%% file: main.tex
\documentclass[11pt]{article}
\usepackage[utf8]{inputenc}
\usepackage{amsmath,amssymb,amsthm}
\usepackage{graphicx}
\usepackage[margin=1.1in]{geometry}
\usepackage[hidelinks]{hyperref}
\usepackage{placeins}
\usepackage{longtable}
\usepackage{tikz}
\usepackage{array}

\newtheorem{theorem}{Theorem}
\newtheorem{lemma}[theorem]{Lemma}
\newtheorem{proposition}[theorem]{Proposition}
\newtheorem{corollary}[theorem]{Corollary}

\theoremstyle{definition}
\newtheorem{definition}[theorem]{Definition}
\newtheorem{remark}[theorem]{Remark}
\newtheorem{openproblem}[theorem]{Open Problem}

\newcommand{\R}{\mathbb{R}}

\title{Greedy Packing of Nested Rings:\\ The Golden Threshold,
Placement Rules, and a Tribonacci Floor}
\author{Javier Aguilar Mart\'in\thanks{AGILabs.
\texttt{javiecija96@gmail.com}. Computational verification scripts for
every numerical claim, together with extended proofs and the verification
reports cited in the appendix, are available in the accompanying repository,
\url{https://github.com/JaviMaligno/calamares}. This version of the
manuscript is the integrated v2 working revision of 15 September 2026, extending
the archived release \texttt{v1-arxiv} of that repository;
\texttt{python code/run\_all.py --campaign} reproduces every
verification in the map (band-by-band, several hours);
\texttt{--full} covers the core-theorem scripts in about $35$
minutes, and \texttt{lake build} in \texttt{lean/} rechecks the
formalized certificates. \texttt{requirements.txt} pins the exact
environment.}}
\date{}

\begin{document}
\maketitle

\begin{abstract}
We study packings of annuli of a common width, allowing each ring to
nest inside the hole of a larger one. The objectives of maximizing
contact area and the number of placed rings diverge: area is
superadditive in the radius, cardinality is not. Under superincreasing
radii, every descending greedy maximizes every positive, strictly
increasing, superadditive objective. More strongly, any choice among
feasible containers yields the lexicographically maximal feasible set,
for containers of arbitrary shape in every dimension. This placement
irrelevance holds unconditionally for at most three rings and fails
at four in disks and squares; twin instances exclude every universal
rule based only on the observable state.

Write $\rho=\max_i(\sum_{j>i}r_j)/r_i$. The additive model has
threshold exactly $1$. For disks we prove the exact global threshold
$\tau=\varphi$, with no failure at $\rho\le\varphi$, for every finite
inventory, even with independent hole radii. The key geometric theorem
states that, under golden tail bounds, an entire disk list fits a
circular container if and only if its three largest disks fit. It
supplies a uniform exchange of parents, matching the four-ring golden
counterexamples approaching $\varphi$ from above. The Tribonacci
constant $T\approx1.83929$ remains the exact floor of a rigid subfamily.

A dimension-reduction lemma transfers spherical sharpness results to
all dimensions $d\ge2$, and a separate argument proves the golden
threshold for at most five rings in those dimensions. For square pans,
a Cartesian confinement criterion gives twins and a family proving
$\tau_{\square}\le Y\approx1.684487745872346$; its optimality is open.
For independent holes, the exact universal area guarantee under
$\rho\le\kappa<1$ is $\min(1,\kappa^{-2}-1)$, with optimality threshold
$1/\sqrt2$. The repository contains 122 Lean algebraic and Cartesian
theorems. Euclidean geometry, forest assembly and continuity remain
written proofs; numerical checks do not substitute for them.
\end{abstract}

\section{Introduction}

Drop a batch of squid rings into a frying pan and a natural optimization
problem appears: place as many rings as possible flat on the pan -- or, if one
cares about searing, maximize the total ring--pan contact area -- exploiting the
fact that a sufficiently small ring fits inside the hole of a larger one.
Abstracted, this is the problem of packing annuli of common width into a disk
with recursive nesting, a circular-container relative of the
\emph{Recursive Circle Packing Problem} (RCPP) introduced by Pedroso, Cunha and
Tavares~\cite{PedrosoCunhaTavares2016} to model the ``telescoping'' of tubes in
shipping containers, and solved exactly by Gleixner, Maher, M\"uller and
Pedroso~\cite{Gleixner2020}. That literature is algorithmic: it packs a fixed
set of rings into as few rectangular containers as possible, and its
methods -- like the greedy and GRASP procedures for RCPP and the greedy
heuristics for equal circles in a circular
container~\cite{PedrosoCunhaTavares2016,ChenEtAl2018} -- are
\emph{heuristics}: they propose placements without guarantees of
optimality or of independence from the placement rule. Here we ask
structural questions instead: which objectives the natural greedy
algorithm \emph{provably} optimizes, when the choice of \emph{where} to
put each ring is provably irrelevant, and which conditions on the radii
govern the answers; to our knowledge, no comparable
placement-independence guarantee appears in the circle-packing literature.

Our starting point is a classical phenomenon in one dimension. For bin packing
with \emph{divisible} item sizes, Coffman, Garey and
Johnson~\cite{CoffmanGareyJohnson1987} proved that First Fit Decreasing is
optimal. We are not aware of an analogue for geometric packing of disks, let
alone with nesting. We prove one, with \emph{superincreasing} radii (each radius exceeding
the sum of all smaller ones) playing the role of divisibility, and we
determine exactly how far the hypothesis can be relaxed in the additive
model, bracketing the geometric threshold. The answer is a
surprise: the additive relaxation of the model has threshold exactly $1$,
while the disk model has threshold exactly the
\emph{golden ratio} -- an explicit golden family fails at
$\rho=\varphi+3\varepsilon$. The Tribonacci constant -- the natural
conjectured threshold, which our own program long targeted -- survives as
the exact floor of the rigid nested family, the proved floor of a
hierarchy of nested-exchange templates, and the conjectured universal
floor of nested exchanges.

\paragraph{Contributions: main theorems.}
(1)~A \emph{superadditivity dichotomy}: under superincreasing radii the
selection greedy is optimal for every positive, strictly increasing,
superadditive value function of the radius -- contact area is one -- while
cardinality (not superadditive) admits counterexamples even under
superincreasing radii (Section~\ref{sec:superadd}). The selection statement
is best seen as a general fact about downward-closed set systems with
superincreasing weights, in the lineage of greedy analyses on independence
systems~\cite{Edmonds1971,KorteHausmann1978} and of the superincreasing
knapsack~\cite{Gupte2016}; the geometric content of this paper begins with
the next item.
(2)~A \emph{placement-obliviousness theorem}: under superincreasing radii,
\emph{any} rule for choosing among feasible containers yields the
lexicographically maximal feasible set; the proof is container-shape- and
dimension-agnostic (Section~\ref{sec:oblivious}).
(3)~\emph{Sharpness}: placement obliviousness holds unconditionally for
$n\le3$ rings and fails at $n=4$; twin instances with an identical prefix rule
out every state-based placement rule, deterministic or randomized
(Section~\ref{sec:sharp}).
(The positive statement holds in every compact container and dimension;
Section~\ref{sec:dimension} transfers the spherical negative results
to every $d\ge2$.)
(4)~\emph{Thresholds}: the additive model has universal threshold exactly
$\rho=1$; the geometric four-ring rigid subfamily has infimum exactly the
Tribonacci constant $T$, proved with no idealization
(Theorem~\ref{thm:rigidfloor}, full proof in
Appendix~\ref{app:rigidproof}); and an explicit \emph{golden family}
refutes the natural conjecture that the geometric threshold equals $T$:
placement obliviousness fails at $\rho=\varphi+3\varepsilon$ for every
small $\varepsilon>0$, which \emph{proves} $\tau\le\varphi<T$
(Theorem~\ref{thm:golden}), and the family realizes every
$\rho\in(\varphi,T)$ at a fixed width
(Corollary~\ref{cor:goldencover}). The matching universal guarantee
is Theorem~\ref{thm:golden-global}: every finite disk inventory with
$\rho\le\varphi$ is placement-oblivious, including independent holes.
Its proof uses the three-largest-disks theorem, rather than a
classification by inventory size or a computational coverage claim.
The earlier four-ring proof is retained as a direct special-case analysis.

The status of the golden value, at a glance:
\begin{center}
\begin{tabular}{ll}
\hline
Statement & Status \\
\hline
$\tau\le\varphi$ & proved (golden family, Theorem~\ref{thm:golden}) \\
$\tau_4=\varphi$, infimum unattained & proved
(Theorem~\ref{thm:fourfloor}) \\
Pan-exchange floor for pair profiles equals $\varphi$ & proved
(Theorem~\ref{thm:DP}) \\
Universal $\tau=\varphi$, infimum unattained & proved
(Theorem~\ref{thm:golden-global}) \\
\hline
\end{tabular}
\end{center}

\paragraph{Specialized exchange floors.}
(5)~\emph{The width program}: the blocked-exchange infimum with relative
width $\omega>0$ is governed by the deformed Tribonacci cubic
$\alpha^3=(1+\omega)(\alpha^2+\alpha+1)$; the full lower-bound curve is in
closed form and its global minimum is exactly $13/7$, attained at a rational
corner (Section~\ref{sec:width}); the combinatorial profile threshold is
in closed form for every profile size, with golden plateau and exact Tribonacci
crossing $\omega_T=1/T-1/2$.
(6)~\emph{Generic containers}: an exact corona criterion for wall-tangent
packings (with two corrective counterexamples and a $k=5$ ``pentagram''
certificate), and a hierarchy of blocking walls -- occupant holes, ``blocking
costs the slack'', and the double pocket -- yielding metallic-mean floors
(silver ratio $1+\sqrt2$, golden line $\varphi^2-(\varphi/2)\omega$) that
push every blocked exchange with extra occupants above $T$ -- throughout
the width convention $\omega\in(0,1)$ of the program: for widths
$\omega<0.9626$ with one extra occupant, up to $0.624$ and $0.896$ with two
and three, and for all $\omega\in(0,1)$ with four or more
(Section~\ref{sec:generic}). The remaining gaps of the program are stated
explicitly there and in Open Problem~\ref{op:assembly}; they concern
stronger template bounds, not the global golden guarantee.
(7)~A phase diagram for the divergence of the two objectives, and a
decoupling of hardness into a geometric and a combinatorial layer, the
latter with an explicit \textsc{Partition} reduction
(Sections~\ref{sec:divergence}--\ref{sec:hardness}).
(8)~\emph{Shape and dimension}: sibling queries involving $k$ balls in
a ball reduce to dimension $k-1$. This preserves the sharpness results
and the rigid Tribonacci floor for every spherical dimension $d\ge2$.
For square pans an exact quadrant-confinement criterion gives four-ring
failures, state-indistinguishable twins, and a family proving
$\tau_{\square}\le Y\approx1.684487745872346$
(Sections~\ref{sec:dimension}--\ref{sec:squarelimit}); these results
make no claim of an exact global square threshold.
(9)~\emph{Independent holes and area}: placement obliviousness under
$\rho\le1$ remains valid for arbitrary compact containers, but contact
area is no longer a common function of outer radius. We prove the exact
factor $\min(1,\kappa^{-2}-1)$ under $\rho\le\kappa<1$ and the sharp
area-optimality threshold $1/\sqrt2$ (Section~\ref{sec:variablewidth}).

\section{Model and preliminaries}\label{sec:model}

\begin{definition}[Rings and placements]
Fix a width $w>0$. A \emph{ring} of outer radius $r>0$ is an annulus with
inner (hole) radius $\max(0,\,r-w)$; if $r\le w$ it is a solid disk with no
hole. Let $K\subset\R^d$ be a compact \emph{container} (the \emph{pan}; our
motivating case is a disk of radius $R$ in the plane, $d=2$, but
Theorems~\ref{thm:selection} and~\ref{thm:oblivious} hold for arbitrary $K$ and
$d$, with rings read as spherical shells). A \emph{placement} of a subset $S$
of the rings is a forest: each placed ring has as parent either the pan or a
placed ring with $r_{\mathrm{child}}\le r_{\mathrm{parent}}-w$ (it sits inside
the parent's hole); the children of a common parent must be packable as
balls (by outer radius) with pairwise disjoint interiors inside the parent's region --- $K$
itself, or the hole ball of radius $r-w$. Feasibility is a property of the
assignment (siblings may be rearranged freely inside their container).
\end{definition}

\begin{definition}[Objectives; superincreasing radii]
For radii $r_1>r_2>\dots>r_n$, the \emph{contact area} of a placed ring is
$a(r)=\pi\bigl(r^2-\max(0,r-w)^2\bigr)$; the two objectives are
$N=|S|$ and $A=\sum_{i\in S}a(r_i)$ over feasible $S$. The radii are
\emph{superincreasing} if $r_i>\sum_{j>i}r_j$ for all $i$; the violation is
measured by $\rho=\max_i\bigl(\sum_{j>i}r_j\bigr)/r_i$.
We call $\bigl(\sum_{j>i}r_j\bigr)/r_i$ the \emph{tail} (ratio) of ring
$i$, so that $\rho$ is the largest tail; ``the tail of $x$'' always
refers to this ratio.
The \emph{lex-max} set $L$ is defined greedily: scanning $i=1,\dots,n$,
include $i$ whenever $(L\cap\{1,\dots,i-1\})\cup\{i\}$ is feasible.
Feasibility is \emph{downward closed}: removing a ring from a feasible
placement leaves a feasible placement of the remaining set --- the
removed ring's disk becomes free space, and its nested children (each
of radius at most $r-w$, less than the freed disk) stay where
they are, re-parented to the removed ring's container, so no piece
moves. Hence $L$ is the unique lexicographically
maximal feasible set, and the \emph{selection greedy} (add ring $i$ if the
resulting \emph{set} remains feasible) computes it. Note that the selection
greedy queries a \emph{set-feasibility oracle}; it is an information bound,
not an efficient algorithm -- deciding sibling packability is itself hard
(Section~\ref{sec:hardness}). The placement greedies below, by contrast,
only ever attempt concrete insertions.
A \emph{descending greedy with placement rule} processes rings in decreasing
radius and places each into \emph{some} feasible container (the pan or the hole
of an already placed ring), skipping it only if no container admits it.
\end{definition}

\begin{lemma}[Row lemma]\label{lem:row}
Balls with radii $x_1,\dots,x_k$ satisfying $\sum_j x_j\le C$ pack
inside a ball of radius $C$: place them tangent in a row along a diameter.
Consequently, if a ball of radius $m$ is removed from any packing, any
collection of balls of total radius at most $m$ can be inserted into the
vacated ball without disturbing the rest.
\end{lemma}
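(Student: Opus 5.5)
The first claim I would prove by an explicit construction along a diameter. Put the center of the container ball $B(0,C)$ at the origin and fix a unit vector $e$. Set $s_0=-C$ and $s_j=s_{j-1}+2x_j$, and place ball $j$ with center $c_j=(s_{j-1}+x_j)\,e$, so that it occupies the segment $[s_{j-1},s_j]$ of the diameter.

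Containment comes from the triangle inequality. The center $c_j$ lies on the diameter, so $\|c_j\|+x_j=\max(|s_{j-1}|,|s_j|)$. Since $-C\le s_{j-1}<s_j\le -C+2\sum_i x_i\le C$, this quantity is at most $C$. Hence $B(c_j,x_j)\subset B(0,C)$.

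Disjointness of interiors comes from the fact that balls with centers on a common line have disjoint interiors whenever their projection intervals do. For $j<l$ we have $\|c_l-c_j\|=(s_{l-1}+x_l)-(s_{j-1}+x_j)\ge s_j+x_l-s_{j-1}-x_j = x_j+x_l$, using $s_{l-1}\ge s_j$. This argument holds in every dimension $d\ge1$, which is why the lemma is stated for balls rather than disks.

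For the consequence, suppose a ball $B(c,m)$ is one of the pieces of some packing, meaning it is disjoint in interior from the other pieces and lies inside their common region. Removing it frees exactly the region $B(c,m)$. Any collection of balls with total radius at most $m$ can then be packed inside $B(c,m)$ by the first part, applied with $C=m$ after translating the origin to $c$.

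These new balls lie inside $B(c,m)$. The interior of $B(c,m)$ already met no other piece, so the new balls' interiors are disjoint from every remaining piece, and they also stay inside the parent region. Nothing else moves.

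There is one point that needs care: if the removed ball was a ring with nested children, "vacated ball" means the children are removed as well, or else re-parented as in the downward-closure remark. The statement should be read as freeing the whole disk of radius $m$. I expect no real obstacle; the only subtlety is this bookkeeping of what "vacated" includes.
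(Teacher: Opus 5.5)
Your proof is correct and is essentially the paper's own: the paper places the centers on a diameter at $p_j=-C+2\sum_{l<j}x_l+x_j$ (your $s_{j-1}+x_j$) and checks containment via $|p_j|\le C-x_j$, exactly as you do. The paper leaves the pairwise separation and the vacated-ball consequence implicit; your treatment of both, including reading ``vacated'' as the whole ball of radius $m$ being freed, is sound.
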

\begin{proof}
With centers on a diameter at coordinates
$p_j=-C+2\sum_{l<j}x_l+x_j$, each satisfies $|p_j|\le C-x_j$.
\end{proof}

\section{The superadditivity dichotomy}\label{sec:superadd}

\begin{lemma}\label{lem:superadd}
The contact area $a$ is strictly increasing and superadditive on $(0,\infty)$:
$a(x)+a(y)\le a(x+y)$.
\end{lemma}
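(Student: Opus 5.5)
The plan is to use the explicit two-piece formula for $a$:
\[
a(r)=\begin{cases}\pi r^2, & r\le w,\\ \pi\bigl(2wr-w^2\bigr), & r\ge w,\end{cases}
\]
which agrees at $r=w$. Both pieces are continuous and increasing, $\pi r^2$ on $(0,w]$ and the linear function with slope $2\pi w>0$ on $[w,\infty)$, so $a$ is strictly increasing on $(0,\infty)$. We can also write $a(r)=\pi\min(r^2,\,2wr-w^2)$. This holds because $r^2-(2wr-w^2)=(r-w)^2\ge0$, so the minimum is the correct branch on each side of $w$.

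For superadditivity, take $x,y>0$ with $x\le y$ and split into cases according to how $x$, $y$ and $x+y$ compare with $w$.

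\textbf{Case 1: $x+y\le w$.} Everything is on the quadratic branch, and $(x+y)^2=x^2+y^2+2xy\ge x^2+y^2$.

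\textbf{Case 2: $x+y>w$.} Now $a(x+y)=\pi(2w(x+y)-w^2)$, and we bound $a(x)+a(y)$ from above.
\begin{itemize}
\item If $x,y\ge w$, then $a(x)+a(y)=\pi(2w(x+y)-2w^2)$, which is less than $a(x+y)$ by $\pi w^2$.
\item If $x<w\le y$, then $a(x)+a(y)=\pi(x^2+2wy-w^2)$. The inequality reduces to $x^2\le 2wx$, which holds since $x<w$.
\item If $x,y<w$, we need $x^2+y^2\le 2w(x+y)-w^2$. Rewrite this as $(w-x)^2+(w-y)^2\le w^2$. Setting $u=w-x$ and $v=w-y$, we have $u,v\in(0,w)$ and $u+v<w$, because $x+y>w$. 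Then $u^2+v^2\le(u+v)^2<w^2$.
\end{itemize}

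The only slightly delicate step is this last subcase, where both small rings are solid disks but their sum enters the linear regime; the completed-square substitution handles it. An alternative uniform route would use concavity of $\min$, but the case split above is short and fully explicit, so I would present it that way.
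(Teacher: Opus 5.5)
Your case analysis is correct and is essentially the paper's proof. It uses the same four regimes: all three arguments on the quadratic branch, $x,y\ge w$, $x<w\le y$, and $x,y<w<x+y$. Only the last subcase uses a different but equally valid manipulation: the paper chains $x^2+y^2\le w(x+y)\le 2w(x+y)-w^2$, while you complete squares with $u=w-x$, $v=w-y$.

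You should, however, delete two asides, which are wrong even though the proof never uses them. First, $a(r)=\pi\min(r^2,2wr-w^2)$ is false. Your own identity $r^2-(2wr-w^2)=(r-w)^2\ge0$ shows the minimum is the linear tangent piece for \emph{every} $r$, so the formula fails on $(0,w)$; it is even negative for $r<w/2$. Second, the suggested uniform route via concavity points the wrong way, since a concave function vanishing at $0$ is subadditive. The correct uniform argument is convexity: $a'(r)=2\pi\min(r,w)$ is nondecreasing, so $a(x+y)-a(y)=2\pi\int_y^{x+y}\min(s,w)\,ds\ge 2\pi\int_0^{x}\min(s,w)\,ds=a(x)$.
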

\begin{proof}
Monotonicity is clear. For $x,y\ge w$:
$\pi w(2x-w)+\pi w(2y-w)=\pi w\bigl(2(x{+}y)-2w\bigr)\le\pi w\bigl(2(x{+}y)-w\bigr)$.
For $x,y\le w$ with $x+y\le w$: $x^2+y^2\le(x+y)^2$.
For $x,y\le w<x+y$: $x^2+y^2\le w(x+y)\le 2w(x+y)-w^2$, using
$x^2\le wx$, $y^2\le wy$ and $x+y\ge w$.
For $x\le w\le y$: $\pi x^2+\pi w(2y-w)\le\pi w(2x+2y-w)$ since $x^2\le 2wx$.
\end{proof}

\begin{lemma}[Lexicographic dominance]\label{lem:lexdom}
Let the radii be superincreasing and let $v$ be any positive, increasing,
superadditive function of the radius. If $S,T$ are feasible sets and the first
index (in decreasing radius order) at which they differ belongs to $S$, then
$\sum_S v(r_i)\ge\sum_T v(r_i)$, with equality only if $T\subseteq S$.
\end{lemma}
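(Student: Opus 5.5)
Let $k$ be the first index, in decreasing radius order, at which $S$ and $T$ differ. By hypothesis $k\in S\setminus T$. The two sets agree on $\{1,\dots,k-1\}$, so their contributions from those indices cancel. It therefore suffices to compare $v(r_k)+\sum_{i\in S,\,i>k}v(r_i)$ with $\sum_{i\in T,\,i>k}v(r_i)$. Note that feasibility of $S$ and $T$ plays no role here; the lemma is purely a statement about weights. The whole argument rests on one inequality: superadditivity together with monotonicity bounds the total value of any set of small radii by the value at a single larger radius.

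The key step is to show that for any finite set $J\subseteq\{k+1,\dots,n\}$,
\[
\sum_{i\in J}v(r_i)\le v\Bigl(\sum_{i\in J}r_i\Bigr)\le v\Bigl(\sum_{j>k}r_j\Bigr)\le v(r_k).
\]
The first inequality comes from superadditivity, applied by induction on $|J|$. For the empty set we read the sum as $0<v(r_k)$, using positivity of $v$. The second and third inequalities come from monotonicity, since $\sum_{i\in J}r_i\le\sum_{j>k}r_j<r_k$ by the superincreasing hypothesis. Applying this with $J=T\cap\{k+1,\dots,n\}$ gives $\sum_T v\le \sum_{T\cap[k-1]}v + v(r_k)\le\sum_S v$. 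The last step uses the fact that the indices of $S$ after $k$ contribute a nonnegative amount, because $v>0$.

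For the equality clause I would use strict monotonicity, or at least strictness somewhere along the chain. Suppose $T\not\subseteq S$. Then $T$ contains some index $i>k$ with $i\notin S$, so $J=T\cap\{k+1,\dots\}$ is nonempty. The chain then gives $\sum_J v\le v(\sum_J r)$, and $v(\sum_J r)<v(r_k)$ provided $v$ is strictly increasing, since $\sum_J r<r_k$ strictly. Even when $J$ is empty the inequality is strict, because $0<v(r_k)$. In every case we get strict inequality $\sum_T v<\sum_S v$. Contrapositively, equality forces $T\subseteq S$. In fact this argument shows that equality never occurs at all when $S\ne T$, which is consistent with, and stronger than, the stated clause.

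The main obstacle is the word ``increasing'' in the statement. If it means only non-decreasing, strictness at the step $v(\sum_J r)\le v(r_k)$ can fail. In that case I would fall back on the residual terms: equality then needs $\sum_{i\in S,i>k}v(r_i)=0$, which by positivity forces $S\cap\{k+1,\dots\}=\emptyset$. It also needs every inequality in the chain to be tight, and tightness can occur with $J\neq\emptyset$, so $T\subseteq S$ would fail. I would therefore read the hypothesis as strictly increasing, as in the surrounding theorem, or note that $\sum_J r<r_k$ gives strictness whenever $v$ is strictly increasing. The rest is a routine induction.
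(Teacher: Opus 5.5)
Your proof is correct and is the paper's argument: split off the common prefix, bound the part of $T$ beyond the first difference by $v(r_k)$ using iterated superadditivity and monotonicity, and get strictness from $\sum_J r<r_k$. Your closing worry is unfounded. Positivity and superadditivity already force strict monotonicity, since $v(x+y)\ge v(x)+v(y)>v(x)$. The chain is therefore strict even under the weak reading of ``increasing'', so your claim that tightness can occur with $J\ne\emptyset$ is false.
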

\begin{proof}
Let $i$ be the first differing index and $P=S\cap\{1,\dots,i-1\}
=T\cap\{1,\dots,i-1\}$. If $T\setminus P=\emptyset$ then $T\subseteq S$ and
monotonicity of the sum under inclusion suffices. Otherwise
$T\setminus P\subseteq\{i{+}1,\dots,n\}$, so
$\sum_{T\setminus P}r_j<r_i$ by superincreasingness, and by iterated
superadditivity and monotonicity
$\sum_{T\setminus P}v(r_j)\le v\bigl(\sum_{T\setminus P}r_j\bigr)<v(r_i)$.
Hence $\sum_T v=\sum_P v+\sum_{T\setminus P}v<\sum_P v+v(r_i)\le\sum_S v$.
\end{proof}

\begin{theorem}[Selection greedy; arbitrary container, any dimension]
\label{thm:selection}
With superincreasing radii, the selection greedy computes the lex-max feasible
set and therefore maximizes $\sum_{i\in S}v(r_i)$ for every positive,
strictly increasing, superadditive $v$; in particular it maximizes the
contact area $A$.
\end{theorem}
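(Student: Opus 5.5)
The plan has two parts: the selection greedy outputs $L$, and $L$ beats every other feasible set. The proof is essentially an assembly of results already in place: downward closure of feasibility, the greedy definition of $L$, and Lemma~\ref{lem:lexdom}. No geometry beyond downward closure is needed, so the argument is valid for any compact container $K$ and any dimension $d$.

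\emph{Step 1: the greedy computes $L$.} The selection greedy scans $i=1,\dots,n$ and adds $i$ exactly when the current set together with $i$ is feasible. This is literally the rule defining $L$, and an induction on $i$ shows the greedy's set after step $i$ equals $L\cap\{1,\dots,i\}$. Next, $L$ is lexicographically maximal among feasible sets. Suppose a feasible $T\neq L$ has first differing index $i$ with $i\in T\setminus L$. Then $(L\cap\{1,\dots,i-1\})\cup\{i\}=(T\cap\{1,\dots,i\})$ is a subset of $T$, hence feasible by downward closure. The greedy would therefore have included $i$, a contradiction. So for every feasible $T\neq L$, the first differing index lies in $L$.

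\emph{Step 2: optimality.} Fix a positive, strictly increasing, superadditive $v$ and a feasible $T\neq L$. By Step~1 the first index where $L$ and $T$ differ belongs to $L$, so Lemma~\ref{lem:lexdom} (with $S=L$) gives $\sum_L v(r_i)\ge\sum_T v(r_i)$, with equality only if $T\subseteq L$. In the case $T\subsetneq L$, strict monotonicity and positivity of $v$ make the inequality strict. Hence $L$ is the unique maximizer.

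\emph{Specializing to area.} Lemma~\ref{lem:superadd} shows $a$ is positive, strictly increasing and superadditive on $(0,\infty)$, so the statement covers $A$.

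The only delicate point is Step~1's use of downward closure. It is what lets a lexicographically larger feasible set certify feasibility of the greedy's tentative set. Downward closure was established in the model section for arbitrary compact containers, so I expect no real obstacle there; the rest is bookkeeping.
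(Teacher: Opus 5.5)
Your proof is correct and follows the paper's route. Downward closure shows that $L$, which the selection greedy computes by definition, wins at the first index where it differs from any feasible set, and Lemma~\ref{lem:lexdom} then gives dominance; positivity of $v$ alone makes the inequality strict when $T\subsetneq L$, and positivity of $a$ is immediate from $w>0$ rather than being part of Lemma~\ref{lem:superadd}.
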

\begin{proof}
Feasibility is downward closed, so the selection greedy computes $L$
(if it could add $i\notin L$, any maximal feasible extension would
lexicographically beat $L$). Lemma~\ref{lem:lexdom} shows $L$ dominates every
feasible set.
\end{proof}

\begin{remark}[Positioning]
Theorem~\ref{thm:selection} is at heart a statement about arbitrary
downward-closed set systems with superincreasing weights, and as such it
belongs to the classical circle of ideas around greedy algorithms on
independence systems~\cite{Edmonds1971,KorteHausmann1978} and the
superincreasing knapsack, whose lexicographic structure was analyzed by
Gupte~\cite{Gupte2016}. The hypotheses on $v$ are all needed: positivity
rules out $v\equiv-1$ (superadditive and non-decreasing, yet maximized by
the empty set), and strict monotonicity is what makes the dominance in
Lemma~\ref{lem:lexdom} strict. The geometric novelty of this paper lies in
Theorem~\ref{thm:oblivious}, where the exchange argument must respect the
geometry of nested containers.
\end{remark}

\begin{proposition}[Cardinality is not rescued]\label{prop:count}
There are superincreasing instances on which every descending greedy is
suboptimal for $N$. Example (verified; script \texttt{superinc.py}): pan $R=10$, $w=4.8$, radii
$\{9.95,\,5.0,\,4.3,\,0.6\}$. Greedy places $\{9.95,5.0\}$ ($N=2$, optimal
area; no step offers a choice of container, so all placement rules coincide),
while $\{5.0,4.3,0.6\}$ packs in a row in the pan ($N=3$).
The obstruction is structural: $N$ corresponds to $v\equiv1$, which is not
superadditive, and Lemma~\ref{lem:lexdom} fails for it.
\end{proposition}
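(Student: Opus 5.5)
The statement to prove is Proposition~\ref{prop:count}, with pan $R=10$, $w=4.8$ and radii $\{9.95,5.0,4.3,0.6\}$. It is a concrete instance, so the plan is a direct verification in four steps: superincreasingness, the trace of every descending greedy, the feasibility of the triple, and the failure of Lemma~\ref{lem:lexdom} for $v\equiv1$.

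\emph{Step 1: superincreasingness.} Check $9.95>5.0+4.3+0.6=9.9$, $5.0>4.3+0.6=4.9$ and $4.3>0.6$, so the hypothesis holds.

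\emph{Step 2: every descending greedy gives $\{9.95,5.0\}$.} Trace the greedy and note that no container choice ever arises.
\begin{itemize}
\item Ring $9.95$ goes in the pan, since $9.95\le10$. Its hole has radius $9.95-4.8=5.15$.
\item Ring $5.0$ cannot sit beside $9.95$ in the pan, because $9.95+5.0>10$. It does fit the hole, since $5.0\le 5.15$. Hole is the only container.
\item Ring $4.3$ finds no container. It does not fit the pan, because $9.95+4.3>10$. It does not fit the hole of $9.95$ beside $5.0$, because $5.0+4.3>5.15$ and two balls of radii $a,b$ fit in a ball of radius $C$ iff $a+b\le C$. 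It does not fit the hole of $5.0$, which has radius $0.2<4.3$.
\item Ring $0.6$ also finds no container. It fails the pan, since $9.95+0.6>10$. It fails the hole of $9.95$, since $5.0+0.6>5.15$. It fails the hole of $5.0$, since $0.6>0.2$. Ring $4.3$ was never placed, so its hole is not available.
\end{itemize}
So every placement rule yields $\{9.95,5.0\}$, with $N=2$. By Theorem~\ref{thm:selection} this set is area-optimal.

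\emph{Step 3: the triple is feasible.} The set $\{5.0,4.3,0.6\}$ has total radius $9.9\le10$. By Lemma~\ref{lem:row} it packs in a row along a diameter of the pan, giving $N=3>2$.

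\emph{Step 4: why Lemma~\ref{lem:lexdom} fails for $v\equiv1$.} This function is not superadditive, since $1+1>1$. Taking $S=\{9.95,5.0\}$ and $T=\{5.0,4.3,0.6\}$, the first differing index lies in $S$, yet $\sum_T v>\sum_S v$.

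The only step needing care is the case check in Step 2, where one must confirm that every container is blocked for rings $4.3$ and $0.6$. The two-ball criterion handles this directly, since the rings occupying each container are fixed at that point.
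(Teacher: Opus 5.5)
Your verification is correct and follows the paper's own argument. You check superincreasingness, trace the forced greedy with the exact two-circle criterion (so every placement rule yields $\{9.95,5.0\}$), pack $\{5.0,4.3,0.6\}$ in a row, and note that $v\equiv1$ is not superadditive. One small point: area-optimality of the greedy output needs that this set is the lex-max set, which follows from Theorem~\ref{thm:oblivious} or directly from your trace ($\{9.95,5.0,x\}$ is infeasible as a set for $x\in\{4.3,0.6\}$); Theorem~\ref{thm:selection} then applies.
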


\section{Placement obliviousness under superincreasing radii}
\label{sec:oblivious}

\begin{theorem}[Placement rule irrelevance]\label{thm:oblivious}
Let the radii be superincreasing (weakly: $r_i\ge\sum_{j>i}r_j$ suffices;
under the weak hypothesis the dominance of Lemma~\ref{lem:lexdom} holds with
non-strict inequality, which is enough for optimality) and
let $K\subset\R^d$ be an arbitrary container. Every descending greedy, with an
\emph{arbitrary} rule for choosing among feasible containers, places exactly
the lex-max feasible set. In particular best fit (place in the feasible
container of smallest capacity, defined for a general container as the radius
of its largest inscribed ball), worst fit (largest capacity) and random
placement are all optimal for every positive, strictly increasing,
superadditive objective.
\end{theorem}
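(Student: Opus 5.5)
Let $G$ be the set placed by a descending greedy with an arbitrary placement rule. I prove $G=L$ by induction on the index $i$. The inductive claim is: after processing rings $1,\dots,i$, the placed set equals $L\cap\{1,\dots,i\}$, whatever concrete forest the rule has built.

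The direction $i\notin L\Rightarrow i\notin G$ is immediate. If the greedy placed $i$, the concrete placement would witness feasibility of $(L\cap\{1,\dots,i-1\})\cup\{i\}$, contradicting the definition of $L$. The whole difficulty is the converse. Suppose $i\in L$. Then $L_i:=(L\cap\{1,\dots,i-1\})\cup\{i\}$ has some feasible forest $F^*$. The greedy, however, has committed to a possibly different forest $F$ on $P=L\cap\{1,\dots,i-1\}$, and we must show that some container in $F$ admits ring $i$.

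The key step is a \emph{transplant} argument using the tail bound and the row lemma. Since $r_k\ge\sum_{j>k}r_j$, all rings after $k$ have total radius at most $r_k$.

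First I try the direct argument. In $F^*$, ring $i$ sits in some container $c$: the pan, or the hole of a ring $p\in P$. Assume $c$ is the hole of $p$. In $F$, ring $p$ still has its hole of radius $r_p-w\ge r_i$, and that hole contains only children of $p$, all with indices $>p$. If those children together with $i$ have total radius at most $r_p-w$, Lemma~\ref{lem:row} gives a row packing. This need not hold, because the tail bound controls sums by $r_p$, not by $r_p-w$.

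So I will instead argue by strong induction with a canonical normal form. Among all feasible forests on $P$, the one built greedily behaves like any other up to rearranging the descendants of smaller rings. Concretely, take the smallest-indexed ring $k$ at which $F$ and $F^*$ assign different parents. Every ring of index $\ge k$ in $P\cup\{i\}$ has total radius at most $r_k$. In $F^*$, the subtree hanging from $k$'s $F^*$-parent can be reorganized: remove the rings with index $\ge k$ from both forests. What remains is a common forest $F_0$, in which the slots occupied in $F^*$ and in $F$ by the removed rings are available.

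In $F$, ring $k$ occupies a ball of radius $r_k$ in its chosen container. The greedy legitimately put $k$ there, so all rings of index $>k$, including $i$, can be re-inserted into that vacated ball of radius $r_k$, by the second part of Lemma~\ref{lem:row} as balls, with nesting discarded. This gives a feasible configuration in which $i$ is present, but it changes where the later rings sit, which the greedy cannot do.

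The main obstacle is exactly this: converting "there is a feasible placement of the set" into "the greedy's current frozen forest has a free container for $i$". My plan for it is to strengthen the induction hypothesis to a statement about free space. After step $i-1$, for each ring $k\in P$, the region available in the container chosen for $k$ (minus $k$ and its co-siblings of larger index) behaves as if a ball of radius $r_k$ is reserved for the tail. Inserting $i$ into the container of the largest $k\in P$ whose reserved ball still has room, by Lemma~\ref{lem:row}, should exist whenever $L_i$ is feasible. I would first verify this in the case where all later rings go in the pan, then handle nesting by the downward-closure re-parenting observation of the model section.

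Optimality for best fit, worst fit, random placement and every positive, strictly increasing, superadditive $v$ then follows from Theorem~\ref{thm:selection} and Lemma~\ref{lem:lexdom}. Under the weak hypothesis only non-strict dominance is needed, so the same conclusion holds.
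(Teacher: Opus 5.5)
\textbf{There is a genuine gap.} The step that carries the whole theorem is never proved: that some container of the greedy's current forest $F$ admits ring $i$ whenever $L_i$ is feasible.

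Your final plan is only sketched (``should exist'', ``I would first verify''). It rests on an invariant saying that, in $F$, a ball of radius $r_k$ is ``reserved for the tail'' next to each placed ring $k$. Read literally, that invariant is false. A first ring that exactly fills the pan leaves no free ball in its container, and the tail must then go into holes.

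Your earlier exchange attempt vacates the wrong ball. Emptying $k$'s ball in $F$ removes $k$ from the greedy's configuration, and the greedy gets nothing it can use.

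The obstacle you name (``it changes where the later rings sit, which the greedy cannot do'') is not an obstacle in this model. The greedy is committed only to parent assignments: feasibility is a property of the assignment, and siblings may be rearranged freely inside their container.

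\textbf{The missing idea} is to transform the witness $F^*$ toward $F$, not the other way round.

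At the first disagreement $k$, let $u$ be $F$'s container for $k$ and $v$ its container in $F^*$. All larger rings have the same parents in both forests. Hence the rings larger than $k$ assigned to $u$ are the same in $F$ and $F^*$. Since the greedy processes rings in decreasing order, it certified, when it placed $k$, that exactly these rings pack together with $k$ in $u$.

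Now modify $F^*$ in two moves:
\begin{itemize}
\item Move $k$, carrying its hole contents, into $u$.
\item Move every ring smaller than $k$ that $F^*$ kept in $u$, with its subtree, into the ball of radius $r_k$ vacated by $k$ in $v$. By the tail bound their total radius is at most $r_k$, so Lemma~\ref{lem:row} fits them in a row there.
\end{itemize}
Nesting constraints persist. The containers $u$ and $v$ are the pan or holes of rings larger than $k$, and those rings do not move. Everything moved into $v$ is smaller than $k$, which fitted in $v$.

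The new witness still contains $i$. It agrees with $F$ on every ring of radius at least $r_k$, so the first disagreement moves strictly later. After at most $|P|$ steps you reach a witness whose parent assignment on $P$ equals $F$'s. The container holding $i$ in that witness has exactly $F$'s occupants plus $i$, so the greedy admits $i$ there.

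Your first direction ($i\notin L\Rightarrow i$ not placed) and the reduction of the optimality claims to Lemma~\ref{lem:lexdom} are fine.
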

\begin{proof}
Let $F$ denote the greedy's placement just before processing ring $i$, with
placed set $G$, and suppose $G\cup\{i\}$ is feasible via a witness placement
$P$; it suffices to show some container of $F$ admits $i$ (the converse
direction is immediate, and then greedy's set equals $L$ by induction as in
Theorem~\ref{thm:selection}). The available containers -- the pan plus one hole
per ring of $G$ -- coincide in $F$ and $P$, and in $P$ the smallest ring $i$
parents nobody.

Let $m$ be the largest ring of $G$ assigned different containers by $F$ and
$P$ (if none, $P$ restricted to $G$ equals $F$ and $i$'s container in $P$
witnesses admission). Let $u$ be $F$'s container for $m$ and $v$ be $P$'s.
Since the greedy processes rings in decreasing order, when $F$ placed $m$ the
occupants of $u$ were exactly the rings larger than $m$ that $F$ assigns to
$u$; by maximality of $m$ these coincide with the rings larger than $m$ that
$P$ assigns to $u$, and $F$ certified that this set together with $m$ packs in
$u$. Build $P'$ from $P$: (i) every ring smaller than $m$ that $P$ kept in $u$
is moved into the ball of radius $r_m$ vacated by $m$ inside $v$ -- they fit in
a row by Lemma~\ref{lem:row}, because their total radius is at most
$\sum_{j:\,r_j<r_m}r_j\le r_m$, and they touch nothing else in $v$;
(ii) $m$ moves to $u$, joining exactly the larger-than-$m$ rings whose
compatibility $F$ certified. Nesting constraints persist: $u$ and $v$ are the
pan or holes of rings larger than $m$, which do not move, and every ring moved
into $v$ is smaller than $r_m$, which fitted in $v$. Thus $P'$ is a feasible
witness agreeing with $F$ on all rings of radius $\ge r_m$: the first
disagreement strictly shrinks. Iterating at most $|G|$ times yields a witness
$P^{*}$ agreeing with $F$ on all of $G$ and placing $i$ in some container
$c^{*}$; since the occupants of $c^{*}$ in $P^{*}$ are exactly those in $F$,
the greedy admits $i$ there.
\end{proof}

\begin{remark}
The proof uses only (a) downward closure of packings, (b) the row lemma
applied \emph{inside the vacated ball}, and (c) the total-radius bound from
superincreasingness. Nothing refers to the shape of $K$ or the dimension:
the theorem covers rectangular pans (griddles) and spherical shells nested in
arbitrary containers in $\R^d$ verbatim. Computational corroboration of the
strongest consequence: on $100$ random superincreasing instances, best-fit,
worst-fit and random placement produced optimal (hence, by
Lemma~\ref{lem:lexdom}, identical) outcomes without exception (script
\texttt{test\_oblivious.py}; beyond the exact one-circle, two-circle and row
criteria its feasibility oracle is a heuristic packer with restarts -- a
sampling caveat of the corroboration, not an ingredient of the proof).
\end{remark}

\section{Sharpness: the \texorpdfstring{$n=4$}{n=4} transition and twin instances}\label{sec:sharp}

\begin{proposition}[$n\le3$, arbitrary container]\label{prop:n3}
For any compact container $K\subset\R^d$, $d\ge1$, and arbitrary
strictly decreasing radii of common positive width, every descending
greedy on at most three rings places the lex-max set.
\end{proposition}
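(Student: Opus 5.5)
The plan is to reuse the reduction from the proof of Theorem~\ref{thm:oblivious}. By downward closure and induction, it suffices to show the following: whenever the greedy's current placement $F$ of its placed set $G$ (with $G$ equal to the lex-max set restricted to earlier rings, by induction) and some witness placement $P$ of $G\cup\{i\}$ exist, some container of $F$ admits $i$. Superincreasingness is not available, so I would replace the row-lemma step with a single elementary fact. If a ball of radius $s$ in a packing is replaced by a concentric ball of radius $s'\le s$, the packing stays valid. In particular, any container that admits ring $j$ alongside some occupants also admits a smaller ring in $j$'s place.

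I would first dispose of the trivial steps. Ring $1$ has only the pan as candidate. For ring $2$, if ring $1$ was rejected there is again only the pan. If ring $1$ was placed, the greedy admits $2$ exactly when $2$ fits somewhere, which is precisely set-feasibility of $\{1,2\}$ (or of $\{2\}$ if $1$ is absent). So the greedy agrees with $L$ through ring $2$, and the only genuine choice occurs when $G=\{1,2\}$ and both the pan (beside $1$) and the hole of $1$ admit $2$. If $G$ has fewer than two rings, or $F$ and $P$ assign ring $2$ the same container, then $P$ restricted to $G$ equals $F$ and $P$'s container for $3$ works directly. This leaves two cases, split by where $F$ puts ring $2$.

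\emph{Case A: $F$ puts $2$ in the hole of $1$, and $P$ puts $2$ in the pan.} I sub-case on $P$'s container for $3$.
\begin{itemize}
\item If it is the hole of $2$, that hole is empty in $F$ and $3$ fits there.
\item If it is the pan, then $\{1,3\}\subseteq\{1,2,3\}$ packs in the pan by downward closure. In $F$ the pan holds only $1$, so $3$ fits.
\item If $3$ sits alone in the hole of $1$, I use the shrinking fact instead. In $P$ the pan packs $\{1,2\}$, so replacing $2$ by a concentric ball of radius $r_3<r_2$ shows $\{1,3\}$ packs in the pan, and $F$'s pan holds only $1$.
\end{itemize}

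\emph{Case B: $F$ puts $2$ in the pan, and $P$ puts $2$ in the hole of $1$.} In $F$ the hole of $1$ is empty. Ring $2$ fits there alone (it does so in $P$), hence so does the smaller ring $3$. This settles every sub-case at once, with the hole of $2$ also available when $P$ uses it.

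The main obstacle is the third sub-case of Case A. There the naive transfer (``put $3$ where $P$ put it'') fails, because the hole of $1$ is now occupied by $2$, and without superincreasing radii nothing guarantees $r_3\le r_2-w$. The fix is the shrinking substitution: $3$ takes the role $2$ played in the pan. I would also check that the argument never uses the shape of $K$ or the dimension, only ball containment and the substitution. It also does not need $r_3\le r_2-w$ when invoking holes (a hole of $2$ exists in $F$ whenever $P$ uses it) or the solid-disk convention, since a ring with no hole is simply never a container in either placement.
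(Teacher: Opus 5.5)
Your proposal is correct and follows the paper's proof essentially step for step. Both split on the parent of ring~2, handle ``$F$ nests, $P$ at the root'' by shrinking ring~2 concentrically to radius $r_3$ beside ring~1, and handle the reverse case with the empty hole of ring~1. The only difference is that you subdivide the first case by where $P$ puts ring~3, which is unnecessary: the shrinking argument alone settles that case regardless of ring~3's position in $P$.
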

\begin{proof}
A ring skipped before any has been admitted cannot occur in any
feasible set: its outer ball would have to lie in $K$, even if nested.
Discard such rings. With at most two remaining there is no previous
assignment choice that can obstruct admission. With three, if the second
ring is skipped, the prefix pair is infeasible because its only possible
parents are the root and the first ring, and both were tested. The last
decision then involves only one previously admitted ring.

It remains to consider $r_1>r_2>r_3$ with the first two admitted. Let
$F$ be their greedy assignment and suppose a witness $P$ places all three.
If the parent of $r_2$ agrees in $F$ and $P$, the latter's home for $r_3$
works in $F$, since siblings may be rearranged. If $F$ nests $r_2$ but
$P$ keeps it at the root, shrink that root ball concentrically to radius
$r_3$: it fits beside $r_1$ in $K$, so $F$ admits $r_3$ at the root.
If $F$ keeps $r_2$ at the root but $P$ nests it, then
$r_3<r_2\le r_1-w$, so the empty hole of $r_1$ in $F$ admits $r_3$.
Thus the third ring is admitted whenever the full prefix is feasible.
Conversely every legal admission is a feasibility witness. No convexity
or special shape of $K$ was used.
\end{proof}

\begin{lemma}[Cap confinement]\label{lem:cap}
Circles of radii $10,\,4.9,\,4.8$ do not pack in a disk of radius $15$.
\end{lemma}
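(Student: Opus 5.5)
The plan is to prove Lemma~\ref{lem:cap} by an angular confinement argument about the centre of the container. Place the disk of radius $15$ at the origin, and let $a$, $p$, $q$ be the centres of the circles of radii $10$, $4.9$, $4.8$. Containment gives $|a|\le 5$, $|p|\le 10.1$ and $|q|\le 10.2$. Disjointness gives $|p-a|\ge 14.9$, $|q-a|\ge 14.8$ and $|p-q|\ge 9.7$. Rotate so that $a=(-t,0)$ with $0\le t\le5$. The idea is that the big circle confines both small centres to narrow angular sectors on the side opposite $a$, and these sectors are too narrow to keep $p$ and $q$ at distance $9.7$.

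\textbf{Step 1 (sector bound for each small centre).} Write $p$ in polar form $(s,\theta)$, with $\theta$ measured from the positive $x$-axis. Then $|p-a|^2=s^2+t^2+2ts\cos\theta$.
\begin{itemize}
\item If $\cos\theta<0$, this is at most $s^2+t^2\le 102.01+25<14.9^2$, which is a contradiction.
\item If $\cos\theta\ge0$, the expression is increasing in both $s$ and $t$, so $14.9^2\le 10.1^2+25+2\cdot5\cdot10.1\cos\theta$. This gives $\cos\theta\ge 95/101$, hence $|\theta|\le\arccos(95/101)<20^\circ$.
\end{itemize}
The same computation for $q$, with $10.2$ and $14.8$, gives $\cos\theta_q\ge 90/102$, hence $|\theta_q|<28.1^\circ$. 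The triangle inequality also gives the radial bounds $|p|\ge 14.9-5=9.9$ and $|q|\ge 9.8$, though these turn out not to be needed below.

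\textbf{Step 2 (distance bound).} The angle $\gamma$ between $p$ and $q$ is therefore less than $48^\circ$, so $\cos\gamma>0.669$. We have $|p-q|^2=|p|^2+|q|^2-2|p||q|\cos\gamma$. Its partial derivative in $|p|$ is $2|p|-2|q|\cos\gamma$, which is positive because $|q|\cos\gamma<10.2\cdot0.75<9.9\le|p|$. Symmetrically, $2|q|-2|p|\cos\gamma>0$, since $|p|\cos\gamma<10.1\cdot0.75<9.8\le|q|$. So $|p-q|^2$ is maximized at $|p|=10.1$ and $|q|=10.2$, where
\[
|p-q|^2<102.01+104.04-2\cdot10.1\cdot10.2\cdot0.669\approx 68.2 .
\]
Thus $|p-q|<8.3<9.7$, contradicting disjointness of the two small circles.

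\textbf{Main obstacle.} The delicate point is Step 1. One must justify that the worst case has $s$ and $t$ at their extremes, which requires splitting on the sign of $\cos\theta$ as above. One must also keep enough numerical margin in the arccos estimates for the final inequality to survive rounding. The margin here is generous ($8.3$ against $9.7$), so crude decimal bounds on $\arccos$ suffice. These can be certified with exact rationals, in the style of the repository's Lean checks.
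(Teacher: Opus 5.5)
Your strategy is the paper's cap-confinement argument, written in polar rather than Cartesian form. The paper sets $u=-c_A/|c_A|$ and obtains $c_X\cdot u\ge 9.5$ and $c_Y\cdot u\ge 9.0$, with transverse parts at most $3.43$ and $4.8$. It then bounds $c_X\cdot c_Y$ from below directly. Your $\cos\theta\ge 95/101$ is the same ratio $9.5/10.1$.

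Step~1 is fine, but Step~2 contains an unjustified inequality. You have only a \emph{lower} bound on $\cos\gamma$, and nothing a priori prevents $\gamma$ from being near $0$. So $|q|\cos\gamma<10.2\cdot 0.75$ does not follow. The repair is to substitute the lower bound $c_0$ for $\cos\gamma$ before optimising over the radii. Since $|p||q|\ge 0$, you get $|p-q|^2\le x^2+y^2-2c_0xy$ with $x=|p|\in[9.9,10.1]$ and $y=|q|\in[9.8,10.2]$. This quadratic has positive partial derivatives on that box, because $10.2\,c_0<9.9$ and $10.1\,c_0<9.8$. Its maximum at $(10.1,10.2)$ then gives your bound $68.2$. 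The paper's axial/transverse decomposition avoids this optimisation, and the $\arccos$ estimates, entirely.

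Two smaller corrections.
\begin{itemize}
\item The stated bounds $20^\circ$ and $28.1^\circ$ only yield $\gamma<48.1^\circ$, not $48^\circ$. The true angles sum to about $47.9^\circ$, and even $48.1^\circ$ still leaves $|p-q|<8.3$.
\item Contrary to your remark, the radial bounds $|p|\ge 9.9$ and $|q|\ge 9.8$ are not dispensable: Step~2 uses them. Without them, the convex quadratic above attains $10.2^2$ on $[0,10.1]\times[0,10.2]$, which gives no contradiction with $9.7$.
\end{itemize}
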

\begin{proof}
Write $c_A,c_X,c_Y$ for the centers, measured from the pan center. Then
$|c_A|\le5$, $|c_X|\le10.1$, $|c_Y|\le10.2$, $|c_A-c_X|\ge14.9$,
$|c_A-c_Y|\ge14.8$, $|c_X-c_Y|\ge9.7$. From $|c_A|+|c_X|\ge14.9$,
$a:=|c_A|\in[4.8,5]$. Let $u=-c_A/|c_A|$. Then
$c_X\!\cdot\!u=\bigl(|c_X-c_A|^2-|c_X|^2-a^2\bigr)/(2a)
\ge(120-a^2)/(2a)\ge9.5$ (decreasing in $a$), so the transverse part of
$c_X$ has norm at most $\sqrt{10.1^2-9.5^2}\le3.43$; similarly
$c_Y\!\cdot\!u\ge(115-a^2)/(2a)\ge9.0$ with transverse norm at most
$\sqrt{10.2^2-9^2}=4.8$. Hence
$|c_X-c_Y|^2\le10.1^2+10.2^2-2(9.5\cdot9.0-3.43\cdot4.8)\le67.98$,
i.e.\ $|c_X-c_Y|\le8.25<9.7$, a contradiction.
\end{proof}

\begin{theorem}[Failure at $n=4$]\label{thm:n4}
Placement obliviousness fails for four rings: with $R=15$, $w=0.3$ and radii
$\{10,\,5,\,4.9,\,4.8\}$ (all four form the lex-max set, witnessed by the pan
pair $\{10,5\}$ at exact tangency and the hole pair $\{4.9,4.8\}$ filling the
hole $9.7$ exactly), best fit nests the $5$, forces the $4.9$ into the pan,
and blocks the $4.8$ everywhere by Lemma~\ref{lem:cap} and the exact two-circle
conditions; worst fit places all four (Figure~\ref{fig:n4}). (The worst-fit
run also uses that $\{10,5,z\}$ does not pack in the pan for $z\in\{4.9,4.8\}$:
the pan pair is diametrically rigid and the rescaled pocket of
Proposition~\ref{prop:S5} is $10\,b(1/2)=30/7\approx4.286<z$; the count of
four is in any case robust to either oracle answer.)
\end{theorem}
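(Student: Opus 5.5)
We have to check three things for the instance $R=15$, $w=0.3$, radii $10,5,4.9,4.8$: that all four rings are feasible (so they form the lex-max set), that best fit loses the $4.8$, and that worst fit places everything.

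\textbf{Feasibility.} The holes have radii $9.7$, $4.7$, $4.6$ and $4.5$. The witness places $10$ and $5$ in the pan, tangent along a diameter; this is legal by Lemma~\ref{lem:row}, since $10+5=15$. It places $4.9$ and $4.8$ inside the hole of the $10$; again by Lemma~\ref{lem:row}, since $4.9+4.8=9.7$. Both nestings are legal, because $4.9,4.8\le 10-w=9.7$. So the full set is feasible and equals $L$.

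\textbf{Best fit.} The $10$ can only go into the pan. When the $5$ arrives, the candidate containers are the pan and the hole of the $10$.
\begin{itemize}
\item The hole has capacity $9.7$, which is smaller than the pan's remaining inscribed radius $5$? No: capacity is defined as the radius of the largest inscribed ball of the container, so the pan has $15$ and the hole $9.7$.
\item Best fit therefore picks the hole and nests the $5$ there.
\end{itemize}
Next the $4.9$ arrives. The hole of the $10$ now contains the $5$, and $5+4.9>9.7$. The exact two-circle condition for a disk of radius $C$ is $a+b\le C$; it fails here, so the $4.9$ does not fit beside the $5$. The hole of the $5$ has radius $4.7<4.9$. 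The pan holds only the $10$, and $10+4.9\le15$, so the pan is the only option and the $4.9$ goes there.

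Finally the $4.8$ arrives, and we check every container.
\begin{itemize}
\item Hole of the $10$: $5+4.8=9.8>9.7$, so it fails by the two-circle condition.
\item Hole of the $5$: radius $4.7<4.8$.
\item Hole of the $4.9$: radius $4.6<4.8$.
\item Pan: the $4.8$ would have to join the $10$ and the $4.9$, which Lemma~\ref{lem:cap} forbids.
\end{itemize}
So best fit places three rings, not the lex-max four. This last step is the only nontrivial geometric input, and it is exactly what Lemma~\ref{lem:cap} supplies.

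\textbf{Worst fit.} Worst fit puts the $5$ in the pan; this is legal because $10+5\le15$ with tangency. For the $4.9$, the candidates are the hole of the $10$ (capacity $9.7$), the hole of the $5$ (capacity $4.7$, too small), and the pan.
\begin{itemize}
\item If the pan is offered, worst fit picks it, since the pan has the largest capacity $15$. But the pair $\{10,5\}$ is diametrically rigid at exact tangency, and the largest circle fitting in the remaining pocket has radius $30/7<4.9$, by the quoted computation from Proposition~\ref{prop:S5}. So the pan does not admit the $4.9$.
\item The $4.9$ therefore enters the hole of the $10$.
\end{itemize}
Then the $4.8$ goes into the same hole, since $4.9+4.8=9.7$, unless the pan is (wrongly) reported feasible by the oracle. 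That alternative would still place all four rings, which is the robustness remark in the statement.

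Hence the two rules place different sets, and placement obliviousness fails at $n=4$. The main care point is the tangency and rigidity argument in the worst-fit run. Since the count is robust either way, I would cite the pocket bound rather than reprove it.
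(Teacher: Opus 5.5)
Your proof is correct and follows the same route as the paper: the row-lemma witness, the best-fit trace closed by the exact two-circle conditions and Lemma~\ref{lem:cap}, and the worst-fit trace using the rigid diametral pocket $10\,b(1/2)=30/7<4.8$. The only thing to tidy is the self-correcting aside in the best-fit step; capacity is the static inscribed radius of the container, so the comparison is simply $9.7<15$.
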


\begin{figure}[ht]
\centering
\includegraphics[width=0.95\textwidth]{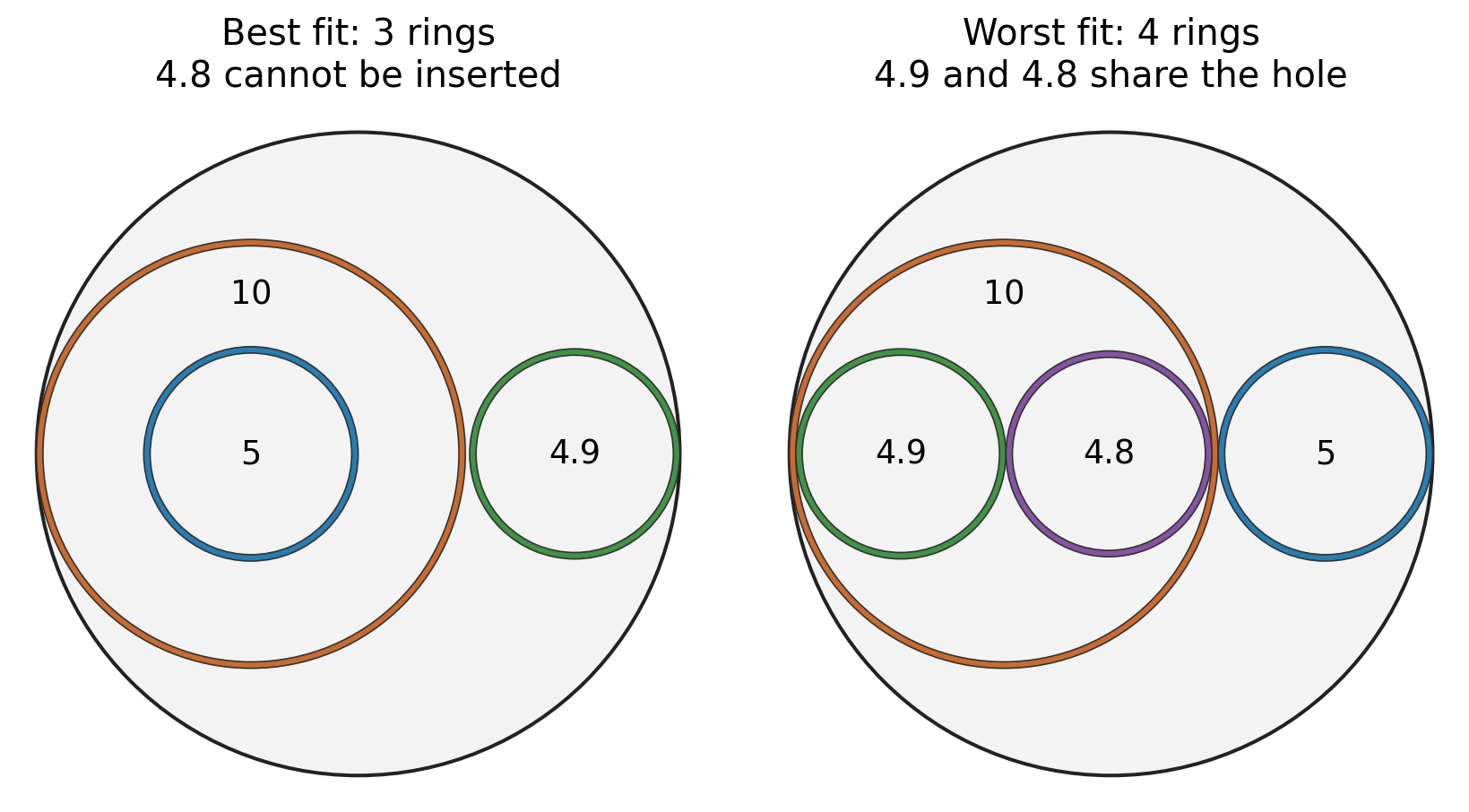}
\caption{The $n=4$ counterexample of Theorem~\ref{thm:n4}: best fit (left)
nests the $5$ and loses the $4.8$; the witness (right), realized by worst fit,
places all four rings.}
\label{fig:n4}
\end{figure}

\begin{theorem}[Twin instances; no state-based rule]\label{thm:twins}
Let $R=15$, $r_1=10$, $r_2=5$, $w=0.505$ (hole $9.495$), and
\[
I_1=\{10,\,5,\,4.99,\,4.50\},\qquad I_2=\{10,\,5,\,4.76,\,4.74\}.
\]
In $I_1$ the correct placement of the $5$ is the pan (best fit fails, worst
fit succeeds); in $I_2$ it is the hole (worst fit fails, best fit succeeds).
At the decisive step the observable state -- containers, capacities, occupants,
incoming ring, $R$ and $w$ -- is identical in $I_1$ and $I_2$. Consequently no
deterministic placement rule that is a function of the state attains the
lex-max set on all instances, and for every randomized rule some instance is
failed with probability at least $1/2$.
\end{theorem}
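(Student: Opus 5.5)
We prove the statement by direct verification on the two instances, organized around the decisive step: the placement of the $5$ after the $10$ is in the pan. In both $I_1$ and $I_2$ the first ring $10$ goes to the pan (no other container exists). At the second step the state is the same in both instances: pan of radius $15$ containing the $10$ at some position, an empty hole of radius $9.495$, incoming ring $5$, and the same $R,w$. The $5$ fits in both containers. In the pan it fits beside the $10$ exactly at diametral tangency, since $10+5=15$; in the hole it fits since $5\le 9.495$. So the observable state at this step is literally identical, and the proof reduces to showing that the two choices lead to opposite outcomes in the two instances.

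For $I_1=\{10,5,4.99,4.50\}$, we show that all four rings form the lex-max set via the pan choice, and that the hole choice loses a ring.
\begin{itemize}
\item \emph{Pan choice.} The pan pair $\{10,5\}$ is diametrically rigid, and the hole $9.495$ receives $4.99$ and $4.50$ in a row, since $4.99+4.50=9.49\le9.495$ (Lemma~\ref{lem:row}).
\item \emph{Hole choice.} After the $5$ is nested in the hole, the $4.99$ cannot join it there, because $5+4.99>9.495$. So the $4.99$ goes to the pan beside the $10$, where it fits since $10+4.99\le15$.
\item The $4.50$ must then be blocked in both containers. In the hole, $5+4.50=9.50>9.495$, so it is blocked by the exact two-circle condition. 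In the pan, we need $\{10,4.99,4.50\}$ not to pack in radius $15$. This is a cap-confinement computation as in Lemma~\ref{lem:cap}: $|c_A|\le5$, $|c_X|\le10.01$, $|c_Y|\le10.5$, with the same projection bound onto $u=-c_A/|c_A|$, giving $|c_X-c_Y|<9.49$.
\end{itemize}

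For $I_2=\{10,5,4.76,4.74\}$, the hole choice wins.
\begin{itemize}
\item \emph{Hole choice.} With the $5$ in the hole, both the $4.76$ and the $4.74$ can go in the pan. Here the confinement must be shown to be \emph{feasible}: the triple $\{10,4.76,4.74\}$ packs in radius $15$. We verify this by an explicit construction, putting the $10$ tangent to the wall and placing the two smaller disks symmetrically in the crescent, each tangent to the $10$ and the wall, and checking that their separation is at least $9.50$.
\item \emph{Pan choice.} With the $5$ in the pan, the $4.76$ goes to the hole, since $4.76\le9.495$, or is tested there; it cannot go in the pan because $\{10,5,z\}$ does not pack for $z>30/7$ (Theorem~\ref{thm:n4}'s remark).
\item The $4.74$ is then blocked everywhere. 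The pan is rigidly blocked in the same way, and the hole is blocked because $4.76+4.74=9.50>9.495$.
\item Since all four rings are feasible, the lex-max set contains all four, and the pan choice fails.
\end{itemize}
The same reasoning shows best fit (hole, the smaller capacity) succeeds and worst fit fails on $I_2$, with the roles reversed on $I_1$.

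\textbf{Main obstacle.} The hardest step is the pair of pan-triple facts. One is the infeasibility of $\{10,4.99,4.50\}$, where the margin is thin. The other is the feasibility of $\{10,4.76,4.74\}$, which needs explicit coordinates and a tight distance check. I would first try the Lemma~\ref{lem:cap} bounds verbatim for the infeasibility. If the slack is too small, I would compute exactly the optimal crescent configuration: the two disks tangent to the wall and the $10$, on opposite sides, which maximizes their separation. I would then compare that separation to the sum of their radii in both instances.

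Finally, the rule-theoretic consequence follows from the identical state.
\begin{itemize}
\item A deterministic state-based rule makes the same choice in $I_1$ and $I_2$, so it fails one of them.
\item A randomized rule puts probability $p$ on the pan, so it fails $I_1$ with probability $1-p$ and $I_2$ with probability $p$. Since $\max(p,1-p)\ge1/2$, some instance is failed with probability at least $1/2$.
\item Later steps cannot repair the error, because each failure above holds for every continuation of the rule.
\end{itemize}
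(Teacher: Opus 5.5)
Your proposal is correct and follows the paper's proof essentially step for step. It uses the identical state after the $10$ is placed, two-circle exact checks, and cap confinement for $\{10,4.99,4.50\}$; that confinement gives $|c_X-c_Y|\le 8.80<9.49$, so the margin is comfortable rather than thin. It also uses the rigid pocket $30/7$ for $\{10,5,z\}$, the wall-tangent crescent configuration for $\{10,4.76,4.74\}$ (exactly the paper's, with separation $>10.6$), and the $\max(p,1-p)\ge 1/2$ argument.

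The one thing to tighten is that your forcing claims examine only the pan and the hole of the $10$. The holes of the already placed $5$, $4.99$ and $4.76$ (radii $4.495$, $4.485$, $4.255$) are also containers, and they must be excluded by capacity; for example, $4.50>4.495$ is what keeps the $4.50$ out of the $5$ in $I_1$, as the paper notes under ``nesting by capacity''.
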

\begin{proof}[Proof ingredients]
Feasibilities are two-circle-exact except for two triples, both settled
rigorously: $\{10,4.99,4.50\}$ does not pack in the disk of radius $15$ (cap
confinement as in Lemma~\ref{lem:cap} yields $|c_X-c_Y|\le8.80<9.49$), and the
pan pair $\{10,5\}$ is diametrically rigid, whence no third circle of radius
$\ge4.7$ fits (confinement again: for $z=4.76$,
$|c_z-c_B|\le\sqrt{204.86-20\cdot8.8}\le5.38<9.76$; in general
$|c_z-c_B|^2\le z^2-130z+625<(5+z)^2$ for every $z>30/7$, so the claim holds
for all $z\ge4.7$). Packability of
$\{10,4.76,4.74\}$ in radius $15$ is exhibited by an explicit boundary
configuration (angles $180^\circ$, $30.8^\circ$, $-32.0^\circ$; all pairwise
constraints verified). The key phenomenon enabling the twins is that
triple-packability is \emph{not monotone in the sum}: the unbalanced pair
$\{4.99,4.50\}$ (sum $9.49$) fails beside the $10$ while the balanced pair
$\{4.76,4.74\}$ (sum $9.50$) succeeds.
The four executions, every step after the decisive one forced (a single
feasible container or none): in $I_1$, nesting the $5$ sends the $4.99$ to the
pan ($5+4.99>9.495$) and blocks the $4.50$ everywhere (pan by the triple
fact, hole by $5+4.50>9.495$, nesting by capacity) -- three rings -- while
the $5$ in the pan sends both $4.99$ and $4.50$ to the hole
($4.99+4.50=9.49\le9.495$) -- all four; in $I_2$, nesting the $5$ sends
$4.76$ and $4.74$ to the pan (the explicit boundary configuration) -- all
four -- while the $5$ in the pan blocks the pan for both ($\ge4.7$), nests
the $4.76$ alone and blocks the $4.74$ ($4.76+4.74=9.50>9.495$) -- three
rings. See Appendix~\ref{app:verif}.
\end{proof}

\begin{remark}
The obstruction is informational, not existential: following any witness of
the lex-max set is itself a legal greedy execution (a ring outside the lex-max
set is never admissible), so some execution always succeeds; what cannot exist
is a state-based rule that finds it. For rules with access to the full input
the question becomes one of computational complexity and remains open
(Section~\ref{sec:open}).
\end{remark}

\section{Thresholds: the Tribonacci floor and the golden counterexample}
\label{sec:threshold}

\begin{theorem}[Additive model: threshold exactly $1$]\label{thm:additive}
Replace sibling feasibility by its additive surrogate (a set of siblings fits
iff their radii sum to at most the container capacity; exact for at most two
circles and sufficient in general by Lemma~\ref{lem:row}). Then the
\emph{universal} threshold for placement obliviousness is exactly $1$:
obliviousness holds for \emph{all} instances with $\rho\le1$ (the proof of
Theorem~\ref{thm:oblivious} applies verbatim), while for every
$\varepsilon>0$ \emph{some} instance with $\rho\le1+\varepsilon$ fails --
individual instances with larger $\rho$ may of course still succeed.
Indeed, fix $r_1$, set $s=r_1/2$ and $R=r_1+s=\tfrac32r_1$, pick any
$w\in\bigl(\tfrac{r_1}4,\,\tfrac{r_1}2-\delta\bigr)$, and take $r_2=s$
and the exact pair
$r_3=\tfrac{r_1}4+\tfrac{2\delta}3$, $r_4=\tfrac{r_1}4+\tfrac\delta3$
(so $r_3+r_4=s+\delta$ and $r_4<r_3<r_2<r_1$ strictly), with any
$0<\delta<\min(\tfrac{r_1}4,\ \tfrac{\varepsilon r_1}2)$: the witness
fills the pan with $\{r_1,r_2\}$ and
the hole of $r_1$ with the pair ($r_3+r_4=s+\delta\le r_1-w$), while
best fit nests $r_2$ ($r_2=s\le r_1-w$ since $w\le r_1/2$), sends $r_3$
to the pan, and blocks $r_4$ additively ($r_1+r_3+r_4>R$ since
$r_3+r_4>s$; $r_2+r_4>r_1-w$ since $w>r_1/4$; $r_4>r_2-w$ likewise).
The three tail ratios are $(2s+\delta)/r_1=1+\delta/r_1$,
$(s+\delta)/s=1+2\delta/r_1$ and $r_4/r_3<1$, so
$\rho=1+2\delta/r_1\le1+\varepsilon$; verified instances reach
$\rho=1.03$ and $1.02$ (script \texttt{umbral.py}).
\end{theorem}

The rigid four-ring counterexample family has its floor at the Tribonacci
constant. An earlier version of this statement relied on a tangency
idealization ($r_3\to r_2$, $w\to0$); the following theorem removes it
entirely: the rigid configuration is not assumed but \emph{emerges} as the
extremal case of a concavity argument.

\begin{theorem}[Rigid-subfamily Tribonacci floor, no idealization]
\label{thm:rigidfloor}
Normalize $r_1=1$ and write $t=r_2$, $p=r_3$, $q=r_4$, $\omega=w>0$.
Let $\mathcal F$ be the family of four-ring instances with
\textup{(F1)} diametral pan $R=1+t$,
\textup{(F2)} $p+q\le 1-\omega$ (the witness pair fits the hole of $r_1$), and
\textup{(F3)} the triple $\{1,p,q\}$ does not pack in the disk of radius
$1+t$. Then every instance of $\mathcal F$ satisfies $\rho>T$, strictly, for
every width $\omega>0$ and all strict radii $q<p<t<1$; moreover
$\inf_{\mathcal F}\rho=T$ and the infimum is not attained. The family is
nonempty only for $t<t^{*}=1/T$, and it contains the $n=4$ counterexample and
the twin $I_1$ of Theorem~\ref{thm:twins}.
\end{theorem}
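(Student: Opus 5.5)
The plan is to reduce (F3) to a single scalar inequality via the exact pocket radius, and then minimize $\rho$ over what remains. The only tails that matter are
\[
\tau_1=t+p+q,\qquad \tau_2=\frac{p+q}{t},\qquad \tau_3=\frac{q}{p}<1,
\]
so $\rho=\max(\tau_1,\tau_2)$ whenever this exceeds $1$.

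\textbf{Step 1 (pocket radius).} I would first compute the largest circle that fits beside the unit circle and a circle of radius $t$ when both are diametrically placed in the disk of radius $1+t$. This is the rigid pocket already used in Theorem~\ref{thm:n4}. Descartes' theorem, or a direct computation with the two tangencies, gives
\[
b(t)=\frac{t(1+t)}{1+t+t^2}.
\]
The check $b(1/2)=3/7$ matches $10\,b(1/2)=30/7$ from that theorem.

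\textbf{Step 2 (reading off (F3)).} Fix $t$ and $p<t$. Let $f_t(p)$ be the largest $q$ for which $\{1,p,q\}$ packs in radius $1+t$. Then (F3) says exactly $q>f_t(p)$.

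Next I would show that $p+f_t(p)$ is minimized at the endpoint $p\to t$. Parametrize the configuration by the offset of the unit circle's center and the positions of $p,q$ in the crescent around it, and argue concavity of the relevant boundary. The intended extremal picture is:
\begin{itemize}
\item the unit circle is pushed to the wall;
\item $p$ becomes the diametric partner, so $f_t(p)\to b(t)$ as $p\to t$;
\item for smaller $p$, the sum $p+f_t(p)$ is strictly larger.
\end{itemize}
Combining this with monotonicity of $\tau_1$ and $\tau_2$ in $p+q$, every instance of $\mathcal F$ satisfies $p+q>t+b(t)$, strictly because $p<t$ is strict.

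This step, the concavity and endpoint claim, is the main obstacle. It is where the rigid configuration must emerge rather than be assumed. I would first try writing $f_t$ implicitly through the three tangency equations and proving $\frac{d}{dp}\bigl(p+f_t(p)\bigr)<0$ on $(0,t)$. If that is messy, I would fall back on a cap-confinement argument in the style of Lemma~\ref{lem:cap}, applied along a one-parameter family.

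\textbf{Step 3 (bounding the tails).} From Step 2,
\[
\tau_2>1+\frac{b(t)}{t}=\frac{2x^2+2x+1}{x^2+x+1},\qquad x=1/t,
\]
and this expression is decreasing in $x$. We also get $\tau_1>2t+b(t)$, which is increasing in $t$.

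At $x=T$, using $T^3=T^2+T+1$, the bound on $\tau_2$ becomes
\[
\frac{2T^2+2T+1}{T^3}=1+\frac1T+\frac1{T^2}=T.
\]
One checks that $2t+b(t)\le T$ there. Hence for $t\le 1/T$ the bound $\tau_2>T$ holds. For $t$ in the range where $(F2)$ and $(F3)$ are compatible, the constraint $p+q\le 1-\omega$ together with $p+q>t+b(t)$ forces $t+b(t)<1$. Checking this against the $\tau_2$ bound yields the emptiness statement for $t\ge t^{*}=1/T$.

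\textbf{Step 4 (infimum and membership).} For sharpness, I would take $t\uparrow 1/T$, $p\uparrow t$, $q\downarrow b(t)$ and $\omega\downarrow 0$, keeping (F2) valid since $t+b(t)<1$. Along this sequence $\rho\to T$, so $\inf_{\mathcal F}\rho=T$. The infimum is not attained because of the strict inequalities above.

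Finally, I would verify (F1)–(F3) numerically for the $n=4$ counterexample and for the twin $I_1$, with (F3) supplied by Lemma~\ref{lem:cap} and the triple fact in Theorem~\ref{thm:twins}.
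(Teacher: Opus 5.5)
Your skeleton is right (the pocket $b(t)$, then $t+b(t)<p+q\le1-\omega$, then $\rho\ge(p+q)/t>L(t)>L(t^{*})=T$ with $L(t)=1+(1+t)/(1+t+t^2)$). But Step~2, which carries all the content, is only announced, and this is a genuine gap. The route you sketch for it is also harder than necessary and unjustified: describing the exact frontier $f_t(p)$ ``through the three tangency equations'' presupposes which configuration is extremal for three circles in a disk. That is a necessity statement you would have to prove.

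Since (F3) reads $q>f_t(p)$, you only need a \emph{lower} bound on $f_t$, and an explicit construction supplies it. Place all three circles wall-tangent, with the unit circle in the middle. By the half-angle identity $\sin^2(\theta(a,b)/2)=f(a)f(b)$, with $f(x)=x/(R-x)$, the packing condition $\theta(1,p)+\theta(1,q)+\theta(p,q)\le2\pi$ is implied by $\psi(p)+\psi(q)\ge\tau_t$, where $\psi(x)=\sqrt{(t-x)/x}$ and $\tau_t=t/\sqrt{1+t}$. So (F3) forces $\psi(p)+\psi(q)<\tau_t$. In the coordinates $\alpha=\psi(p)$, $\beta=\psi(q)$ one has $p+q=U(\alpha)+U(\beta)$ with $U(z)=t/(1+z^2)$, which is concave on $[0,1/\sqrt3]$. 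Hence for $\tau_t\le1/\sqrt3$ the infimum of $p+q$ is the corner value $U(0)+U(\tau_t)=t+b(t)$. It is not attained because (F3) is strict; the strictness comes from there, not from $p<t$ as you say. This covers only $t\le(1+\sqrt{13})/6$, so larger $t$ need a separate argument. For instance, $\psi(b(t))=\tau_t$ gives $p,q>b(t)$, hence $p+q>2b(t)\ge1$ once $t\ge1/\varphi$, contradicting (F2).

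Step~4 has a second gap: the approximating instances must satisfy (F3) with $p<t$ strictly, and you never check this. Two ingredients are needed.
\begin{itemize}
\item[(a)] Exactness of the pocket at $p=t$. Your Step~1 gives this only if you also show the diametral placement is forced: in the disk of radius $1+t$, the triangle inequality forces the pair $\{1,t\}$ to be diametral, after which any third disjoint circle has radius at most $b(t)$.
\item[(b)] Propagation of infeasibility to $p<t$. The packable set is closed (by compactness of witness centers), so for a \emph{fixed} $q>b(t)$ non-packability at $(t,t,q)$ persists for all $p\in(t-\delta_0,t]$.
\end{itemize}
Hence $q$ must be chosen first and $p$ afterwards. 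An uncoupled limit $p\uparrow t$, $q\downarrow b(t)$ can leave $\mathcal F$: for fixed $p<t$ and $q$ close enough to $b(t)$, $\psi(p)+\psi(q)\ge\tau_t$, so the triple packs.

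Finally, in Step~3, $1+b(t)/t=2-1/(x^2+x+1)$ is \emph{increasing} in $x=1/t$, not decreasing. With the direction you state, $x\ge T$ would give the inequality the wrong way.
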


\begin{proof}[Proof sketch (complete proof in Appendix~\ref{app:rigidproof})]
For two circles of radii $a,b$ internally tangent to the wall of a disk of
radius $R$, the minimal angular separation $\theta(a,b)$ obeys the half-angle
identity $\sin^2\bigl(\theta/2\bigr)=f(a)f(b)$ with $f(x)=x/(R-x)$.
Placing the triple $\{1,p,q\}$ wall-tangent with the unit ring in the middle
packs it whenever $\theta(1,p)+\theta(1,q)+\theta(p,q)\le2\pi$ (the two
non-adjacent arguments are separated by monotonicity of $\theta$), and this
angular condition reduces \emph{algebraically} to
\[
\psi(p)+\psi(q)\;\ge\;\tau_t,\qquad
\psi(x)=\sqrt{\tfrac{t-x}{x}},\qquad \tau_t=\tfrac{t}{\sqrt{1+t}} ,
\]
(an algebraic \emph{sufficient} condition for packing, which is all the
contrapositive needs: (F3) forces $\psi(p)+\psi(q)<\tau_t$). In the
coordinates $\alpha=\psi(p)$, $\beta=\psi(q)$ the sum $p+q=U(\alpha)+U(\beta)$
with $U(z)=t/(1+z^2)$, and $U$ is concave on $[0,\tau_t]$ exactly when
$\tau_t\le1/\sqrt3$, i.e.\ $t\le(1+\sqrt{13})/6=0.7676\dots$, which covers the
relevant range with room to spare; hence the infimum of $p+q$ over the
blocked (open) region sits at the \emph{corner} of its closure, $p=t$,
$q=b(t)$, where $b(t)=t(1+t)/(1+t+t^2)$ is the Descartes pocket of the rigid
pair. This gives the two pressures $t+b(t)<p+q\le1-\omega<1$ (the first one
strict, whence the strict floor and the non-attainment), whose compatibility
is precisely the cubic $t^3+t^2+t<1$, and then
$\rho\ge(p+q)/t>1+(1+t)/(1+t+t^2)=:L(t)$ with $L$ strictly decreasing and
$L(t^{*})=T$. Exactness of the pocket at the rigid pair (needed only for the
direction $\inf\le T$) is a rigidity computation: the constraint left-hand
side factors exactly as $4(t^2+t+1)\bigl(b(t)-q\bigr)$; and the approximating
family lives genuinely inside $\mathcal F$ by a closure-and-monotonicity
lemma for the packing region (compactness of witnesses), which shows the
blocked interval in $p$ is half-open with an attained endpoint.
\end{proof}

\begin{remark}[Slack pans]\label{rem:slack}
The hypothesis $R=r_1+r_2$ can be relaxed to $R\ge r_1+r_2$ at no cost:
non-packability is antitone in the container (a packing in a smaller disk is
a packing in a larger one after translation), so (F3) at radius $R$ implies
(F3) at radius $r_1+r_2$ and Theorem~\ref{thm:rigidfloor} applies verbatim;
the infimum remains $T$. Slack in the pan never helps the blocker --- what it
changes is the set of possible \emph{occupants}, the subject of
Section~\ref{sec:generic}.
\end{remark}

Define the \emph{geometric threshold} as
\[
\tau\;:=\;\inf\bigl\{\rho(I)\ :\ \text{some greedy execution fails to
place the lex-max set on }I\bigr\},
\]
so that placement obliviousness holds for every instance with
$\rho<\tau$ and fails at values of $\rho$ arbitrarily close to $\tau$
(possibly at $\tau$ itself, if the infimum is attained); ``floor''
always refers to an infimum of $\rho$ over a family of blocked
exchanges. The
natural conjecture -- which was tenable until late in this
project -- is that $\tau=T$. It is \emph{false}. The witness capacity that lifts the floor from $\varphi$ to
$T$ in Theorem~\ref{thm:rigidfloor} exists only when the greedy's
container for the pivotal ring is a \emph{hole}; when it is the pan
itself, the floor drops to the first rung of the ladder, the Descartes
pocket, and the golden ratio appears:

\begin{theorem}[The golden family: the threshold is at most $\varphi$]
\label{thm:golden}
For every $\omega\in(1-\varphi/2,\ \varphi-1)=(0.1910,0.6180)$ and every
\[
0<\varepsilon<\varepsilon^\ast:=
\min\Bigl(\tfrac{T-\varphi}{3},\ \tfrac{\varphi/2-\omega}{2},\
\tfrac{4\sqrt5-8-\varphi/2}{2}\Bigr)
\]
(all three bounds positive on the window; the middle one enforces
$s_1\le\varphi-\omega$), the instance
\[
R=\varphi+1,\qquad w=\omega,\qquad
r=\bigl\{\varphi,\ 1,\ \tfrac\varphi2+2\varepsilon,\
\tfrac\varphi2+\varepsilon\bigr\}
\]
has $\rho=\varphi+3\varepsilon<T$, its lex-max set is all four rings, and
the descending greedy under worst fit places only three: placement
obliviousness fails at $\rho=\varphi+3\varepsilon$.
\end{theorem}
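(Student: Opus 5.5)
The proof splits into three checks: the tail computation for $\rho$, a feasibility witness for all four rings (so the lex-max set is the whole inventory), and a trace of worst fit showing it stops at three. Throughout I write $s_1=\tfrac\varphi2+2\varepsilon$ and $s_2=\tfrac\varphi2+\varepsilon$, and I use $\varphi^2=\varphi+1$ and $1/\varphi^2=2-\varphi$ freely.

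\emph{Tails.} The tail of $\varphi$ is $(1+\varphi+3\varepsilon)/\varphi=\varphi+3\varepsilon/\varphi$. The tail of $1$ is $s_1+s_2=\varphi+3\varepsilon$. The tail of $s_1$ is $s_2/s_1<1$. Hence $\rho=\varphi+3\varepsilon$, and the first bound on $\varepsilon^\ast$ gives $\rho<T$.

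\emph{Witness.} I place the $1$ in the hole of $\varphi$, which is legal because $\omega\le\varphi-1$. I keep $\{\varphi,s_1,s_2\}$ as root siblings in the pan of radius $R=\varphi+1=\varphi^2$. To certify that this triple packs, I use the wall-tangent angular criterion from the proof of Theorem~\ref{thm:rigidfloor}, with $f(x)=x/(R-x)$ and $\sin^2(\theta/2)=f(a)f(b)$. By monotonicity of $\theta$ it suffices to pack the symmetric triple $\{\varphi,s,s\}$ with $s=s_1$. The plan is as follows:
\begin{itemize}
\item Solve $2\theta(\varphi,s)+\theta(s,s)=2\pi$ for the critical $s^\ast$. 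I expect the third term of $\varepsilon^\ast$ to encode exactly $s^\ast=4\sqrt5-8\approx0.944$.
\item Alternatively, verify directly that the three circles are mutually tangent and wall-tangent at $s^\ast$, by placing the two equal circles symmetrically about the axis through the centre of $\varphi$.
\item Conclude that $s_1\le s^\ast$, which is what the third bound enforces, makes the triple pack. Shrinking $s_1$ to $s_2$ preserves feasibility by downward closure and the row lemma.
\end{itemize}
This is the step I expect to be the main obstacle: confirming the closed form $4\sqrt5-8$ of the critical symmetric radius, and making sure the criterion is applied with the unit-free radii $\varphi,s,s$ and not the normalized ones of Theorem~\ref{thm:rigidfloor}. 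If the half-angle algebra is messy, I would instead exhibit explicit centres at $s^\ast$ and check the six distance constraints exactly in $\mathbb{Q}(\sqrt5)$.

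\emph{Failure of worst fit.} First, $\varphi$ goes to the pan. Next, the $1$ has two options, the pan (capacity $R$) and the hole of $\varphi$ (capacity $\varphi-\omega$); worst fit chooses the pan. The pair $\{\varphi,1\}$ then sits diametrically and rigidly, since $R=\varphi+1$. Its Descartes pocket is the rescaled $b(t)$ of Theorem~\ref{thm:rigidfloor} with $t=1/\varphi$, namely
\[
\varphi\cdot\frac{t(1+t)}{1+t+t^2}=\frac{\varphi}{\varphi+2-\varphi}=\frac\varphi2 .
\]
By the confinement argument already used for Theorem~\ref{thm:twins}, no circle of radius $>\varphi/2$ fits beside the rigid pair.

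Now consider $s_1$. It is blocked in the pan, because $s_1>\varphi/2$. It is blocked in the hole of $1$, because $1-\omega<\varphi/2$ exactly when $\omega>1-\varphi/2$. It fits in the hole of $\varphi$, because $s_1\le\varphi-\omega$ by the middle bound. So $s_1$ is nested there; this step is forced, with no choice left to the rule.

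Finally consider $s_2$. The pan is blocked, again by the pocket bound. The hole of $1$ is blocked as before. The hole of $\varphi$ is blocked, because the additive condition is exact for two circles and $s_1+s_2=\varphi+3\varepsilon>\varphi-\omega$. The hole of $s_1$ is blocked, because $s_1-\omega<s_2$ when $\omega>\varepsilon$, which holds on the window. So worst fit places three rings while the lex-max set has four.
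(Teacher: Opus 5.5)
Your proposal is correct and follows the paper's proof step for step: the tail computation, the symmetric-trio corona certificate in the pan with the $1$ nested in the hole of $\varphi$, and the forced worst-fit trace against the rigid pocket $\varphi/2$, which is exactly the rescaled necessity of Proposition~\ref{prop:S5}. The step you flag as the obstacle does close: since $f(\varphi)=\varphi$, the critical condition $\sin\bigl(\theta(s,s)/2\bigr)=\sin\theta(\varphi,s)$ becomes $f(s)(1+4\varphi^2)=4\varphi$, i.e.\ $s(2\varphi+1)^2=4\varphi^3$, and $2\varphi+1=\varphi^3$ gives $s^\ast=4/\varphi^3=4(\sqrt5-2)$.
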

\begin{proof}
Write $s_1=\tfrac\varphi2+2\varepsilon$, $s_2=\tfrac\varphi2+\varepsilon$.
Every feasibility below is exact. \emph{The value of $\rho$}: the tails
are $(1+s_1+s_2)/\varphi=\varphi+3\varepsilon/\varphi$ (by the golden
identity $(1+\varphi)/\varphi=\varphi$), $s_1+s_2=\varphi+3\varepsilon$
(dominant) and $s_2/s_1<1$, and $\varphi+3\varepsilon<T$ for
$\varepsilon<(T-\varphi)/3$. \emph{The witness}: place $1$ in the hole of
$\varphi$ (capacity $\varphi-\omega\ge1\iff\omega\le\varphi-1$) and
$\{\varphi,s_1,s_2\}$ as a corona in the pan; the corona exists by an
\emph{exact} certificate. By monotonicity of $\theta$ it suffices that
the symmetric trio $\{\varphi,\sigma,\sigma\}$ with $\sigma=s_1$ has
angular sum below $2\pi$, i.e.\ $2A+B<\pi$ with
$A=\theta(\varphi,\sigma)/2$, $B=\theta(\sigma,\sigma)/2$. Here
$f(\varphi)=\varphi$, and when $2A>\pi/2$ (otherwise the claim is
immediate from $B<\pi/2$),
\[
2A+B<\pi\iff\sin B<\sin2A
\iff f(\sigma)^2<4\varphi f(\sigma)\bigl(1-\varphi f(\sigma)\bigr)
\iff f(\sigma)\,(1+4\varphi^2)<4\varphi ,
\]
which with $f(\sigma)=\sigma/(\varphi^2-\sigma)$,
$8\varphi+5=(2+\sqrt5)^2$ and $\varphi^3=2+\sqrt5$ reduces to
$\sigma(8\varphi+5)<4\varphi^3$, i.e.\
$\sigma<4(\sqrt5-2)=0.9442\dots$; this holds since
$s_1=\varphi/2+2\varepsilon<4(\sqrt5-2)$ by the third bound in
$\varepsilon^\ast$. (At $\varepsilon=0$ the certificate is the exact
identity $4\varphi(\sqrt5-\varphi)=4>1$.) So all four rings are
placeable.
\emph{The greedy under worst fit}: $\varphi\to$ pan; the ring $1$ fits
both the pan ($\varphi+1\le R$, exact tangency, as in
Theorem~\ref{thm:n4}) and the hole of $\varphi$, and worst fit -- the
container of largest capacity, $R=\varphi+1>\varphi-\omega$ -- chooses
the \emph{pan}. Now $\{\varphi,1\}$ fills the pan diametrally: the pair
is rigid and, by the exact necessity of Proposition~\ref{prop:S5}
(rescaled), any third circle has radius at most
$b_2(\varphi,1)=\varphi/2$ -- the golden pocket identity. Hence neither
$s_1$ nor $s_2$ fits the pan; the hole of $\varphi$ admits one of them
but not both ($s_1+s_2=\varphi+3\varepsilon>\varphi-\omega$, two-circle
exact); the hole of the ring $1$ admits neither
($s_2>1-\omega\iff\omega>1-\tfrac\varphi2-\varepsilon$); and
$s_2$ cannot nest in $s_1$ (nesting would require
$\omega\le s_1-s_2=\varepsilon$, while
$\omega>1-\tfrac\varphi2>\varepsilon^\ast$); and $s_1$ fits alone in the hole of
$\varphi$ ($s_1\le\varphi-\omega$, via $\varepsilon^\ast$), which is
where the greedy puts it before $s_2$ is stranded. An exhaustive
placement tree confirms the count: three rings with $1$ in the pan,
four with $1$ in the hole (\texttt{code/aureo.py}).
\end{proof}

\begin{corollary}[The family realizes every $\rho\in(\varphi,T)$]
\label{cor:goldencover}
For $\rho_0\in(\varphi,T)$ fix the width $\omega=0.3$ (any value in
the window works) and take $s_2=\varphi/2+\varepsilon$, $s_1=s_2+\eta$
with $2\varepsilon+\eta=\rho_0-\varphi$ and $0<\eta<\rho_0-\varphi$
small. The dominant tail is $s_1+s_2=\rho_0$ exactly. All walls hold
explicitly: $s_1<1$ and the corona certificate $s_1<4(\sqrt5-2)$, since
$s_1=\varphi/2+\tfrac{\rho_0-\varphi}{2}+\tfrac\eta2
<\varphi/2+\tfrac{T-\varphi}{2}+\tfrac\eta2=0.9196\ldots+\tfrac\eta2
<0.9443$ for $\eta<0.049$; $s_2>\varphi/2=0.809>0.7=1-\omega$ ($H_m$
blocked); $s_1+s_2=\rho_0>\varphi>\varphi-\omega$ (the hole of
$\varphi$ rejects the pair); $s_2>s_1-\omega$ (their gap is
$\eta<0.3=\omega$, so no nesting); and $s_1\le\varphi-\omega=1.318$
($s_1$ fits alone in the hole, where the greedy strands it). \qed
\end{corollary}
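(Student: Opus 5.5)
The plan is to reuse the instance of Theorem~\ref{thm:golden} with its parameters reparametrized so that the dominant tail equals $\rho_0$ exactly. Then I recheck, one by one, the walls that proof used. The key observation is that the proof of Theorem~\ref{thm:golden} never used the specific split $s_1=\varphi/2+2\varepsilon$, $s_2=\varphi/2+\varepsilon$. It used only inequalities on $s_1,s_2$, which I list as I recheck them.

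\textbf{Step 1: the value of $\rho$.} With $s_1+s_2=\varphi+2\varepsilon+\eta=\rho_0$, the tail of the ring $1$ is $\rho_0$. The tail of $\varphi$ is $(1+\rho_0)/\varphi$. Using $1+\varphi=\varphi^2$, this equals $\varphi+(\rho_0-\varphi)/\varphi<\rho_0$. The tail of $s_1$ is $s_2/s_1<1$. So $\rho=\rho_0$.

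\textbf{Step 2: the witness.} Place $1$ in the hole of $\varphi$, which needs $\omega\le\varphi-1$, true at $\omega=0.3$. Place $\{\varphi,s_1,s_2\}$ as a corona in the pan. By monotonicity of $\theta$ this reduces to the symmetric trio with $\sigma=s_1$, so the exact certificate from the golden proof requires only $s_1<4(\sqrt5-2)$. Writing $s_1=\varphi/2+(\rho_0-\varphi)/2+\eta/2$ and using $\rho_0<T$ bounds $s_1$ by $T/2+\eta/2$. Since $T/2\approx0.9196$, a bound $\eta<0.049$ suffices. The same bound gives $s_1<1$, so the radii remain strictly decreasing.

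\textbf{Step 3: the greedy under worst fit.} Worst fit places $\varphi$ in the pan and $1$ in the pan, since the pan has the larger capacity. The pair $\{\varphi,1\}$ is then diametrally rigid, and its pocket is $\varphi/2<s_2$, so both small rings are excluded from the pan. The hole of $1$ has capacity $0.7<\varphi/2<s_2$, so it admits neither. The hole of $\varphi$ rejects the pair because $\rho_0>\varphi>\varphi-\omega$, but it admits $s_1$ alone, since $s_1<0.95\le1.318=\varphi-\omega$. Nesting $s_2$ inside $s_1$ would need $s_1-s_2=\eta\ge\omega$, which fails for $\eta<0.3$. Hence the greedy strands $s_2$ and places three rings while the lex-max set has four.

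\textbf{Main obstacle and side conditions.} The main obstacle is the corona certificate near $\rho_0\to T$. This is why the split uses a small $\eta$ rather than $\varepsilon=\eta$: pushing the excess into the common part keeps $s_1$ close to $\rho_0/2<T/2$. Two side conditions must also be enforced. First, $\varepsilon>0$ requires $\eta<\rho_0-\varphi$, as the statement imposes. Second, $\eta>0$ is needed so that $s_2<s_1$ strictly.
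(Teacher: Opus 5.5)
Your proposal is correct and follows essentially the same route as the paper. Both reparametrize the golden family of Theorem~\ref{thm:golden} so that $s_1+s_2=\rho_0$ exactly. Both write $s_1=\rho_0/2+\eta/2<T/2+\eta/2$ to keep the corona certificate $s_1<4(\sqrt5-2)$. Both then recheck the same list of walls: the pocket $\varphi/2$, the hole $H_m$, rejection of the pair by the hole of $\varphi$, no nesting, and $s_1\le\varphi-\omega$. Your explicit check that the tail of $\varphi$, namely $\varphi+(\rho_0-\varphi)/\varphi$, stays below $\rho_0$ is a detail the paper leaves implicit.
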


The instance is golden in four distinct ways: the occupant is
$\varphi$ (the unique positive fixed point of $2b(A)\,A=1+2b(A)$), the
pocket is $b_2(\varphi,1)=\varphi/2$, the tail of $\varphi$ is
$(1+\varphi)/\varphi=\varphi$, and $\rho\to\varphi$. The family is not
degenerate: the $\omega$-window has width $0.427$, $\varepsilon$ ranges
over an interval, and the diametral tangency survives pan slack up to
$\delta^\ast=0.0248$ (numerically, via the angular criterion). It
does not appear in structured searches over generic radii because it lives at a
codimension-one corner (pan tangency) that random radii never visit.

\begin{theorem}[Exact threshold for four rings]\label{thm:fourfloor}
Let $\tau_4$ denote the infimum defining $\tau$ restricted to inventories
of at most four rings in a disk. Every failing execution on such an
inventory has $\rho>\varphi$. Consequently $\tau_4=\varphi$, and this
infimum is not attained at positive width and strictly decreasing radii.
\end{theorem}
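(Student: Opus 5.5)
Since the golden family of Theorem~\ref{thm:golden} already gives failures at $\rho=\varphi+3\varepsilon$ for every small $\varepsilon>0$, the upper bound $\tau_4\le\varphi$ is in hand. What remains is that every failing execution on at most four rings has $\rho>\varphi$. Non-attainment then follows at once: the infimum over a set of values all strictly above $\varphi$, approaching $\varphi$, is not attained.

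By Proposition~\ref{prop:n3}, a failure needs exactly four rings $r_1>r_2>r_3>r_4$ with all four feasible. Discarding rings skipped before any admission only lowers the count, so the failure must be the final ring $r_4$ being blocked in the greedy state $F$ while a witness $P$ places all four. I would rerun the exchange argument of Theorem~\ref{thm:oblivious} and locate where it breaks. Let $m$ be the largest ring assigned differently by $F$ and $P$. Step (i) moves the smaller rings that $P$ kept in $u$ into the vacated ball of radius $r_m$, and this succeeds whenever their total radius is at most $r_m$. Failure therefore requires a tail exceeding $1$ at $m$. With four rings the only nontrivial choice points are $m=r_2$ (parent: pan or the hole of $r_1$) and $m=r_3$, so I would split into cases by $m$ and by which container ($u$ or $v$) is the pan.

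\textbf{Case $m=r_3$.} The only ring below it is $r_4$, and $r_4<r_3$, so the exchange works. No failure arises here.

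\textbf{Case $m=r_2$, greedy puts $r_2$ in a hole, $P$ puts it in the pan.} This is the hole-container case. It should reduce to the rigid analysis of Theorem~\ref{thm:rigidfloor} together with Remark~\ref{rem:slack}. The displaced rings $\{r_3,r_4\}$ must fit the hole of $r_1$ in $P$, fail to fit beside $r_1$ in the pan, and have $r_3+r_4>r_2$. That theorem gives $\rho>T>\varphi$.

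A subcase needs attention: $P$ may instead keep $r_3$ or $r_4$ in the pan beside $r_1,r_2$. There I would use the Descartes-pocket bound $b(t)$ to show that one displaced ring still fits alongside.

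\textbf{Case $m=r_2$, greedy puts $r_2$ in the pan, $P$ nests it.} This is the golden case and the main obstacle. Normalize $r_1=1$ and $t=r_2$. After $F$ places $\{1,t\}$ in the pan, $r_3$ and $r_4$ must be blocked pairwise. The witness has $t$ in the hole, so $t\le1-\omega$, and it packs $\{1,r_3,r_4\}$ in the pan (or puts one of them in the hole with $t$).

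Blocking in $F$ requires three things:
\begin{itemize}
\item $r_3+r_4>1-\omega\ge t$ (the hole of $r_1$ rejects the pair);
\item $r_4>t-\omega$ (the hole of $t$ rejects $r_4$);
\item $r_4$ does not fit the pan beside $\{1,t\}$ (plus $r_3$, if the greedy put $r_3$ there).
\end{itemize}
The pan obstruction gives $r_4>b_2$-type pocket bounds via Proposition~\ref{prop:S5}, scaled to the actual $R\ge$ the required size.

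I would bound $\rho$ as the maximum of the tail of $r_1$, namely $t+r_3+r_4$, and the tail of $r_2$, namely $(r_3+r_4)/t$. The first is large when $t$ is large and the second when $t$ is small. I expect the worst case to be the rigid pan $R=1+t$, where the pocket is $b(t)=t/(1+t+t^2)\cdot(1+t)$ in the normalization of Proposition~\ref{prop:S5}. Balancing $t+2b>$ versus $2b/t$ should give $\max>\varphi$, with equality only in the limit $t\to1/\varphi$, $r_3,r_4\to b$, and $\omega\to0$ or tangency. This mirrors the four golden coincidences listed after Corollary~\ref{cor:goldencover}. The strictness comes from the strict inequalities in blocking together with distinct radii, $r_3>r_4$.

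The hard step is showing that slack pans and non-rigid $\{1,t\}$ only enlarge the pocket, so the rigid pan is extremal. I would handle it by the antitonicity argument of Remark~\ref{rem:slack} applied to the blocking condition.
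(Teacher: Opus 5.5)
Your route is the paper's. You reduce to four rings with the first three admitted and the fourth rejected, and you show every witness must disagree with $F$ on the parent of $r_2$, because the exchange at $r_3$ always succeeds. You then split on whether the greedy nests $r_2$ (the rigid family plus Remark~\ref{rem:slack}, giving $\rho>T$) or puts it at the root (pocket bound, with the two tails $t+r_3+r_4$ and $(r_3+r_4)/t$ balancing at $t=1/\varphi$, where $b(1/\varphi)=1/2$).

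\textbf{Gap in the root case.} The root case as written has a load-bearing hole: you never establish where $F$ put $r_3$, and your third bullet explicitly allows $r_3$ at the root. In that configuration, the rejection of $r_4$ only says that $\{1,t,r_3,r_4\}$ does not pack. Two circles of radius at most $b(t)$ fit in the two mirror pockets, so this yields only $r_3>b(t)$, not $r_4>b(t)$. With $r_4$ small the tails then give only $\max\bigl(t+b(t),\,b(t)/t\bigr)$, which is about $1.24$ at $t=0.7$.

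The missing line is the paper's. The witness nests $t$ in $r_1$, so the hole of $r_1$ has capacity $1-\omega\ge t>r_4$. If $r_3$ were at the root or in the hole of $t$, that hole would be empty and would admit $r_4$. Hence $r_3$ sits in the hole of $r_1$ and the root holds exactly $\{1,t\}$. Antitone containment ($R\ge1+t$) plus the sufficiency half of Proposition~\ref{prop:S5} then give $r_4>b(t)$. This is all the slack-pan issue amounts to; there is no separate ``hard step''. You should also prove $\max\bigl(t+2b(t),\,2b(t)/t\bigr)\ge\varphi$ rather than expect it. With $A=1/t$ these are $(1+2g(A))/A$ and $2g(A)$, and the paper's two factorizations settle both ranges.

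\textbf{Smaller points.} In the nested case, your subcase where $P$ keeps $r_3$ or $r_4$ outside the hole of $r_1$ needs no pocket argument. Then the hole of $r_1$ has at most one immediate child smaller than $r_2$ in $P$. Moving it, with its subtree, into the ball vacated by $r_2$ makes the exchange succeed, so the subcase is vacuous.

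The reduction to ``first three admitted, fourth rejected'' also needs the restriction argument, not just discarding rings skipped before any admission. A ring can be correctly skipped after $r_1$ is placed. Restrict the execution to the previously admitted rings plus the first wrongly skipped ring. Omitted rings leave no containers or occupants, so this restricted execution is legal, and Proposition~\ref{prop:n3} forces at least three admitted rings.
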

\begin{proof}
We first isolate the combinatorial reduction. Consider the first index
where a greedy execution differs from the lex-max set. It skips a ring
that is feasible together with its previously admitted set. By
Proposition~\ref{prop:n3}, there must be at least three previously
admitted rings: otherwise the same execution on the subinventory of
these rings and the skipped ring would contradict that proposition.
Previously omitted rings leave no occupants or containers in $F$,
so the restricted execution is legal and its full inventory is feasible.
Thus all four rings are feasible, and the execution admits the first
three and rejects the fourth. Normalize the second radius to $1$,
and write the radii as $A>1>p>q>0$, with scaled width $w>0$.
Let $F$ be the greedy assignment of $A,1,p$ and $P$ a full witness.

If $F$ and a full witness agree on the parent of $1$, they can also
be made to agree on the parent of $p$: use the exchange step of
Theorem~\ref{thm:oblivious} at $p$. The only smaller ring is $q<p$,
so any displaced child fits in the vacated outer ball of $p$;
if $q$ is a child of $p$, carry it with $p$. The larger parents and
their assignments stay fixed, and the destination was certified
when the greedy placed $p$. The resulting witness admits $q$ in
$F$, a contradiction. Therefore every full witness places $1$
in the opposite parent from $F$: the only choices are the root and
the hole of $A$.

Apply the exchange step at $1$, and let $S$ be the immediate
children smaller than $1$ in its greedy destination, as assigned by
$P$. If $S$ has at most one member, its subtree fits in the vacated
unit ball. If $S=\{p,q\}$ and $p+q\le1$, the row lemma does the
same. In either operation $1$ travels to its destination with its
own subtree. This gives a full witness agreeing with $F$ on
$1$, which was just ruled out. Hence $S=\{p,q\}$ and $p+q>1$.
There are now exactly two cases.

\emph{The greedy nests $1$ in $A$.} The witness has root pair
$\{A,1\}$ and hole pair $\{p,q\}$ in $A$, so
$R\ge A+1$ and $p+q\le A-w$. If the root triple $\{A,p,q\}$
were packable as root children, nesting $1$ in $A$ (legal since
$F$ certified $1\le A-w$) would give a full witness with
the required parent of $1$. Thus that triple is infeasible.
Divide all lengths by $A$: $t=1/A$, $\omega=w/A$ and
$R/A\ge1+t$, while $\rho$ is unchanged. These are the hypotheses of
Theorem~\ref{thm:rigidfloor}, with pan slack allowed by
Remark~\ref{rem:slack}. Therefore $\rho>T>\varphi$.

\emph{The greedy places $1$ at the root.} Now $P$ nests $1$ in
$A$, and $p,q$ are root children. In particular $A-w\ge1$.
In $F$, the ring $p$ must lie in the hole of $A$: if it were at the
root or inside $1$, the empty hole of $A$ would admit $q<1$.
Since the root pair $\{A,1\}$ is feasible, $R\ge A+1$. The exact
diametral pocket of this pair is
\[
 g(A)=\frac{A(A+1)}{A^2+A+1}.
\]
The sufficiency direction of Proposition~\ref{prop:S5}, rescaled,
supplies a packing of
$\{A,1,q\}$ at radius $A+1$ whenever $q\le g(A)$; such a packing
also works in the actual pan and is compatible with $p$ inside $A$.
Since $q$ is rejected, $q>g(A)$. Consequently
\[
 \rho\ge p+q>2g(A),\qquad
 \rho A\ge1+p+q>1+2g(A).
\]
Put $D=A^2+A+1>0$. The identity $\varphi^2=\varphi+1$ gives
\begin{align*}
 D\bigl(2g(A)-\varphi\bigr)
  &=(A-\varphi)\bigl((2-\varphi)A+1\bigr),\\
 D\bigl(1+2g(A)-\varphi A\bigr)
  &=(\varphi-A)\bigl(\varphi A^2+(2\varphi-2)A+\varphi-1\bigr).
\end{align*}
For $A\ge\varphi$, the first identity implies $2g(A)\ge\varphi$;
for $1<A\le\varphi$, the second implies $1+2g(A)\ge\varphi A$.
In both cases the strict inequalities above yield $\rho>\varphi$.
Finally Theorem~\ref{thm:golden} supplies four-ring failures with
$\rho\downarrow\varphi$, establishing the infimum and non-attainment.
\end{proof}

The two algebraic identities and the ordered-ring implication used in
the second case are kernel-checked in \texttt{Calamares/FourRing.lean}.
The forest reduction and the Euclidean pocket remain written proofs.
The preceding four-ring proof does not by itself cover arbitrary
inventories. The following uniform argument closes that gap.

\clearpage
\input{golden_global}

\section{The width program: profile thresholds and the \texorpdfstring{$13/7$}{13/7} corner}
\label{sec:width}

For quantitative bounds within restricted exchange templates, the proof of
Theorem~\ref{thm:oblivious} localizes all difficulty in a single
\emph{exchange step}: $m$ is the largest ring that the greedy $F$ and a
witness $P$ place in different containers (so $F$ and $P$ agree
on the placement of every ring larger than $m$) $u=c_F(m)$, $v=c_P(m)$, and the set
$S$ of rings smaller than $m$ that $P$ keeps in $u$ must be reinserted using
the resources freed by moving $m$ to $u$: the vacated disk $D_m$ (radius
$r_m$), the hole $H_m$ (capacity $r_m-w$, travelling with $m$), nesting, and
the geometry of $v$. Normalize $r_m=1$ and $\omega=w/r_m$.
\begin{figure}[ht]
\centering
\begin{tikzpicture}[scale=0.9, every node/.style={font=\small}]
\draw[thick] (0,0) circle (2.0);
\node at (0,2.35) {$v=c_P(m)$};
\draw[dashed] (-0.55,-0.25) circle (1.0);
\node at (-0.55,-0.25) {$D_m$};
\node[align=center] at (0,-2.45) {\footnotesize vacated disk, radius $r_m$};
\draw[thick] (6.5,0) circle (2.0);
\node at (6.5,2.35) {$u=c_F(m)$};
\draw[thick] (6.0,0.35) circle (1.0);
\draw[thick] (6.0,0.35) circle (0.62);
\node at (6.0,1.15) {\footnotesize $m$};
\node at (6.0,0.35) {\footnotesize $H_m$};
\draw[dashed] (7.35,-0.9) circle (0.42); \node at (7.35,-0.9) {\footnotesize $\sigma_1$};
\draw[dashed] (6.35,-1.35) circle (0.3); \node at (6.35,-1.35) {\footnotesize $\sigma_2$};
\draw[->, dashed] (6.6,-2.05) to[bend right=25] node[below, pos=0.6]{\footnotesize reinsertion} (0.3,-1.7);
\node at (6.5,-2.45) {\footnotesize $S$: displaced from $u$};
\draw[->, very thick] (1.6,0.9) to[bend left=18] node[above]{$m$ moves} (4.9,0.9);
\end{tikzpicture}
\caption{The exchange step of Theorem~\ref{thm:oblivious}: $m$ is the
largest ring placed differently by the greedy $F$ and a witness $P$.
Moving $m$ to $u$ frees the vacated disk $D_m$ (radius $r_m$) in $v$
and brings along the hole $H_m$ (capacity $r_m-w$); the set $S$ of
smaller rings that $P$ keeps in $u$ must be reinserted using $D_m$,
$H_m$, nesting, and the geometry of $v$.}
\label{fig:exchange}
\end{figure}

\emph{Width convention}: unless a result explicitly states otherwise,
throughout the exchange-step program (this section, the next, and
Appendices~\ref{app:widthproofs} and~\ref{app:genericproofs}) we assume
$0<\omega<1$, so that the pivot $m$ is a genuine ring with a nonempty
hole $H_m$. The model does admit $\omega\ge1$ (a solid-disk pivot); that
regime is outside this specialized program except in results that state ``every $\omega>0$''
explicitly: Theorem~\ref{thm:corner}, Corollary~\ref{cor:Cmin}'s
Tribonacci closure, Corollary~\ref{cor:DB2}, and cases (i)--(ii) of
Theorem~\ref{thm:DP}. Their proofs route the $\omega\ge1$ regime
through walls that do not use $H_m$ (the trio frontier and the witness
capacity) and so remain valid for a solid pivot. Everything $\rho$
imposes on $S$ is the pair of inequalities $\sum S\le\rho$ and
$\sum_{l>j}x_l\le\rho\,x_j$; a lower bound for $\rho$ over all
\emph{blocked} exchanges is therefore a lower bound for any failure of
placement obliviousness. We call a \emph{wall} any such necessary
condition for blocking: the contrapositive of an explicit unblocking
placement, which every blocked exchange must therefore satisfy. Two
regimes must be distinguished. When $u$ is
the \emph{hole} of a ring (the \emph{nested} exchange), the witness
placement supplies the capacity wall $\sum S\le\mathrm{cap}(u)$, and this
section and the next prove floors that all exceed $T$ for a hierarchy of
nested templates: the Tribonacci constant is the exact floor of the
rigid family and the conjectured universal floor of nested exchanges.
When $u$ is the pan, that wall is absent, the floor drops to the Descartes pocket, and
the golden family of Theorem~\ref{thm:golden} realizes it: the
\emph{pan} exchange governs the global threshold of
Theorem~\ref{thm:golden-global}. Complete proofs for every statement of this
section are given in Appendix~\ref{app:widthproofs}.

\paragraph{The combinatorial layer is closed.}
For two-ring profiles the blocked threshold (a \emph{floor} in the
terminology of Section~\ref{sec:threshold}: an infimum over blocked
exchanges) is exactly
$\rho^*_2(\omega)=\max\bigl(1,\,2(1-\omega)\bigr)$; in particular no
two-ring blocking is compatible with $\rho<T$ for
$\omega\le1-T/2=0.0804$. For three rings the threshold is closed,
\[
\rho^*_3(\omega)=\max\Bigl(1,\ \min\bigl(2(1-\omega),\
\max\bigl(\varphi,\ \tfrac{2}{1+2\omega}\bigr)\bigr)\Bigr),
\]
with a purely additive proof and a \emph{golden plateau}
$\rho^*_3\equiv\varphi$ on
$\omega\in\bigl[\tfrac{\sqrt5-2}{2},\,1-\tfrac{\varphi}{2}\bigr]$; and a
general additive tree argument shows $\rho^*_k=\rho^*_3$ for \emph{every}
$k\ge3$. Consequently the combinatorial threshold crosses $T$ exactly at
\[
\omega_T \;=\; \tfrac1T-\tfrac12 \;=\; T^2-T-\tfrac32 \;=\;0.043689\dots :
\]
below $\omega_T$ the exchange step never blocks with $\rho<T$, with no
geometry at all
(Propositions~\ref{prop:Cpair}--\ref{prop:Callk},
Theorem~\ref{thm:Ctrio}, Corollary~\ref{cor:Comegac}).

\paragraph{The canonical template with positive width.}
For $\omega>\omega_T$ geometry enters through the canonical template
($v$ a diametral pan with one big neighbor $\alpha$; $u$ the hole of
$\alpha$, capacity $\alpha-\omega$; $S=\{\sigma_1\ge\sigma_2\}$ placed there
by the witness). Blocking must defeat four resources, giving the walls
$\sigma_2>1-\omega$ ($H_m$), $\sigma_2>\alpha-\omega-1$ ($\sigma_2$ next to
$m$ inside $u$), $\sigma_1+\sigma_2\le\alpha-\omega$ (the witness), and
non-packability of the triple $\{\alpha,\sigma_1,\sigma_2\}$ in the pan
(Figure~\ref{fig:canonical}).
\begin{figure}[ht]
\centering
\begin{tikzpicture}[scale=0.85, every node/.style={font=\small}]
\draw[very thick] (0,0) circle (2.8);
\node at (0,3.15) {the pan ($v$): diametral pair $\{\alpha,m\}$};
\draw[thick] (-1,0) circle (1.8);
\node at (-1,1.55) {\footnotesize $\alpha$};
\draw[thick] (-1,0) circle (1.2);
\node at (-1,0.93) {\footnotesize $u$ (cap.$\,$ $\alpha-\omega$)};
\draw (-1.35,-0.2) circle (0.55); \node at (-1.35,-0.2) {\footnotesize $\sigma_1$};
\draw (-0.45,0.25) circle (0.4); \node at (-0.45,0.25) {\footnotesize $\sigma_2$};
\draw[thick] (1.8,0) circle (1.0);
\draw[thick] (1.8,0) circle (0.4);
\node at (1.8,0.68) {\footnotesize $m$};
\node at (1.8,0) {\scriptsize $H_m$};
\end{tikzpicture}
\caption{The canonical template: $v$ is a diametral pan with one big
neighbor $\alpha$; $u$ is the hole of $\alpha$ (capacity
$\alpha-\omega$), where the witness keeps
$S=\{\sigma_1\ge\sigma_2\}$. Blocking must defeat four resources,
giving the walls $\sigma_2>1-\omega$ (the hole $H_m$),
$\sigma_2>\alpha-\omega-1$ ($\sigma_2$ next to $m$ inside $u$),
$\sigma_1+\sigma_2\le\alpha-\omega$ (the witness), and
non-packability of the triple $\{\alpha,\sigma_1,\sigma_2\}$ in the
pan.}
\label{fig:canonical}
\end{figure}

Two structural facts organize the optimum. First, on the blocking frontier of the
triple one has the closed form $t(\sigma_1)+t(\sigma_2)=t(b(\alpha))$ with
$t(s)=\sqrt{(1-s)/s}$ (the Descartes pocket read in the $t$-coordinate), and
the frontier slope satisfies the $\alpha$-\emph{independent} identity
\[
\kappa \;=\; -\,\frac{\partial\sigma_2}{\partial\sigma_1}
\;=\;\sqrt{\frac{g(\sigma_2)}{g(\sigma_1)}},\qquad g(s)=s^3(1-s),
\qquad \kappa\ge1\ \text{for all }\alpha>1 ,
\]
so the cheapest blocked pair sits at $\sigma_1\to1$, $\sigma_2\to b(\alpha)$;
moreover $1+b(\alpha)\ge\alpha$ iff $\alpha\le T$, so no blocking exists at
all for $\alpha\le T$. Second, the witness capacity deforms the cubic: with
$T_c$ the positive root of $\alpha^3=c(\alpha^2+\alpha+1)$, the
witness branch has infimum
$\Phi(\omega)=T_{1+\omega}-\omega$, strictly increasing and concave with
$\Phi'(\omega)=(2\alpha+1)/\bigl(\alpha^2(\alpha^2+2\alpha+3)\bigr)$ at
$\alpha=T_{1+\omega}$. The full curve of the blocked infimum
$T_{\mathrm{can}}(\omega)$ is obtained in closed form \emph{as a proved
lower bound} --- the
matching upper bound is exact on the witness branch and at the corner below,
and inherits the angular-criterion caveat (the criterion is proved
sufficient, not necessary, so the matching upper bound is conditional
there) on the other two branches:
$2(1-\omega)$ on $(0,\omega_1]$, a sextic-algebraic middle branch on
$[\omega_1,1/7]$, and $\Phi$ on $[1/7,0.30]$, where
$\omega_1=0.0413570\dots$ is the root of $4\omega^3-20\omega^2+25\omega-1$
in $(1/25,1/14)$
(Lemma~\ref{lem:CF}, Theorem~\ref{thm:Ckappa}, Corollary~\ref{cor:Cmin},
Propositions~\ref{prop:Cwitness} and~\ref{prop:Ccurve}).

\begin{theorem}[The $13/7$ corner]\label{thm:corner}
For all $\omega>0$ (including the solid-pivot regime $\omega\ge1$: for
$\omega\ge\tfrac17$ the proof below uses only (B1) and (W), which do
not use $H_m$ and remain valid for a solid pivot),
\[
T_{\mathrm{can}}(\omega)\;\ge\;\tfrac{13}{7}\;=\;T+0.017856\dots,
\]
with equality exactly in the limit at the rational corner
$(\omega,\alpha,\sigma_2)=\bigl(\tfrac17,\,2,\,\tfrac67\bigr)$, where all
four walls saturate simultaneously (note $T_{8/7}=2$ because $2^3=8=
\tfrac87\cdot7$). The bound is unconditional -- in particular it does not
depend on the exactness of any angular feasibility criterion -- and the
approximating family is genuine. Thus positive width only \emph{raises} the
Tribonacci floor, uniformly.
\end{theorem}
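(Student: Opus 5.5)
The plan is to bound $\rho$ from below using only the two tail inequalities available in the canonical template, normalized to $r_m=1$:
\[
\rho\ge\sigma_1+\sigma_2=:\Sigma,\qquad \rho\ge\frac{1+\Sigma}{\alpha}.
\]
I would then show that the four walls force $\Sigma\ge 13/7$ in the limit, splitting at $\omega=\tfrac17$.

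The first tool is a lower bound for $\Sigma$ extracted from (B1), the non-packability of $\{\alpha,\sigma_1,\sigma_2\}$ in the diametral pan. I would use the frontier identity $t(\sigma_1)+t(\sigma_2)=t(b(\alpha))$ together with the slope identity $\kappa\ge1$ of Theorem~\ref{thm:Ckappa}. These show that along the blocking frontier $\Sigma$ is non-increasing as $\sigma_1$ grows. Since $\sigma_1<1$, this gives
\[
\Sigma>1+b(\alpha),\qquad b(\alpha)=\frac{\alpha(1+\alpha)}{1+\alpha+\alpha^2}.
\]
It is essential that (B1) is used only in this direction: it is the contrapositive of an explicit packing, which is why the bound does not depend on the exactness of the angular criterion. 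The function $1+b(\alpha)$ is strictly increasing in $\alpha$.

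\emph{Regime $\omega\ge\tfrac17$.} Combine the (B1) bound with the witness wall (W), $\Sigma\le\alpha-\omega$. This gives
\[
1+b(\alpha)<\alpha-\omega,
\]
which after clearing denominators reads $\alpha^3>(1+\omega)(\alpha^2+\alpha+1)$, so $\alpha>T_{1+\omega}\ge T_{8/7}=2$. By monotonicity of $b$, $\rho\ge\Sigma>1+b(2)=1+\tfrac67=\tfrac{13}{7}$. This branch uses only (B1) and (W), neither of which involves $H_m$. That justifies the claim that the bound also holds for a solid pivot, and hence for all $\omega\ge\tfrac17$, including $\omega\ge1$.

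\emph{Regime $0<\omega<\tfrac17$.} Here the hole wall (H), $\sigma_2>1-\omega$, is available. I would combine (H) with the frontier identity $t(\sigma_1)+t(\sigma_2)=t(b(\alpha))$ and with (N), $\alpha<\sigma_2+1+\omega$.
\begin{itemize}
\item Since $t$ is decreasing, (H) gives $t(\sigma_2)<t(1-\omega)$. This caps $t(\sigma_1)$ from below on the frontier, which pushes $\sigma_1$ down and hence $\Sigma$ up relative to the corner value.
\item Condition (N) bounds $\alpha$ above in terms of $\sigma_2$ and $\omega$.
\end{itemize}
Minimizing $\Sigma$ over $(\sigma_1,\sigma_2,\alpha)$ subject to these three constraints should reproduce the branches $2(1-\omega)$ and the sextic middle branch of Proposition~\ref{prop:Ccurve}. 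Both must be shown to stay at least $\tfrac{13}{7}$, decreasing to $\tfrac{13}{7}$ as $\omega\uparrow\tfrac17$. For the outer branch this is immediate, since $2(1-\omega)>\tfrac{12}{7}$ must be improved using (B1) only near $\omega=\tfrac17$. For the middle branch I would check that the constrained minimizer is monotone in $\omega$ and evaluate it at $\omega=\tfrac17$, where it should equal $\tfrac{13}{7}$.

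At the corner $(\omega,\alpha,\sigma_2)=(\tfrac17,2,\tfrac67)$ all four walls are tight simultaneously:
\begin{itemize}
\item (H) is tight: $\sigma_2=\tfrac67=1-\omega$.
\item (N) is tight: $\tfrac67=2-\tfrac17-1$.
\item (W) is tight: $1+\tfrac67=2-\tfrac17$.
\item (B1) is tight: $b(2)=\tfrac67$.
\end{itemize}
The second tail is $\tfrac{1+13/7}{2}=\tfrac{10}{7}<\tfrac{13}{7}$, so it is not binding. For sharpness, I would take $\sigma_1\uparrow1$ and $\sigma_2\downarrow\tfrac67$ at $\alpha$ slightly above $2$, slightly perturbing $\omega$ as well. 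I would then verify that the instance is genuinely blocked, using the closure-and-monotonicity lemma of Theorem~\ref{thm:rigidfloor} to see that the blocked set is half-open, so approximating points lie inside the family.

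I expect the middle branch $\omega\in[\omega_1,\tfrac17)$ to be the main obstacle. There (H), (N) and (B1) interact through the frontier identity, and the minimum is sextic-algebraic rather than in closed form. Rather than solving it exactly, I would first try a cruder lower bound: replace $\sigma_1$ by its value at $\sigma_2=1-\omega$ on the frontier, and use $\alpha\ge$ (the value forced by (B1) at that $\sigma_2$). I would then verify that the resulting explicit function of $\omega$ is at least $\tfrac{13}{7}$ on $[\omega_1,\tfrac17]$, by a single polynomial sign check that is decreasing in $\omega$.
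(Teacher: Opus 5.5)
\textbf{There is a genuine gap in the regime $0<\omega<\tfrac17$.} Your case $\omega\ge\tfrac17$ and your corner analysis match the paper. (The paper also splits off $\alpha\ge2$ for every $\omega$, where (B1) alone gives $\rho\ge1+b(\alpha)\ge\tfrac{13}7$.) For $\omega<\tfrac17$, however, you minimize over (H)$=$(B2), the frontier (B1) and (N)$=$(B4) only. That list omits the witness wall (W), $\sigma_1+\sigma_2\le\alpha-\omega$, and (W) is the constraint that does the work there.

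Neither of your other constraints can replace it. For $\alpha<2$, (N) is implied by (H), since $\alpha-\omega-1<1-\omega$, so it carries no information. (B1) bounds $\alpha$ only from \emph{above}, because its threshold $\Theta(\alpha)=t(b(\alpha))=1/\sqrt{\alpha(\alpha+1)}$ decreases in $\alpha$. So your crude step ``$\alpha\ge$ the value forced by (B1)'' goes the wrong way.

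Without (W) your program genuinely drops below $\tfrac{13}7$. Take $\omega=0.14$, $\alpha=1.54$, $\sigma_2\downarrow1-\omega=0.86$, and put $\sigma_1\approx0.9897$ on the frontier $t(\sigma_1)+t(\sigma_2)=\Theta(\alpha)$ (note $\sigma_1\ge s^*(\alpha)\approx0.940$). Then $\sigma_1+\sigma_2\approx1.8497$ and $(1+\sigma_1+\sigma_2)/\alpha\approx1.8504$, both below $\tfrac{13}7\approx1.8571$. Only (W) excludes this point, since $\alpha-\omega=1.40$.

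In the paper the lower bound on $\alpha$ comes from (W). For $\alpha<2$, the walls (B1), (B2) and (W) give $\rho\ge2(1-\omega)$ on $(0,\omega_1]$. On $[\omega_1,\tfrac17)$ they give $\rho\ge V(\omega)=\alpha_m(\omega)-\omega$, where $\alpha_m$ is the least $\alpha$ compatible with (W) on the case-(b) frontier, namely the root of $t(\alpha-1)+t(1-\omega)=\Theta(\alpha)$. (B4) plays no role in the lower bound; it only has to be respected, with slack, by the approximating family.

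\textbf{Your middle-branch strategy would also fail as stated.} $V$ is not monotone on $[\omega_1,\tfrac17]$: it rises from the local minimum at $\omega_1$ to a bump near $\omega\approx0.0445$ before descending to $\tfrac{13}7$. So ``show the minimizer is monotone and evaluate at $\tfrac17$'' cannot work. The paper instead excludes $V=\tfrac{13}7$ on $[\omega_1,\tfrac17)$ through the factorization $P(\tfrac{13}7+\omega,\omega)=4(7\omega-1)Q_5(\omega)/7^6$, where $Q_5$ has no root in $[\tfrac1{25},\tfrac17]$. It then concludes by continuity from $V(\omega_1)-\tfrac{13}7=\tfrac17-2\omega_1>0$.

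Similarly, the branch $2(1-\omega)$ is valid only for $\omega\le\omega_1<\tfrac1{14}$, which is exactly why it exceeds $\tfrac{13}7$. It does not extend toward $\tfrac17$, so ``$2(1-\omega)>\tfrac{12}7$, improved by (B1) near $\tfrac17$'' misreads the branch structure.
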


The fine structure is curious: the curve is \emph{not} monotone on
$(0,1/7]$ -- $\omega_1$ is a local minimum, followed by a bump of height
$1.1\cdot10^{-4}$ at $\omega_{\mathrm{peak}}\approx0.0445$ (a root of an
explicit degree-8 polynomial) before descending to the corner.

\section{Generic containers: coronas, walls, and metallic floors}
\label{sec:generic}

The canonical template assumes $v$ contains only the big neighbor and $m$.
Generic occupied containers require additional walls within this
specialized template analysis. The following toolkit quantifies those
obstructions; it is not needed for the universal golden proof. Throughout, $v$ is a pan of radius $R$ with occupants
$\{\alpha\}\cup O\cup\{m\}$, $O=\{o_1\ge\dots\ge o_j\}$, $o_i\ge m=1$,
$u$ is the hole of $\alpha$, $S=\{\sigma_1\ge\sigma_2\}$, and holes may be
occupied arbitrarily, at any nesting depth. Complete proofs for every
statement of this section are given in Appendix~\ref{app:genericproofs}.

\paragraph{The universal trio frontier.}
For a trio $\{A,x,y\}$ wall-tangent in a disk of \emph{arbitrary} radius $R$
(interior domain, and under the necessary hypothesis $A\ge\min(x,y)$), the
angular blocking condition is \emph{linear} in the coordinates
$T_c(x)=\sqrt{(c-x)/x}$, $c=R-A$:
$\ \theta$-infeasibility $\iff T_c(x)+T_c(y)\le\tau_R=c/\sqrt{AR}$.
The pocket generalizes to $b_R(A)=ARc/(AR+c^2)$, increasing in $R$, and the
frontier slope identity $\kappa=\sqrt{g_c(y)/g_c(x)}$, $g_c(s)=s^3(c-s)$, is
uniform in $R$. Without the hypothesis $A\ge\min(x,y)$ the equivalence is
false (an explicit pointwise counterexample witnesses the failure; the
failure region is swept by script), a fact
found by adversarial verification (Lemma~\ref{lem:DU}).

\paragraph{An exact corona criterion.}
Call a \emph{corona} a packing of $k$ circles all wall-tangent. For a fixed
cyclic order the corona constraints form a linear feasibility system in the
angular gaps, so coronas are decidable exactly; every subset of a corona
yields a necessary \emph{certificate} (its induced cyclic sum of pairwise
angles is at most $2\pi$). For $k=4$ the certificates are exactly sufficient
-- for arbitrary symmetric separations in $(0,\pi]$, by a negative-cycle
argument: a violating cycle with $U$ ascents needs more than $2U$ edges, so
on four nodes only $U=1$ cycles can violate, and those are exactly the
subset certificates -- and the global criterion collapses to \emph{two}
inequalities: the top trio, plus the total of the \emph{zigzag} order
$(a_1,a_3,a_2,a_4)$, which minimizes the cyclic total by a
convexity/majorization argument ($\theta=g(\log f(a)+\log f(b))$ with $g$
convex). Two natural guesses are refuted by explicit counterexamples: the
consecutive-arc condition alone is \emph{not} sufficient for $k\ge4$, and the
sorted order is \emph{not} optimal (the zigzag is: large circles must not be
angular neighbors). For $k=5$ sufficiency requires exactly one additional
certificate, the \emph{pentagram} (the five-diagonal star $\{5/2\}$, with
combinatorially sharp example $\theta=\pi$ on the diagonals), conjecturally
redundant for separations of geometric origin
(Lemmas~\ref{lem:Dgaps}--\ref{lem:Dsubset},
Theorems~\ref{thm:Dk4}--\ref{thm:DU4}).

\paragraph{Occupant walls and metallic floors.}
Each extra occupant's hole is itself a reinsertion resource, and blocking it
is \emph{taxed}. The basic count: with free holes, blocking forces every
extra occupant to within one width of $m$ ($o_i<\sigma_2+\omega$), and the
largest occupant's tail gives
$\rho>(j+2)/(1+\omega)$ -- each occupant pays $1/(1+\omega)$ -- whence, with
a refined branch $\rho>4/(1+2\omega)$ for $\omega\ge\tfrac12$,
$\rho>T$ for $\omega<2/T-\tfrac12=0.5874$ when $j=1$ and for \emph{all}
$\omega$ when $j\ge2$; adding occupants is strictly suboptimal for the
blocker, since the canonical curve satisfies $\Phi(\omega)<2$ identically
(the polynomial identity
$(2+\omega)^3-(1+\omega)\bigl((2+\omega)^2+(2+\omega)+1\bigr)=1$).
For occupied holes the tax is quantified by the \emph{opposite-disk lemma}:
in a disk of capacity $c$ with largest piece $\sigma$, if the remaining mass
is at most $c-\sigma$ the whole family packs (the wall-tangent $\sigma$
leaves a free disk of radius exactly $c-\sigma$ opposite, filled in a row);
hence blocking a hole costs at least its slack: every \emph{node} $y$ --
an extra occupant $o_i$, or any ring of radius $\ge r_m$ nested at any
depth inside one -- satisfies $y<\sigma_2+\omega+X_y$, where $X_y$ is the
total radius of $y$'s hole content
($\alpha$ and $m$ themselves are excluded: $\alpha$'s hole is $u$, governed
by its own walls, and excluding $m$ is essential). Following the
\emph{minimal} such ring and balancing two tails yields, for arbitrary hole
occupancy at any depth,
\[
\rho\;>\;\Psi(\omega)\;=\;(1-\omega)+\sqrt{(1-\omega)^2+1} ,
\]
the metallic mean of $2(1-\omega)$ (we write $\Psi\equiv\Psi_1$
interchangeably: it is the $j=1$ member of the ladder $\Psi_j$ below):
$\Psi(0)=1+\sqrt2$ is the silver ratio,
$\Psi(1/4)=2$ exactly, and $\Psi>T$ iff $\omega<(T-1)^2/2=0.3522$.
Allowing $m$ itself to carry children splits by an evacuation dichotomy into
the same bound and a second metallic mean, the root of
$u^2-(2-\omega)u-1$, which dominates automatically; its own crossing is at
$(T-1)^2$ exactly -- twice the previous one, and the certifying identity
\emph{is} the Tribonacci polynomial:
$(T-1)^2\,T-(2T-T^2+1)=T^3-T^2-T-1$
(Lemmas~\ref{lem:DV1} and~\ref{lem:DR}, Proposition~\ref{prop:DV2},
Theorems~\ref{thm:DB} and~\ref{thm:DBpp}).

\paragraph{The double-pocket wall and the golden line.}
The geometric wall needs no angular criterion at all: blocking implies the
four rings $\{\alpha,o_1,\sigma_1,\sigma_2\}$ do not pack in the pan, hence
not in the disk of radius $\bar R=\alpha+o_1$ (containment), where the pair
$\{\alpha,o_1\}$ is diametrically rigid and leaves exactly \emph{two}
Descartes pockets
$b_2(\alpha,o_1)=\alpha o_1(\alpha+o_1)/(\alpha^2+\alpha o_1+o_1^2)$, one on
each side (their mirror centers are $4b_2$ apart: the exact identity
$y_0=2b_2$). Thus blocking forces $\sigma_1>b_2(\alpha,o_1)$. Feeding this
wall into the program of the previous walls produces a \emph{golden} optimum:
the binding corner is $\alpha=2$, $o_1=\sqrt5-1$ -- the pair with unit pocket,
$b_2(2,\sqrt5-1)=1$ exactly, self-dual in the sense
$A_{\max}(\sqrt5-1)=2$, $A_{\max}(2)=\sqrt5-1$ -- and, for one extra occupant
and \emph{any} hole occupancy, both branches of the analysis meet the same
line: $\rho>T$ unconditionally for all $\omega<\omega_A$, via
\[
\rho\;>\;\varphi^2-\tfrac{\varphi}{2}\,\omega,
\qquad
\omega_A\;=\;2-2(T-1)(\varphi-1)\;=\;0.962585\dots
\]
(the golden line itself is proved in all but one corner
configuration -- the corner with a nested node inside the occupant
and $\alpha<o_1$, called branch~B in the appendix, possible
only for $\omega<\tfrac12$ -- where the proved bound is $\ge2>T$, via
$\Psi_B(\tfrac12)=2$ exactly). The proof combines two exact facts -- $b_2$
is concave in $\alpha$
($\partial^2b_2/\partial\alpha^2=-6\alpha o^3(\alpha+o)/D^3$) and
$D^2-\alpha o^2(o+2\alpha)=(\alpha+o)(\alpha^3+\alpha^2o+o^3)>0$ -- with
one-variable boundary certificates anchored at the identity
$f_1\equiv G_\omega$ at $o_1=\sqrt5-1$, where
$G_\omega:=\varphi^2-(\varphi/2)\omega$ is the golden-line bound. For $j\ge2$ occupants the
ladder $\Psi_j=(1-\omega)+\sqrt{(1-\omega)^2+j}$ (crossing $T$ at
$1-(T^2-j)/(2T)$: $0.624$ for $j=2$, $0.896$ for $j=3$, all $\omega$ for
$j\ge4$) covers the corresponding ranges unconditionally, for arbitrary
nested occupancy of the holes: a \emph{leaf argument} closes the case where
occupants' holes contain further rings of radius $\ge r_m$. Every occupant
subtree contains a leaf (a node none of whose hole contents reach $r_m$);
taking $L$ the largest of the $j$ disjoint leaves, its tail collects the
other $j-1$ leaves together with $m$, $\sigma_1$, $\sigma_2$ and all small
mass $W$, while the blocking wall at a leaf reads $L<\sigma_2+\omega+W$;
optimizing the resulting two-branch program returns exactly $\Psi_j$, with
the crossing at $\sigma_2=1-\omega$ -- the same metallic-mean mechanism as
$\Psi$. Small rings anywhere compatible with each theorem's template
-- and with $S$ still a pair -- are free for all the combinatorial walls,
whose unblocking placements are local to $D_m$, $H_m$, the holes, and the
slot next to $m$ inside $u$
(Lemma~\ref{lem:DG}, Theorem~\ref{thm:DGp},
Proposition~\ref{prop:DPsij}, Corollary~\ref{cor:DS}).

\paragraph{Three-piece profiles: the zigzag pays.}
Larger reinsertion profiles are \emph{cheaper} for the adversary in pure
nest accounting ($\rho^*_3\le\rho^*_2$), but in the canonical template the
geometry reverses the sign: blocking with $S=\{\sigma_1\ge\sigma_2\ge
\sigma_3\}\subset(\omega,1)$ and arbitrary occupancy of $H_m$ forces
$\rho>\max\bigl(\Phi(\omega),\tfrac{13}7\bigr)$ for \emph{all} $\omega$,
with no exceptional branch. If $\sigma_3\le\sigma_1-\omega$ then $\sigma_3$ (the \emph{dust}) rides
inside $\sigma_1$ and the pair program is inherited wall by wall
(the $13/7$ corner bound applies); if nothing nests and the trio
$\{\alpha,\sigma_1,\sigma_2\}$ has no corona, the witness chain gives
$\sigma_1+\sigma_2\ge1+b(\alpha)\ge\Phi(\omega)$; and if the trio
\emph{does} pack, the $U_4$ criterion read backwards forces the \emph{zigzag}
certificate to fail, and the zigzag implies exactly $\sigma_2>b(\alpha)$
-- the Descartes pocket by a third route, via
$\sin^2\!A+\sin^2\!B>1$. Crossing that wall with the witness capacity
lands on the \emph{golden deformation} $\gamma_\omega$, the root of
$\alpha^3=\alpha^2+\alpha+\omega(\alpha^2+\alpha+1)$ with
$\gamma_0=\varphi$, and the tail bounds meet at another rational corner:
$\gamma_{2/7}=2$ exactly ($8=4+2+2$) with value $\tfrac{17}7$ -- the
sister of the $13/7$ corner
(Theorem~\ref{thm:DT3}, Proposition~\ref{prop:DT3j}).

\paragraph{Status.}
Together with Theorem~\ref{thm:corner}, every blocked \emph{nested}
exchange in the templates above satisfies $\rho>T$: for all $\omega$ in
the canonical template -- with $S$ a pair or a triple -- for
$\omega<0.9626$ with one extra occupant under arbitrary hole occupancy,
for $\omega<0.624$ and $\omega<0.896$ with two and three, and for all
$\omega\in(0,1)$ with four or more. All these floors exceed $T>\varphi$. The global golden guarantee is
already established by Theorem~\ref{thm:golden-global}; the following
questions concern the stronger nested-template threshold. Of the
remaining nested gaps, the width tips at $j=1$ ($\omega\ge0.9626$,
where the golden line still gives $\varphi^2-\tfrac\varphi2\omega
\ge\varphi^2-\tfrac\varphi2=1.809>\varphi$) and at $j=3$
($\omega\ge0.896$, where $\Psi_3(\omega)>\Psi_3(1)=\sqrt3>\varphi$)
matter only for pinning the nested threshold at exactly $T$. At the
golden level itself, the remaining nested configurations -- the
$j=2$ tip for $\omega\in[\varphi/2,1)$, the quantitative ``gap
lemma'' regime (small rings against the geometric walls; tiny
$\sigma_2$), and profiles of four or more pieces beyond the
reduction branch -- have written proofs
(Theorems~\ref{thm:nestedwritten} and~\ref{thm:gapwritten}: the
entire nested template, every $j$), with duality-certificate
backing and independent verification, in
Appendix~\ref{app:campaign}. Each statement in this section is
backed by a verification script and a verification report in the
repository (Appendix~\ref{app:verifmap}).

\section{The two objectives diverge: a phase diagram}\label{sec:divergence}

Contact area behaves like $2\pi w\sum r_i$ minus a per-ring penalty
$\pi w^2$; cardinality counts rings. \emph{Divergence} means the area
optimum has strictly smaller cardinality than the cardinality maximum.
Two rings never diverge: if both fit together the full set is optimal
for both objectives (area is strictly increasing under inclusion), and
otherwise every feasible set has at most one ring, a cardinality the
area optimum attains. Three rings can already diverge: at $R=10$, $w=9/2$, radii
$\{8,\,101/20,\,99/20\}$ (rational-exact; script
\texttt{divergencia3}, 5/5), the small pair is exactly diametral
($101/20+99/20=10$), nothing coexists with or nests in the $8$ (its
hole has radius $7/2<99/20$), and the areas compare as
$a(8)=\tfrac{207}{4}\pi>\tfrac{198}{4}\pi=a(101/20)+a(99/20)$: the
area optimum is the single $8$ while cardinality prefers the pair.
The mechanism is pure superadditivity at large width, where all holes are too small to permit nesting
and the problem degenerates to circle packing; sampling places the
onset of three-ring divergence near $w/R\approx0.26$ for this family
(swept, not a proved threshold). In the \emph{nesting-active} regime
the divergence survives only marginally: small rings that fit $k$
times in the pan but at most $k-2$ times in the large ring's hole
(with $k-1$ in the hole the two objectives tie), and the minimal
instance of this hole mechanism has four rings:
$R=10$, $w=1$, radii $\{9.0,\,4.2,\,4.2,\,4.2\}$ --- the area optimum
is the $9.0$ with one nested $4.2$ ($N=2$, $A\approx76.7$), the
cardinality optimum the three $4.2$'s ($N=3$, $A\approx69.7$).
Figure~\ref{fig:phase} maps the divergence region for the family
``one large ring $b$ plus unlimited equal small rings of radius
$s$'' \emph{at width $w=1$, $R=10$} (the regime of the hole
mechanism; script \texttt{franja}, which generates the figure): a
staircase governed by the proved optimal thresholds
for $n$ equal circles in a disk~\cite{GLNO1998,Pirl1969,Melissen1994,%
Fodor1999,EkanayakeLaFountain2024}; its upper edge is exactly the three-circle
threshold $0.4641R$.

\begin{figure}[ht]
\centering
\includegraphics[width=0.90\textwidth]{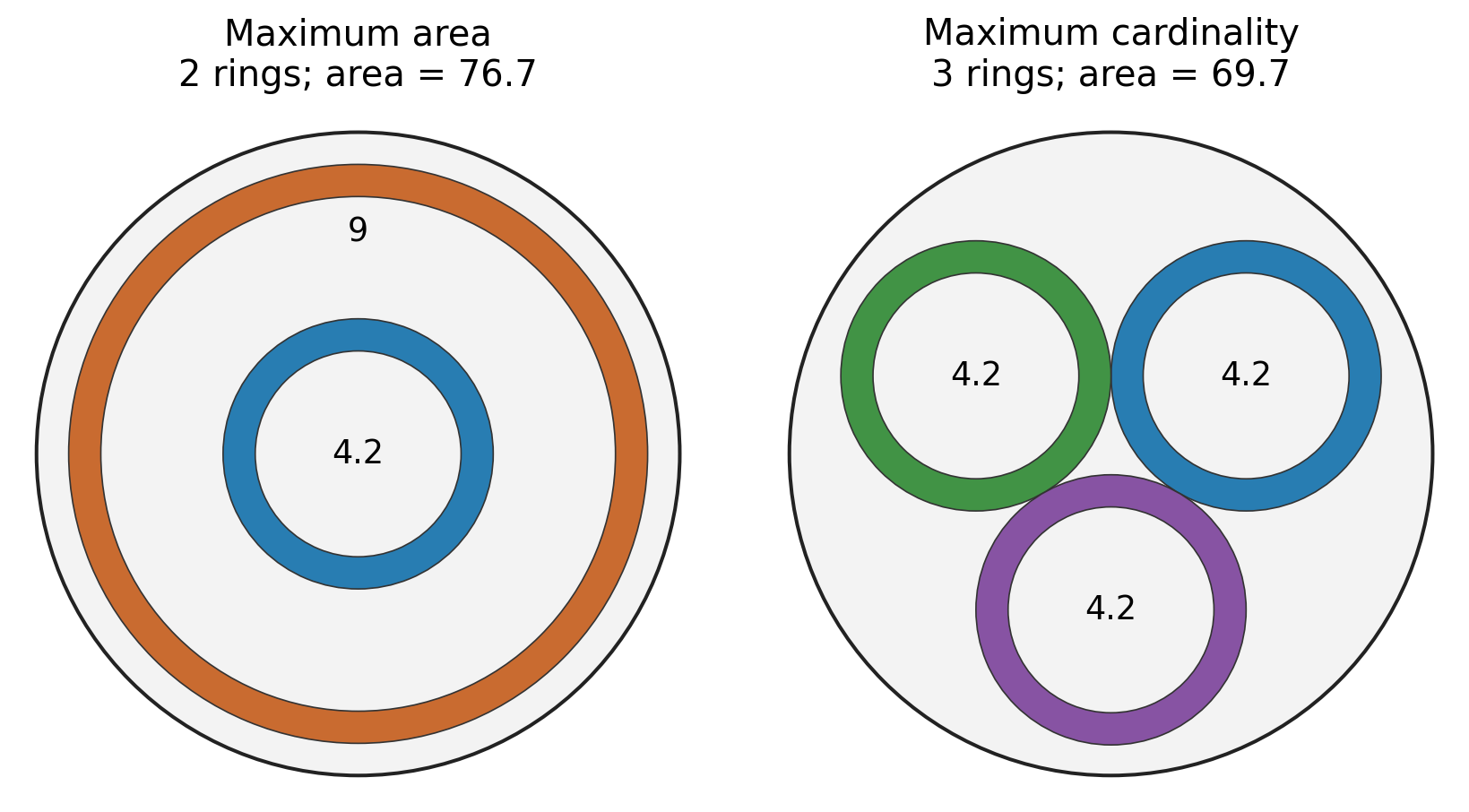}\\[1ex]
\includegraphics[width=0.85\textwidth]{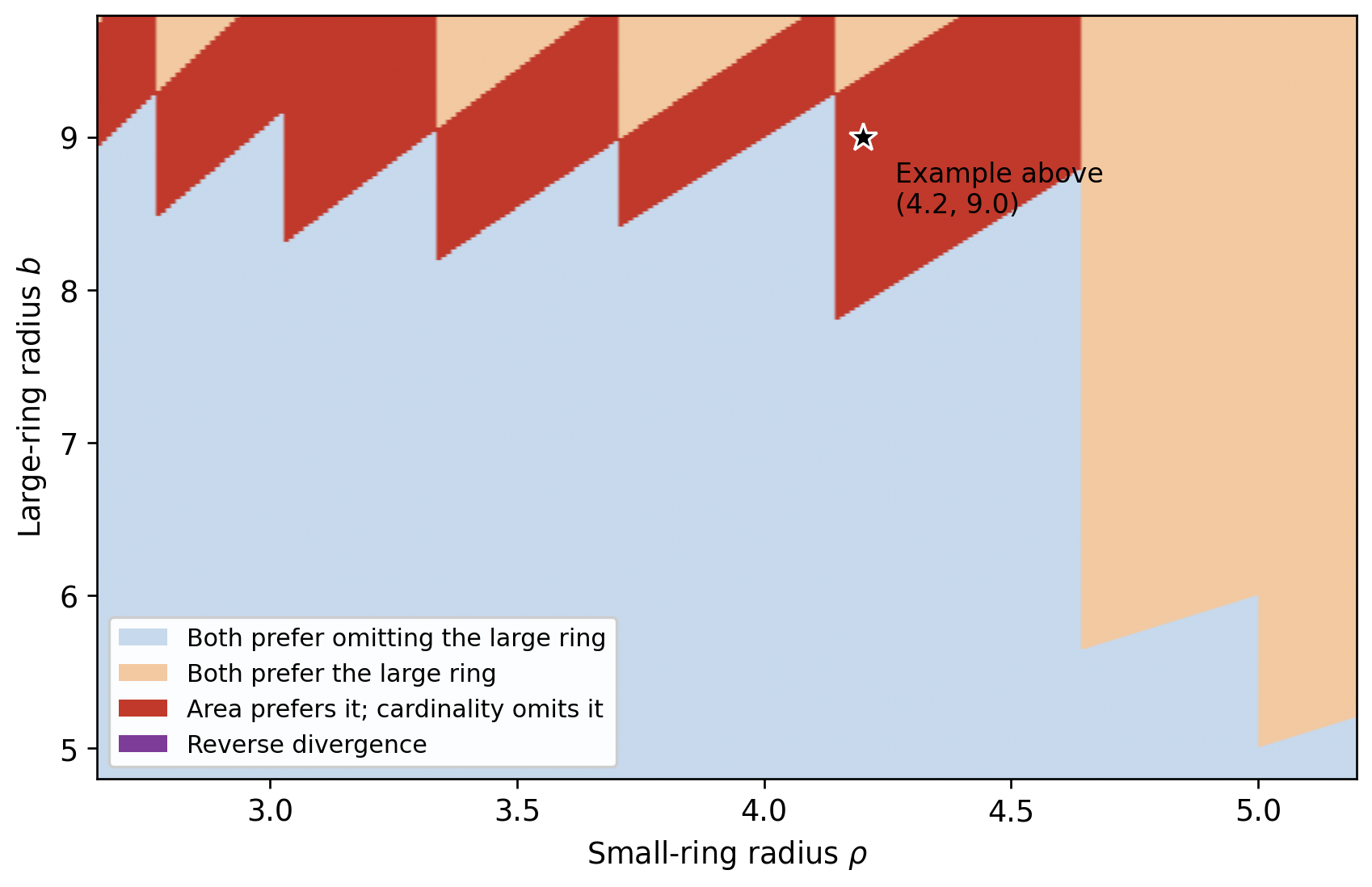}
\caption{Top: the minimal divergence instance; area optimum (left) versus
cardinality optimum (right). Bottom: phase diagram of the divergence band for
uniform width.}
\label{fig:phase}
\end{figure}

\FloatBarrier
\section{Hardness decouples into two layers}\label{sec:hardness}

If all radii lie within a band of width less than $w$, no nesting is
possible and the decision problem ``do all rings fit?'' is exactly circle
packing of the outer radii into the pan.

\paragraph{The geometric layer.}
Since Theorems~\ref{thm:selection} and~\ref{thm:oblivious} hold for
arbitrary containers, our model contains, as the special case of a
\emph{square} pan with a no-nesting band, the problem of deciding whether
given circles fit into a square, which is NP-hard by reduction from
3-\textsc{Partition}~\cite{DemaineFeketeLang2010}; hence the geometric layer
of the model is NP-hard. Membership in NP is itself delicate, because tight
packings may require coordinates without short descriptions -- a phenomenon
made precise by the $\exists\mathbb R$-completeness framework for
two-dimensional packing~\cite{AbrahamsenMiltzowSeiferth2020}. For the disk
pan specifically, NP-hardness of deciding whether given disks fit into a
circular container is established in the journal version of
Fekete--Keldenich--Scheffer~\cite[Theorem~5.1]{FeketeKeldenichScheffer2019}
(the conference version does not contain this result -- we thank the
referee for the precise pointer). Integer-programming
formulations for circle packing in rectangular containers are studied
in~\cite{Litvinchev2014}.

\paragraph{The combinatorial layer.}
Independently of the geometry, the \emph{additive surrogate}
(Theorem~\ref{thm:additive}) is weakly NP-hard, and here the reduction can
be made exact despite the per-ring penalty in the area.

\begin{proposition}[The additive layer is weakly NP-hard]
\label{prop:additivehard}
Deciding whether an instance of the additive surrogate admits a feasible
set of contact area at least a given threshold is weakly NP-hard, already
for instances with a no-nesting radius band.
\end{proposition}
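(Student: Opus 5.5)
We reduce from \textsc{Partition}, using the no-nesting band to turn the additive surrogate into a pure subset-sum problem and then showing the per-ring penalty in the area can be neutralized.

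\textbf{Construction.} Given positive integers $a_1,\dots,a_n$ with $\sum a_i=2B$, take a single pan (the additive surrogate needs only capacity $R$). Choose a large base radius $M$ and a scale $\lambda>0$ small enough that all radii $r_i=M+\lambda a_i$ lie in a band of width less than $w$. Then no nesting is possible: a ring $i$ nests in $j$ only if $r_i\le r_j-w$. Also take $w\le\min_i r_i$, so every ring has a hole and $a(r_i)=\pi w(2r_i-w)$ is affine in $r_i$.

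\textbf{Forcing the cardinality.} The pitfall is that contact area $\sum_S \pi w(2r_i-w)=2\pi w\sum_S r_i-\pi w^2|S|$ mixes total radius and cardinality. To control $|S|$, pad the inventory with $n$ dummy items $a_{n+i}=0$ and ask for exactly $n$ rings. Set the capacity to
\[R=nM+\lambda B.\]
Since the base value $M$ is large compared with $\lambda\sum a_i$, any feasible set has at most $n$ rings, because $(n+1)M>R$. A set of exactly $n$ rings is feasible iff its $a$-values sum to at most $B$.

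\textbf{Threshold.} Set the area threshold to $2\pi w R-\pi w^2 n$. Any feasible $S$ with $|S|<n$ has total radius at most $(n-1)M+\lambda\cdot 2B$. Taking $M>2\lambda B+\pi w$ (after dividing through appropriately), its area falls strictly below the threshold even with the reduced penalty. Hence the threshold is met only if $|S|=n$ and $\sum_S r_i=R$, that is, iff the selected original items sum to exactly $B$. The dummies let any sub-collection be completed to $n$ rings. Conversely, a partition half padded by dummies attains the threshold exactly.

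\textbf{Encoding size.} The values $M,\lambda,w$ are rationals of polynomial bit length in the input numbers, so the reduction is polynomial. Because \textsc{Partition} is only weakly NP-hard, so is the conclusion.

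\textbf{Main obstacle.} The delicate step is the simultaneous choice of parameters. We need $\lambda\cdot 2B<w$ (the band condition), $w\le M$, and $2\pi w M$ exceeding the penalty saving $\pi w^2$ plus the slack $2\pi w\lambda\cdot 2B$. We would try $w=1$, $\lambda=1/(4B+1)$ and $M=n+2$, and verify each inequality directly.
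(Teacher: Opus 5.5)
Your reduction is correct and is essentially the paper's. The paper first pads \textsc{Partition} into \textsc{Equal-Cardinality Partition} (items $b_i+K$ plus $m$ copies of $K$), then uses radii $\delta(a_i+3B)$ with capacity for exactly half the rings and the threshold at full capacity; composed, this is precisely your zero-dummy padding with a large common base radius, and your parameters $w=1$, $\lambda=1/(4B+1)$, $M=n+2$ satisfy every inequality you need. The one formal point to add is the paper's normalization: state the threshold for the normalized area $A/\pi$, i.e.\ $2wR-w^2n$, so that every datum of the instance is rational.
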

\begin{proof}
Reduce from \textsc{Equal-Cardinality Partition}: given positive integers
$a_1,\dots,a_n$, $n$ even, with $\sum_i a_i=2B$, decide whether some $S$
with $|S|=n/2$ has $\sum_{S}a_i=B$. (This variant is NP-hard: from a
\textsc{Partition} instance $b_1,\dots,b_m$ with $\sum_i b_i=2B_0$, take
any integer $K\ge1$ and the $2m$ numbers $b_1+K,\dots,b_m+K,K,\dots,K$ ($m$
copies of $K$); an $m$-subset with $t$ items of the first kind has sum
$mK+\sum_{\mathrm{chosen}}b_i$, so $m$-subsets of sum $mK+B_0$ correspond
exactly to subsets of the original instance of sum $B_0$, of any size,
padded to cardinality $m$ with $K$-items.) Fix any width $w>0$, set $M=3B$, pick
$\delta\in\bigl(\tfrac{w}{3B},\tfrac{w}{2B}\bigr)$, and take radii
$r_i=\delta(a_i+M)$ with pan capacity $R^{+}=\delta\bigl(\tfrac n2M+B\bigr)$.
Dividing the area inequality by $\pi$ and choosing $\delta$ rational
in the interval makes every datum of the instance rational, so the
reduction is fully formal.
All radii lie in $[3B\delta,\,5B\delta]$, so $r_i>w$ (each is a genuine
ring, $a(r)=2\pi wr-\pi w^2$) and $r_{\max}-r_{\min}=2B\delta<w$ (no
nesting): feasibility is exactly $\sum_S r_i\le R^{+}$. Write $s=|S|$ and
$t=\sum_S a_i\le 2B$; the area is
$A(S)=2\pi w\delta(sM+t)-\pi w^2 s$, and we ask whether
$A(S)\ge\Theta:=2\pi w\delta\bigl(\tfrac n2M+B\bigr)-\pi w^2\tfrac n2$.
If a balanced partition exists, taking $S$ with $s=n/2$, $t=B$ attains
$\Theta$ exactly. Conversely: $s>n/2$ is infeasible, since it would force
$(s-\tfrac n2)M\le B-t\le B<M$. For $s=\tfrac n2-q$ with $q\ge1$, the
requirement $A(S)\ge\Theta$ rearranges to $wq\ge2\delta(qM+B-t)$, but
\[
2\delta(qM+B-t)\;\ge\;2\delta(qM-B)\;\ge\;2\delta q(M-B)\;=\;4\delta qB
\;>\;wq
\]
using $t\le2B$, $q\ge1$, $M=3B$ and $\delta>\tfrac{w}{4B}$ (implied by
$\delta>\tfrac{w}{3B}$): impossible. Finally $s=n/2$ forces $t\ge B$ from
$A\ge\Theta$ and $t\le B$ from feasibility, i.e.\ $t=B$: a balanced
partition.
\end{proof}

Superincreasing radii eliminate exactly this combinatorial layer --
precisely as in the superincreasing knapsack~\cite{Gupte2016} -- leaving
only the geometric one: by Theorem~\ref{thm:oblivious}, the area optimum
then reduces to at most $O(n^2)$ sibling-packing feasibility queries (per
ring, one per currently open container), or $n$ calls to the stronger
oracle ``does the current placement admit ring $i$''. Useful
black-box tools for the remaining geometry include the critical-density theorem
for disks in a disk (any family of total area at most half the container's
packs, and $1/2$ is tight)~\cite{FeketeKeldenichScheffer2019} and Split
Packing~\cite{FeketeMorrScheffer2019}.

\section{Generalizations}\label{sec:general}

The extensions of container shape and dimension below have complete
written proofs. Other changes of the model remain research directions.
\input{generalizations_v2}

\input{extras_v2}

\subsection{Further model variations}
\emph{Flexible rings.} Real squid rings bend: a ring partially resting on
another still touches the pan outside the overlap, up to a lifted ramp of
width $\delta$ around each overlap. The idealization $\delta=0$ (contact area
equals the annulus minus the overlapped region) defines a continuous
relaxation in which partial placements trade contact for cardinality; $\delta>0$
penalizes overlaps by a dead band proportional to the overlap perimeter.

\emph{Infinite inventories.} Unlimited supplies (already used in the
phase diagram) suggest degenerate limits: maximal nested chains
$r,\,r-w,\,r-2w,\dots$, density questions, and Apollonian-type limit
configurations.

\emph{Other objectives.} Theorem~\ref{thm:selection} applies to positive,
strictly increasing, superadditive values. Weighted counts and concave
values lie outside this class. Proposition~\ref{prop:count} shows that
the selection guarantee fails for the constant value $v\equiv1$,
which belongs to both categories.

\section{Conclusions}\label{sec:conclusions}

For disk containers, the exact placement threshold is the golden ratio:
every descending greedy execution reaches the lex-max feasible set at
$\rho\le\varphi$, whereas four-ring failures approach that value from
above. The proof is uniform in the finite inventory size. Golden tail
pressure reduces local disk feasibility to the three largest members,
and two auxiliary slots resolve every branching exchange while the
single-child construction resolves chains. These arguments allow
independent hole radii and do not depend on the historical numerical
coverage of exchange templates.

Placement independence and area optimality are different properties.
With common width, superadditivity supplies the earlier selection
guarantees. With independent holes, the sharp area factor is
$\min(1,\kappa^{-2}-1)$ under $\rho\le\kappa<1$, and its optimality
threshold is $1/\sqrt2$. The square bound $Y$, the rigid-family floor
$T$, and the higher-dimensional five-ring theorem retain their separate
scopes. The remaining questions below concern stronger or different
claims, rather than a missing case in the global disk proof.

\section{Open problems}\label{sec:open}

The universal disk threshold is settled by
Theorem~\ref{thm:golden-global}, including the endpoint and independent
holes. The following questions are not consequences of that theorem.

\begin{openproblem}\label{op:assembly}
Determine the sharp lower floors of the specialized nested-exchange
families beyond the rigid family, whose floor is exactly $T$. Replace
the remaining bounded computational certificates by analytic proofs
where useful, or exhibit limits of those particular constructions.
The historical coverage limits in Appendix~\ref{app:campaign} are not
unresolved cases of the global golden threshold.
\end{openproblem}

\begin{openproblem}\label{op:squareglobal}
Determine the exact square threshold, currently bounded above by $Y$,
and audit the extension of the new global disk theorem to spherical
containers in higher dimensions. The independently reviewed dimensional
claims in this version remain those stated in
Section~\ref{sec:dimension} and Theorem~\ref{thm:five-dim}.
\end{openproblem}

\begin{openproblem}\label{op:oracle}
Beyond the regimes settled here -- including $\rho\le\varphi$
for disk containers -- can a placement rule
with access to the full input attain the lex-max set with
polynomially many calls to a sibling-packing oracle, or is there a
complexity-theoretic obstruction?
\end{openproblem}

\begin{openproblem}\label{op:misc}
Characterize the divergence band of Section~\ref{sec:divergence} exactly;
determine the approximability of the cardinality objective; decide whether
the additive surrogate admits a pseudopolynomial algorithm for integer
radii; and settle membership in NP (versus
$\exists\mathbb R$-completeness) of disk-into-disk packability, whose
NP-hardness is~\cite[Theorem~5.1]{FeketeKeldenichScheffer2019}.
\end{openproblem}

\section*{Acknowledgements}
The results of Sections~\ref{sec:threshold}--\ref{sec:generic} were developed
with extensive computer assistance (Anthropic's Claude) under a
draft--verify workflow: every result was independently rederived from its
bare statement by an adversarial verification pass before being accepted,
and several statements were corrected or sharpened by that pass; the reports
are part of the repository. To be explicit about epistemic status: the
mathematical guarantee for every claim is the written proof and, where a
proof delegates an identity, an exact symbolic computation; the verification
workflow and the numerical sweeps are quality control and evidence, never a
substitute for proof.

The v2 extensions include Theorem~\ref{thm:golden-global}, the
five-ring dimensional guarantee and the area theorem for independent holes. They
were developed with
OpenAI Codex and reviewed independently using Claude Code/Fable.
The dated review payload manifests and verbatim reports are in
\texttt{docs/reviews/}. Those reviews are static; local execution of
Lean and the rational certificates is recorded separately. The new
golden proof and its exchange dependencies received two static Fable
reviews; their requested clarifications are incorporated. The integrated
English exposition and this complete PDF received a local editorial
and rendering review, not an additional full-manuscript Fable review.

\appendix
\section{Verification map}\label{app:verifmap}

Every theorem of Sections~\ref{sec:threshold}--\ref{sec:generic} is backed in
the accompanying repository by (i) a self-contained draft with complete
proofs, (ii) a verification script whose blocks separate exact symbolic
identities (\texttt{sympy}) from numerical sweeps and whose checks all pass (a few blocks
are declared explorations or statements of convention and cannot fail;
the scripts label them as such), and (iii) an
\emph{adversarial verification report}: an independent rederivation of each
result from its statement alone, followed by an audit and directed attacks.
The correspondence is tabulated below; per-round narratives (each
refutation, repair and re-verification) are recorded in the
repository reports (\texttt{docs/drafts/VEREDICTOS.md} and the
per-script reports); the Notes column keeps only the
headline. Unqualified fractions count script blocks; fractions
marked ``claims'' count verified claims.

\begin{center}
\small
\begin{longtable}{>{\raggedright\arraybackslash}p{0.30\textwidth}>{\raggedright\arraybackslash}p{0.28\textwidth}>{\raggedright\arraybackslash}p{0.33\textwidth}}
\hline
Result & Script(s) & Notes \\
\hline
\endfirsthead
\hline
Result & Script(s) & Notes \\
\hline
\endhead
Uniform golden theorem (Thm.~\ref{thm:golden-global}) &
\texttt{ThreeCore.lean}; \texttt{code/}\allowbreak
\texttt{tres\_}\allowbreak\texttt{mayores.py} &
Seven algebraic certificates; 10\,006 placement controls, not a universal
numerical proof; two static Fable reviews of the source proof \\
Five-ring spherical guarantee (Thm.~\ref{thm:five-dim}) &
\texttt{FiveRing.lean} &
Eleven certificates; explicit pockets and parent-alignment proof \\
Independent-hole area factor (Thm.~\ref{thm:variable-area}) &
\texttt{VariableWidth.lean}; \texttt{code/grosor\_}\allowbreak
\texttt{variable.py} &
Seven certificates; exact examples and negative controls; tail capture written \\
Arbitrary-container $n\le3$; dimension reduction and transfer &
\texttt{docs/drafts/} \texttt{generalizacion\_}\allowbreak
\texttt{dimensional.md} &
Written proofs; no packing heuristic used \\
Four-ring threshold (Thm.~\ref{thm:fourfloor}) &
\texttt{lean/Calamares/} \texttt{FourRing.lean} &
Two identities and ordered-ring bound; forest reduction written \\
Square confinement and D (Lem.~\ref{lem:squareQ}, Prop.~\ref{prop:squareD}) &
\texttt{code/cuadrado\_} \texttt{certificado.py} &
17 exact gates; universal Cartesian exclusion in Lean \\
Square twins and finite improvement &
\texttt{code/cuadrado\_} \texttt{gemelas.py},
\texttt{cuadrado\_optimizado.py} &
17+13 exact gates; Cartesian exclusions and witness in Lean \\
Square limiting bound (Thm.~\ref{thm:squarelimit}) &
\texttt{code/cuadrado\_} \texttt{limite.py} &
39 exact gates on three rational instances; continuity written;
three polynomial identities in Lean \\
Additive threshold (Thm.~\ref{thm:additive}) & \texttt{code/umbral.py} & reproduces the $\rho=1.03$/$1.02$ instances \\
Rigid floor (Thm.~\ref{thm:rigidfloor}) & \texttt{code/rigido.py} & 10/10 claims; draft \texttt{suelo\_rigido} \\
Slack pans; universal trio frontier & \texttt{code/universal.py} & 5/5 \\
$\kappa$ identity & \texttt{code/h1.py} & 6/6 claims \\
Width curve; Thm.~\ref{thm:corner} & \texttt{code/grosor.py}, \texttt{code/esquina.py} & 8/8 and 6/6 claims \\
Profile thresholds & \texttt{code/tresk.py}, \texttt{code/cuatrok.py} & drafts \texttt{perfil\_tres}, \texttt{cuatro} \\
Corona criterion & \texttt{code/corona.py} & 5/5 \\
Occupant walls; opposite disk; metallic floors & \texttt{code/ocupantes.py}, \texttt{code/bloqueadores.py} & 5/5 and 6/6 \\
Double pocket; golden line & \texttt{code/bolsillo.py} & 6/6 \\
Three-piece profiles; golden deformation & \texttt{code/striple.py} & 5/5 \\
Golden family (Thm.~\ref{thm:golden}) & \texttt{code/aureo.py} & 5/5 \\
Pan-exchange golden floor & \texttt{code/batalla2.py}, \texttt{code/microcelda.py} & 6/6 and 5/5 \\
Larger profiles & \texttt{code/perfilp.py}, \texttt{code/rstar.py} & 5/5 and 6/6 \\
Corona-versus-tails campaign & \texttt{code/coronacolas.py}, \texttt{code/coronanidada.py} & 5/5 each \\
Duality certificates & \texttt{code/zigzag.py} & 5/5 \\
Exchange assembly; container port; scaling & \texttt{code/ensamblaje.py}, \texttt{code/puertocii.py}, \texttt{code/escala.py} & 6/6, 7/7, 5/5 \\
Written-proof layer: compaction, insertion, gap lemma, hole coronas, geometric tail, optimization, repack & \texttt{code/compactacion.py}, \texttt{code/insercion.py}, \texttt{code/insercion}\discretionary{-}{}{}\texttt{anidada.py}, \texttt{code/gaplemma.py}, \texttt{code/coronaagujero.py}, \texttt{code/colageometrica.py}, \texttt{code/optimizacion.py}, \texttt{code/repack.py} & 5/5--7/7 each \\
Exact-arithmetic and LP layers & \texttt{code/r2bcert.py}, \texttt{code/arcolp.py}, \texttt{code/bolsillos.py} & 5/5 each \\
Conditional duality lemma; arc-LP to $k=6$ & \texttt{code/f3cierre.py} & 5/5; round refuted a stronger closure claim \\
Multipiece certificates & \texttt{code/r2bmulti.py} & 5/5; domain corrected to the sweep box \\
Golden-threshold reduction; heavy branches & \texttt{code/areduccion.py} & 5/5; algebra confirmed by hand \\
Diametral-pocket geometry ($X_Y>0$) & \texttt{code/espxy.py} & 5/5; an over-claimed legality was recorded as errata \\
Sliver vacuity; floor rigidity & \texttt{code/espvals.py} & 5/5; round supplied a missing wall \\
Global-tail audit & \texttt{code/auditcolas.py} & 5/5; first version refuted, repaired \\
Forced sub-pocket (near-equal tops) & \texttt{code/f3vacio.py} & 5/5 \\
Light mirrored $X_Y>0$ ($k$ pieces) & \texttt{code/espkp.py} & 5/5; dust convention made explicit \\
Heavy mirrored $X_Y>0$ & \texttt{code/esppesada.py}, \texttt{code/espfinal.py} & 5/5 and 6/6; one surgical gap repaired \\
Occupant channel (light; heavy) & \texttt{code/espcanal.py}, \texttt{code/espcanalp.py} & 5/5 each; a residual sheet dissolved, a tower cut repaired \\
Exact duality-gap converse & \texttt{code/f3converso.py} & 5/5; first round refuted the draft, all repairs re-verified \\
C3.3 corona wall (box certificate) & \texttt{code/rstarcert.py} & 5/5; 71 boxes \\
$2\to3$ divergence & \texttt{code/divergencia3.py} & 5/5; rational-exact counterexample \\
Phase-diagram figure & \texttt{code/franja.py} & generator \\
D1 shadow budget (box certificate) & \texttt{code/insercioncert.py} & 5/5; 149+173 boxes, all occupant counts \\
Branch-B golden-line patches & \texttt{code/goldencert.py} & 5/5; exact Sturm isolation \\
Gap-lemma cross-derivation & \texttt{code/gaplemmacert.py} & 5/5; contrast only \\
Width tails $\omega>1.6$; $Y>6.6$ tails & \texttt{code/espomegacola.py}, \texttt{code/r2bcolas.py} & 5/5 each \\
Deep-tower mid band (light; heavy) & \texttt{code/esptorre.py}, \texttt{code/esptorrep.py} & 5/5 each \\
Width tail of all six channel cells & \texttt{code/espomegacanal.py} & first round refuted two coverage gaps; repaired, re-confirmed \\
Slot-reduction lemma; placement engine & \texttt{code/lemaA.py} & 5/5; engine rebuilt after refutation, confirmed against an LP oracle \\
G-b$'$ over all $Y$, mass, piece counts & \texttt{code/lemaA2.py} & 6/6; two unsound caps repaired before refereeing \\
Heavy assembly murals G-e/G-g & \texttt{code/lemaA3.py} & 5/5; a $k\ge6$ block cap repaired \\
Light channel, $k\ge2$ extras & \texttt{code/lemaA4.py} & domain $\omega\le1.25$ (leaf extras, full nested-mass range), parents on $\omega\le1.05$, $W_v\le8$, and the $j_v\le1$ slice of $[1.6,2]$; nine rounds, four intermediate refutations repaired \\
Heavy profile, $k\ge2$ extras & \texttt{code/lemaA5.py} &
$\omega\le1.25$ after the certified-corner extension; leaf extras,
$W_z\le34$, full $W_v$ range \\
Quintet certificate of Theorem~\ref{thm:gapwritten} & \texttt{code/quintetocert.py} & 4/4; whole domain, 11{,}973 boxes, tolerance-free rational acceptance, no platform hypotheses; supersedes \texttt{code/bolsillos.py} and
\texttt{code/arcolp.py} for the $j=1$ quintet \\
\hline
\end{longtable}
\end{center}

Several statements were corrected or sharpened by the adversarial
rounds (a missing hypothesis in the trio frontier, a refuted
``leak'' family, the $\alpha\ge o_1$ case split, a refuted
induction in the duality layer, a repaired extension lemma and case
split in the assembly); the reports record each incident.

A fourth layer is machine-checked. The repository contains a Lean~4
development (\texttt{lean/}, core only, no \texttt{mathlib}) that formalizes
the \emph{exact-certificate layer} on which the proofs rest: arithmetic in
$\mathbb Q$ and $\mathbb Q[\sqrt5]$ with a decidable real order, and
polynomial identities over $\mathbb Z[A]$, $(\mathbb Q[e])[d]$ and
$(\mathbb Q[\sqrt5])[r]$. Its
original fifty-seven theorems cover the golden arithmetic of Theorem~\ref{thm:golden}
($\varphi^2=\varphi+1$, $b_2(\varphi,1)=\varphi/2$, the fixed point and its
factorization, the witness certificate at $s^{*}=4(\sqrt5-2)$, the
$\varepsilon_0$ margin and the width window), the metallic-mean certificates
of Theorem~\ref{thm:DP} (including $\Psi(1/2)=\Psi_B(1)=\Psi_2(\varphi/2)
=\varphi$ and the monotonicity certificate for $g$), the Tribonacci floor
($P(\varphi)=-1$, the bracket $1.8392<T<1.8393$ and a certificate that $P$
increases strictly on $[1,\infty)$), the $13/7$ corner, the additive
threshold family, the pincer and boundary-corner certificates of
Theorem~\ref{thm:DPr}, and the duality-certificate identities of
Appendix~\ref{app:campaign} (the vanishing Descartes discriminant and
pocket $\varphi/2$ of the golden pair, the critical margin identity,
the $(N)$-heritage golden line, the mirror corner
$b_2(2,\sqrt5-1)=1$, the strong-breathing chain
$\varphi(3-\varphi)=2\varphi-1=\sqrt5$,
$\sqrt5-(\varphi-1)=\varphi$, the golden $\pi$-trio identity
behind $\theta(\varphi,1/\varphi)+\theta(1/\varphi,1)
+\theta(1,\varphi)=\pi$ at $R=2\varphi$, the reduction-lemma
thresholds $t_0=(\varphi-1)/4=1/(4\varphi)$ with
$5t_0=\varphi-\beta^{*}$ and $4t_0=\varphi-1$ exact at
$\beta^{*}=(9-\sqrt5)/8$, the diametral-pocket identities
$\varphi^2+\varphi+1=2\varphi^2$,
$x^{*}(\varphi)=\varphi/2$ and
$\varphi(4-\varphi)=3\varphi-1$, and the forbidden-triple and
golden-cubic certificates of the near-equal-tops vacuity --
$2\cdot(\varphi/2)=\varphi$, the polynomial identity
$r(1+r+r^2)+r(1+r)-\varphi(1+r+r^2)
=r^3+(2-\varphi)r^2+(2-\varphi)r-\varphi$ in
$(\mathbb Q[\sqrt5])[r]$, $q(9/10)=171/271$ and the ceiling
inequality $\varphi-9/10-171/271<25/83$; and, from the
extra-ring campaign, the tail-slope identities $2/\varphi=\sqrt5-1>1$
and $(2/\varphi)(2/\varphi-1)=7-3\sqrt5$ of the $W_z$-tail's upper
piece with the numerator coefficient $(6+2\sqrt5)(7-3\sqrt5)
=12-4\sqrt5$, the heavy-mass bound $2\varphi/3<1.079$ with its
genuine extremum at $b=\varphi/3$, and the spillover-bound
identities of Theorem~\ref{thm:D1written} with
$\varphi-1=1/\varphi$; and the exact anchors of the quintet
certificate — the stacked-witness margin $2\varphi^4-(\varphi+2)^2
=\varphi-1$ at the golden point, $2\varphi-1=\sqrt5$ with
$\sqrt5\ge2$ for the mid-arc criterion, and the semiarc threshold
collapse $1+f(2/\varphi)+f(2)=1+\sqrt5$ at $R=1+\sqrt5$).
The v2 adds 24 square theorems, described in
Section~\ref{sec:squarelimit}, and three four-ring algebraic theorems,
bringing that stage to 84. Later modules add 18 certificates for five
rings and variable width, three for balanced partitions, three for
two-tail pressure, seven for reservoir bounds, and seven for the
three-largest-disks proof: 122 in the repository. The reservoir bounds
and the historical adjacent-swap classification are not premises of
Theorem~\ref{thm:golden-global}. There are no \texttt{sorry}s and no new axioms
(\texttt{\#print axioms} reports only \texttt{propext},
\texttt{Classical.choice} and \texttt{Quot.sound}); \texttt{native\_decide}
is not used, so every proof is checked by the Lean kernel. The square's
Cartesian inequalities are formalized. Corona rigidity, angular
criteria, placement trees, evacuations, dimension reduction and
continuity remain written proofs, with supporting scripts and reports.

\section{Complete proof of Theorem~\ref{thm:rigidfloor}}\label{app:rigidproof}

Throughout, work in the normalization of Theorem~\ref{thm:rigidfloor}:
$r_1=1$, $t=r_2$, $p=r_3$, $q=r_4$, width $\omega>0$, pan radius $R=1+t$,
and recall $b(t)=t(1+t)/(1+t+t^2)$, $T^3=T^2+T+1$, $t^{*}=1/T$
(equivalently $t^{*3}+t^{*2}+t^{*}=1$), $\varphi=(1+\sqrt5)/2$. All exact
algebraic identities used below are additionally verified in
\texttt{code/rigido.py} (block V5).

\begin{lemma}[Half-angle identity]\label{lem:S1}
Let two circles of radii $a,b$ be internally tangent to the wall of a disk
of radius $R$ (centers at distances $R-a$, $R-b$ from the center). If
$a+b\le R$, the minimal angular separation $\theta(a,b)$ guaranteeing
disjoint interiors is well defined, lies in $[0,\pi]$, satisfies
\[
\sin^2\tfrac{\theta(a,b)}2=f(a)f(b),\qquad f(x)=\frac{x}{R-x},
\]
and is increasing in each argument; two wall-tangent circles at angular
separation $\gamma$ have disjoint interiors iff $\gamma\ge\theta(a,b)$.
\end{lemma}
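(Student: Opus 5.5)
We place the big disk's center at the origin and parametrize the two wall-tangent circles by their centers $c_a=(R-a)e(\alpha)$ and $c_b=(R-b)e(\beta)$, where $e(\cdot)$ is the unit vector at the given angle and $\gamma=|\alpha-\beta|\in[0,\pi]$ is the angular separation. Disjoint interiors is the condition $|c_a-c_b|\ge a+b$. By the law of cosines,
\[
|c_a-c_b|^2=(R-a)^2+(R-b)^2-2(R-a)(R-b)\cos\gamma
=(a-b)^2+4(R-a)(R-b)\sin^2\tfrac\gamma2 ,
\]
where the second form uses $1-\cos\gamma=2\sin^2(\gamma/2)$. Comparing with $(a+b)^2=(a-b)^2+4ab$, the disjointness condition becomes $(R-a)(R-b)\sin^2(\gamma/2)\ge ab$, that is, $\sin^2(\gamma/2)\ge f(a)f(b)$.

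Next comes well-definedness. Since $\gamma\mapsto\sin^2(\gamma/2)$ is continuous and strictly increasing on $[0,\pi]$, taking values from $0$ to $1$, the set of admissible separations is an interval $[\theta,\pi]$ as soon as $f(a)f(b)\le1$. We have $f(a)f(b)\le 1$ exactly when $ab\le(R-a)(R-b)$, which simplifies to $R(a+b)\le R^2$, i.e.\ $a+b\le R$; this is the stated hypothesis. So $\theta(a,b)=2\arcsin\sqrt{f(a)f(b)}\in[0,\pi]$ exists, the identity $\sin^2(\theta/2)=f(a)f(b)$ holds, and the characterization ``disjoint iff $\gamma\ge\theta$'' is the equivalence just derived. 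Here we also need $a,b<R$, so that $f$ is positive and finite; this follows from $a+b\le R$ with positive radii.

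For monotonicity, $f(x)=x/(R-x)$ is strictly increasing on $(0,R)$, so the product $f(a)f(b)$ increases in each argument. Composing with the increasing map $s\mapsto2\arcsin\sqrt s$ on $[0,1]$ shows that $\theta$ increases in each argument. The only care needed is to apply this within the domain $a+b\le R$, where the product stays at most $1$.

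No step is a real obstacle. The only points to handle carefully are the rewriting of the law-of-cosines expression (the $(a-b)^2$ terms cancel cleanly), and recording that the hypothesis $a+b\le R$ is exactly what makes the arcsine argument at most $1$. This also gives the boundary case $a+b=R$, where $\theta=\pi$ corresponds to the diametral configuration.
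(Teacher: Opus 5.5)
Your proposal is correct and follows essentially the same route as the paper: the law of cosines for the distance between the two centres, with disjointness reduced to $\sin^2(\gamma/2)\ge f(a)f(b)$. The condition $f(a)f(b)\le1$ is equivalent to $a+b\le R$, and monotonicity is inherited from $f$. Your rewriting $|c_a-c_b|^2=(a-b)^2+4(R-a)(R-b)\sin^2(\gamma/2)$ is just a slightly more explicit form of the paper's threshold computation $1-\cos\theta=2ab/\bigl((R-a)(R-b)\bigr)$.
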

\begin{proof}
By the law of cosines the center distance
$D(\gamma)^2=(R-a)^2+(R-b)^2-2(R-a)(R-b)\cos\gamma$ is increasing on
$[0,\pi]$, and disjointness is $D\ge a+b$. At the threshold,
$1-\cos\theta=\bigl[(a+b)^2-(a-b)^2\bigr]/\bigl(2(R-a)(R-b)\bigr)
=2ab/\bigl((R-a)(R-b)\bigr)$, i.e.\
$\sin^2(\theta/2)=f(a)f(b)$; and $f(a)f(b)\le1$ is exactly $a+b\le R$.
Monotonicity is that of $f$.
\end{proof}

\begin{lemma}[Construction]\label{lem:S2}
Let $q\le p\le t<1$ and $R=1+t$. If
$\theta(1,p)+\theta(1,q)+\theta(p,q)\le2\pi$, then $\{1,p,q\}$ packs in the
disk of radius $R$.
\end{lemma}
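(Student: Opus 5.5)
The plan is to build the packing explicitly with all three circles internally tangent to the wall. Put the centre of the unit circle at angle $0$, the centre of $p$ at angle $\theta(1,p)$, and the centre of $q$ at angle $-\theta(1,q)$, all on the appropriate concentric circles of radii $R-1$, $R-p$, $R-q$. Lemma~\ref{lem:S1} needs $a+b\le R$ for each pair. This holds here: $1+p\le1+t=R$, $1+q\le R$, and $p+q\le2t<1+t$. Containment in the disk is automatic for wall-tangent circles. By Lemma~\ref{lem:S1}, the pairs $\{1,p\}$ and $\{1,q\}$ then have disjoint interiors, since their angular separations equal the thresholds exactly.

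It remains to separate $p$ from $q$. Going the long way round, their angular separation is $\gamma=2\pi-\theta(1,p)-\theta(1,q)$, and by hypothesis this is at least $\theta(p,q)$. The subtlety is that Lemma~\ref{lem:S1} measures separation in $[0,\pi]$: the actual angular distance is $\min(\gamma,2\pi-\gamma)$. The next step is therefore to check that $2\pi-\gamma=\theta(1,p)+\theta(1,q)$ is also at least $\theta(p,q)$. This follows from the monotonicity in Lemma~\ref{lem:S1}: since $1\ge p$, we have $\theta(1,q)\ge\theta(p,q)$. Hence $\min(\gamma,2\pi-\gamma)\ge\theta(p,q)$. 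The monotonicity of the centre distance $D$ on $[0,\pi]$ then gives disjointness of $p$ and $q$.

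One degenerate point needs care: the placement must make sense even when $\theta(1,p)+\theta(1,q)>\pi$, which the argument above already handles. The main obstacle, though a mild one, is this wrap-around bookkeeping: making sure the "non-adjacent" pair is controlled whichever way round the circle is shorter. This is exactly where the ordering $q\le p\le 1$ and the monotonicity of $\theta$ are used. Everything else is a direct application of Lemma~\ref{lem:S1}.
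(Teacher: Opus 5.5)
Your proposal is correct and matches the paper's proof essentially step for step. You use the same wall-tangent placement at angles $0$, $+\theta(1,p)$, $-\theta(1,q)$, and the same two-sided check on the pair $\{p,q\}$: $\theta(1,p)+\theta(1,q)\ge\theta(p,q)$ holds because $\theta(1,q)\ge\theta(p,q)$ when $1\ge p$, the complement $2\pi-\theta(1,p)-\theta(1,q)\ge\theta(p,q)$ is the hypothesis, and hence the true separation $\min(\Delta,2\pi-\Delta)$ clears the threshold.
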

\begin{proof}
All three angles are defined ($1+p\le R\iff p\le t$, and $p+q\le2t<R$).
Place the three circles wall-tangent with the unit circle in the middle:
centers at angles $0$, $+\theta(1,p)$, $-\theta(1,q)$. Each circle is
contained in the disk (internal wall tangency). The pairs $(1,p)$ and
$(1,q)$ are separated by exactly their thresholds. For $(p,q)$, the angular
difference is $\Delta=\theta(1,p)+\theta(1,q)$ and the separation is
$\gamma=\min(\Delta,2\pi-\Delta)$. If $\gamma=\Delta$: by monotonicity
(Lemma~\ref{lem:S1}, $1\ge p$), $\theta(1,q)\ge\theta(p,q)$, so
$\Delta\ge\theta(p,q)$. If $\gamma=2\pi-\Delta$: the hypothesis gives
$2\pi-\Delta\ge\theta(p,q)$.
\end{proof}

\begin{lemma}[Algebraic reduction]\label{lem:S3}
Define $\psi(x)=\sqrt{(t-x)/x}$ (decreasing, $\psi(t)=0$) and
$\tau_t=t/\sqrt{1+t}$; note the exact identities $\psi(b(t))=\tau_t$ and
$t/(1+\tau_t^2)=b(t)$. If $q\le p\le t<1$ and $\psi(p)+\psi(q)\ge\tau_t$, then
$\theta(1,p)+\theta(1,q)+\theta(p,q)\le2\pi$, and hence $\{1,p,q\}$ packs
in the disk $R=1+t$ by Lemma~\ref{lem:S2}.
\end{lemma}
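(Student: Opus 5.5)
The plan is to pass to half-angles and turn the angular condition into a single polynomial-type inequality in the coordinates $a=\psi(p)$, $b=\psi(q)$. Write $A=\theta(1,p)/2$, $B=\theta(1,q)/2$, $C=\theta(p,q)/2$; these are all defined by Lemma~\ref{lem:S1}, since $1+p\le R$ and $p+q<R$, and all lie in $[0,\pi/2]$. The goal is $A+B+C\le\pi$. If $A+B\le\pi/2$ this is immediate from $C\le\pi/2$. Otherwise both $A+B$ and $\pi-C$ lie in $[\pi/2,\pi]$, where cosine is decreasing. The goal is then equivalent to
\[
\cos(A+B)+\cos C\;\ge\;0 .
\]

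\emph{Step 1: closed forms.} From $\sin^2A=f(1)f(p)$ with $f(1)=1/t$ and $R=1+t$ I compute
\[
\cos^2A=\frac{(1+t)(t-p)}{t(1+t-p)},
\]
so that $\cot A=\sqrt{1+t}\,\psi(p)$, and likewise $\cot B=\sqrt{1+t}\,\psi(q)$. I also get the identity $\sin A\sin B=\sin C/t$, because $\sin^2C=f(p)f(q)$. Dividing the goal by $\sin A\sin B>0$ turns it into
\[
(1+t)ab-1+t\cot C\;\ge\;0,\qquad
\cot^2C=\frac{(1+t)(1+t-p-q)}{pq}.
\]
Using $x=t/(1+\psi(x)^2)$, which is the same substitution as the identity $t/(1+\tau_t^2)=b(t)$, I rewrite $\cot C$ entirely in terms of $a$ and $b$. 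If some $p$ or $q$ equals $t$, the corresponding coordinate is $0$ and the formulas remain valid; a degenerate case with $\sin A\sin B=0$ can only give a trivially satisfied inequality.

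\emph{Step 2: monotonicity.} Both terms $(1+t)ab$ and $\cot C$ are nondecreasing in $a$ and in $b$: increasing $a$ decreases $p$, which decreases $\sin^2 C$. Hence the set of $(a,b)$ satisfying the inequality is an up-set. It therefore suffices to verify the inequality on the line $a+b=\tau_t$. A cleaner alternative is to show directly that the inequality, after clearing denominators, is implied by $(a+b)^2\ge\tau_t^2=t^2/(1+t)$.

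\emph{Step 3: the key algebraic computation.} If $(1+t)ab\ge1$ we are done. Otherwise I isolate $t\cot C\ge 1-(1+t)ab>0$ and square. Substituting $p=t/(1+a^2)$ and $q=t/(1+b^2)$, I expect the difference of the two sides to factor as a nonnegative multiple of $(1+t)(a+b)^2-t^2$. This factorization is the main obstacle. I would attempt it symbolically, guided by the two boundary checks $\psi(b(t))=\tau_t$ and the rigid configuration $a=0$, $b=\tau_t$ (that is, $p=t$ and $q=b(t)$), where equality should hold exactly because this is the Descartes pocket of the diametral pair. If the factorization does not come out cleanly, I would fall back on Step 2: verify the inequality on the segment $a+b=\tau_t$ by a one-variable argument, and let monotonicity carry it to the whole region. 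Once the inequality is established, Lemma~\ref{lem:S2} gives the packing.
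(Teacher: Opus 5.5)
Your route is correct, and the step you flag as the main obstacle closes exactly. With $p=t/(1+a^2)$ and $q=t/(1+b^2)$ one gets $t^2\cot^2C=(1+t)\bigl[(1-t)+a^2+b^2+(1+t)a^2b^2\bigr]$. Hence
\[
t^2\cot^2C-\bigl(1-(1+t)ab\bigr)^2=(1+t)(a+b)^2-t^2 ,
\]
which is an exact identity with multiplier $1$. The hypothesis $a+b\ge\tau_t$ therefore finishes the proof, and your Step~2 fallback is unnecessary.

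The paper takes the sine side instead. In the case $A+B>\pi/2$ it writes the goal as $\sin C\le\sin(A+B)=\sin A\cos B+\cos A\sin B$ and divides by $\sin C$. In your notation, since $\sin C=t\sin A\sin B$, this reads $1\le(\cot A+\cot B)/t=\sqrt{1+t}\,(a+b)/t$, which is literally $\psi(p)+\psi(q)\ge\tau_t$. Your Step~1 closed forms therefore already give a one-line proof along the paper's route. That route needs no formula for $\cot C$, no sign split on $1-(1+t)ab$, and no squaring.

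What your cosine version buys is the converse. On the segment $a+b=\tau_t$ one automatically has $(1+t)ab\le t^2/4<1$, so the identity shows equality $A+B+C=\pi$ along that whole segment. Below the segment the same identity gives strict failure, $A+B+C>\pi$. You thus obtain the exact frontier description. The paper does not need it here, since it uses only the constructive direction, and it proves that description separately, in the width-program normalization, in Lemma~\ref{lem:CF}.
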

\begin{proof}
Let $A=\sqrt{f(1)f(p)}$, $B=\sqrt{f(1)f(q)}$, $C=\sqrt{f(p)f(q)}$ (the
sines of the half-angles); the goal is
$\arcsin A+\arcsin B+\arcsin C\le\pi$. If $\arcsin A+\arcsin B\le\pi/2$
there is nothing to prove, since $\arcsin C\le\pi/2$. Otherwise the goal is
equivalent, both sides lying in $[0,\pi/2]$, to
$C\le\sin(\arcsin A+\arcsin B)=A\sqrt{1-B^2}+B\sqrt{1-A^2}$. Dividing by
$C>0$ and writing $G(x)=f(1)/f(x)-f(1)^2$,
\[
\frac{A\sqrt{1-B^2}}{C}=\sqrt{G(q)},\qquad
\frac{B\sqrt{1-A^2}}{C}=\sqrt{G(p)},\qquad
G(x)=\frac{(1+t)(t-x)}{t^2x},
\]
the last identity by direct computation with $f(1)=1/t$,
$f(x)=x/(1+t-x)$; the radicals are real because $f(1)f(x)\le1\iff x\le t$.
Thus $\sqrt{G(x)}=\bigl(\sqrt{1+t}/t\bigr)\psi(x)$ and, by hypothesis,
$\sqrt{G(p)}+\sqrt{G(q)}=\bigl(\sqrt{1+t}/t\bigr)(\psi(p)+\psi(q))
\ge\bigl(\sqrt{1+t}/t\bigr)\tau_t=1$, which is the required inequality.
\end{proof}

The contrapositive used below: hypothesis (F3) implies
$\psi(p)+\psi(q)<\tau_t$. This is the only direction ever used -- the
\emph{constructive} one -- so no exactness of any angular criterion is
assumed anywhere in this appendix.

\begin{lemma}[Corner minimization]\label{lem:S4}
Let $0<q\le p\le t<1$ with $\psi(p)+\psi(q)<\tau_t$. Then:
\textup{(1)} $p,q>b(t)$;
\textup{(2)} if $t\ge1/\varphi$ then $p+q>1$;
\textup{(3)} if $t<1/\varphi$ then $p+q>t+b(t)$.
\end{lemma}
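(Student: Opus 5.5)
The plan is to work in the coordinates $\alpha=\psi(p)$, $\beta=\psi(q)$, both in $[0,\infty)$. Since $\psi$ is a decreasing bijection from $(0,t]$ onto $[0,\infty)$, its inverse is $U(z)=t/(1+z^2)$. Thus $p=U(\alpha)$, $q=U(\beta)$, and the hypothesis reads $\alpha+\beta<\tau_t$ with $\alpha,\beta\ge0$.

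\textbf{Part (1).} Since $\beta\ge0$, we have $\alpha\le\alpha+\beta<\tau_t$, and likewise $\beta<\tau_t$. Because $U$ is strictly decreasing on $[0,\infty)$, this gives $p=U(\alpha)>U(\tau_t)=b(t)$. The last equality is the identity $t/(1+\tau_t^2)=b(t)$ recorded in Lemma~\ref{lem:S3}. The same argument applies to $q$.

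\textbf{Parts (2) and (3).} We minimize $p+q=U(\alpha)+U(\beta)$ over the closed triangle $\alpha,\beta\ge0$, $\alpha+\beta\le\tau_t$; the open region then has strict values wherever the minimum is attained only on the excluded edge $\alpha+\beta=\tau_t$ (or handled by monotonicity). First, for fixed $\alpha$, $U(\beta)$ decreases as $\beta$ grows, so the infimum sits on the segment $\alpha+\beta=\tau_t$. On that segment, with $\alpha\in[0,\tau_t]$, the function $h(\alpha)=U(\alpha)+U(\tau_t-\alpha)$ has to be examined. We have $U''(z)=2t(3z^2-1)/(1+z^2)^3$, so $U$ is concave on $[0,1/\sqrt3]$. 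If $\tau_t\le1/\sqrt3$, then $h$ is concave and its minimum is at an endpoint, namely $(\alpha,\beta)=(0,\tau_t)$, which gives $t+b(t)$. Since points of the open region have $\alpha+\beta<\tau_t$, their value is strictly larger, $p+q>t+b(t)$. This is (3) provided $t<1/\varphi$ implies $\tau_t\le1/\sqrt3$. That reduces to $3t^2\le 1+t$, i.e. $t\le(1+\sqrt{13})/6\approx0.7676$, which holds because $1/\varphi\approx0.618$.

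For (2), when $t\ge1/\varphi$ the target $1$ is at most $t+b(t)$, because $t+b(t)\ge1$ iff $t^3+t^2+t\ge1$ roughly, and at $t=1/\varphi$ we have $t+b(t)>1$. I would verify this by the identity $1/\varphi+b(1/\varphi)$ compared against $1$, with $b$ increasing. Then two regimes arise. On $t\le(1+\sqrt{13})/6$ the same concavity argument gives $p+q>t+b(t)\ge1$. For larger $t$, concavity can fail near the middle of the segment, so instead I would bound $h$ directly. Since $U$ is convex beyond $1/\sqrt3$ and concave before, the minimum of $h$ is either at an endpoint or at the symmetric point $\alpha=\tau_t/2$, where $h=2U(\tau_t/2)=2t/(1+t^2/(4(1+t)))=8t(1+t)/(4+4t+t^2)$. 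One checks this exceeds $1$ for $t\ge1/\varphi$: it is increasing in $t$ and already larger than $1$ at $t\approx0.77$.

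The main obstacle is this last regime, where $\tau_t>1/\sqrt3$ and concavity is lost. There the interior critical points of $h$ need a careful (though one-variable) argument. I would first try the crude bound $h\ge\min(t+b(t),\,2U(\tau_t/2))$, justified by the convex-concave shape of $U$ (at most one inflection on the relevant range). I would then verify both quantities exceed $1$ by elementary algebra.
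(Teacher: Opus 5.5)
Your parts (1) and (3) are the paper's argument: the substitution $p=U(\alpha)$, $q=U(\beta)$, the identity $U(\tau_t)=b(t)$, and concavity of $U$ on $[0,1/\sqrt3]$ together with $\tau_t\le1/\sqrt3$ for $t\le(1+\sqrt{13})/6$. The problem is your treatment of (2) when $t>(1+\sqrt{13})/6$.

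There you assert that, because $U$ is concave up to $1/\sqrt3$ and convex beyond, the minimum of $h(\alpha)=U(\alpha)+U(\tau_t-\alpha)$ is attained either at an endpoint or at $\alpha=\tau_t/2$. A single inflection point does not imply this. $U'$ decreases on $[0,1/\sqrt3]$ and increases afterwards, so its level sets contain two points. Hence the critical-point equation $U'(\alpha)=U'(\tau_t-\alpha)$ can have asymmetric solutions with $\alpha<1/\sqrt3<\tau_t-\alpha$. Nothing in your sketch rules out an asymmetric interior minimum.

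The claim does happen to hold here, but only through an argument you have not given. Write $U'(z)=-2t\sqrt{G(z)}$ with $G(z)=z^2/(1+z^2)^4$, and note that $\tau_t<1/\sqrt2$ for every $t<1$. The monotonicity argument inside Theorem~\ref{thm:Ckappa} then shows that $h$ is increasing on $[0,\tau_t/2]$. So the symmetric point is in fact a maximum, and the endpoints are the minimum for all $t<1$. As written, this step is a gap.

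More importantly, the detour is unnecessary. Part (2) follows in one line from part (1), which you have already proved. From $p,q>b(t)$ you get $p+q>2b(t)$, and
\[
2b(t)-1=\frac{t^2+t-1}{t^2+t+1}\ge0\iff t\ge1/\varphi .
\]
This is the paper's proof. It needs no case split in $t$ and no analysis of $h$ on the segment. Your first regime, $1/\varphi\le t\le(1+\sqrt{13})/6$ via $t+b(t)\ge1$, is correct but is superseded by this one-liner.
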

\begin{proof}
(1) Each summand is $<\tau_t$, $\psi$ is decreasing and $\psi(b(t))=\tau_t$.
(2) $p+q>2b(t)$ and $2b(t)-1=(t^2+t-1)/(t^2+t+1)\ge0$ exactly when
$t\ge1/\varphi$.
(3) Substitute $\alpha=\psi(p)$, $\beta=\psi(q)$, so $p=U(\alpha)$,
$q=U(\beta)$ with $U(z)=t/(1+z^2)$ strictly decreasing; the region is
$\alpha,\beta\ge0$, $\alpha+\beta<\tau_t$. Increasing $\alpha$ to
$\tau_t-\beta$ decreases $p$, so $p+q>U(\tau_t-\beta)+U(\beta)=:W(\tau_t-\beta)$.
Now $U''(z)=2t(3z^2-1)/(1+z^2)^3\le0$ for $z\le1/\sqrt3$, and
$\tau_t\le1/\sqrt3\iff3t^2-t-1\le0\iff t\le(1+\sqrt{13})/6=0.7676\dots$,
which covers $t<1/\varphi=0.618\dots$ with room to spare. Hence $U$ is
concave on $[0,\tau_t]$, $W$ is concave, and its minimum over $[0,\tau_t]$ is
at the endpoints: $W\ge W(0)=U(0)+U(\tau_t)=t+t/(1+\tau_t^2)=t+b(t)$, by the
identity $t/(1+\tau_t^2)=b(t)$.
\end{proof}

\begin{proof}[Proof of the strict floor in Theorem~\ref{thm:rigidfloor}]
Let $I\in\mathcal F$, normalized. By (F3) and the contrapositive of
Lemmas~\ref{lem:S2}--\ref{lem:S3}, $\psi(p)+\psi(q)<\tau_t$. If
$t\ge1/\varphi$, Lemma~\ref{lem:S4}(2) gives $p+q>1$, contradicting (F2)
($p+q\le1-\omega<1$); so $t<1/\varphi$ and Lemma~\ref{lem:S4}(3) gives the
blocking pressure $p+q>t+b(t)$. Chaining with the witness pressure (F2),
\[
t+b(t)\;<\;p+q\;\le\;1-\omega\;<\;1,
\]
and $t+b(t)<1\iff t(1+t)<(1-t)(1+t+t^2)=1-t^3\iff t^3+t^2+t<1\iff t<t^{*}$
(which also proves that $\mathcal F$ forces $t<t^{*}$). Finally
\[
\rho\;\ge\;\rho_2=\frac{p+q}{t}\;>\;\frac{t+b(t)}{t}
\;=\;1+\frac{1+t}{1+t+t^2}\;=:\;L(t),
\]
and $L$ is strictly decreasing ($L'=-(t^2+2t)/(1+t+t^2)^2<0$) with
$L(t^{*})=1/t^{*}=T$ (an identity modulo $t^{*3}+t^{*2}+t^{*}=1$). Since
$t<t^{*}$: $\rho>L(t)>L(t^{*})=T$.
\end{proof}

It remains to prove exactness of the infimum. Two ingredients: the rigid
pocket is exact, and infeasibility survives small perturbations.

\begin{proposition}[Exact rigid pocket]\label{prop:S5}
For $0<q\le t<1$, the triple $\{1,t,q\}$ packs in the disk of radius
$R=1+t$ if and only if $q\le b(t)$.
\end{proposition}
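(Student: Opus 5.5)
The proof has two directions. Sufficiency will be an explicit Descartes configuration, and necessity a rigidity argument for the pair $\{1,t\}$ in the disk of radius $R=1+t$.

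\emph{Setup.} Since $1+t=R$, any packing forces the unit circle and the $t$-circle to be internally tangent to the wall and diametrically opposite. Indeed, their centers satisfy $|c_1|\le t$, $|c_t|\le 1$ and $|c_1-c_t|\ge 1+t$. The triangle inequality then forces equality throughout, with the centers collinear through the origin. So fix $c_1=(-t,0)$ and $c_t=(1,0)$.

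\emph{Sufficiency.} Consider the circle tangent internally to the wall and externally to both fixed circles. Its radius is the Descartes value. Solving Descartes' theorem with curvatures $-1/(1+t)$, $1$ and $1/t$ gives the double root $b(t)=t(1+t)/(1+t+t^2)$, because three mutually tangent circles with collinear centers make the discriminant vanish. For any $q\le b(t)$, shrink that pocket circle concentrically to radius $q$; it remains disjoint from the other two and inside the disk. This gives the packing.

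\emph{Necessity.} Suppose the third center is $c=(x,y)$. The constraints are
\[
|c|\le 1+t-q,\qquad |c-c_1|\ge 1+q,\qquad |c-c_t|\ge t+q .
\]
Square all three and eliminate. Subtracting suitable multiples of the first inequality from the other two cancels $x^2+y^2$, leaving two linear inequalities in $x$. Their compatibility forces an inequality on $q$ alone.

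The plan is to combine them with weights $t$ and $1$, mirroring the weighted combination $|c-c_1|^2t+|c-c_t|^2\cdot 1-(1+t)|c|^2$. This weighting is Stewart's theorem for the collinear points $c_1,0,c_t$, since $0=(t\,c_t+1\cdot c_1)/(1+t)$. Stewart's identity makes the left side equal to $t(1+t)$ independently of $c$. The two lower bounds and the upper bound on $|c|$ then yield
\[
t(1+q)^2+(t+q)^2-(1+t)(1+t-q)^2\le t(1+t).
\]
I expect this polynomial inequality to simplify to $4(t^2+t+1)(q-b(t))\le 0$. This matches the factorization mentioned in the sketch of Theorem~\ref{thm:rigidfloor}, so I anticipate a clean factorization confirming the bound $q\le b(t)$.

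\emph{Main obstacle.} The step I expect to be delicate is choosing the correct weights and signs so that the inequalities combine in the right direction. Stewart's weights make the $c$-dependence vanish identically, so only a sign check on the squared constraints remains, followed by verifying the algebraic simplification. The rigidity step and the Descartes computation are routine.
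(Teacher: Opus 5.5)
\textbf{Gap: the Stewart weights are swapped, so the necessity step fails as written.}

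Your overall route is the paper's. You use rigidity of the diametral pair, the Descartes pocket for sufficiency, and elimination of the third center's position by a weighted combination of the squared constraints. The paper does the same elimination in polar form, adding $t$ times the $c_t$-constraint to the $c_1$-constraint to cancel $\cos\gamma$.

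With $c=(x,y)$, $c_1=(-t,0)$ and $c_t=(1,0)$, your stated combination gives
\[
t\,|c-c_1|^2+|c-c_t|^2-(1+t)|c|^2 \;=\; 2(t^2-1)\,x+t^3+1 .
\]
This depends on $x$ because $t<1$, and it does not equal $t(1+t)$ even at $x=0$. Accordingly, your displayed inequality simplifies to $2(1+4t+t^2)\,q\le(1+t)^3$, not to $4(t^2+t+1)\bigl(q-b(t)\bigr)\le0$. At $t=\tfrac12$ it yields only $q\le\tfrac{27}{52}$, which is vacuous since $q\le t$, whereas $b(\tfrac12)=\tfrac37$. So the expected factorization does not occur, and necessity is not established as written.

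\textbf{The fix.} In Stewart's identity, the weight on $|c-c_i|^2$ is the barycentric coefficient of $c_i$ in the point $0$. Your own relation $0=(t\,c_t+c_1)/(1+t)$ puts weight $1$ on $c_1$ and weight $t$ on $c_t$. Thus, for every $c$,
\[
|c-c_1|^2+t\,|c-c_t|^2-(1+t)|c|^2 \;=\; t(1+t).
\]
The three constraints then give
\[
(1+q)^2+t(t+q)^2-(1+t)(1+t-q)^2\le t(1+t).
\]
The $q^2$ terms cancel, and this is exactly $4(t^2+t+1)\bigl(q-b(t)\bigr)\le 0$, the paper's inequality.

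With that correction your proof is complete. It even handles a center at the origin uniformly, without the paper's separate $d=0$ check. Your rigidity and sufficiency steps are correct as written; in particular the Descartes discriminant $-\tfrac1{1+t}+\tfrac1t-\tfrac1{t(1+t)}$ does vanish, giving the double root $b(t)$.
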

\begin{proof}
\emph{Rigidity.} Centers satisfy $|c_1|\le t$, $|c_t|\le1$ and
$|c_1-c_t|\ge1+t$; the triangle inequality forces equality throughout, so
up to rotation $c_1=(-t,0)$, $c_t=(1,0)$.
\emph{Sufficiency.} The pocket circle tangent to the wall and to both has
radius $b(t)$; any $q\le b(t)$ placed concentrically inside it works.
\emph{Necessity.} Let $X$ be the center of $q$ and $d=|X|$; if $d=0$ then
$|X-c_1|=t<1+q$, infeasible, so $d>0$. Let $\gamma$ be the
angle between $X$ and $(1,0)$. Disjointness reads
$d^2+t^2+2dt\cos\gamma\ge(1+q)^2$ and $d^2+1-2d\cos\gamma\ge(t+q)^2$, which
bound $\cos\gamma$ from below and above; compatibility of the two bounds
requires $(1+t)d^2\ge(1+q)^2+t(t+q)^2-t(1+t)$, and with $d\le R-q=1+t-q$:
\[
(1+t)(1+t-q)^2-(1+q)^2-t(t+q)^2+t(1+t)\;\ge\;0.
\]
The left-hand side factors \emph{exactly} as
$4\,(t^2+t+1)\,\bigl(b(t)-q\bigr)$ (symbolic identity, V5), whence
$q\le b(t)$.
\end{proof}

\begin{lemma}[Closure and monotonicity of packability]\label{lem:S6a}
Let $D=\{(t,p,q):0<q\le p\le t<1\}$ and say $z=(t,p,q)$ \emph{packs} if
there are centers with $|c_1|\le t$, $|c_2|\le1+t-p$, $|c_3|\le1+t-q$,
$|c_1-c_2|\ge1+p$, $|c_1-c_3|\ge1+q$, $|c_2-c_3|\ge p+q$. Let
$E\subseteq D$ be the packable set. Then:
\textup{(1)} $E$ is downward closed in $(p,q)$;
\textup{(2)} $E$ is closed in $D$;
\textup{(3)} for fixed $t,q$, the set $\{p\in[q,t]:(t,p,q)\in E\}$ is empty
or a closed interval $[q,p_{\max}]$ with the maximum attained; in
particular, if $(t,t,q)\notin E$, then (when the interval is nonempty;
if it is empty every $p$ is infeasible and any $\delta$ serves)
$(t,p,q)\notin E$ for all
$p\in(p_{\max},t]$ -- the infeasibility of the rigid corner propagates to
$\delta:=t-p\in[0,\delta_0)$, $\delta_0=t-p_{\max}>0$.
\end{lemma}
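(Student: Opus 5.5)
For (1), I would argue directly from the defining inequalities. Take a packing witness $(c_1,c_2,c_3)$ for $(t,p,q)$ and a point $(t,p',q')\in D$ with $p'\le p$ and $q'\le q$. The same centers still work. The containment bounds only loosen, since $1+t-p'\ge1+t-p$ and $1+t-q'\ge1+t-q$. The separation requirements $1+p'$, $1+q'$ and $p'+q'$ only decrease. So $(t,p',q')\in E$ whenever it lies in $D$. Geometrically, this is the concentric shrinking already used in Proposition~\ref{prop:n3}.

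For (2), I would use compactness of witnesses. Let $z_k=(t_k,p_k,q_k)\in E$ converge to $z\in D$, with witnesses $(c_1^k,c_2^k,c_3^k)$.
\begin{itemize}
\item All centers lie in the ball of radius $2$, since $|c_i^k|\le 1+t_k<2$.
\item By Bolzano--Weierstrass, a subsequence of the witnesses converges to some triple $(c_1,c_2,c_3)$.
\item Each of the six constraints has the form $g(c,z)\le0$ or $g(c,z)\ge0$, with $g$ continuous in both centers and parameters.
\item Hence each constraint survives the limit, $(c_1,c_2,c_3)$ witnesses $z$, and $z\in E$.
\end{itemize}
No degenerate case arises because $D$ already imposes $q>0$ and $t<1$, and the constraints need no lower bound on the radii.

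For (3), fix $t$ and $q$ and set $I=\{p\in[q,t]:(t,p,q)\in E\}$.
\begin{itemize}
\item By (1), applied with $q'=q$, the set $I$ is downward closed within $[q,t]$. So it is empty or an interval starting at $q$.
\item By (2), $I$ is closed, as the preimage of the closed set $E$ under the continuous map $p\mapsto(t,p,q)$ from $[q,t]$ into $D$. The slice $[q,t]\subset(0,1)$ keeps $(t,p,q)$ inside $D$.
\item A nonempty, closed, downward-closed subset of $[q,t]$ is $[q,p_{\max}]$ with $p_{\max}=\sup I$ attained.
\end{itemize}
If $(t,t,q)\notin E$, then $p_{\max}<t$. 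Every $p\in(p_{\max},t]$ lies outside $I$, so $\delta_0=t-p_{\max}>0$ as stated. The empty case is vacuous.

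The only point needing care is the use of closedness \emph{relative to} $D$ in (2). $D$ is not closed, because its boundary includes $q=0$ and $t=1$. However, the limit point is assumed to lie in $D$, and the argument never divides by a radius, so this causes no trouble. I expect nothing here to be a genuine obstacle. The lemma is purely topological, and the rigid-corner infeasibility input comes separately from Proposition~\ref{prop:S5}.
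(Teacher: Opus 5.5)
Your proposal is correct and follows the paper's proof essentially step for step. Part (1) reuses the same centers, since all six constraints relax. Part (2) takes a convergent subsequence of witnesses in a fixed compact ball and passes to the limit in the non-strict continuous constraints. Part (3) combines the initial-segment property from (1) with closedness from (2); you only add a little more explicit detail, such as relative closedness in $D$ and the preimage argument for the slice.
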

\begin{proof}
(1) The same centers witness (all six constraints relax). (2) Witness
centers live in a fixed compact ball; take a convergent subsequence of
witnesses along $z_k\to z$ and pass to the limit in the six non-strict
continuous constraints. (3) By (1) the set is an initial segment, by (2) it
is closed.
\end{proof}

\begin{proposition}[The infimum is $T$ and is not attained]\label{prop:S6}
$\inf\{\rho(I):I\in\mathcal F\}=T$.
\end{proposition}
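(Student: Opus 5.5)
Since the strict floor $\rho>T$ on all of $\mathcal F$ is already proved (so no instance attains $T$), it remains to show $\inf\le T$. The plan is to build, for $t\uparrow t^{*}=1/T$, explicit instances of $\mathcal F$ with $\rho\to L(t^{*})=T$. The natural choice sits at the rigid corner identified in Lemma~\ref{lem:S4}: take $p$ slightly below $t$, $q$ slightly above $b(t)$, and width $\omega$ small enough that (F2) holds.

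\textbf{Step 1 (rigid corner infeasible).} Fix $t<t^{*}$ close to $t^{*}$, and pick $q$ with $b(t)<q<t$. By Proposition~\ref{prop:S5} the triple $\{1,t,q\}$ does not pack in the disk of radius $1+t$; that is, $(t,t,q)\notin E$.

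\textbf{Step 2 (perturb $p$ off $t$).} By Lemma~\ref{lem:S6a}(3) there is $\delta_0>0$ such that $(t,p,q)\notin E$ for every $p\in(t-\delta_0,t]$. Choose $p=t-\delta$ with $0<\delta<\delta_0$, and additionally $\delta<t-q$ so that $q<p<t$ strictly. Then (F3) holds, and (F1) holds by construction.

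\textbf{Step 3 (witness capacity).} Since $t<t^{*}$ we have $t+b(t)<1$. Choosing $q-b(t)$ small makes $p+q<t+q<1$, so some $\omega\in(0,1-p-q]$ exists and (F2) holds. Thus the instance lies in $\mathcal F$.

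\textbf{Step 4 (compute $\rho$).} With $r=(1,t,p,q)$ the tails are
\[
\rho_1=t+p+q,\qquad \rho_2=\frac{p+q}{t},\qquad \rho_3=\frac{q}{p}<1 .
\]
As $\delta\to0$ and $q\downarrow b(t)$, we get $\rho_2\to L(t)$ and $\rho_1\to t+t+b(t)<t+1$. Since $t<t^{*}$ gives $t(L(t)-1)=t\cdot\frac{1+t}{1+t+t^2}<1$, we have $t+1<L(t)$ whenever $t^2<1$. Hence $\rho_2$ dominates and $\rho\to L(t)$. Letting $t\uparrow t^{*}$ gives $L(t)\downarrow T$, so $\inf\le T$.

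The main obstacle is ordering the limits correctly. The $\delta_0$ of Step 2 depends on $(t,q)$, so the construction must first fix $t$, then $q$, and only then choose $\delta$ small relative to both. I will check that each choice depends only on the earlier ones; the closure lemma guarantees $\delta_0>0$ for each fixed pair. A secondary check is that $q\le p$ is maintained, which the constraint $\delta<t-q$ in Step 2 handles.
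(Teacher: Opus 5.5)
Your proof is correct and is essentially the paper's argument. You fix $t<t^{*}$ and take $q$ just above $b(t)$, so the rigid corner $(t,t,q)$ is infeasible by Proposition~\ref{prop:S5}. You then propagate infeasibility to $p=t-\delta$ by Lemma~\ref{lem:S6a}(3), choose $\omega$ from (F2), and let $\rho_2\to L(t)\to T$. The paper only differs in coupling $q$ to $t$ through $\varepsilon_t=(1-t-b(t))/2$, so that a single limit $t\to t^{*}$ suffices.

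One small slip: $t+1<L(t)$ is equivalent to $t^2(1+t)<1$, not to $t^2<1$ (it fails at $t=0.9$). It does hold for $t<t^{*}$, and you do not need it anyway, because $\rho_1=t+p+q<3t<T$ directly, as the paper observes.
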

\begin{proof}
$\ge$ and non-attainment: the strict floor above. $\le$: fix $t<t^{*}$, set
$\varepsilon_t=(1-t-b(t))/2>0$ and
$q_t=b(t)+\min\bigl(\varepsilon_t,(t-b(t))/2\bigr)$. By
Proposition~\ref{prop:S5}, $(t,t,q_t)$ is infeasible; by
Lemma~\ref{lem:S6a}(3) there is $\delta\in(0,\min(\varepsilon_t,t-q_t))$
with $(t,t-\delta,q_t)$ infeasible. (Write
$\rho_i=(\sum_{j>i}r_j)/r_i$ for the $i$th tail ratio.) The instance
$I_t=(1,\,t,\,t-\delta,\,q_t)$ with $\omega=1-(t-\delta+q_t)>0$ satisfies
(F1)--(F3) and the strict orderings, so $I_t\in\mathcal F$, and
$\rho(I_t)=\rho_2\le\bigl(t+b(t)+\varepsilon_t\bigr)/t
=L(t)+\varepsilon_t/t\longrightarrow L(t^{*})=T$ as $t\to t^{*}$
(the other tails are smaller: $\rho_1=t+p+q<3t<T$ and $\rho_3=q/p<1$).
\end{proof}

\begin{proof}[Proof of Remark~\ref{rem:slack}]
A packing in a disk of radius $R'\le R$ is a packing in the disk of radius
$R$ (translate to concentric position; containment is preserved, pairwise
constraints are unchanged). Hence non-packability at radius $R$ implies
non-packability at radius $r_1+r_2\le R$, so an instance with a slack pan
satisfying (F2) and (F3) at its own radius satisfies (F2)--(F3) at radius
$r_1+r_2$, and the floor applies verbatim; the approximating family of
Proposition~\ref{prop:S6} has $R=r_1+r_2$, which is admissible, so the
infimum is unchanged.
\end{proof}

\section{Complete proofs for the width program}\label{app:widthproofs}

This appendix proves every statement of Section~\ref{sec:width}. Throughout,
normalize $r_m=1$ and $\omega=w/r_m\in(0,1)$, and write $\beta:=1-\omega$.
The reinsertion resources of the exchange step are the two root
containers $A$ (the vacated disk $D_m$, capacity $1$) and $B$ (the
hole $H_m$, capacity $\beta$), plus
recursive nesting inside $S$ itself: a ring $x$ fits in the hole of a ring
$y$ iff $x\le y-\omega$. For siblings in a disk of capacity $c$ we use:
one circle, $s\le c$; two circles, $s+s'\le c$, which is \emph{exact}
(sufficiency by tangent placement on a diameter; necessity from
$s+s'\le|c_1-c_2|\le|c_1|+|c_2|\le(c-s)+(c-s')$); and for three or more
only \emph{constructive} certificates -- the row of
Lemma~\ref{lem:row} and explicit placements -- so that every lower bound
below is unconditional. For a profile $S=\{s_1\ge\dots\ge s_k\}\subset(0,1)$,
\[
\rho_{\mathrm{needed}}(S)=\max\Bigl(\textstyle\sum S,\
\max_j\bigl(\textstyle\sum_{l>j}s_l\bigr)/s_j\Bigr),
\]
and $\rho^*_k(\omega)=\inf\{\rho_{\mathrm{needed}}(S):S$ not reinsertable
in $A\uplus B\}$.
Two elementary facts are used without mention: $\rho_{\mathrm{needed}}$ is
monotone under adding smaller rings, and if a prefix $\{s_1,\dots,s_j\}$ is
already non-reinsertable then so is $S$ (every placement restricts).

\subsection{Profile thresholds}\label{app:profiles}

\begin{proposition}[Two rings]\label{prop:Cpair}
$S=\{\sigma_1\ge\sigma_2\}$ is non-reinsertable iff $\sigma_1+\sigma_2>1$
and $\sigma_2>1-\omega$. Hence $\rho^*_2(\omega)=\max\bigl(1,2(1-\omega)\bigr)$,
approached by $\sigma_1=\sigma_2\to\max(\tfrac12,1-\omega)^+$.
\end{proposition}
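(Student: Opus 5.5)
The proof is a finite case analysis over the possible placements of the pair $\{\sigma_1,\sigma_2\}$ in the two root containers $A$ (capacity $1$) and $B$ (capacity $\beta=1-\omega$), with recursive nesting also allowed. Sibling feasibility for at most two circles is exact by the diametral criterion, so every case can be decided exactly.

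\emph{Reinsertable placements.} First I list placements that would reinsert $S$; blocking means all of them fail.
\begin{itemize}
\item Both rings as siblings in $A$ succeed iff $\sigma_1+\sigma_2\le1$.
\item Both in $B$ succeed iff $\sigma_1+\sigma_2\le\beta$; this is weaker than the previous case and can be ignored.
\item Split placements: $\sigma_1$ in $A$ and $\sigma_2$ in $B$ succeed iff $\sigma_2\le\beta$, since $\sigma_1<1$ always fits $A$. The reverse split requires $\sigma_1\le\beta$, which implies $\sigma_2\le\beta$, so it is dominated.
\item Nesting $\sigma_2$ inside $\sigma_1$ requires $\sigma_2\le\sigma_1-\omega$. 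Since $\sigma_1<1$, this gives $\sigma_2<1-\omega=\beta$, so this placement is also dominated by the split.
\item Nesting $\sigma_1$ inside $\sigma_2$ is impossible.
\end{itemize}
Hence $S$ is reinsertable iff $\sigma_1+\sigma_2\le1$ or $\sigma_2\le\beta$. Negating gives the stated characterization. Necessity of each exclusion uses the exact two-circle criterion, and the single-circle criterion $s\le c$.

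\emph{The threshold.} For the value of $\rho^*_2$, use $\rho_{\mathrm{needed}}(S)=\max(\sigma_1+\sigma_2,\ \sigma_2/\sigma_1)$, and note that $\sigma_2/\sigma_1\le1$.

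For the lower bound, blocking gives both $\sigma_1+\sigma_2>1$ and $\sigma_1+\sigma_2\ge2\sigma_2>2\beta$. Therefore $\rho_{\mathrm{needed}}>\max(1,2(1-\omega))$.

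For the upper bound, take $\sigma_1=\sigma_2=\max(\tfrac12,1-\omega)+\eta$ with $\eta\downarrow0$. These values stay in $(0,1)$ because $\omega>0$. They satisfy both blocking inequalities, and $\rho_{\mathrm{needed}}=2\max(\tfrac12,\beta)+2\eta\to\max(1,2(1-\omega))$.

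\emph{Main obstacle.} The only delicate step is making sure no placement is missed. In particular, one must check that nesting is never stronger than the split placement $A{+}B$. This follows from the inequality $\sigma_1-\omega<\beta$, which holds because $\sigma_1<1$. With that disposed of, the case list above is complete.
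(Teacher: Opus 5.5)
Your proof is correct and follows the paper's own argument. It uses the same enumeration of placements (both in $A$, one in each root container, nesting of $\sigma_2$ in $\sigma_1$ reduced to the split via $\sigma_2\le\sigma_1-\omega\le 1-\omega$, both in $B$ dominated), the same lower bound $\rho\ge\sigma_1+\sigma_2>\max(1,2(1-\omega))$, and the same equal-pair family $\sigma_1=\sigma_2\to\max(\tfrac12,1-\omega)^+$ for the matching upper bound.
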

\begin{proof}
The possible placements are: both in $A$ (iff $\sigma_1+\sigma_2\le1$,
two-circle exact); one in each nest (the one in $B$ measures
$\ge\sigma_2$, so it requires $\sigma_2\le\beta$); $\sigma_2$ nested in
$\sigma_1$ (requires $\sigma_2\le\sigma_1-\omega\le\beta$); both in $B$
(stronger than one in each). Both stated conditions kill all four; failure
of either condition enables the first or second placement. The infimum:
blocking forces $\rho\ge\sigma_1+\sigma_2>\max(1,2\beta)$, and the twin
family attains the bound in the limit (its tail ratios tend to $1$ and $0$).
\end{proof}

\begin{proposition}[Three rings: characterization]\label{prop:Ctrichar}
Let $s_1\ge s_2\ge s_3$ in $(0,1)$. $S$ is non-reinsertable iff exactly one
of the following holds (the cases are disjoint by the position of $s_1,s_2$
relative to $\beta$):
\begin{itemize}\itemsep2pt
\item[(i)] $s_2>\beta$ and $s_1+s_2>1$ (then $s_3$ is arbitrary);
\item[(ii)] $s_2>\beta$, $s_1+s_2\le1$, $s_3>\beta$, and the trio does not
pack in the unit disk (possible only for $\omega>\tfrac12$);
\item[(iii)] $s_2\le\beta<s_1$, $s_1+s_3>1$, $s_2+s_3>\beta$, and
$s_3>s_1-\omega$;
\item[(iv)] $s_1\le\beta$, $s_2+s_3>1$, and $s_3>s_1-\omega$.
\end{itemize}
\end{proposition}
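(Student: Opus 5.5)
We enumerate the reinsertion placements of a trio in the resources $A$ (capacity $1$), $B$ (capacity $\beta$) and mutual nesting ($x$ fits in $y$ iff $x\le y-\omega$). We then show that the four conditions kill every placement, and that outside them some placement survives. Only exact one- and two-circle criteria are used, except in case (ii), where trio packability in the unit disk enters as a primitive. The proof is organised by where $s_1,s_2$ sit relative to $\beta$. Three regimes arise: $s_2>\beta$ (cases (i),(ii)), $s_2\le\beta<s_1$ (case (iii)), and $s_1\le\beta$ (case (iv)). Since these regimes are disjoint, so are the cases.

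\emph{Regime $s_2>\beta$.} Here neither $s_1$ nor $s_2$ fits in $B$. Nesting $s_2$ in $s_1$ would need $s_2\le s_1-\omega<1-\omega=\beta$, so it is impossible. Hence $s_1,s_2$ both lie in $A$ as root pieces, which requires $s_1+s_2\le1$ by two-circle exactness. If that fails, $S$ is blocked: this is case (i), and the prefix remark shows $s_3$ is irrelevant.

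Now suppose $s_1+s_2\le1$. The piece $s_3$ can go to $B$ if $s_3\le\beta$, and then $S$ is reinsertable. It can nest in $s_1$ or $s_2$ only if $s_3\le s_1-\omega<\beta$, so nesting is subsumed by $B$. Otherwise $s_3$ must join $A$, giving case (ii). Case (ii) requires $s_3>\beta$ and $s_1+s_2\le 1$, so $2\beta<1$, i.e.\ $\omega>\tfrac12$. The row lemma shows packing fails only when additionally $s_1+s_2+s_3>1$, but we keep the geometric condition as stated.

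\emph{Regime $s_2\le\beta<s_1$.} The piece $s_1$ cannot enter $B$ or nest, so it lies in $A$. We list the placements that remain for $s_2,s_3$ given $s_1\in A$.
\begin{itemize}
\item[(a)] $s_2\in B$ and $s_3\in A$: needs $s_1+s_3\le1$.
\item[(b)] $s_2,s_3\in B$: needs $s_2+s_3\le\beta$.
\item[(c)] $s_3$ nested in $s_1$: needs $s_3\le s_1-\omega$, and then $s_2\in B$.
\item[(d)] $s_3$ nested in $s_2$ with $s_2\in B$: dominated by (c), since $s_2\le s_1$.
\item[(e)] $s_2\in A$ next to $s_1$: needs $s_1+s_2\le1$, with $s_3$ then going to $B$ since $s_3\le s_2\le\beta$. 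This implies (a), because $s_3\le s_2$.
\end{itemize}
Blocking therefore means that (a), (b) and (c) all fail, which is exactly the list in (iii). Conversely, if any inequality of (iii) fails, the corresponding placement works.

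\emph{Regime $s_1\le\beta$.} All pieces fit singly in $B$. The available placements are as follows.
\begin{itemize}
\item[(a)] $s_1\in B$ with $s_2,s_3\in A$: needs $s_2+s_3\le1$.
\item[(b)] Nesting: $s_3$ in $s_1$ (needs $s_3\le s_1-\omega$) with $s_1\in B$ and $s_2\in A$, or the analogous nestings into $s_2$, which are dominated.
\item[(c)] Two pieces in $B$ and one in $A$: needs a pair sum at most $\beta<1$, which implies $s_2+s_3\le1$ and so is dominated by (a).
\item[(d)] All three in $A$: by the row-free two-circle necessity this implies $s_2+s_3\le1$, so it is dominated by (a).
\end{itemize}
So blocking is exactly $s_2+s_3>1$ and $s_3>s_1-\omega$, which is case (iv).

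The main obstacle is the completeness of the placement enumeration, especially the dominance claims. The first is that any packing of two or three circles in a disk forces every pair sum to be at most the capacity, which follows from the two-circle necessity argument applied pairwise. The second is that nesting chains (e.g.\ $s_3$ in $s_2$ in $s_1$) are subsumed. I would handle this by a systematic table indexed by the parent of each piece, checking each entry against the pairwise necessary conditions.
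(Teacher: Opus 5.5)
Your proof is correct and follows the paper's route. You split by the position of $s_1,s_2$ relative to $\beta$, show that the stated inequalities rule out every placement in each regime, and exhibit an explicit placement as soon as one of them fails (a pair in $A$, a pair in $B$, or $s_3$ nested in $s_1$).

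The one thing to tighten is completeness of the enumeration. In the regime $s_2\le\beta<s_1$ you never list placements with $s_2$ nested in $s_1$, and in the regime $s_1\le\beta$ several root splits appear only implicitly. The paper closes both uniformly with two observations. First, any nesting edge $x\subset y$ forces $s_3\le x\le y-\omega\le s_1-\omega$, so $s_3>s_1-\omega$ excludes every edge at once. Second, without nesting, three top-level rings in two roots put some pair in a single root.
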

\begin{proof}
The nesting forests on three rings are finite: no edge; one edge ($s_3$ in
$s_1$, $s_3$ in $s_2$, $s_2$ in $s_1$); the chain $s_3\subset s_2\subset
s_1$; the star $\{s_2,s_3\}$ in the hole of $s_1$. Top-level rings are
distributed among $A$ and $B$ under the sibling rules above; $S$ is
reinsertable iff some combination is feasible.

\emph{(i).} Neither $s_1$ nor $s_2$ fits in $B$ ($>\beta$) and $s_2$ cannot
nest in $s_1$ ($s_2\le s_1-\omega<\beta<s_2$ is absurd), so both must go to
$A$, and $s_1+s_2>1$ forbids it: blocked for every $s_3$. Conversely, with
$s_2>\beta$ and $s_1+s_2\le1$: if $s_3\le\beta$, place $\{s_1,s_2\}$ in a
row in $A$ and $s_3$ in $B$; if $s_3>\beta$, all three live in the band
$(\beta,1)$, $B$ is useless, nobody nests ($s_i-\omega<\beta<s_3$), so all
three must go to $A$ and reinsertability is exactly packability of the trio
in the unit disk -- case (ii), whose condition is also sufficient by the
same argument. Habitability of (ii): $s_1+s_2>2\beta$ and $s_1+s_2\le1$
force $\omega>\tfrac12$.

\emph{(iii).} ($\Leftarrow$) $s_1>\beta$ has no parent and does not fit in
$B$: $s_1\in A$. No edge survives: $s_3>s_1-\omega\ge s_2-\omega$ kills all
three. A companion of $s_1$ in $A$ needs $s_1+s_i\ge s_1+s_3>1$: impossible.
So $s_2,s_3$ must both go to $B$, and $s_2+s_3>\beta$ forbids it.
($\Rightarrow$) If $s_1+s_3\le1$: $\{s_1,s_3\}$ in $A$, $s_2$ in $B$. If
$s_2+s_3\le\beta$: $\{s_2,s_3\}$ in $B$, $s_1$ in $A$. If $s_3\le
s_1-\omega$: nest $s_3$ in $s_1$, $s_1\to A$, $s_2\to B$.

\emph{(iv).} ($\Leftarrow$) $s_2+s_3>1$ makes \emph{every} pair infeasible
in any root ($s_i+s_j\ge s_2+s_3>1>\beta$), so each root hosts at most one
top-level ring; two roots cannot host three rings unless someone nests, and
$s_3>s_1-\omega$ kills all edges as in (iii). ($\Rightarrow$) If
$s_2+s_3\le1$: $\{s_2,s_3\}$ in $A$, $s_1$ in $B$ ($s_1\le\beta$). If
$s_3\le s_1-\omega$: nest $s_3$ in $s_1$, $s_1\to A$, $s_2\to B$.
\end{proof}

\begin{theorem}[Three rings: closed formula]\label{thm:Ctrio}
For all $\omega\in(0,1)$,
\[
\rho^*_3(\omega)=\max\Bigl(1,\ \min\bigl(2(1-\omega),\
\max\bigl(\varphi,\ \tfrac{2}{1+2\omega}\bigr)\bigr)\Bigr),
\]
with extremal families (as $\varepsilon\to0^+$):
$\{\tfrac12+\omega,\tfrac12+\varepsilon,\tfrac12+\varepsilon\}$ on the
hyperbola, $\{1/\varphi,\tfrac12+\varepsilon,\tfrac12+\varepsilon\}$ on the
golden plateau
$\omega\in\bigl[\tfrac{\sqrt5-2}{2},1-\tfrac{\varphi}{2}\bigr]$,
$\{\beta+\varepsilon,\beta+\varepsilon,\delta\}$ with $\beta=1-\omega$
on $\omega\in\bigl[1-\tfrac{\varphi}{2},\tfrac12\bigr]$ (there
$2(\beta+\varepsilon)>1$: the pair cannot row), and
$\{\tfrac12+\varepsilon,\tfrac12+\varepsilon,\delta\}$ for
$\omega\ge\tfrac12$ (where $\rho^*_3=1$; the family
$\{\beta+\varepsilon,\beta+\varepsilon,\delta\}$ rows into the unit
disk there and no longer blocks). The proof is
purely additive: neither case (ii) nor any three-circle geometry enters.
\end{theorem}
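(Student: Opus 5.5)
The plan is to use the characterization of Proposition~\ref{prop:Ctrichar} and minimize $\rho_{\mathrm{needed}}$ separately over each of the four blocking cases. The value $\rho^*_3$ is then the minimum of the four case infima. Case (ii) will be shown never to improve on case (i), so no geometry enters. Throughout write $\beta=1-\omega$.

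\emph{Cases (i) and (ii).} In case (i) the tail of $s_1$ already gives $\rho\ge s_1+s_2>\max(1,2\beta)$. The family $\{\max(\tfrac12,\beta)+\varepsilon,\max(\tfrac12,\beta)+\varepsilon,\delta\}$ approaches this bound, since its other tails stay small. This reproduces $\rho^*_2$. In case (ii) we have $s_2,s_3>\beta$ and $\omega>\tfrac12$, so $\rho\ge s_2+s_3>2\beta$ and also $\rho\ge\sum S$. It is enough to note that $\max(1,2\beta)=1$ there, while $\rho_{\mathrm{needed}}\ge\sum S>1$ can be checked additively. Indeed, non-packability of the trio in the unit disk together with the row lemma forces $\sum S>1$. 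Hence (ii) contributes at least $1$ and never beats (i). This gives the outer $\max(1,\cdot)$ and the $2(1-\omega)$ branch.

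\emph{Case (iv).} Here $s_1\le\beta$, $s_2+s_3>1$ and $s_3>s_1-\omega$. The tail of $s_1$ gives $\rho\ge(s_2+s_3)/s_1>1/s_1$. The total sum gives $\rho\ge s_1+s_2+s_3>s_1+1$. Balancing $1/s_1=1+s_1$ yields $s_1=1/\varphi$ and the value $\varphi$. This requires $1/\varphi\le\beta$, i.e.\ $\omega\le 1-1/\varphi$; for larger $\omega$ the constraint $s_1\le\beta$ caps the first bound instead. The nesting wall $s_3>s_1-\omega$ matters when $s_1$ is large. Taking $s_2=s_3=\tfrac12+\varepsilon$ (balanced, the cheapest way to get $s_2+s_3>1$) forces $s_1<\tfrac12+\omega$. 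When $1/\varphi>\tfrac12+\omega$, i.e.\ $\omega<(\sqrt5-2)/2$, the optimum sits at $s_1=\tfrac12+\omega$. Its binding tail is then $1/s_1\cdot1=2/(1+2\omega)$, which dominates the sum bound $\tfrac32+\omega$ in that range; this is the hyperbola. Combining gives $\max(\varphi,2/(1+2\omega))$ with the stated extremal families. I will check that unbalancing $s_2,s_3$ only raises both tails, using $s_3\le s_2$ and the nesting wall on $s_3$.

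\emph{Case (iii) and assembly.} In case (iii), $s_1>\beta$ and $s_1+s_3>1$ give $\rho\ge s_1+s_2+s_3>1+s_2$, with $s_2\ge s_3$. Also $s_2+s_3>\beta$. I expect this case to be dominated by (i) or (iv): the bound is at least $\max(1+s_3,\,\ldots)$, where $s_3>\max(1-s_1,\,s_1-\omega,\,\beta-s_2)$. A short optimization should show it is never below $\min(2\beta,\max(\varphi,2/(1+2\omega)))$. The final formula is then the minimum of (i) and (iv), clipped below by $1$. The plateau endpoints $(\sqrt5-2)/2$ and $1-\varphi/2$ are where the branches $2/(1+2\omega)=\varphi$ and $2\beta=\varphi$ cross.

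The main obstacle is case (iii), together with proving that the balanced configurations are truly optimal in (iv). Both are piecewise-linear/hyperbolic programs in three variables with several active walls, and each vertex must be enumerated carefully.
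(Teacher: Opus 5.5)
Your decomposition is the paper's: minimize $\rho_{\mathrm{needed}}$ over the four blocking cases of Proposition~\ref{prop:Ctrichar}, dispose of case (ii) by the row ($\sum S>1$), get $\max(1,2\beta)$ from case (i) with the pair-plus-dust family, get $\varphi$ and the hyperbola from case (iv), and show case (iii) is dominated. (A small mislabel: in case (i), $\rho\ge s_1+s_2$ comes from the total $\sum S$, i.e.\ the pivot's tail, not from the tail of $s_1$.)

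What is missing is the execution of the two steps you defer, and neither is an obstacle.

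\emph{Case (iii).} Your own inequality finishes it:
$\sum S=(s_1+s_3)+s_2>1+s_2\ge1+s_3>1+(s_1-\omega)>1+\beta-\omega=2\beta$.
So every case-(iii) blocking has $\rho>\max(1,2\beta)$ and is dominated by case (i). The paper reaches the same conclusion less directly: it minimizes $g(s_1)=s_1+2\max(1-s_1,s_1-\omega)$ over $s_1>\beta$, with a split at $\omega=\tfrac13$.

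\emph{Case (iv).} No argument about balancing $s_2,s_3$ is needed for the lower bound. Both $\sum S=s_1+q$ and the tail $q/s_1$ depend only on $s_1$ and $q=s_2+s_3$, and both increase in $q$; this is the paper's reduction $q\downarrow\max(1,2(s_1-\omega))$.
\begin{itemize}
\item Using $q>1$ gives $\rho>\max(1+s_1,1/s_1)\ge\varphi$ for every $s_1$.
\item If $s_1\le\tfrac12+\omega$, then also $1/s_1\ge\tfrac{2}{1+2\omega}$.
\item If $s_1>\tfrac12+\omega$, the nesting wall gives $q\ge2s_3>2(s_1-\omega)$, hence $q/s_1>2-2\omega/s_1>\tfrac{2}{1+2\omega}$.
\end{itemize}
So every case-(iv) blocking satisfies $\rho>\max(\varphi,\tfrac{2}{1+2\omega})$, which is at least the formula's value. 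This also covers $\omega>1-1/\varphi$, where the true case-(iv) infimum is $1/\beta$, without comparing $1/\beta$ against $2\beta$. Balance matters only when choosing the extremal families: $s_2=s_3$ maximizes $s_3$ for a given $q$, so it meets the nesting wall most cheaply.

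With these two lines inserted, your lower bound matches the stated families and the proof is complete.
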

\begin{proof}
We bound the infimum of $\rho_{\mathrm{needed}}$ over each case of
Proposition~\ref{prop:Ctrichar} and take the minimum.

\emph{Case (i).} $\rho\ge s_1+s_2>\max(1,2\beta)$, and the family
$s_1=s_2=\max(\beta,\tfrac12)+\varepsilon$, $s_3=\delta\to0$ is blocked
with $\rho_{\mathrm{needed}}\to\max(1,2\beta)=\rho^*_2$. (This ``pair plus
dust'' argument also shows $\rho^*_{k+1}\le\rho^*_k$ for every $k$.)

\emph{Case (ii).} Any blocked profile has $\sum S>1$ (otherwise the row of
Lemma~\ref{lem:row} places the trio in $A$), so its infimum is $\ge1$,
the formula's value on $\omega\ge\tfrac12$.

\emph{Case (iii).} From $s_3>\max(1-s_1,\,s_1-\omega)$ and $s_2\ge s_3$:
$\sum S\ge g(s_1):=s_1+2\max(1-s_1,s_1-\omega)$, which decreases until
$s_1=(1+\omega)/2$ and increases after, with minimum $(3-\omega)/2$; and
$s_1>\beta$. If $\omega\le\tfrac13$ then $\beta\ge(1+\omega)/2$ and
$\sum S>g(\beta)=3-5\omega\ge2-2\omega$; if $\omega\in(\tfrac13,\tfrac12)$,
$\sum S>(3-\omega)/2\ge2-2\omega$; if $\omega\ge\tfrac12$,
$\sum S\ge s_1+s_3>1$. In all ranges case (iii) is dominated by the
formula's value: it never furnishes the minimum.

\emph{Case (iv).} Set $q=s_2+s_3$. Blocking gives $q>1$ and
$q\ge2s_3>2(s_1-\omega)$, with $s_1\le\beta$, while
$\rho\ge\max(s_1+q,\ q/s_1)$. For fixed $s_1$ both bounds grow with $q$, so
$q\downarrow Q(s_1):=\max(1,2(s_1-\omega))$. On the branch
$s_1\ge\tfrac12+\omega$ ($Q=2(s_1-\omega)$) both
$3s_1-2\omega$ and $2-2\omega/s_1$ increase in $s_1$: the branch minimum is
$\max\bigl(\tfrac32+\omega,\tfrac{2}{1+2\omega}\bigr)$ at the left endpoint.
On the branch $s_1\le\tfrac12+\omega$ ($Q=1$),
$h(s_1)=\max(s_1+1,1/s_1)$ decreases until the crossing $s_1^2+s_1=1$,
i.e.\ $s_1=1/\varphi$ where $h=\varphi$, and increases after; the branch
minimum sits at $s_1^*=\min(\beta,\tfrac12+\omega,1/\varphi)$. Comparing
(exact crossings): for $\omega\le\omega_\varphi:=\tfrac{\sqrt5-2}{2}$ the infimum
is $\tfrac{2}{1+2\omega}$ (the crossing
$\tfrac{2}{1+2\omega}=\tfrac32+\omega$ is $2\omega^2+4\omega-\tfrac12=0$,
positive root $\omega_\varphi$; this local symbol is unrelated to the root
$\omega_1$ of Proposition~\ref{prop:Ccurve}); for
$\omega_\varphi\le\omega\le1-1/\varphi$ it is $\varphi$; for
$1-1/\varphi\le\omega<\tfrac12$ it is
$\max(2-\omega,1/\beta)=1/\beta\ge2\beta$: dominated by case (i). For
$\omega\ge\tfrac12$ case (iv) is empty ($s_2+s_3\le2\beta<1$). The lower
bounds are approached by the stated families ($s_2=s_3=q/2$ with $s_1$ at
each branch minimum satisfies all blocking conditions with margin
$\varepsilon$).

\emph{Global minimum.} $\tfrac{2}{1+2\omega}<2(1-\omega)$ on
$(0,\tfrac12)$ (equivalent to $\omega(1-2\omega)>0$) and
$\varphi\le2(1-\omega)\iff\omega\le1-\varphi/2$: the case-wise minimum is
the stated formula.
\end{proof}

\begin{proposition}[All profile sizes]\label{prop:Callk}
$\rho^*_k(\omega)=\rho^*_3(\omega)$ for every $k\ge3$ and every
$\omega\in(0,1)$.
\end{proposition}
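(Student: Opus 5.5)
The inequality $\rho^*_k\le\rho^*_3$ for $k\ge3$ is already the ``pair plus dust'' remark in case (i) of Theorem~\ref{thm:Ctrio}. Adjoin $k-3$ rings of radius $\delta\to0$ to each extremal three-ring family. Blocking persists, since a prefix that is non-reinsertable makes the whole profile non-reinsertable. Moreover, $\rho_{\mathrm{needed}}$ changes by $O(\delta)$: the sum grows by at most $(k-3)\delta$, the tails of the first three rings grow by $O(\delta)$ after dividing by their fixed sizes, and the dust tails are at most $k$ (dust over dust, ratio $\le k-3$). The last point is a problem. I will instead take the dust to be geometric, $\delta,\delta^2,\dots$, so that every dust tail is $O(\delta)$. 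This gives $\rho^*_k\le\rho^*_3+o(1)$, hence $\le$.

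The content is the reverse inequality $\rho^*_k\ge\rho^*_3$, which I would prove by induction on $k$ via a reduction: every blocked profile $S=\{s_1\ge\dots\ge s_k\}$ has a blocked three-ring \emph{surrogate} $S'$ with $\rho_{\mathrm{needed}}(S')\le\rho_{\mathrm{needed}}(S)$. The plan is an additive tree argument. Assume $\rho:=\rho_{\mathrm{needed}}(S)<\rho^*_3$ and construct a reinsertion of $S$ into $A\uplus B$ using only two-circle-exact placements, rows (Lemma~\ref{lem:row}) and nesting, in the spirit of the proof of Theorem~\ref{thm:oblivious}. First place the top three rings $s_1,s_2,s_3$ using the placement guaranteed by Theorem~\ref{thm:Ctrio}: $\{s_1,s_2,s_3\}$ has $\rho_{\mathrm{needed}}\le\rho<\rho^*_3$, so it is reinsertable, and in fact, by inspection of Proposition~\ref{prop:Ctrichar}, reinsertable by one of the explicit additive placements listed there (rows in $A$ or $B$, one nesting edge). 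Then insert the tail $s_4,\dots,s_k$ greedily.

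For the tail, the key quantity is $\tau_3:=\sum_{l>3}s_l\le\rho s_3$. The usable free capacity is supplied by the slack of the placement chosen for the top three. A ring row-placed in a disk of capacity $c$ with row total $\Sigma$ leaves a free disk of radius $c-\Sigma$ (opposite-disk style). In addition, the hole of any placed ring $s_i$ has capacity $s_i-\omega$, and it is free whenever nothing nests in it. I would go case by case through the additive placements of Proposition~\ref{prop:Ctrichar}. In each case I would show that, under $\rho<\rho^*_3$ (i.e.\ below the relevant branch: $2\beta$, $\varphi$, or $2/(1+2\omega)$), either the tail has total radius at most some free slot's capacity, so it goes there as a row, or the problem reduces to a smaller instance. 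For the reduction, replace $\{s_3,\dots,s_k\}$ by a single ring of radius $\min(s_3+\tau_3,\dots)$, or merge the tail into $s_3$'s role, producing a three-profile whose tails are still $\le\rho$; its reinsertion then lifts back, because a row of total radius $\le$ the merged radius fits in the merged ring's disk (Lemma~\ref{lem:row}).

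The main obstacle is this merging step. Merging increases the sum only when the merged ring exceeds $s_3$, and the merged profile must still obey $\sum_{l>j}x_l\le\rho x_j$, which holds because the tail constraints for $j=1,2$ are unchanged while the new ring has no tail. The delicate point is that a merged ring of radius $s_3+\tau_3$ may exceed $s_2$ and reorder the profile, or exceed $1$. There I would first try to split instead: place $s_3$ alone and route the tail into the hole $B$ or into a nest $s_i-\omega$, using $\tau_3\le\rho s_3$ together with the specific blocking inequalities ($s_3>s_1-\omega$, $s_2+s_3>1$, etc.) to show that some slot has room. If that fails in a corner, I would fall back on induction: treat $\{s_1,\dots,s_{k-1}\}$ as reinsertable by hypothesis and insert $s_k$ alone, since $s_k\le\rho s_{k-1}/1$ and a single ring needs only a free slot of radius $s_k$.
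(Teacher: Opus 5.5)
Your $(\le)$ direction is correct and is the paper's own argument: superdecreasing dust appended to a blocked three-ring witness.

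The $(\ge)$ direction, however, is a plan rather than a proof, and the step that carries all the content is never established. Your merge works only in the easy case $X:=\sum_{i\ge3}s_i\le s_2$. There $\{s_1,s_2,X\}$ is an ordered profile with $\rho_{\mathrm{needed}}\le\rho<\rho^*_3$. The merged ring is automatically a leaf of every reinsertion, since no ring of the surrogate is small enough to nest in it, so the row lift of Lemma~\ref{lem:row} is legal.

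The trouble is the case $X>s_2$, and more generally any profile in which every nontrivial suffix outweighs the ring just above it, so that no suffix merge preserves the order. Then the surrogate reorders, or even leaves $(0,1)$. More seriously, the merged ring may host $s_2$ or $s_1$ in its hole in the surrogate's reinsertion. Replacing it by a row of $s_3,\dots,s_k$ is then illegal. You flag the reordering but do not resolve it.

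Your fallback is circular. Knowing that $\{s_1,\dots,s_{k-1}\}$ reinserts gives no free disk of radius $s_k$ in any of its reinsertions, and $s_k\le\rho\,s_{k-1}$ says nothing about free room. The same two facts hold for the blocked golden-plateau family $\{1/\varphi,\tfrac12+\varepsilon,\tfrac12+\varepsilon\}$: its first two rings reinsert, yet the third is blocked. Proving that room exists when $\rho<\rho^*_3$ is the proposition itself. The same objection applies to ``place the top three first, then insert the tail'': a reinsertion of the top three chosen without regard to the tail can leave no room even when a different one would.

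The missing idea is to avoid surrogate rings altogether. The paper argues by contradiction from $\rho<\rho^*_3$ and tests explicit placements of the \emph{whole} profile, in which the small rings travel as rows:
\begin{itemize}
\item all of $S$ in $A$;
\item $s_1\to A$, with the rest in a row in $B$;
\item $s_1\to B$, with the rest in a row in $A$;
\item $s_1\to A$, $s_2\to B$, and $s_3,\dots,s_k$ in a row inside the hole of $s_1$;
\item $s_2$ nested in $s_1$, $s_1\to B$, and the rest in a row in $A$.
\end{itemize}
Their failures, combined with $\rho<\rho^*_3\le\max(1,2\beta)$, give the following in turn:
\begin{itemize}
\item $\sum S>1$, which settles $\omega\ge\tfrac12$;
\item $s_1\le\beta$ and $\sum_{i\ge2}s_i>1$, which settles $\omega\ge\tfrac{\sqrt5-2}{2}$;
\item otherwise, $s_1>\tfrac12+\omega$.
\end{itemize}
The paper then splits on $p=\#\{i:s_i>s_1-\omega\}$. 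For $p\ge3$, the top three rings already form a non-reinsertable prefix. For $p=2$, one recovers the case-(iv) program of Theorem~\ref{thm:Ctrio}, with the aggregated mass $\sum_{i\ge3}s_i$ entering only as a number in inequalities. For $p=1$, the bounds force $\rho\ge2$, a contradiction.

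Because the aggregated tail is used only through inequalities coming from failed placements, never as a ring, the ordering and parenthood problems that stop your merge never arise.
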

\begin{proof}
$(\le)$ Dust: append $k-3$ superdecreasing rings
$\delta,\delta^2,\dots,\delta^{k-3}$ with $\delta\to0$ to any blocked
three-ring witness (superdecreasing, so that every dust tail also tends to
$0$ and $\rho_{\mathrm{needed}}$ tends to that of the witness).
$(\ge)$ Let $S$ be non-reinsertable with $k$ rings and
$\rho:=\rho_{\mathrm{needed}}(S)$.

\emph{Step 0.} The row of all of $S$ in $A$ fails: $\sum S>1$, so $\rho>1$;
this closes $\omega\ge\tfrac12$. Assume $\omega<\tfrac12$ and
$\rho<2\beta$ (otherwise done).

\emph{Step 1.} Two rings $>\beta$ would give $\sum S>2\beta\ge\rho\ge\sum S$.
If only $s_1>\beta$: the placement ``$s_1\to A$ alone, the rest in a row in
$B$'' fails, and since each $s_i\le\beta$ fits alone the failure is the
sum: $\sum S-s_1>\beta$, whence $\sum S>2\beta$: impossible. So
$s_1\le\beta$.

\emph{Step 2.} ``$s_1\to B$, the rest in a row in $A$'' fails:
$Q:=\sum S-s_1>1$. From $\rho\ge s_1+Q$ and $\rho\ge Q/s_1$:
$\rho\ge\min_{s\le\beta}\max(1+s,1/s)$, which equals $\varphi$ if
$\beta\ge1/\varphi$ and $1/\beta$ otherwise; in either case
$\rho\ge\rho^*_3$ for all $\omega\ge\tfrac{\sqrt5-2}{2}$ (on the plateau
$\rho^*_3\le\varphi$; beyond it $\rho^*_3=2\beta\le\varphi$ resp.\
$2\beta\le1/\beta$ for $\beta\le1/\sqrt2$). Assume then
$\omega<\tfrac{\sqrt5-2}{2}$, where $\rho^*_3=\tfrac{2}{1+2\omega}$, and
towards a contradiction $\rho<\tfrac{2}{1+2\omega}$. From
$\rho\ge Q/s_1>1/s_1$ we get $s_1>\tfrac12+\omega$, in particular
$s_1-\omega>\tfrac12$.

\emph{Step 3.} Let $p:=\#\{i:s_i>s_1-\omega\}$; all such rings exceed
$\tfrac12$ and $\sum S\le\rho<2$ allows at most three, so $p\in\{1,2,3\}$.

$p\ge3$: $s_1,s_2,s_3>s_1-\omega>\tfrac12$. No pair fits together in a root
(sums $>1>\beta$, two-circle exact) and nobody nests
($s_j\le s_i-\omega\le s_1-\omega<s_j$ is absurd): the prefix
$\{s_1,s_2,s_3\}$ is already non-reinsertable, so
$\rho\ge\rho_{\mathrm{needed}}(\{s_1,s_2,s_3\})\ge\rho^*_3$.

$p=2$: $s_3,\dots,s_k\le s_1-\omega$. The placement ``$\{s_3,\dots,s_k\}$
in a row inside the hole of $s_1$ (capacity $s_1-\omega$), $s_1\to A$,
$s_2\to B$'' fails, and the only piece that can fail is the row:
$R:=\sum_{i\ge3}s_i>s_1-\omega$. With $s_2>s_1-\omega$, the tail
$Q=s_2+R>2(s_1-\omega)$ and $Q>1$ (Step 2): exactly the case-(iv) program
of Theorem~\ref{thm:Ctrio}, whose value is $\ge\rho^*_3$. (This is the only
biting branch: the dust of $(\le)$ lives here.)

$p=1$: $s_2\le s_1-\omega$. The placement ``$s_2$ in the hole of $s_1$,
$s_1\to B$, $\{s_3,\dots,s_k\}\to A$ in a row'' fails:
$R':=\sum_{i\ge3}s_i>1$. Then
$\sum S=s_1+s_2+R'>(s_2+\omega)+s_2+1=1+2s_2+\omega$ and the tail of $s_2$
gives $\rho\ge R'/s_2>1/s_2$, so
\[
\rho\ \ge\ \min_s\max\bigl(1+2s+\omega,\ 1/s\bigr)
=\tfrac{(1+\omega)+\sqrt{(1+\omega)^2+8}}{2}\ \ge\ 2,
\]
contradicting $\rho<2\beta<2$.
\end{proof}

\begin{corollary}[Exact combinatorial threshold]\label{cor:Comegac}
For $\omega<\omega_T=\tfrac1T-\tfrac12$ no profile of any size blocks the
exchange step with $\rho<T$; for $\omega>\omega_T$ the witness
$\{\tfrac12+\omega,\tfrac12+\varepsilon,\tfrac12+\varepsilon\}$ blocks with
$\rho$ arbitrarily close to $\tfrac{2}{1+2\omega}<T$. \qed
\end{corollary}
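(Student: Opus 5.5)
The statement is Corollary~\ref{cor:Comegac}, and it splits into two directions: a lower bound below $\omega_T$ and an explicit blocked family above it. For the lower bound I will use Proposition~\ref{prop:Callk}, which gives $\rho^*_k(\omega)=\rho^*_3(\omega)$ for every $k\ge3$, and Proposition~\ref{prop:Cpair}, which covers profiles of size two. Together they give
\[
\inf\{\rho_{\mathrm{needed}}(S): S \text{ blocked}\}=\min\bigl(\rho^*_2(\omega),\rho^*_3(\omega)\bigr).
\]
The dust remark in the proof of Theorem~\ref{thm:Ctrio} gives $\rho^*_3\le\rho^*_2$, so this infimum is $\rho^*_3(\omega)$. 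A one-piece profile is always reinsertable into $A$, since $s_1<1$, so it contributes nothing. The task therefore reduces to showing $\rho^*_3(\omega)\ge T$ when $\omega<\omega_T$.

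Next I evaluate the closed formula of Theorem~\ref{thm:Ctrio} for small $\omega$. Because $\omega_T\approx0.0437<\tfrac{\sqrt5-2}{2}\approx0.118$, we are below the golden plateau. On that range $\tfrac{2}{1+2\omega}\ge\tfrac{2}{1+(\sqrt5-2)}=\tfrac{2}{\sqrt5-1}=\varphi$, so the inner maximum equals $\tfrac{2}{1+2\omega}$. The proof of Theorem~\ref{thm:Ctrio} also shows $\tfrac{2}{1+2\omega}<2(1-\omega)$ on $(0,\tfrac12)$, so the minimum is $\tfrac{2}{1+2\omega}$, and this exceeds $1$. Hence $\rho^*_3(\omega)=\tfrac{2}{1+2\omega}$ there.

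This value is strictly decreasing in $\omega$, and it equals $T$ exactly when $1+2\omega=2/T$, that is, when $\omega=\tfrac1T-\tfrac12=\omega_T$. The alternative expression $T^2-T-\tfrac32$ follows from $1/T=T^2-T-1$, which is the relation $T^3=T^2+T+1$ divided by $T$. So for $\omega<\omega_T$ every blocked profile satisfies $\rho_{\mathrm{needed}}\ge\rho^*_3(\omega)>T$. Since any failure of the exchange step requires a blocked profile with $\rho\ge\rho_{\mathrm{needed}}$, no profile of any size blocks with $\rho<T$.

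For the converse, take $\omega>\omega_T$ and restrict to $\omega<\tfrac{\sqrt5-2}{2}$; larger $\omega$ only requires changing the family, and the claim is intended for the hyperbola branch. I use the profile $\{\tfrac12+\omega,\tfrac12+\varepsilon,\tfrac12+\varepsilon\}$ and check the case (iv) conditions of Proposition~\ref{prop:Ctrichar} directly:
\begin{itemize}
\item $s_1=\tfrac12+\omega\le\beta=1-\omega$, since $\omega\le\tfrac14$;
\item $s_2+s_3=1+2\varepsilon>1$;
\item $s_3=\tfrac12+\varepsilon>s_1-\omega=\tfrac12$.
\end{itemize}
Ordering $s_1\ge s_2$ holds once $\varepsilon<\omega$. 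The needed value is
\[
\rho_{\mathrm{needed}}=\max\Bigl(\tfrac32+\omega+2\varepsilon,\ \tfrac{1+2\varepsilon}{1/2+\omega},\ 1\Bigr).
\]
As $\varepsilon\to0$ this tends to $\max\bigl(\tfrac32+\omega,\tfrac{2}{1+2\omega}\bigr)=\tfrac{2}{1+2\omega}$ for $\omega\le\tfrac{\sqrt5-2}{2}$, and that limit is $<T$ because $\omega>\omega_T$.

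The only delicate point is making sure the formula's branch selection is correct on the relevant interval. Everything else is direct substitution.
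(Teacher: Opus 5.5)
Your proof is correct and follows the paper's intended route. The corollary is read off from Propositions~\ref{prop:Cpair} and~\ref{prop:Callk} together with the closed formula of Theorem~\ref{thm:Ctrio}: its hyperbola branch $\tfrac{2}{1+2\omega}$ is in force below $\tfrac{\sqrt5-2}{2}$ and crosses $T$ exactly at $\omega_T$, and the stated family is its extremal witness.

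Your restriction of the second claim to $\omega\le\tfrac{\sqrt5-2}{2}$ is the right reading. Beyond that point $\rho^*_3(\omega)>\tfrac{2}{1+2\omega}$, so no family at all approaches that value, not merely this one; blocking with some $\rho<T$ still persists, because $\rho^*_3(\omega)<T$ for every $\omega>\omega_T$.
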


\subsection{The blocking frontier of the canonical trio}\label{app:frontier}

Work in the canonical template: the trio $\{\alpha,\sigma_1,\sigma_2\}$
with $0<\sigma_2\le\sigma_1\le1<\alpha$ must pack in the pan of radius
$R=\alpha+1$. With the half-angle machinery of Appendix~\ref{app:rigidproof}
(Lemma~\ref{lem:S1}), set $f(x)=x/(R-x)$,
$\theta(a,b)=2\arcsin\sqrt{f(a)f(b)}$ and
$F(\sigma_1,\sigma_2)=\theta(\alpha,\sigma_1)+\theta(\alpha,\sigma_2)
+\theta(\sigma_1,\sigma_2)$; by the constructive direction
(Lemma~\ref{lem:S2}), $F\le2\pi$ implies the trio packs, so
non-packability implies $F\ge2\pi$ on the closure. Each $\theta$ is
strictly increasing in both arguments, so $F=2\pi$ defines the
\emph{blocking frontier} $\sigma_2=h(\alpha,\sigma_1)$ where it meets the
band. Two evaluations used repeatedly: $f(\alpha)=\alpha$ (since
$R-\alpha=1$) and $1-\alpha f(\sigma)=R(1-\sigma)/(R-\sigma)$.

\begin{lemma}[Closed frontier]\label{lem:CF}
Let $\alpha>1$ and $\sigma:=1/\sqrt{\alpha(\alpha+1)}$. In the band
$0<\sigma_2\le\sigma_1\le1$,
\[
F(\sigma_1,\sigma_2)=2\pi\iff t(\sigma_1)+t(\sigma_2)=\sigma,
\qquad t(s)=\sqrt{\tfrac{1-s}{s}},
\]
and $\sigma=t(b(\alpha))$. Consequently
$h(\alpha,\sigma_1)=t^{-1}(\sigma-t(\sigma_1))$ with $t^{-1}(u)=1/(1+u^2)$,
defined and strictly decreasing exactly for $\sigma_1\in[s^*,1]$ where
$s^*=4\alpha(\alpha+1)/(2\alpha+1)^2$ is the diagonal point
($t(s^*)=\sigma/2$); the range of $h$ is $[b(\alpha),s^*]$, with
$h(\alpha,1)=b(\alpha)$.
\end{lemma}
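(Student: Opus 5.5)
The plan is to copy the algebraic reduction of Lemma~\ref{lem:S3}, now with the unit circle replaced by $\alpha$ and the pan radius $R=\alpha+1$. In this setting $f(\alpha)=\alpha$ and $f(\sigma_i)=\sigma_i/(\alpha+1-\sigma_i)$. Write $A=\sqrt{f(\alpha)f(\sigma_1)}$, $B=\sqrt{f(\alpha)f(\sigma_2)}$ and $C=\sqrt{f(\sigma_1)f(\sigma_2)}$. Then
\[
F=2\pi\iff\arcsin A+\arcsin B+\arcsin C=\pi .
\]

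First I check that we are on the obtuse branch $\arcsin A+\arcsin B>\pi/2$, so that the equation is equivalent to $C=\sin(\arcsin A+\arcsin B)$. Indeed, on the other branch the left side is at most $\pi/2+\pi/2=\pi$, with equality only when $C=1$. But $C=1$ would need $\sigma_1+\sigma_2=R=\alpha+1>2$, which is impossible in the band.

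On the obtuse branch I divide by $C$, exactly as in Lemma~\ref{lem:S3}. This gives $\sqrt{G(\sigma_1)}+\sqrt{G(\sigma_2)}=1$ with $G(x)=f(\alpha)/f(x)-f(\alpha)^2$. Using $1-\alpha f(x)=R(1-x)/(R-x)$, I compute
\[
G(x)=\frac{\alpha(R-x)}{x}-\alpha^2=\frac{\alpha R(1-x)}{x}=\alpha(\alpha+1)\,t(x)^2 .
\]
Hence the equation reads $t(\sigma_1)+t(\sigma_2)=1/\sqrt{\alpha(\alpha+1)}=\sigma$. The reverse implication holds because every step is reversible, provided the obtuse branch is automatic on the set $t(\sigma_1)+t(\sigma_2)=\sigma$; I would confirm this from the sign of $\cos(\arcsin A+\arcsin B)$ or by continuity. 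The identity $t(b(\alpha))=\sigma$ is the same computation as $\psi(b(t))=\tau_t$: with $b(\alpha)=\alpha(\alpha+1)/(\alpha^2+\alpha+1)$,
\[
\frac{1-b}{b}=\frac{1}{\alpha(\alpha+1)} .
\]

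The frontier description then follows from $t$ being a strictly decreasing bijection $(0,1]\to[0,\infty)$ with inverse $t^{-1}(u)=1/(1+u^2)$. We have $h=t^{-1}(\sigma-t(\sigma_1))$, which requires $t(\sigma_1)\le\sigma$. The constraint $\sigma_2\le\sigma_1$ requires $t(\sigma_1)\le\sigma-t(\sigma_1)$, that is, $t(\sigma_1)\le\sigma/2$, or equivalently $\sigma_1\ge s^*=t^{-1}(\sigma/2)$. Substituting gives
\[
s^*=\frac{1}{1+1/(4\alpha(\alpha+1))}=\frac{4\alpha(\alpha+1)}{(2\alpha+1)^2}.
\]
As $\sigma_1$ increases, $t(\sigma_1)$ decreases, so $\sigma-t(\sigma_1)$ increases and $h$ decreases strictly. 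The endpoints are $h(s^*)=s^*$ and $h(1)=t^{-1}(\sigma)=b(\alpha)$.

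The main obstacle is ruling out the spurious branch cleanly, so that the equivalence is genuinely two-sided. I would handle it by showing that on the band one always has $A>1/\sqrt2$, hence $\arcsin A>\pi/4$. With $f(\alpha)=\alpha$, the condition $\alpha f(\sigma_1)>1/2$ reduces to a linear inequality in $\sigma_1$. If that fails near small $\sigma_1$, I would instead use the fact that $F$ is strictly increasing along the frontier parametrisation, so the level set $F=2\pi$ meets each vertical line at most once.
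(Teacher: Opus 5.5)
Your forward direction and your description of $h$ are correct. This includes the algebra $G(x)=\alpha(\alpha+1)t(x)^2$, the exclusion of the acute branch via $C<1$, and the formula for $s^*$; the paper's $(\Rightarrow)$ is the same computation, written as $\sin(A+B)$ with factored cross terms.

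The gap is the converse. You flag it as the main obstacle but do not close it, and it matters: it is what makes the points $(\sigma_1,h(\alpha,\sigma_1))$ genuine frontier points, in particular $(1,b(\alpha))$. It is therefore what justifies ``defined exactly for $\sigma_1\in[s^*,1]$''. From $t(\sigma_1)+t(\sigma_2)=\sigma$ you only recover $C=\sin(\arcsin A+\arcsin B)$, and this also admits the spurious solution $\arcsin C=\arcsin A+\arcsin B$.

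Your primary plan, $A>1/\sqrt2$ on the band, is false. Indeed $A^2=\alpha\sigma_1/(\alpha+1-\sigma_1)>\tfrac12$ iff $\sigma_1>(\alpha+1)/(2\alpha+1)$, which fails for small $\sigma_1$. Even where it holds it controls only $A$, whereas the obtuse branch is $A^2+B^2>1$. Your fallback, strict monotonicity of $F$ in $\sigma_2$ so that the level set meets each vertical line at most once, gives uniqueness only. That adds nothing to $(\Rightarrow)$, which already puts the frontier inside the $t$-curve. You still need the level set $F=2\pi$ to be nonempty on each vertical line $\sigma_1\in[s^*,1]$.

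Any of three short repairs closes this.

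\emph{(a)} On the $t$-set, $t(\sigma_i)\le\sigma=t(b(\alpha))$ gives $\sigma_i\ge b(\alpha)$. Since $\alpha f(b(\alpha))=\alpha^2/(\alpha^2+1)>\tfrac12$ for $\alpha>1$, you get $A^2,B^2>\tfrac12$, hence $\cos(\arcsin A+\arcsin B)<0$.

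\emph{(b)} Uniformly on the band, $f(\alpha)=\alpha>f(\sigma_2)$ gives $A>C$. So $\arcsin A+\arcsin B>\arcsin C$, and the spurious branch never occurs.

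\emph{(c)} The paper's route. Fix $\sigma_1\in[s^*,1]$. Then $F(\sigma_1,0^+)\le\pi<2\pi\le F(\sigma_1,\sigma_1)$. The last inequality holds because $F$ increases along the diagonal and $F(s^*,s^*)=2\pi$; that equality itself follows from the intermediate value theorem on the diagonal together with $(\Rightarrow)$. Hence $F(\sigma_1,\cdot)-2\pi$ has a zero in $(0,\sigma_1]$. By $(\Rightarrow)$ that zero solves the $t$-equation, whose solution is unique, so it is $h(\alpha,\sigma_1)$.

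With any of these in place, your reversibility argument completes the proof.
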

\begin{proof}
On a frontier point all three angles are interior: for the pairs
$(\alpha,\sigma_i)$, $P=\alpha f(\sigma_i)<1$ since $\sigma_i<1$
(and $\alpha f(1)=1$); for $(\sigma_1,\sigma_2)$,
$\sigma_1+\sigma_2\le2<R$. Write $A=\theta(\alpha,\sigma_1)/2$,
$B=\theta(\alpha,\sigma_2)/2\in(0,\pi/2)$, so that
$\sin^2\!A=\alpha f(\sigma_1)$, $\cos^2\!A=R(1-\sigma_1)/(R-\sigma_1)$
(same for $B$ with $\sigma_2$), and on the frontier
$\theta(\sigma_1,\sigma_2)/2=\pi-A-B\in(0,\pi)$.

($\Rightarrow$) Taking sines,
$\sin\bigl(\theta(\sigma_1,\sigma_2)/2\bigr)=\sin(A+B)
=\sin A\cos B+\cos A\sin B$. The cross terms factor:
\[
(\sin A\cos B)^2=\alpha f(\sigma_1)\cdot\frac{R(1-\sigma_2)}{R-\sigma_2}
=\alpha R\,t(\sigma_2)^2\,f(\sigma_1)f(\sigma_2),
\]
because $(1-\sigma_2)/(R-\sigma_2)=t(\sigma_2)^2 f(\sigma_2)$; symmetrically
with $t(\sigma_1)$. Since
$\sin(\theta(\sigma_1,\sigma_2)/2)=\sqrt{f(\sigma_1)f(\sigma_2)}\neq0$,
dividing gives $1=\sqrt{\alpha R}\,\bigl(t(\sigma_1)+t(\sigma_2)\bigr)$,
the stated equation with $\sigma=1/\sqrt{\alpha R}$. The identity
$(1-b)/b=1/(\alpha(\alpha+1))$ for $b=b(\alpha)$ gives
$t(b(\alpha))=\sigma$.

($\Leftarrow$) For fixed $\sigma_1$, both $F(\sigma_1,\cdot)$ (increasing)
and $t(\sigma_1)+t(\cdot)$ (decreasing) are strictly monotone in
$\sigma_2$; by ($\Rightarrow$) their level sets coincide where the frontier
meets the band, and by monotonicity each has a unique solution.
Existence: on the diagonal, $F(s,s)$ increases from $0^+$ to
$F(1,1)>2\pi$, so there is a unique $s^\circ$ with $F=2\pi$, and
($\Rightarrow$) forces $t(s^\circ)=\sigma/2$, i.e.\ $s^\circ=s^*$. For
$\sigma_1\in[s^*,1]$, $F(\sigma_1,0^+)\le\pi<2\pi\le F(\sigma_1,\sigma_1)$:
the frontier crosses, and $t(\sigma_1)\le\sigma/2$ keeps
$h(\sigma_1)\le\sigma_1$. For $\sigma_1<s^*$ it would need
$\sigma_2>\sigma_1$. Endpoints: $h(s^*)=s^*$ and
$h(1)=t^{-1}(\sigma)=b(\alpha)$.
\end{proof}

\begin{theorem}[The $\kappa$ identity]\label{thm:Ckappa}
On the frontier, for every $\alpha>1$ and $\sigma_1<1$,
\[
\kappa:=-\frac{\partial h}{\partial\sigma_1}
=\sqrt{\frac{g(\sigma_2)}{g(\sigma_1)}},\qquad
g(s):=s^3(1-s),
\]
where $g$ is local to this subsection, and $\kappa\ge1$,
with equality iff $\sigma_1=\sigma_2=s^*$.
\end{theorem}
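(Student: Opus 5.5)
By Lemma~\ref{lem:CF}, the frontier is the level curve $t(\sigma_1)+t(\sigma_2)=\sigma$, where $\sigma=1/\sqrt{\alpha(\alpha+1)}$ is constant once $\alpha$ is fixed. The plan is to differentiate this closed form implicitly, instead of working with the angular equation $F=2\pi$ directly. This is also why the slope cannot depend on $\alpha$: $\alpha$ enters only through the constant on the right-hand side.

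\emph{Step 1 (the slope).} Differentiating in $\sigma_1$ gives $t'(\sigma_1)+t'(\sigma_2)\,\partial h/\partial\sigma_1=0$, hence
\[
\kappa=\frac{t'(\sigma_1)}{t'(\sigma_2)}.
\]
Since $t(s)=\sqrt{(1-s)/s}$, we have $t'(s)=-\dfrac{1}{2s^2 t(s)}=-\dfrac{1}{2\sqrt{s^3(1-s)}}=-\dfrac{1}{2\sqrt{g(s)}}$. Therefore $\kappa=\sqrt{g(\sigma_2)/g(\sigma_1)}$.

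Two points need care. We need $t'(\sigma_2)\ne0$, which holds because $\sigma_2\ge b(\alpha)>0$. We also need $t'(\sigma_1)$ to be finite, which is why the statement requires $\sigma_1<1$. Differentiability of $h$ follows from $h=t^{-1}(\sigma-t(\sigma_1))$ with $t^{-1}(u)=1/(1+u^2)$ smooth, together with smoothness of $t$ on $(0,1)$.

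\emph{Step 2 ($\kappa\ge1$).} On the frontier, $\sigma_2=h(\sigma_1)\le\sigma_1$, with equality only at the diagonal point $s^*$ (Lemma~\ref{lem:CF}). So it suffices to show $g(\sigma_2)\ge g(\sigma_1)$ whenever $b(\alpha)\le\sigma_2\le\sigma_1<1$ lie on the frontier.

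The function $g(s)=s^3(1-s)$ increases on $(0,3/4]$ and decreases on $[3/4,1)$. The comparison is therefore not immediate; this is the step I expect to be the main obstacle.

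\emph{Step 3 (handling the obstacle).} The approach is to work in the $t$-coordinate. Write $a=t(\sigma_1)$ and $c=t(\sigma_2)$, so that $a+c=\sigma$ and $a\le c$. Using $s=1/(1+u^2)$,
\[
g(s)=\frac{u^2}{(1+u^2)^4}=:G(u).
\]
The claim becomes $G(c)\ge G(a)$ for $0<a\le c$ with $a+c=\sigma\le1/\sqrt2$, since $\alpha>1$ gives $\alpha(\alpha+1)>2$.

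The function $u\mapsto\log G(u)=2\log u-4\log(1+u^2)$ is increasing exactly when $u<1/\sqrt3$. Every relevant value satisfies $a\le c<\sigma<1/\sqrt2$, but not necessarily $c<1/\sqrt3$, so monotonicity alone does not suffice.

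The fallback is a two-point argument. Set $\Delta(a,c)=\log G(c)-\log G(a)$ along the line $a+c=\sigma$, parametrized by $a\in(0,\sigma/2]$. At $a=\sigma/2$ we have $\Delta=0$, and as $a\to0^+$, $\Delta\to+\infty$.

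I would show that $\Delta$ has no interior zero by checking the sign of $d\Delta/da=-(\log G)'(c)-(\log G)'(a)$. Here
\[
(\log G)'(u)=\frac{2}{u}-\frac{8u}{1+u^2},
\]
which is convex and decreasing for small $u$. Since $a,c\le\sigma<1/\sqrt2$, an explicit estimate should show $(\log G)'(a)+(\log G)'(c)>0$ on the whole segment. This makes $\Delta$ strictly decreasing in $a$ toward its zero at $a=\sigma/2$, so $\Delta>0$ off the diagonal.

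This gives $\kappa\ge1$, with equality iff $a=c$, that is, iff $\sigma_1=\sigma_2=s^*$. The remaining task is the elementary inequality $(\log G)'(a)+(\log G)'(c)>0$ for $a+c\le1/\sqrt2$. I would prove it by clearing denominators and checking polynomial positivity on that triangle.
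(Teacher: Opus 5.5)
Your argument is correct. The first half coincides with the paper's proof, and the second half takes a different route.

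\textbf{Slope.} Implicit differentiation of $t(\sigma_1)+t(h(\sigma_1))=\sigma$ with $t'(s)=-1/(2\sqrt{g(s)})$ is exactly the paper's computation. One small correction: $t'$ never vanishes on $(0,1)$. What $0<b(\alpha)\le\sigma_2\le s^*<1$ secures is that $t'(\sigma_2)$ is finite.

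\textbf{The inequality $\kappa\ge1$.} You and the paper both pass to $G(u)=u^2/(1+u^2)^4$ on the chord $a+c=\sigma<1/\sqrt2$, and then the routes diverge. The paper uses only the sign of $G'$ plus one numerical comparison. If $c\le1/\sqrt3$, both points lie where $G$ increases. Otherwise $a<1/\sqrt2-1/\sqrt3$, so $G(a)\le0.0158<8/81=G(1/\sqrt2)<G(c)$. You instead show that $\Delta=\log G(c)-\log G(a)$ is strictly monotone along the chord.

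\textbf{Your open step.} The inequality you leave open is true, and it needs no polynomial check on a triangle. Write $\phi=(\log G)'$, so $\phi(u)=2(1-3u^2)/\bigl(u(1+u^2)\bigr)$. Then
\[
\phi''(u)=\frac{4}{u^3}+\frac{16u(3-u^2)}{(1+u^2)^3}>0\qquad(0<u<\sqrt3),
\]
so by convexity $\phi(a)+\phi(c)\ge2\phi(\sigma/2)>0$, since $\sigma/2<1/\sqrt3$. The paper's case split would also close it. If $c\le1/\sqrt3$, both terms are nonnegative. Otherwise $\phi(a)>\phi(1/\sqrt2-1/\sqrt3)>14$, while $\phi(c)>\phi(1/\sqrt2)=-2\sqrt2/3$, because $\phi$ is decreasing on $(0,1)$.

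\textbf{Comparison.} The paper's route is the more economical one for the bare inequality. Yours, once completed, avoids numerics and needs only $\sigma<2/\sqrt3$. It also gives strictly more. Since $\log\kappa^2=\Delta$ decreases in $a=t(\sigma_1)$, $\kappa$ is strictly increasing in $\sigma_1$ along the frontier. Equivalently, $h$ is strictly concave on $[s^*,1)$.
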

\begin{proof}
The identity: $t'(s)=-1/(2\sqrt{g(s)})$, and differentiating
$t(\sigma_1)+t(h(\sigma_1))=\sigma$ gives
$\kappa=t'(\sigma_1)/t'(\sigma_2)=\sqrt{g(\sigma_2)/g(\sigma_1)}$.

The inequality, in the $t$-coordinate: since $t$ reverses order,
$\sigma_2\le\sigma_1\iff t_2\ge t_1$ on the segment $t_1+t_2=\sigma$, and
the claim is $G(t_2)\ge G(t_1)$ for
\[
G(t):=g\bigl(1/(1+t^2)\bigr)=\frac{t^2}{(1+t^2)^4},\qquad
G'(t)=\frac{2t(1-3t^2)}{(1+t^2)^5}:
\]
$G$ increases on $[0,1/\sqrt3]$ and decreases after. Note
$\alpha>1\iff\alpha(\alpha+1)>2\iff\sigma<1/\sqrt2$. If
$\sigma\le1/\sqrt3$, both $t_i\le\sigma$ lie in the increasing range and
$t_2\ge t_1$ gives the claim, with equality only at $t_1=t_2$. If
$\sigma>1/\sqrt3$ and $t_2>1/\sqrt3$ (otherwise the previous case
applies), then $t_1=\sigma-t_2<1/\sqrt2-1/\sqrt3$, so
$G(t_1)\le G(1/\sqrt2-1/\sqrt3)=0.01574\dots$, while
$t_2\in(1/\sqrt3,\sigma)$ lies in the decreasing range, so
$G(t_2)>G(\sigma)>G(1/\sqrt2)=\tfrac{8}{81}$: here
$\kappa^2>\tfrac{8/81}{0.0158}>6$, far from tight.
\end{proof}

\begin{corollary}[Minimum of $\sigma_1+\sigma_2$; Tribonacci closure]
\label{cor:Cmin}
For every $\alpha>1$, the minimum of $\sigma_1+\sigma_2$ over the
region $\{F\ge2\pi,\ 0<\sigma_2\le\sigma_1\le1\}$ -- the closure of
the non-packable locus $F>2\pi$; on the boundary $F=2\pi$ the corona
closes with exact tangencies -- is
$1+b(\alpha)$, attained at $(\sigma_1,\sigma_2)=(1,b(\alpha))$
(equivalently, $1+b(\alpha)$ is the unattained infimum over
$F>2\pi$).
Consequently a blocking satisfying the witness wall
$\sigma_1+\sigma_2\le\alpha-\omega$ forces $1+b(\alpha)\le\alpha-\omega$,
i.e.\ $\alpha\ge T_{1+\omega}$ (the root of
$\alpha^3=(1+\omega)(\alpha^2+\alpha+1)$); in particular no blocking exists
for $\alpha\le T$, at any width.
\end{corollary}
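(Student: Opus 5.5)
The proof splits into a minimization over the region $\{F\ge2\pi\}$ and a short algebraic consequence for the witness wall.

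\emph{The minimum.} For fixed $\sigma_1$, every $\theta$ is strictly increasing in $\sigma_2$, so $F(\sigma_1,\cdot)$ is increasing. The region $\{F\ge2\pi,\ \sigma_2\le\sigma_1\}$ in the band is therefore a union of vertical segments $[h(\alpha,\sigma_1),\sigma_1]$, one for each $\sigma_1\in[s^*,1]$. By Lemma~\ref{lem:CF}, for $\sigma_1<s^*$ even the diagonal point has $F<2\pi$, so no such $\sigma_1$ contributes. Minimizing $\sigma_1+\sigma_2$ thus reduces to minimizing $\sigma_1+h(\alpha,\sigma_1)$ over $[s^*,1]$. By Theorem~\ref{thm:Ckappa}, $\frac{d}{d\sigma_1}\bigl(\sigma_1+h\bigr)=1-\kappa\le0$, with equality only at the diagonal endpoint $s^*$. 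Hence the function is strictly decreasing, and its minimum is at $\sigma_1=1$, where Lemma~\ref{lem:CF} gives $h(\alpha,1)=b(\alpha)$. This yields the value $1+b(\alpha)$, attained at $(1,b(\alpha))$.

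\emph{The unattained infimum over $F>2\pi$.} The points $(1,b(\alpha)+\eta)$ have $F>2\pi$ by strict monotonicity. As $\eta\to0$ they approach the minimum, so $1+b(\alpha)$ is the infimum. It is not attained on the open locus, because any point with $F>2\pi$ has $\sigma_2>h(\alpha,\sigma_1)$ and hence a strictly larger sum. One caveat is whether $\{F>2\pi\}$ is exactly the set of points strictly above the frontier in the band; this follows from strict monotonicity in $\sigma_2$.

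\emph{The Tribonacci consequence.} A blocked configuration is non-packable. By the constructive direction (Lemma~\ref{lem:S2}, transported to radius $R=\alpha+1$ as in the subsection setup), it satisfies $F\ge2\pi$, so $\sigma_1+\sigma_2\ge1+b(\alpha)$. Combining with the witness wall gives $1+b(\alpha)\le\alpha-\omega$. Substituting $b(\alpha)=\alpha(\alpha+1)/(\alpha^2+\alpha+1)$ and clearing the positive denominator $D=\alpha^2+\alpha+1$, this becomes
\[
(1+\omega)D+\alpha^2+\alpha\le\alpha D,
\]
which after expansion is $\alpha^3\ge(1+\omega)(\alpha^2+\alpha+1)$. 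The cubic $P_c(\alpha)=\alpha^3-c(\alpha^2+\alpha+1)$ has a unique positive root $T_c$ and is positive beyond it, since $P_c/\alpha^3=1-c(\alpha^{-1}+\alpha^{-2}+\alpha^{-3})$ is strictly increasing. Therefore the inequality is equivalent to $\alpha\ge T_{1+\omega}$.

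Finally, $T_c$ is increasing in $c$, so $T_{1+\omega}>T_1=T$ for every $\omega>0$. No blocking therefore exists for $\alpha\le T$. For a solid pivot ($\omega\ge1$) only the witness wall and the trio frontier are used, so the argument holds verbatim.

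The main obstacle is the domain subtlety in the minimization. The case $\alpha\le T$ is not the problem: there $F\ge2\pi$ is still possible, and the bound simply becomes incompatible with the witness wall. The delicate point is justifying that the one-variable reduction via $h$ covers the whole closed region, including boundary points with $\sigma_1=1$. At $\sigma_1=1$ the pair $(\alpha,1)$ is exactly diametral, so its angle equals $\pi$, and Lemma~\ref{lem:CF} extends continuously there, giving $h(\alpha,1)=b(\alpha)$.
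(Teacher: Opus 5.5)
Your proof is correct and follows the paper's argument. You reduce to the frontier $\sigma_2=h(\alpha,\sigma_1)$ using monotonicity of $F$ in $\sigma_2$, use $1-\kappa\le0$ from Theorem~\ref{thm:Ckappa} to push the minimum to $(1,b(\alpha))$, and clear the denominator $\alpha^2+\alpha+1$ to reach the deformed Tribonacci cubic. The only differences are cosmetic: you describe the region fiber by fiber, and you use the monotonicity of $P_c(\alpha)/\alpha^3$, rather than Descartes' rule of signs, to get the unique positive root and $T_{1+\omega}>T$.
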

\begin{proof}
The region is non-empty ($F(1,1)>2\pi$) and a minimum cannot have
$F>2\pi$ (decrease $\sigma_2$: the sum drops, $F$ stays $\ge2\pi$ by
continuity until the frontier is crossed, and
$F(\sigma_1,0^+)<2\pi$ guarantees it is crossed). On the frontier,
$d(\sigma_1+h)/d\sigma_1=1-\kappa\le0$ (Theorem~\ref{thm:Ckappa}): the
minimum sits at $\sigma_1=1$ and equals $1+b(\alpha)$. The sign of
$1+b(\alpha)-(\alpha-\omega)$ is governed by the polynomial identity
$1+b(\alpha)-\alpha=-(\alpha^3-\alpha^2-\alpha-1)/(\alpha^2+\alpha+1)$
and its deformation
$b(\alpha)-(\alpha-1-\omega)
=-\bigl[\alpha^3-(1+\omega)(\alpha^2+\alpha+1)\bigr]/(\alpha^2+\alpha+1)$,
whose numerator has the single positive root $T_{1+\omega}$ (Descartes'
rule); $T_1=T$.
\end{proof}

\subsection{Positive width: the lower-bound curve and the \texorpdfstring{$13/7$}{13/7} corner}
\label{app:corner}

The full blocking program of the canonical template adds the remaining
resources to the trio wall. With capacities $c_2:=1-\omega$ (the hole
$H_m$) and $c_4:=\alpha-\omega-1$ ($\sigma_2$ back in $u$ next to $m$,
two-circle exact), a realizable blocking satisfies (closures for infima)
\[
\text{(B1) trio non-packable},\quad
\text{(B2) }\sigma_2\ge c_2,\quad
\text{(B4) }\sigma_2\ge c_4,\quad
\text{(W) }\sigma_1+\sigma_2\le\alpha-\omega,
\]
plus the band $\sigma_2\le\sigma_1\le1$ and $\alpha-\omega\ge1$, and
$T_{\mathrm{can}}(\omega)$ is the infimum of
$\rho=\max\bigl(\sigma_1+\sigma_2,(1+\sigma_1+\sigma_2)/\alpha\bigr)$ over
blockings. (At $\omega=0$ the program is empty -- (B2) would force
$\sigma_2\ge1$ -- which is why the limit value $T$ uses only (B1)+(W).)

\begin{proposition}[Witness branch]\label{prop:Cwitness}
The infimum of $\rho$ over (B1)+(W) alone is
$\Phi(\omega)=T_{1+\omega}-\omega$, attained in the limit
$\alpha=T_{1+\omega}$, $\sigma_1\to1$, $\sigma_2\to b(\alpha)=
\alpha-1-\omega$. Moreover $\Phi$ is strictly increasing and concave, with
\[
\Phi'(\omega)=\frac{2\alpha+1}{\alpha^2(\alpha^2+2\alpha+3)}
\Big|_{\alpha=T_{1+\omega}}>0,\qquad
\Phi'(0)=\frac{2T+1}{7T^2+4T+3}=0.1374\dots,
\]
and on $[0,\tfrac17]$ the chord--tangent bounds
$T+(13-7T)\,\omega\le\Phi(\omega)\le T+\Phi'(0)\,\omega$ hold, the chord
endpoint being exact: $T_{8/7}=2$ and $\Phi(\tfrac17)=\tfrac{13}7$.
\end{proposition}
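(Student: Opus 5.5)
The plan has three steps: a lower bound on $\rho$ from Corollary~\ref{cor:Cmin}, a matching limiting family, and implicit differentiation for the qualitative claims. Under (B1) and (W) alone, Corollary~\ref{cor:Cmin} gives $\sigma_1+\sigma_2\ge1+b(\alpha)$ on the closure of the blocked region. Combining this with (W) gives $1+b(\alpha)\le\alpha-\omega$, which is equivalent to $\alpha\ge T_{1+\omega}$.

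\emph{Lower bound.} Write $\rho\ge\sigma_1+\sigma_2\ge1+b(\alpha)$. The function $b$ is increasing in $\alpha$, since $b'(\alpha)=(2\alpha+1)/(\alpha^2+\alpha+1)^2>0$. Hence $\rho\ge1+b(T_{1+\omega})$. At the root the deformed identity of Corollary~\ref{cor:Cmin} gives $b(T_{1+\omega})=T_{1+\omega}-1-\omega$, so $\rho\ge T_{1+\omega}-\omega=\Phi(\omega)$. The other tail needs no separate treatment: it only enlarges the maximum.

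\emph{Attainment.} Take $\alpha=T_{1+\omega}$, $\sigma_1\to1$ and $\sigma_2\to b(\alpha)$. By Corollary~\ref{cor:Cmin} this corner is the infimum over $F>2\pi$, and (W) holds with equality in the limit. The remaining point is that the tail $(1+\sigma_1+\sigma_2)/\alpha\to(\alpha-\omega+1)/\alpha$ does not exceed $\alpha-\omega$. This is equivalent to $\alpha^2-(1+\omega)\alpha-1\ge0$. It follows from $\alpha\ge T>\varphi$, using the cubic at $\alpha$ for small $\omega$; more generally it should follow by rewriting with $\alpha^3=(1+\omega)(\alpha^2+\alpha+1)$. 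I expect this to be the only delicate bookkeeping step.

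\emph{Derivative, monotonicity and concavity.} Let $P(\alpha,\omega)=\alpha^3-(1+\omega)(\alpha^2+\alpha+1)$. Implicit differentiation gives $d\alpha/d\omega=(\alpha^2+\alpha+1)/P_\alpha$, where $P_\alpha=3\alpha^2-2(1+\omega)\alpha-(1+\omega)$. Eliminate $1+\omega=\alpha^3/(\alpha^2+\alpha+1)$. This gives $P_\alpha=\alpha^2(\alpha^2+2\alpha+3)/(\alpha^2+\alpha+1)$, and hence
\[
\Phi'=\frac{(\alpha^2+\alpha+1)^2}{\alpha^2(\alpha^2+2\alpha+3)}-1=\frac{2\alpha+1}{\alpha^2(\alpha^2+2\alpha+3)},
\]
as claimed. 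At $\omega=0$ use $T^3=T^2+T+1$ to simplify $T^2(T^2+2T+3)=7T^2+4T+3$. The function $\alpha=T_{1+\omega}$ is increasing in $\omega$, and $(2\alpha+1)/(\alpha^2(\alpha^2+2\alpha+3))$ is decreasing in $\alpha$ for $\alpha>1$: the numerator has degree $1$ against degree $4$, which a direct derivative sign check confirms. Therefore $\Phi'$ decreases, so $\Phi$ is concave, and $\Phi'>0$ gives strict increase.

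\emph{Chord--tangent bounds and endpoint.} The chord endpoint is exact: $2^3=8=\tfrac87\cdot7$, so $T_{8/7}=2$ and $\Phi(\tfrac17)=2-\tfrac17=\tfrac{13}7$. On $[0,\tfrac17]$, concavity places $\Phi$ below its tangent at $0$ and above the chord joining $(0,T)$ and $(\tfrac17,\tfrac{13}7)$. That chord has slope $7(\tfrac{13}7-T)=13-7T$.

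The main obstacle is therefore the tail comparison in the attainment step. The rest consists of algebraic identities modulo the cubic, which could be checked symbolically.
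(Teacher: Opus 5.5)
Your lower bound and your calculus are correct and coincide with the paper's proof: $\rho\ge\sigma_1+\sigma_2\ge1+b(\alpha)\ge1+b(T_{1+\omega})=\Phi(\omega)$ via Corollary~\ref{cor:Cmin} and (W), then implicit differentiation with $1+\omega=\alpha^3/(\alpha^2+\alpha+1)$, monotonicity of $\Phi'$ in $\alpha$, and the chord--tangent bounds from concavity. The gap is in the attainment step: at fixed $\alpha=T_{1+\omega}$ your family is empty. Non-packable trios satisfy $F>2\pi$ (contrapositive of Lemma~\ref{lem:S2}). By Corollary~\ref{cor:Cmin}, every point with $F>2\pi$ has $\sigma_1+\sigma_2>1+b(\alpha)$, and at the root $1+b(\alpha)=\alpha-\omega$. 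So every candidate approximant violates (W). The limit point $(1,b(\alpha))$ does satisfy (W), but there the trio packs with exact tangencies (Proposition~\ref{prop:S5}). Quantitatively, the slope $\kappa=\sqrt{g(\sigma_2)/g(\sigma_1)}$ of Theorem~\ref{thm:Ckappa} blows up as $\sigma_1\to1$. Hence lowering $\sigma_1$ by $\varepsilon$ forces $\sigma_2$ up by far more than $\varepsilon$. Your remark that (W) ``holds with equality in the limit'' is exactly this symptom. The paper records the same obstruction at the corner: ``at exactly $\alpha=2$, $\omega=\tfrac17$ there is no family''.

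The missing idea is to let $\alpha\downarrow T_{1+\omega}$. The deformed identity in Corollary~\ref{cor:Cmin} makes the slack $\alpha-\omega-1-b(\alpha)$ strictly positive above the root. So you can fix $\sigma_2\in(b(\alpha),\alpha-\omega-1]$ and take $\sigma_1=1-\varepsilon$. You must also certify that these trios are genuinely non-packable. In the width appendix, ``$F>2\pi$'' does not do this, because the angular criterion is proved only sufficient for packing. The paper reuses the construction from the proof of Theorem~\ref{thm:corner}. By Proposition~\ref{prop:S5}, rescaled by $t=1/\alpha$, the trio $\{\alpha,1,\sigma_2\}$ does not pack in radius $\alpha+1$. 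Lemma~\ref{lem:S6a}(3) propagates this to $\sigma_1=1-\varepsilon$ for small $\varepsilon$. Then (W) holds, $\rho=\sigma_1+\sigma_2\le\alpha-\omega$, and $\alpha-\omega\to\Phi(\omega)$. Alternatively, the wall-moving argument of Lemma~\ref{gt:minimal} makes $F\le2\pi$ necessary for trios. Then the unattained infimum of Corollary~\ref{cor:Cmin}, taken at each $\alpha>T_{1+\omega}$, suffices, but you must say so. By contrast, the tail comparison you flag as the main obstacle is routine. On the blocked region, $\sigma_1+\sigma_2\ge1+b(\alpha)>1/(\alpha-1)$ for $\alpha\ge T$, which is exactly $(1+\sigma_1+\sigma_2)/\alpha\le\sigma_1+\sigma_2$. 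Your reduction also drops a term: the exact condition is $\alpha^2-(1+\omega)\alpha+\omega-1\ge0$. The inequality you wrote is stronger and also true, so this slip is harmless.
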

\begin{proof}
By Corollary~\ref{cor:Cmin}, blocking at a given $\alpha$ forces
$\rho\ge\sigma_1+\sigma_2\ge1+b(\alpha)$ and (W) forces
$\alpha\ge T_{1+\omega}$; $b$ is strictly increasing
($b'=(2\alpha+1)/(\alpha^2+\alpha+1)^2$), so the infimum over admissible
$\alpha$ is $1+b(T_{1+\omega})=T_{1+\omega}-\omega$ (the cubic identity
with equality), approached by $\sigma_1=1-\varepsilon$,
$\sigma_2=b(\alpha)+\varepsilon'$ -- a genuine blocking: the rigidity of
Proposition~\ref{prop:S5} at $\sigma_1=1$ and the propagation of
Lemma~\ref{lem:S6a}(3), written for the corner $\omega=\tfrac17$ in the
proof of Theorem~\ref{thm:corner}, apply verbatim at $\alpha=T_{1+\omega}$
for every $\omega\in[\tfrac17,0.30]$ ($\alpha\ge2$ keeps the disk
diametrally full). The derivative: implicit
differentiation of $\alpha^3=(1+\omega)(\alpha^2+\alpha+1)$ and
substitution of $1+\omega=\alpha^3/(\alpha^2+\alpha+1)$ reduce the
numerator of $d\alpha/d\omega-1$ to $2\alpha+1$ (symbolic identity).
Concavity: $\frac{d}{d\alpha}\log\Phi'=\frac{2}{2\alpha+1}-\frac2\alpha
-\frac{2\alpha+2}{\alpha^2+2\alpha+3}<0$ and $\alpha$ increases with
$\omega$. $T_{8/7}=2$ because $2^3=8=\tfrac87\cdot7$.
\end{proof}

Adding (B2) gives the second branch: every full blocking has
$\rho\ge\sigma_1+\sigma_2\ge2(1-\omega)$. The maximum of the two branches
is minimized at their crossing $T_{1+\omega}=2-\omega$, i.e.\ at the root
$\omega_\times=0.0754315\dots$ of $2\omega^3-10\omega^2+14\omega-1$, giving
the uniform floor
$T_{\mathrm{can}}(\omega)\ge2(\alpha_\times-1)=T+0.00985\dots$ for
$\omega\in(0,0.30]$ -- already enough for ``width only raises the floor''.
The full curve is finer. For fixed $\alpha$, pass to the coordinate
$t_i=t(\sigma_i)$, where (B1) becomes \emph{linear}
(Lemma~\ref{lem:CF}): $t_1+t_2\le\Theta(\alpha):=t(b(\alpha))$, and the
walls read $t_2\le\tau_c:=t(c)$, $c:=\max(c_2,c_4)$. If $\alpha>2+\omega$
there is no blocking at all ((B4) and the band give
$\sigma_1+\sigma_2\ge2c_4>\alpha-\omega$, violating (W)); so
$\alpha-\omega\in[1,2]$ effectively.

\begin{lemma}[Conditional minimum]\label{lem:CE1}
Let $\alpha\in(1,2+\omega)$ and $c=\max(1-\omega,\alpha-\omega-1)<1$. The
infimum of $S_0=\sigma_1+\sigma_2$ over non-packable trios with
$\sigma_2\ge c$ and $\sigma_2\le\sigma_1\le1$ is
\[
\text{(a) } 1+b(\alpha)\ \ \text{if } c\le b(\alpha);\quad
\text{(b) } c+t^{-1}\bigl(\Theta(\alpha)-t(c)\bigr)\ \ \text{if }
b(\alpha)<c\le s^*(\alpha);\quad
\text{(c) } 2c\ \ \text{if } c>s^*(\alpha),
\]
where $t^{-1}(u)=1/(1+u^2)$ as in Lemma~\ref{lem:CF}. Moreover each case
is non-decreasing in $\alpha$ on its domain.
\end{lemma}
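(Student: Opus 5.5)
The plan is to work entirely in the $t$-coordinate of Lemma~\ref{lem:CF}. There, non-packability of the trio, read as $F\ge2\pi$ on the closure, becomes the linear condition $t_1+t_2\le\Theta(\alpha)=t(b(\alpha))$ with $t_i=t(\sigma_i)$. Since $t$ is decreasing, the constraints translate as follows: $\sigma_2\ge c$ becomes $t_2\le t(c)$, and the band $\sigma_2\le\sigma_1\le1$ becomes $0\le t_1\le t_2$.

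The proof then minimizes $S_0=t^{-1}(t_1)+t^{-1}(t_2)$ over the resulting polygon. As in Corollary~\ref{cor:Cmin}, a minimizer may be assumed to lie on the frontier $t_1+t_2=\Theta$: decreasing $\sigma_2$ (increasing $t_2$) lowers the sum, and this is allowed until we hit either the frontier or the wall $\sigma_2=c$. Along the frontier, Theorem~\ref{thm:Ckappa} gives $d(\sigma_1+h)/d\sigma_1=1-\kappa\le0$. So $S_0$ decreases as $\sigma_1$ increases, that is, as $\sigma_2=h$ decreases toward $b(\alpha)$.

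The three cases then follow.
\begin{itemize}
\item Case (a), $c\le b(\alpha)$: the wall never binds on the frontier, whose range is $[b(\alpha),s^*]$. The unconstrained minimum $1+b(\alpha)$ at $(1,b(\alpha))$ from Corollary~\ref{cor:Cmin} is admissible.
\item Case (b), $b(\alpha)<c\le s^*$: the frontier point with $\sigma_2=c$ exists. It has $\sigma_1=t^{-1}(\Theta-t(c))\ge c$, because $t(c)\ge\Theta/2$ when $c\le s^*$. By monotonicity along the frontier, the feasible arc $\sigma_2\in[c,s^*]$ attains its minimum at $\sigma_2=c$, giving $c+t^{-1}(\Theta-t(c))$. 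The non-frontier part of the region (where $\sigma_2=c$ and $F>2\pi$) has larger $\sigma_1$, so it cannot do better.
\item Case (c), $c>s^*$: every pair with $\sigma_1\ge\sigma_2\ge c>s^*$ satisfies $t_1+t_2\le2t(c)<\Theta$, so it lies strictly in the blocked region. The minimum is the diagonal corner $\sigma_1=\sigma_2=c$, giving $2c$; we need $c<1$ for the band, which is assumed.
\end{itemize}
Infima rather than minima arise only because the genuine non-packable locus is the open side; one passes to closures as in Corollary~\ref{cor:Cmin}.

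For monotonicity in $\alpha$, the key inputs are these:
\begin{itemize}
\item $b$ is increasing, which handles (a).
\item $c=\max(1-\omega,\alpha-\omega-1)$ is non-decreasing in $\alpha$, which handles (c).
\item $\Theta(\alpha)=1/\sqrt{\alpha(\alpha+1)}$ is decreasing in $\alpha$.
\end{itemize}
In (b), write the value as $V=c+t^{-1}(\Theta-t(c))$. The term $t^{-1}(\Theta-t(c))$ increases when $\Theta$ decreases, since $t^{-1}$ is decreasing on $u\ge0$. When $c$ is constant ($c=1-\omega$) this suffices. When $c=\alpha-\omega-1$ increases, the direct contribution is $+1$ per unit of $c$, and the indirect contribution of $c$ through $t(c)$ has derivative $-\kappa$ by the frontier slope identity, evaluated at the point with $\sigma_2=c$. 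This gives $dV/dc=1-\kappa\le0$, which pulls the wrong way. So in this sub-branch one must add the $\Theta$-contribution and show that the total derivative is $\ge0$.

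This sub-branch is the main obstacle. The derivative there is
\[
1-\kappa+\frac{\partial\sigma_1}{\partial\Theta}\,\Theta'(\alpha),
\qquad \frac{\partial\sigma_1}{\partial\Theta}=-2\sqrt{g(\sigma_1)},
\]
with $g(s)=s^3(1-s)$ as in Theorem~\ref{thm:Ckappa}. Its sign must be checked on the region $b(\alpha)<\alpha-\omega-1\le s^*(\alpha)$, $\alpha\in(1,2+\omega)$. The approach I would try first is to substitute $\sigma_1=t^{-1}(t_1)$ and $\sigma_2=t^{-1}(t_2)$, with $t_1+t_2=\Theta$, and reduce the sign to a polynomial inequality in $(\alpha,t_2)$. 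Failing a clean factorization, I would fall back on a one-variable certificate.

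Case boundaries also need continuity checks:
\begin{itemize}
\item at $c=b(\alpha)$, (b) gives $b+t^{-1}(\Theta-\Theta)=b+1$, matching (a);
\item at $c=s^*$, (b) gives $s^*+t^{-1}(\Theta/2)=2s^*$, matching (c).
\end{itemize}
Hence if the branch domains move monotonically with $\alpha$, piecewise monotonicity is consistent across them.
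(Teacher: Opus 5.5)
Your computation of the three infima is correct and is essentially the paper's argument. The paper works in the same truncated box $\{t_1\le t_2\le t(c),\ t_1+t_2\le\Theta\}$ and takes the corner $(t(c),t(c))$ when $2t(c)\le\Theta$. Otherwise it pushes $t_2$ to its cap along $t_1+t_2=\Theta$, using $dS_0/dt_2=2\bigl[\sqrt{G(t_1)}-\sqrt{G(t_2)}\bigr]\le0$; this is your $1-\kappa\le0$ read in the $t$-coordinate. The genuine gap is in the monotonicity of case (b) under the moving cap $c=\alpha-\omega-1$, which you flag as the main obstacle and leave unresolved.

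The obstacle comes from inverting $\kappa$. Along the frontier $\kappa=-\partial\sigma_2/\partial\sigma_1$, so at the frontier point with $\sigma_2=c$ one has $\partial\sigma_1/\partial c=-1/\kappa$, not $-\kappa$. Explicitly, differentiating $t(\sigma_1)=\Theta-t(c)$ with $t'(s)=-1/(2\sqrt{g(s)})$ gives $\partial\sigma_1/\partial c=-\sqrt{g(\sigma_1)/g(c)}=-1/\kappa$. Hence $\partial V/\partial c=1-1/\kappa\ge0$, by Theorem~\ref{thm:Ckappa}, which applies because $c\le s^*$ gives $\sigma_2\le\sigma_1$. Your own frontier monotonicity already says the same thing: $S_0$ increases with $\sigma_2$ along the frontier, so raising the cap raises the value. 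Therefore
\[
\frac{dV}{d\alpha}=\Bigl(1-\frac1\kappa\Bigr)-2\sqrt{g(\sigma_1)}\,\Theta'(\alpha)
\]
is a sum of two nonnegative terms. This is exactly the paper's $\tau_4'(-2)\bigl[\sqrt{G(\tau_4)}-\sqrt{G(t_1)}\bigr]+(t^{-1})'(t_1)\Theta'$, and no polynomial certificate is needed.

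As written, the expression $1-\kappa-2\sqrt{g(\sigma_1)}\Theta'$ that you propose to sign-check is not merely unproved; it is genuinely negative near $c\downarrow b(\alpha)$. There $\sigma_1\to1$, so $g(\sigma_1)\to0$, $\kappa\to\infty$, and the $\Theta$-term vanishes. That region lies in the domain for every $\omega\ge\tfrac17$: take $\alpha$ slightly above $T_{1+\omega}\ge2$. So the planned certificate would fail rather than confirm the lemma.
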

\begin{proof}
$t^{-1}(\cdot)$ is decreasing, so $S_0$ drops when either $t_i$ rises. In
the truncated box $\{t_1\le t_2\le\tau_c,\ t_1+t_2\le\Theta\}$: if
$2\tau_c\le\Theta$ (case (c)) the box corner $(\tau_c,\tau_c)$ is admissible and
optimal, $S_0=2\,t^{-1}(\tau_c)=2c$. Otherwise the minimum lives on the segment
$t_1+t_2=\Theta$, where
\[
\frac{dS_0}{dt_2}=-(t^{-1})'(\Theta-t_2)+(t^{-1})'(t_2)
=2\bigl[\sqrt{G(t_1)}-\sqrt{G(t_2)}\bigr]\le0
\]
by the $G$-lemma inside Theorem~\ref{thm:Ckappa}
($(t^{-1})'(x)=-2x/(1+x^2)^2$, $\bigl((t^{-1})'/2\bigr)^2=G$): push $t_2$
to its cap.
If $\tau_c\ge\Theta$ (case (a)): $t_2=\Theta$, $t_1=0$, i.e.\
$(\sigma_1,\sigma_2)=(1,b(\alpha))$. If $\tau_c<\Theta$ (case (b)):
$t_2=\tau_c$, $t_1=\Theta-\tau_c$. Monotonicity in $\alpha$: (a) is
$1+b(\alpha)$ with $b$ increasing; (b) with $c=c_2$ fixed has
$dS_0/d\alpha=(t^{-1})'(\Theta-\tau_c)\,\Theta'(\alpha)\ge0$ ($(t^{-1})'<0$,
$\Theta'<0$); (b)/(c) with the moving cap $c=c_4$ ($\alpha\ge2$,
$\tau_4=t(c_4)$) has
$dS_0/d\alpha=\tau_4'\,(-2)\bigl[\sqrt{G(\tau_4)}-\sqrt{G(t_1)}\bigr]
+(t^{-1})'(t_1)\,\Theta'\ge0$, since $\tau_4'<0$, $G(\tau_4)\ge G(t_1)$
($\tau_4\ge t_1$ on the segment, same $G$-lemma) and the last two factors
are both negative.
\end{proof}

\begin{proposition}[The lower-bound curve; exact on the witness
branch]\label{prop:Ccurve}
For $\omega\in(0,0.30]$ (the cap $0.30$ is a convenient working
restriction of the statement, not a structural obstruction:
$T_{1+\omega}<2+\omega$ holds identically), the proved lower bound for
$T_{\mathrm{can}}$ is
\[
T_{\mathrm{can}}(\omega)\ \ge\
\begin{cases}
2(1-\omega) & \omega\in(0,\omega_1],\\[2pt]
\alpha_m(\omega)-\omega & \omega\in[\omega_1,\tfrac17],\\[2pt]
\Phi(\omega) & \omega\in[\tfrac17,0.30],
\end{cases}
\]
where $\omega_1=0.0413570\dots$ is the root in
$(\tfrac1{25},\tfrac1{14})$ of $4\omega^3-20\omega^2+25\omega-1$ and
$\alpha_m(\omega)\in(1,2]$ is the unique root of
$t(\alpha-1)+t(1-\omega)=\Theta(\alpha)$, an algebraic function of degree
$6$ (its polynomial $P(\alpha,\omega)$, of bidegree $(6,2)$, is
irreducible over $\mathbb Q[\alpha,\omega]$). Equality holds on the witness
branch (the family is genuine as in Proposition~\ref{prop:Cwitness}) and at
the corner; on the other two branches the matching upper
bound inherits the angular-criterion caveat.
\end{proposition}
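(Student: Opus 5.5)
The plan is to minimise $\rho$ over the full program (B1)+(B2)+(B4)+(W) one $\alpha$ at a time, using Lemma~\ref{lem:CE1}, and then minimise over $\alpha$. For fixed $\omega$ and $\alpha\in(1,2+\omega)$ the binding cap is $c=\max(c_2,c_4)$. The lemma gives the least admissible $S_0=\sigma_1+\sigma_2$, and it is non-decreasing in $\alpha$. Blocking also needs (W), so $\alpha$ must satisfy $S_0^{\min}(\alpha)\le\alpha-\omega$. Since $\rho\ge S_0$, the bound at a given $\omega$ is $S_0^{\min}$ evaluated at the smallest $\alpha$ compatible with (W).

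Monotonicity means we only need to locate where $S_0^{\min}(\alpha)$ first meets the line $\alpha-\omega$; this crossing determines the branch.

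\emph{Witness branch.} If the crossing lies in case (a), i.e.\ $c\le b(\alpha)$ at $\alpha=T_{1+\omega}$, the value is $1+b(T_{1+\omega})=\Phi(\omega)$, exactly as in Proposition~\ref{prop:Cwitness}. The plan is to show that (a) holds precisely for $\omega\ge\tfrac17$ up to $0.30$. Concretely, check $1-\omega\le b(T_{1+\omega})=T_{1+\omega}-1-\omega$, i.e.\ $T_{1+\omega}\ge2$, which is exactly $\omega\ge\tfrac17$ because $T_{8/7}=2$. Also check $c_4=\alpha-\omega-1=b(\alpha)$ there, so the (B4) cap is harmless.

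\emph{Case (c).} For small $\omega$ the cap $c_2=1-\omega$ is large and exceeds $s^*(\alpha)$. Then $S_0=2(1-\omega)$, and (W) is compatible once $\alpha\ge3-\omega$ or so. This yields the bound $2(1-\omega)$.

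\emph{Case (b).} In between, the crossing is governed by $S_0=c_2+t^{-1}(\Theta(\alpha)-t(c_2))=\alpha-\omega$. Writing $\sigma_1=t^{-1}(\Theta-t(c_2))=\alpha-1$ turns this into $t(\alpha-1)+t(1-\omega)=\Theta(\alpha)$, which defines $\alpha_m$, and the bound is $\alpha_m-\omega$. Squaring twice to clear the radicals gives the bidegree-$(6,2)$ polynomial, whose irreducibility I would check symbolically.

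The transitions come next. Case (c) and case (b) meet when $c_2=s^*(\alpha)$ at the crossing. Setting $2(1-\omega)=\alpha-\omega$ with $s^*(2-\omega)=1-\omega$ should reduce to $4\omega^3-20\omega^2+25\omega-1=0$, which is $\omega_1$. Cases (b) and (a) meet at $\omega=\tfrac17$, where $\alpha_m=2$ and $t(1)=0$.

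I must also check two further points. First, the other tail $(1+S_0)/\alpha$ never exceeds $S_0$ at these points; this holds because $\alpha\ge S_0+\omega$ and $S_0\ge\varphi$-ish. Second, $c_4$ never overtakes $c_2$ before the crossing, i.e.\ $\alpha\le2$ on the lower branches.

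For equality on the witness branch I will reuse the genuine approximating family of Proposition~\ref{prop:Cwitness} together with Lemma~\ref{lem:S6a}(3). I expect the main obstacle to be the case (b)/(c) bookkeeping: verifying that the crossing really falls in the claimed case on each interval and extracting $\omega_1$ cleanly. I would first do this by exact elimination and then confirm the interval placements with sign checks at rational sample points.
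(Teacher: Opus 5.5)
Your route is the paper's. You fix $\omega$, minimise the value of Lemma~\ref{lem:CE1} over $\alpha$ subject to (W), and read the $H_m$, mixed and witness branches off cases (c), (b) with $c=c_2$, and (a). You also locate $\omega_1$ through the identity $(1-\omega)-s^*(2-\omega)\propto-(4\omega^3-20\omega^2+25\omega-1)$. Your global monotonicity of the case-wise minimum legitimately replaces the paper's explicit treatment of $\alpha\in(2,2+\omega)$, because the minimum is continuous at the case junctions and at the switch $c_2\to c_4$ at $\alpha=2$.

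There is one slip. In case (c) the wall (W) reads $2(1-\omega)\le\alpha-\omega$, i.e.\ $\alpha\ge2-\omega$, not ``$\alpha\ge3-\omega$''. The latter exceeds $2+\omega$, where no blocking exists at all. The case-(c) window is $[2-\omega,\bar\alpha(\omega)]$ with $s^*(\bar\alpha)=1-\omega$, and that is what your later extraction of $\omega_1$ actually uses.

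The missing ingredient is the uniqueness of $\alpha_m$. The statement defines $\alpha_m$ as \emph{the unique} root in $(1,2]$. You need this to identify your ``first crossing'' with that root, and, for instance, to know that $2-\omega_1$ is the first crossing at $\omega_1$ rather than merely some root.

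In case (b) with $c=c_2$, (W) is equivalent to $\Xi(\alpha):=\Theta(\alpha)-t(\alpha-1)\ge t(1-\omega)$. Since $\Xi$ is a difference of two decreasing functions, a single sign change is not automatic, and sign checks at rational sample points cannot establish it. The paper shows $\Xi$ is strictly increasing on $(1,2]$: $\Xi'>0$ reduces to $(\alpha(\alpha+1))^3>(2\alpha+1)^2(\alpha-1)^3(2-\alpha)$, which has no root there. Existence comes from $\Xi(1^+)=-\infty$ and $\Xi(2)=t(\tfrac{6}{7})\ge t(1-\omega)\iff\omega\le\tfrac{1}{7}$. Together these give that the admissible $\alpha$ in $(1,2]$ are exactly $[\alpha_m,2]$. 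Add that lemma and the proposal goes through.

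For orientation, the two outer bounds are immediate. The bound $2(1-\omega)$ follows from (B2) and the band. The bound $\Phi$ follows from Corollary~\ref{cor:Cmin}, since every case of Lemma~\ref{lem:CE1} dominates $1+b(\alpha)$. So your case-(c) and case-(a) analyses serve attainment, and the real lower-bound content is the middle branch.
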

\begin{proof}
Fix $\omega$ and minimize the value of Lemma~\ref{lem:CE1} over $\alpha$
subject to (W).

\emph{$H_m$ branch ($\omega\le\omega_1$).} Case (c) gives $S=2(1-\omega)$,
constant in $\alpha$, admissible iff (W): $\alpha\ge2-\omega$, and iff
$c_2>s^*(\alpha)$, i.e.\ $\alpha\le\bar\alpha(\omega)$. The window is
non-empty iff $s^*(2-\omega)\le1-\omega$, and
$(1-\omega)-s^*(2-\omega)$ has numerator
$-(4\omega^3-20\omega^2+25\omega-1)$; the cubic is strictly increasing on
$(0,\tfrac56)$ (derivative $(5-2\omega)(5-6\omega)$), so the window exists
exactly for $\omega\le\omega_1$. The other cases do not undercut
$2(1-\omega)$ (monotonicity in Lemma~\ref{lem:CE1} plus continuity across
case junctions).

\emph{Mixed branch ($\omega_1<\omega\le\tfrac17$).} For $\alpha\le2$, case
(c) is empty (it would need $1-\omega\ge s^*(\alpha)\ge s^*(2-\omega)$,
against the cubic) and case (a) needs $b(\alpha)\ge1-\omega$, impossible
($b(2)=\tfrac67\le1-\omega$). Case (b) with $c=c_2$ has value
$S_0^b(\alpha)=1-\omega+t^{-1}(\Theta-t(1-\omega))$, increasing in $\alpha$,
and (W) is equivalent to
$\Xi(\alpha):=\Theta(\alpha)-t(\alpha-1)\ge t(1-\omega)$ with $\Xi$
strictly increasing on $(1,2]$ (the claim reduces to the polynomial
inequality $(\alpha(\alpha+1))^3>(2\alpha+1)^2(\alpha-1)^3(2-\alpha)$,
which has no root in $(1,2)$), $\Xi(1^+)=-\infty$ and
$\Xi(2)=t(\tfrac67)\ge t(1-\omega)\iff\omega\le\tfrac17$: the admissible
$\alpha$ form $[\alpha_m,2]$ and the minimum is
$S_0^b(\alpha_m)=\alpha_m-\omega$ ((W) with equality). For
$\alpha\in(2,2+\omega)$: if $b(\alpha)\le1-\omega$, relax (B4) to (B2) and
use monotonicity, $S_0\ge S_0^b(\alpha)>S_0^b(\alpha_m)$; if $b(\alpha)>1-\omega$,
$S_0\ge1+b(\alpha)>2-\omega>\alpha_m-\omega$. Squaring the defining equation
twice yields the polynomial $P$ (necessary condition; the right branch is
pinned by $\alpha_m\in(1,2]$), with the junction identities
$P(2-\omega,\omega)=(\omega-1)^3(4\omega^3-20\omega^2+25\omega-1)$ (so
$\alpha_m(\omega_1)=2-\omega_1$) and $\alpha_m(\tfrac17)=2$.

\emph{Witness branch ($\omega\ge\tfrac17$).} Now
$b(T_{1+\omega})=T_{1+\omega}-1-\omega\ge1-\omega\iff T_{1+\omega}\ge2
\iff\omega\ge\tfrac17$: the optimum of case (a) satisfies (B2), (W) forces
$\alpha\ge T_{1+\omega}$ (Corollary~\ref{cor:Cmin}) and the minimum is
$\Phi(\omega)$; larger $\alpha$ and the cases (b)/(c$_4$) are increasing
and start at $\Phi(\omega)$; smaller $\alpha$ admit no blocking (for
$\alpha\le2$, $\Xi(\alpha)\le\Xi(2)=t(\tfrac67)\le t(1-\omega)$ with
equality only at the corner; case (c) would need $\omega\le\omega_1$).
\end{proof}

\begin{proof}[Proof of Theorem~\ref{thm:corner}]
\emph{Lower bound.} Take any blocking at width $\omega$. If $\alpha\ge2$:
by Corollary~\ref{cor:Cmin} alone (no (B2), (B4), (W) needed),
$\rho\ge1+b(\alpha)\ge1+b(2)=\tfrac{13}7$. If $\alpha<2$: for
$\omega\ge\tfrac17$, by Corollary~\ref{cor:Cmin} and (W) blocking
forces $\alpha\ge T_{1+\omega}\ge T_{8/7}=2$, contradicting $\alpha<2$:
empty.
For $\omega\le\omega_1$: $\rho\ge2(1-\omega)\ge2(1-\omega_1)>\tfrac{13}7$
(since $\omega_1<\tfrac1{14}$). For $\omega_1<\omega<\tfrac17$:
$\rho\ge V(\omega):=\alpha_m(\omega)-\omega$, and $V>\tfrac{13}7$ there:
if $V(\omega')=\tfrac{13}7$ then $(\alpha,\omega)=(\tfrac{13}7+\omega',
\omega')$ satisfies $P=0$, but
\[
P\bigl(\tfrac{13}7+\omega,\ \omega\bigr)
=\frac{4\,(7\omega-1)\,Q_5(\omega)}{7^6},
\]
\[
Q_5=33614\,\omega^5+235298\,\omega^4+620830\,\omega^3+766066\,\omega^2
+397831\,\omega-289,
\]
and $Q_5$ has no root in $[\tfrac1{25},\tfrac17]$ (exact root isolation;
its only positive root is $\approx0.0007$), so the only root of the
product there is $\omega=\tfrac17$; since
$V(\omega_1)-\tfrac{13}7=\tfrac17-2\omega_1>0$ and $V$ is continuous,
$V>\tfrac{13}7$ on all of $[\omega_1,\tfrac17)$. Uniqueness of the
infimum configuration: equality in the branch $\alpha\ge2$ forces
$\alpha=2$, $\sigma_1=1$, $\sigma_2=\tfrac67$; then (B2) gives
$\omega\ge\tfrac17$ and (W) gives $\omega\le\tfrac17$.

\emph{The approximating family is genuine.} At exactly $\alpha=2$,
$\omega=\tfrac17$ there is no family: $\kappa>1$ gives
$\sigma_1+h(2,\sigma_1)>\tfrac{13}7=\alpha-\omega$ for $\sigma_1<1$,
violating (W); the corner is approached by \emph{opening} (B4). Fix
$\omega=\tfrac17$ and let $\delta\downarrow0$:
\[
\alpha=2+\delta,\quad\varepsilon=\delta^2/4,\quad
\sigma_1=1-\varepsilon,\quad\sigma_2=(\alpha-\omega-1)+\varepsilon/2.
\]
All walls hold ((B4) with slack $\varepsilon/2$; (B2) since
$\alpha-\omega-1\ge1-\omega$ for $\alpha\ge2$; (W) since
$\sigma_1+\sigma_2=\alpha-\omega-\varepsilon/2$). Genuineness: since
$\alpha>T_{8/7}=2$, Corollary~\ref{cor:Cmin} gives
$\sigma_2>b(\alpha)$ strictly, so the trio $\{\alpha,1,\sigma_2\}$ is
non-packable by the \emph{rigidity} of Proposition~\ref{prop:S5}
(the disk $R=\alpha+1$ is diametrically full; rescale by $t=1/\alpha$) --
not merely by the angular criterion -- and the infeasibility propagates
from $\sigma_1=1$ to $\sigma_1=1-\varepsilon$ by
Lemma~\ref{lem:S6a}(3) for all small $\varepsilon$ (shrinking
$\varepsilon$ below the propagation radius $\delta_0$ of that lemma if
needed; any smaller $\varepsilon$ serves). Then
$\rho=\alpha-\omega-\varepsilon/2\to\tfrac{13}7$.
\end{proof}

\begin{remark}[Fine structure]\label{rem:Cbump}
The curve is not monotone on $(0,\tfrac17]$: differentiating the defining
equation of $\alpha_m$ at the junction (where $\alpha=2-\omega_1$, so the
leading terms $(\alpha-1)^3(2-\alpha)$ and $(1-\omega_1)^3\omega_1$ agree
by direct substitution) gives
$V'(\omega_1^+)>0$ (numerically $+0.0721$, by implicit differentiation of
$P$): $\omega_1$ is a
\emph{local minimum} of $T_{\mathrm{can}}$, followed by a local maximum at
$\omega_{\mathrm{peak}}=0.0444700$ (the unique root in
$(\omega_1,\tfrac17)$ -- by Sturm -- of an explicit degree-8 factor $R_8$
of the resultant of $P$ and $P_\alpha+P_\omega$), a bump of height
$1.1\cdot10^{-4}$, before descending to the corner. $T_{\mathrm{can}}$ has
two local minima, $\omega_1$ and $\tfrac17$, and the global one is the
corner. (The computational claims of this appendix -- the exact root
isolation of $Q_5$, the degree-8 factor $R_8$ and the bump height -- are
reproduced by \texttt{code/esquina.py} and \texttt{code/grosor.py}.)
\end{remark}

\section{Complete proofs for generic containers}\label{app:genericproofs}

This appendix proves every statement of Section~\ref{sec:generic}; its
final subsection (\ref{app:pan-app}) proves the golden floor of the pan
exchange for pair profiles, in full.
Normalize $r_m=1$, $\omega=w/r_m$. The template: $v$ is the pan, of radius
$R$, containing $\{\alpha\}\cup O\cup\{m\}$ with $O=\{o_1\ge\dots\ge o_j\}$,
$j\ge0$, $o_i\ge m=1$ (rings larger than $m$ agree in $F$ and $P$); $u$ is
the hole of $\alpha$, capacity $\alpha-\omega\ge1$; the witness placed
$S=\{\sigma_1\ge\sigma_2\}$ in $u$, $\sigma_2\le\sigma_1\le1$. Holes may be
occupied arbitrarily: contents of a hole at any depth are parametrized by
$X_y:=\sum\mathrm{children}(y)$. The exchange builds a placement $P'$ that
preserves the container assignments of all rings $\ge m$; \emph{positions}
inside each container are existential (a placement is a nesting forest
whose sibling groups pack). A \emph{node} is any ring of radius $\ge1$
that is an occupant of $v$ other than $\alpha$ \emph{and} $m$, or is
nested (at any depth) inside such an occupant; nodes and their descendants live outside
$m$. Unless stated otherwise, blocked-exchange walls are closures of
strict inequalities, as in Appendix~\ref{app:widthproofs}. Throughout
this appendix and Appendix~\ref{app:campaign} we use the
\emph{first-copy convention}: ties among equal radii are broken by index,
so the tail of a ring collects all later copies.

\subsection{The universal frontier and the corona criterion}
\label{app:corona-app}

\begin{lemma}[Universal frontier]\label{lem:DU}
Let $\{A,x,y\}$ be wall-tangent in a disk of arbitrary radius $R$, in the
interior domain $A+x,A+y,x+y<R$. Set $c:=R-A$,
$T_c(x):=\sqrt{(c-x)/x}$ (a function symbol local to the corona
criterion, unrelated to the deformed Tribonacci root $T_{1+\omega}$ of
Appendix~\ref{app:widthproofs}) and $\tau_R:=c/\sqrt{AR}$. If $A\ge\min(x,y)$,
then
\[
F=\theta(A,x)+\theta(A,y)+\theta(x,y)\ \ge\ 2\pi
\iff T_c(x)+T_c(y)\ \le\ \tau_R .
\]
The direction $(\Rightarrow)$ is unconditional; the converse fails without
the hypothesis ($R=1$, $A=0.01$, $x=y=0.45$ is a counterexample).
\end{lemma}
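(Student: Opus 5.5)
The plan is to follow the proof of Lemma~\ref{lem:CF}, replacing the normalization $R-\alpha=1$ by a general $c=R-A$. With $f(z)=z/(R-z)$ as in Lemma~\ref{lem:S1}, the half-angles are $\mathcal A=\theta(A,x)/2$, $\mathcal B=\theta(A,y)/2$ and $\mathcal C=\theta(x,y)/2$, all in $(0,\pi/2]$. Then
\[
\sin^2\mathcal A=f(A)f(x),\qquad \sin^2\mathcal B=f(A)f(y),\qquad \sin^2\mathcal C=f(x)f(y),
\]
and the interior domain guarantees that all products are $<1$, so every half-angle lies strictly inside $(0,\pi/2)$.

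\emph{Algebraic identity.} Since $f(A)=A/c$, one gets $\cos^2\mathcal A=1-Ax/(c(R-x))=R(c-x)/(c(R-x))$, using $cR-x(c+A)=R(c-x)$. Hence
\[
(\sin\mathcal A\cos\mathcal B)^2=\frac{AR}{c^2}\,f(x)f(y)\,\frac{c-y}{y}=\frac{AR}{c^2}\,f(x)f(y)\,T_c(y)^2,
\]
and symmetrically for $\cos\mathcal A\sin\mathcal B$. Dividing by $\sin\mathcal C=\sqrt{f(x)f(y)}>0$ gives the key identity
\[
\frac{\sin(\mathcal A+\mathcal B)}{\sin\mathcal C}=\frac{T_c(x)+T_c(y)}{\tau_R}.
\]
I will check this factorization symbolically, as in block V5.

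\emph{Reduction to half-angles.} $F\ge2\pi$ is equivalent to $\mathcal C\ge\pi-(\mathcal A+\mathcal B)$.

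For $(\Rightarrow)$: since $\mathcal C<\pi/2$, $F\ge2\pi$ forces $\mathcal A+\mathcal B>\pi/2$. Then both $\mathcal C$ and $\pi-\mathcal A-\mathcal B$ lie in $(0,\pi/2)$, where $\sin$ is increasing. So the condition is equivalent to $\sin\mathcal C\ge\sin(\mathcal A+\mathcal B)$, which by the identity is $T_c(x)+T_c(y)\le\tau_R$. No hypothesis on $A$ is used here, so this direction is unconditional.

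For $(\Leftarrow)$: assume $T_c(x)+T_c(y)\le\tau_R$, so $\sin(\mathcal A+\mathcal B)\le\sin\mathcal C$. The only obstacle is the regime $\mathcal A+\mathcal B\le\pi/2$, where the sine comparison does not translate into $F\ge2\pi$. In that regime both angles lie in $[0,\pi/2]$, so the comparison gives $\mathcal A+\mathcal B\le\mathcal C$. Here the hypothesis enters: by symmetry take $y=\min(x,y)\le A$. Monotonicity of $\theta$ (Lemma~\ref{lem:S1}) gives $\mathcal C=\theta(x,y)/2\le\theta(x,A)/2=\mathcal A$. Then $\mathcal A+\mathcal B\le\mathcal A$, i.e.\ $\mathcal B\le0$, contradicting $\mathcal B>0$. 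Hence $\mathcal A+\mathcal B>\pi/2$ and the previous equivalence applies.

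\emph{Counterexample.} For $R=1$, $A=0.01$, $x=y=0.45$, evaluate numerically. The half-angles are small, $\mathcal A=\mathcal B\approx0.09$, so $F\approx2(0.18)+\theta(x,y)<2\pi$, since $\theta(x,y)=2\arcsin(0.45/0.55)$ is well below $\pi$. Meanwhile $\tau_R=0.99/\sqrt{0.01}=9.9$ exceeds $2T_c(0.45)=2\sqrt{0.54/0.45}\approx2.19$. This exhibits exactly the bad regime that the hypothesis excludes.

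I expect the main obstacle to be isolating and closing the case $\mathcal A+\mathcal B\le\pi/2$ in the converse; the identity itself is routine.
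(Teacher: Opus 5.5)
Your proof is correct and follows essentially the same route as the paper: the same cross-term factorization $\sin(\mathcal A+\mathcal B)=\tfrac{\sqrt{AR}}{c}\bigl(T_c(x)+T_c(y)\bigr)\sin\mathcal C$, the same unconditional forward direction using $\mathcal C\le\pi/2$, and the same use of the hypothesis in the converse, where monotonicity $\theta(x,y)\le\theta(x,A)$ forces $\mathcal A+\mathcal B>\mathcal C$ (the paper's $s>w$), which you phrase as a contradiction in the regime $\mathcal A+\mathcal B\le\pi/2$. Your numerical check that the stated counterexample gives $F\approx2.28<2\pi$ while $2T_c(0.45)\approx2.19<9.9=\tau_R$ is an addition; the paper only asserts the counterexample.
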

\begin{proof}
The cross term factors for arbitrary $R$: with
$\sin^2(\theta_{Az}/2)=f(A)f(z)$, $f(z)=z/(R-z)$, one has the identity
\[
f(A)f(x)\bigl(1-f(A)f(y)\bigr)
=\frac{AR}{(R-A)^2}\cdot\frac{c-y}{y}\cdot f(x)f(y),
\]
so
$\sin(\theta_{Ax}/2)\cos(\theta_{Ay}/2)
=\tfrac{\sqrt{AR}}{c}\,T_c(y)\sqrt{f(x)f(y)}$ and, adding the symmetric
term, $\sin s=\tfrac{\sqrt{AR}}{c}\bigl(T_c(x)+T_c(y)\bigr)\sin w$ with
$s=(\theta_{Ax}+\theta_{Ay})/2$, $w=\theta_{xy}/2$. The linear form is
thus $\sin s\le\sin w$, while $F\ge2\pi$ is $s+w\ge\pi$. Since
$w\le\pi/2$: $s+w\ge\pi\Rightarrow\sin s\le\sin w$ always; the converse
needs $s>w$, which the hypothesis provides: if $A\ge y$, monotonicity
gives $\theta(A,x)\ge\theta(y,x)=2w$, so $s>w$.
\end{proof}

Uniform companions (same computations as in
Appendix~\ref{app:widthproofs}, now for general $R$): the general pocket
$b_R(A)=ARc/(AR+c^2)$, equal to Descartes' $b_2$ when $R=A+B$, increasing
in $R$ and always $\le A$; and the slope identity
$\kappa=\sqrt{g_c(y)/g_c(x)}$ with $g_c(s)=s^3(c-s)$, since $T_c$ is a
primitive of $-c/(2\sqrt{g_c})$.

\begin{lemma}[Gap system]\label{lem:Dgaps}
A \emph{corona} of $\{a_1,\dots,a_k\}$ in the disk $R$ (all $k$ circles
wall-tangent; all pairs admissible, $a_i+a_j\le R$, so that every
$\theta_{ij}$ is defined) with cyclic order $\pi$ exists iff the linear system
\[
\theta_{ij}\ \le\ S_j-S_i\ \le\ 2\pi-\theta_{ij}
\qquad(1\le i<j\le k,\ S_1=0)
\]
is feasible, where $\theta_{ij}$ is the angle of the pair in positions
$i,j$ of $\pi$.
\end{lemma}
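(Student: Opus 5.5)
The plan is to parametrize a corona by the angular positions of its centers. Every circle is wall-tangent, so circle $a_i$ (at position $i$ of the cyclic order $\pi$) has its center at distance $R-a_i$ from the pan center, at polar angle $S_i$. Rotational symmetry lets us fix $S_1=0$. "Cyclic order $\pi$" then means we may choose representatives with $0=S_1\le S_2\le\dots\le S_k<2\pi$, increasing along $\pi$. Containment in the pan holds automatically by internal tangency (each $a_i\le R$). So a corona with this order exists iff some such angle vector makes every pair disjoint.

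By Lemma~\ref{lem:S1}, two wall-tangent circles at angular separation $\gamma\in[0,\pi]$ have disjoint interiors iff $\gamma\ge\theta_{ij}$. Admissibility $a_i+a_j\le R$ is exactly the hypothesis of that lemma, so $\theta_{ij}$ is defined. For $i<j$ the angular difference is $\Delta=S_j-S_i\in[0,2\pi)$, and the separation is $\min(\Delta,2\pi-\Delta)$. The condition $\min(\Delta,2\pi-\Delta)\ge\theta_{ij}$ is therefore the pair of linear inequalities $\theta_{ij}\le S_j-S_i\le 2\pi-\theta_{ij}$.

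For the forward direction, take a corona and read off its angles as above. The ordering and the pairwise disjointness then give the system verbatim.

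For the converse, take a feasible $S$ with $S_1=0$. The constraints with $i<j$ give $S_j-S_i\ge\theta_{ij}\ge0$, so the $S_i$ are nondecreasing. They also give $S_j-S_1\le2\pi-\theta_{1j}\le2\pi$, so every $S_j$ lies in $[0,2\pi]$. We place the centers at these angles. The positions then appear in the order $\pi$ (ties occur only when $\theta_{ij}=0$, which is harmless), and each pair satisfies the separation criterion. Hence the configuration is a packing realizing $\pi$.

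The only delicate point is the boundary case where some $S_j-S_i$ equals $2\pi$ or coincides with a tie. This can happen only when the relevant $\theta$ vanishes. Since the $a_i$ are positive, $\theta_{ij}>0$, so the constraint $S_j\le2\pi-\theta_{1j}<2\pi$ excludes wraparound. I expect no real obstacle; the main care is in justifying that cyclic order corresponds to monotone lifts with total span less than $2\pi$.
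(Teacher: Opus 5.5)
Your proof is correct and follows essentially the same route as the paper's. Both parametrize the corona by center angles, invoke the separation criterion of Lemma~\ref{lem:S1}, and rewrite $\min(\Delta,2\pi-\Delta)\ge\theta_{ij}$ as $\theta_{ij}\le S_j-S_i\le2\pi-\theta_{ij}$. Your extra converse bookkeeping makes explicit what the paper leaves implicit: the $S_j$ increase strictly and satisfy $S_j\le2\pi-\theta_{1j}<2\pi$ because $\theta_{ij}>0$, so $\Delta=S_j-S_i\in(0,2\pi)$ and the separation formula applies.
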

\begin{proof}
The angular separation of positions $i<j$ is
$\min(S_j-S_i,\,2\pi-(S_j-S_i))$, and
$\min(\Delta,2\pi-\Delta)\ge\theta\iff\theta\le\Delta\le2\pi-\theta$
because $\theta\le\pi$; wall-tangent circles are contained in the disk and
pairwise disjointness is exactly the angular condition.
\end{proof}

\begin{lemma}[Subset certificates]\label{lem:Dsubset}
If $\{a_1,\dots,a_k\}$ has a corona with cyclic order $\pi$, then every
subset $T$, $|T|\ge3$, satisfies
$\sum_{\text{consecutive pairs of }T}\theta\le2\pi$ in the induced cyclic
order. Consequently, if some $T$ exceeds $2\pi$ in \emph{all} its cyclic
orders, no corona exists in any order -- the unconditional direction used
by all lower bounds. \qed
\end{lemma}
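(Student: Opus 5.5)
The plan is to restrict the corona witness to $T$ and read off the angular gaps. Fix a corona of $\{a_1,\dots,a_k\}$ with cyclic order $\pi$, and let the circles of $T$ appear around the wall at angles $\phi_1<\phi_2<\dots<\phi_m$ in $[0,2\pi)$, with $m=|T|\ge3$. This is the induced cyclic order on $T$.

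Set $\Delta_l=\phi_{l+1}-\phi_l$ for $l<m$ and $\Delta_m=2\pi+\phi_1-\phi_m$. These $m$ gaps are nonnegative and sum to exactly $2\pi$.

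The key step is to show that each gap dominates the pairwise threshold of its two endpoint circles. By Lemma~\ref{lem:S1}, applied inside the corona (where the pairwise disjointness of wall-tangent circles is exactly the angular condition, as in Lemma~\ref{lem:Dgaps}), the consecutive pair $(l,l+1)$ satisfies
\[
\min(\Delta_l,\,2\pi-\Delta_l)\ \ge\ \theta_{l,l+1}.
\]
Since the minimum is at most $\Delta_l$, we get $\Delta_l\ge\theta_{l,l+1}$. Summing over the $m$ consecutive pairs (indices cyclic) gives
\[
\sum_{l=1}^{m}\theta_{l,l+1}\ \le\ \sum_{l=1}^{m}\Delta_l\ =\ 2\pi,
\]
which is the certificate. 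The only care needed is that $m\ge3$ makes the $m$ consecutive pairs genuinely distinct. This matters for $m=2$, where the two gaps would belong to the same pair and the bound would be $2\theta\le2\pi$, which is trivial but still true; the restriction to $m\ge3$ keeps the statement meaningful.

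For the consequence, argue by contrapositive. Suppose a corona exists in some order. Then $T$ inherits some induced cyclic order, and the argument above shows that this particular order has sum at most $2\pi$. So if every cyclic order of $T$ has sum exceeding $2\pi$, no corona exists.

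I expect no real obstacle. The only subtle point is justifying $\Delta_l\ge\theta_{l,l+1}$ rather than merely $\min(\Delta_l,2\pi-\Delta_l)\ge\theta_{l,l+1}$, and that is immediate because the minimum is bounded above by $\Delta_l$.
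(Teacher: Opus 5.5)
Your proof is correct and is the intended argument. The paper states this lemma with a bare \qed, relying on the same observation it spells out for trios in the necessity part of Lemma~\ref{gt:minimal}: restrict the corona to $T$, note via Lemma~\ref{lem:S1} that each consecutive angular gap is at least that pair's threshold $\theta$, sum the gaps to $2\pi$, and take the contrapositive for the consequence.
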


The naive criterion (``some order with consecutive sum $\le2\pi$ implies a
corona'') is \emph{false} for $k\ge4$: the conditional triangle inequality
$\theta(a,c)\le\theta(a,b)+\theta(b,c)$ holds when $b\ge\min(a,c)$ but
fails badly for small $b$ ($R=1$, $a=c=0.45$, $b\to0$), and
$\{0.47,0.47,0.47,0.02\}$ has a small cyclic sum while its top trio is
already infeasible. The correct statement:

\begin{theorem}[Exact criterion per order, $k=4$]\label{thm:Dk4}
For arbitrary symmetric $\theta_{ij}\in(0,\pi]$, the system of
Lemma~\ref{lem:Dgaps} for the order $(1,2,3,4)$ is feasible iff the four
trio certificates and the total
$\theta_{12}+\theta_{23}+\theta_{34}+\theta_{14}\le2\pi$ hold.
\end{theorem}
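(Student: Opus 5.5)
The plan is to read the system of Lemma~\ref{lem:Dgaps} for the order $(1,2,3,4)$ as a system of \emph{difference constraints} and apply the standard negative-cycle criterion. Such a system is feasible iff its constraint graph has no negative cycle; this follows from Bellman--Ford or Farkas, and the normalization $S_1=0$ is harmless because the constraints only involve differences.

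\textbf{The constraint graph.} Each pair $i<j$ contributes two edges.
\begin{itemize}
\item The lower bound $S_j-S_i\ge\theta_{ij}$, i.e.\ $S_i-S_j\le-\theta_{ij}$, gives a \emph{descent} edge $j\to i$ of weight $-\theta_{ij}$.
\item The upper bound $S_j-S_i\le2\pi-\theta_{ij}$ gives an \emph{ascent} edge $i\to j$ of weight $2\pi-\theta_{ij}$.
\end{itemize}
A cycle with $U$ ascent edges and edge multiset $E$ therefore has weight $2\pi U-\sum_{e\in E}\theta_e$. It is negative iff $\sum_E\theta>2\pi U$. Since every non-simple cycle decomposes into simple ones, it suffices to examine simple cycles.

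\textbf{Counting argument.} Every cycle must return to its starting index, and descents strictly lower the index, so $U\ge1$. Because every $\theta_{ij}\le\pi$, a negative cycle needs $\pi|E|\ge\sum_E\theta>2\pi U$, i.e.\ more than $2U$ edges. A simple cycle on four nodes has at most four edges. Hence $U\ge2$ is impossible: four edges give $\sum\theta\le4\pi$, which is never strictly larger than $4\pi$. Only $U=1$ remains.

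\textbf{The $U=1$ cycles.} Such a cycle consists of one ascent $i\to j$ with $i<j$, followed by a chain of descents from $j$ back to $i$. The descents pass through some nodes strictly between $i$ and $j$, visited in decreasing order. Its $\theta$-sum is therefore $\theta_{ij}$ plus the consecutive angles along the chain. This is exactly the induced cyclic consecutive sum of the subset $\{i,\text{intermediates},j\}$. The cases are:
\begin{itemize}
\item Two nodes: the cycle $i\to j\to i$ has sum $2\theta_{ij}\le2\pi$, so it is never negative.
\item Three nodes: these are precisely the four trio certificates.
\item Four nodes: the ascent must be $1\to4$ with descents $4\to3\to2\to1$, which is exactly the total $\theta_{12}+\theta_{23}+\theta_{34}+\theta_{14}$.
\end{itemize}

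\textbf{Conclusion.} Thus no negative cycle exists iff the trio certificates and the total all hold, which is the claimed equivalence. Necessity can also be seen directly from Lemma~\ref{lem:Dsubset}. The only delicate point is the bookkeeping that every simple cycle has $U\ge1$ and that the $U=1$ cycles are exactly the subset certificates. I expect this to be routine once the edges are oriented as above, and the bound $\theta\le\pi$ does all the work in excluding $U\ge2$.
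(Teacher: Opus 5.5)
Your proposal is correct and matches the paper's own proof essentially step for step. Both read the system as difference constraints and reduce feasibility to the absence of negative simple cycles. Both use $\theta\le\pi$ to show that a negative cycle needs more than $2U$ edges, which rules out $U\ge2$ on four nodes. Both then identify the $U=1$ cycles with the subset certificates: three-node chains give the four trios and the four-node chain gives the total.
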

\begin{proof}
Necessity: Lemma~\ref{lem:Dsubset}. Sufficiency, by difference-constraint
duality: each upper bound is an up-edge $i\to j$ of weight
$2\pi-\theta_{ij}$, each lower bound a down-edge $j\to i$ of weight
$-\theta_{ij}$; the system is feasible iff the digraph has no negative
directed cycle, and it suffices to exclude \emph{simple} cycles. A simple
cycle $\Gamma$ with $U$ up-edges has weight
$w(\Gamma)=2\pi U-\sum_{e\in\Gamma}\theta_e$, so $w<0$ requires
$\#\mathrm{edges}(\Gamma)>2U$ (as $\theta\le\pi$). With $k=4$, a simple
cycle has $\le4$ edges, so only $U=1$ can go negative; a $U=1$ cycle is an
up-edge $i\to j$ plus a decreasing chain from $j$ to $i$, and its
negativity condition $\theta_{ij}+\sum_{\text{chain}}\theta>2\pi$ is
exactly the subset certificate of $\{i,\text{chain},j\}$: enumerating,
sizes $3$ give the four trios and size $4$ gives the total.
\end{proof}

\begin{proposition}[The zigzag minimizes the total]\label{prop:Dzigzag}
For $a_1\ge a_2\ge a_3\ge a_4$, the three cyclic orders have totals
$\sum_{i<j}\theta_{ij}$ minus one perfect matching, and the maximum-sum
matching is $\theta_{12}+\theta_{34}$; hence
$\min_\pi\mathrm{TOT}(\pi)=\sum_{i<j}\theta_{ij}-\theta_{12}-\theta_{34}$,
achieved by the zigzag order $(a_1,a_3,a_2,a_4)$.
\end{proposition}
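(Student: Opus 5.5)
The proof has two parts. First, identify the cyclic totals with ``all six angles minus a perfect matching''. Second, show that the matching $\{12,34\}$ has the largest sum.

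For four labelled points there are exactly three cyclic orders up to rotation and reflection: $(1,2,3,4)$, $(1,3,2,4)$ and $(1,2,4,3)$. Each uses four of the six pairs as consecutive edges. The two pairs it omits are its diagonals, and these are disjoint, so they form a perfect matching. The omitted matchings are $\{13,24\}$, $\{12,34\}$ and $\{14,23\}$ respectively. This gives $\mathrm{TOT}(\pi)=\sum_{i<j}\theta_{ij}-(\text{omitted matching})$. Hence minimizing the total is the same as maximizing the matching sum, and the zigzag $(a_1,a_3,a_2,a_4)$ is precisely the order that omits $\{12,34\}$.

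It remains to show
\[
\theta_{12}+\theta_{34}\ \ge\ \max\bigl(\theta_{13}+\theta_{24},\ \theta_{14}+\theta_{23}\bigr).
\]
I would use the representation recorded in the corona paragraph: $\theta(a,b)=g(\log f(a)+\log f(b))$ with $g(u)=2\arcsin\sqrt{e^{u}}$. Write $x_i=\log f(a_i)$, so $x_1\ge x_2\ge x_3\ge x_4$ since $f$ is increasing. Since $g$ is increasing, the inequality $\theta_{14}+\theta_{23}\le\theta_{13}+\theta_{24}$ follows by comparing arguments directly, without needing convexity.

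For the main comparison, consider the pair sums $(x_1+x_2,\,x_3+x_4)$ and $(x_1+x_3,\,x_2+x_4)$. Both have the same total. The first pair majorizes the second, because $x_1+x_2\ge x_1+x_3$ and $x_1+x_2\ge x_2+x_4$. Karamata's inequality for a convex $g$ then gives $g(x_1+x_2)+g(x_3+x_4)\ge g(x_1+x_3)+g(x_2+x_4)$. The same argument against $(x_1+x_4,\,x_2+x_3)$ handles the third matching.

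The main obstacle is establishing convexity of $g$ on the relevant domain. With $s=e^{u}=f(a)f(b)\in(0,1]$ we have $g(u)=2\arcsin\sqrt s$, so $g'(u)=s\cdot\frac{d}{ds}\bigl(2\arcsin\sqrt s\bigr)=\sqrt{s}/\sqrt{1-s}=\sqrt{s/(1-s)}$. This is increasing in $s$, and therefore in $u$, so $g$ is convex on $u\le0$. This covers exactly the admissible pairs $a_i+a_j\le R$ guaranteed by Lemma~\ref{lem:S1}. I would check the endpoint $s=1$ ($\theta=\pi$) only as a limiting case, where continuity suffices.
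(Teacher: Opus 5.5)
Your argument is the paper's: write $\theta_{ij}=g(x_i+x_j)$ with $g$ convex and increasing on $u\le0$. The pair sums $(x_1+x_2,x_3+x_4)$ majorize those of the other two matchings, so convexity makes $\theta_{12}+\theta_{34}$ the largest, and your explicit identification of each order's omitted diagonals as a perfect matching just spells out what the paper states.

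One slip, though it carries no weight in the proof: your aside that $\theta_{14}+\theta_{23}\le\theta_{13}+\theta_{24}$ follows from monotonicity alone is wrong. Neither pairing of the arguments is termwise dominated (for instance $x_1+x_4\le x_1+x_3$ but $x_2+x_3\ge x_2+x_4$), and with equal sums only convexity can decide. You never need that comparison, however, since you compare $\{12,34\}$ with both other matchings directly via Karamata.
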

\begin{proof}
With $\sigma_i:=\log f(a_i)$ (decreasing) one has
$\theta(a_i,a_j)=g(\sigma_i+\sigma_j)$ for $g(s)=2\arcsin(e^{s/2})$ on
$s\le0$ (a notation local to this proof), with
$g''(s)=e^{-s}/\bigl(2(e^{-s}-1)^{3/2}\bigr)>0$: $g$ is convex and
increasing (continuity at $s=0$ by passage to the limit). The three
matchings have pair-sums $\{\sigma_1+\sigma_2,\sigma_3+\sigma_4\}$,
$\{\sigma_1+\sigma_3,\sigma_2+\sigma_4\}$,
$\{\sigma_1+\sigma_4,\sigma_2+\sigma_3\}$ with equal totals, and the first
majorizes the other two; convexity turns majorization into the claimed
maximality.
\end{proof}

\begin{theorem}[The $U_4$ criterion: global, two inequalities]
\label{thm:DU4}
$\{a_1\ge a_2\ge a_3\ge a_4\}$ admits a corona in the disk $R$ (all pairs
admissible) iff
\[
\theta_{12}+\theta_{13}+\theta_{23}\le2\pi
\qquad\text{and}\qquad
\sum_{i<j}\theta_{ij}-\theta_{12}-\theta_{34}\le2\pi .
\]
\end{theorem}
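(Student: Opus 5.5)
The plan is to combine Theorem~\ref{thm:Dk4} with Proposition~\ref{prop:Dzigzag}. A corona exists iff the gap system of Lemma~\ref{lem:Dgaps} is feasible for at least one of the three cyclic orders on four labelled circles. Reflections and rotations do not change feasibility, so the orders are $(1,2,3,4)$, $(1,2,4,3)$ and $(1,3,2,4)$. By Theorem~\ref{thm:Dk4}, order $\pi$ is feasible iff all four trio certificates hold and the cyclic total $\mathrm{TOT}(\pi)\le2\pi$. The trio certificates do not depend on the order, since a trio has only one cyclic order up to reflection. Hence a corona exists iff every trio sum $\theta_{ij}+\theta_{ik}+\theta_{jk}\le2\pi$ and $\min_\pi\mathrm{TOT}(\pi)\le2\pi$.

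Proposition~\ref{prop:Dzigzag} identifies this minimum as $\sum_{i<j}\theta_{ij}-\theta_{12}-\theta_{34}$, which is the second displayed inequality. For necessity, both displayed conditions follow at once: the first is one of the trio certificates (Lemma~\ref{lem:Dsubset}), and the second is $\min_\pi\mathrm{TOT}\le\mathrm{TOT}(\pi^\ast)\le2\pi$ for a feasible order $\pi^\ast$.

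For sufficiency, the remaining task is to show that the top-trio inequality implies the other three trio certificates. I will use the monotonicity of $\theta$ in each argument (Lemma~\ref{lem:S1}). Any trio $\{i,j,k\}$ with $i<j<k$ satisfies $a_i\le a_1$, $a_j\le a_2$ and $a_k\le a_3$ entrywise, because the radii are sorted. Therefore $\theta_{ij}\le\theta_{12}$, $\theta_{ik}\le\theta_{13}$ and $\theta_{jk}\le\theta_{23}$, so the trio sum is at most the top-trio sum, which is at most $2\pi$. With all four trios and the zigzag total bounded by $2\pi$, Theorem~\ref{thm:Dk4} gives feasibility of the zigzag order, and Lemma~\ref{lem:Dgaps} turns the feasible gap system into a corona.

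The main point needing care is the hypotheses of Theorem~\ref{thm:Dk4}, which asks for $\theta_{ij}\in(0,\pi]$. Admissibility $a_i+a_j\le R$ gives $f(a_i)f(a_j)\le1$, so each angle is defined and lies in $(0,\pi]$. Beyond that, the argument is just bookkeeping: the trio certificates are order-independent, and reflections identify the $4!/4$ labelled orders into three classes.
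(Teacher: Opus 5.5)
Your proposal is correct and follows the paper's proof. Theorem~\ref{thm:Dk4} and Proposition~\ref{prop:Dzigzag} reduce existence of a corona to the order-independent trio certificates plus the minimal (zigzag) total, and monotonicity of $\theta$ lets the top trio dominate the other three. You also spell out bookkeeping the paper leaves implicit: the three cyclic orders, the entrywise comparison $a_i\le a_1$, $a_j\le a_2$, $a_k\le a_3$, and $\theta_{ij}\in(0,\pi]$ from admissibility.
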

\begin{proof}
Trio certificates do not depend on the order, so a corona exists iff all
trios pass and the minimal total does
(Theorem~\ref{thm:Dk4}~+~Proposition~\ref{prop:Dzigzag}). By monotonicity
of $\theta$, every trio is dominated pairwise by the top trio
$\{a_1,a_2,a_3\}$, so one trio inequality suffices. Neither inequality is
redundant (both are active on the frontier). The top-trio condition
linearizes by Lemma~\ref{lem:DU} with automatic hypothesis
($A=a_1$ is the global maximum): it reads
$T_c(a_2)+T_c(a_3)\ge\tau_R$ with $c=R-a_1$.
\end{proof}

For $k=5$ the subset-only criterion fails for exactly one pattern, the
\emph{pentagram}: of the $84$ simple cycles, all with $U\ge2$ are
count-dominated or LP-dominated except the cycle through the five
diagonals, and adding its certificate $\sum_D\theta\le4\pi$ restores an
exact criterion for arbitrary $\theta$; for \emph{geometric} $\theta$ --
separation matrices with $\sin^2(\theta_{ij}/2)=f_i f_j$ -- the identity
$\prod_{(i,j)\in D}f_if_j=\prod_{(i,j)\in S}f_if_j=(\prod_i f_i)^2$
(over the diagonal set $D$ and the side set $S$ of the cycle) obstructs the
pattern and the best value found is $7.556$ against the $4\pi=12.57$
needed (the $84$-cycle enumeration and the dominance checks are
reproduced by script \texttt{code/corona.py}, a computational check); its redundancy for geometric $\theta$ is stated as a conjecture in
the repository and is \emph{not used}: lower bounds only ever use the
unconditional direction of Lemma~\ref{lem:Dsubset}. For $k\ge5$ the
subset quantifier does not commute with the order quantifier (explicit
five-ring counterexamples in the repository), so the per-order criterion
or the LP is the correct tool.

\subsection{Occupant walls and metallic floors}\label{app:walls-app}

\begin{lemma}[Walls, free-hole template]\label{lem:DV1}
Suppose the holes of $m$, $\sigma_1$ and all $o_i$ are free. If the
exchange is blocked, then
\[
\text{(B2) }\sigma_2>1-\omega;\quad
\text{(B3) }\sigma_2>\sigma_1-\omega;\quad
\text{(B4) }\sigma_2>\alpha-\omega-1;\quad
\text{(Bo) }\sigma_2>o_k-\omega\ \forall k;
\]
\[
\text{(D) }\sigma_1+\sigma_2>1;\qquad
\text{(W) }\sigma_1+\sigma_2\le\alpha-\omega.
\]
In particular $o_k<\sigma_2+\omega\le1+\omega$ for all $k$: every extra
occupant stays within one width of $m$.
\end{lemma}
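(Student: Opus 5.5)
Each wall is the contrapositive of an explicit unblocking placement $P'$. We move $m$ into $u$ (the hole of $\alpha$), vacate $D_m$ in the pan, and reinsert $S=\{\sigma_1,\sigma_2\}$ using only free holes and two-circle-exact sibling conditions. Every reinsertion we use involves at most two circles in a disk, so each condition is exact and no angular criterion enters. The pivot $m$ is only ever placed beside the larger-than-$m$ occupants that $F$ certified.

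We list the placements one by one; if any succeeds the exchange is unblocked, so blocking forces the negation of each.

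\textbf{(D).} Both $\sigma_1,\sigma_2$ go into the vacated disk $D_m$ of radius $1$. This works if $\sigma_1+\sigma_2\le1$, by the row lemma (Lemma~\ref{lem:row}), so blocking gives $\sigma_1+\sigma_2>1$.

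\textbf{(B2).} Put $\sigma_1$ in $D_m$ and $\sigma_2$ in the free hole $H_m$, which travels with $m$ into $u$. This works if $\sigma_2\le1-\omega$, so blocking gives (B2).

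\textbf{(B3).} Nest $\sigma_2$ in the free hole of $\sigma_1$ and put $\sigma_1$ in $D_m$. This works if $\sigma_2\le\sigma_1-\omega$, giving (B3).

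\textbf{(B4).} Keep $\sigma_2$ in $u$ next to $m$ and put $\sigma_1$ in $D_m$. Two circles fit in $u$ iff $1+\sigma_2\le\alpha-\omega$, giving (B4).

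\textbf{(Bo).} Put $\sigma_2$ in the free hole of an occupant $o_k$ (capacity $o_k-\omega$) and $\sigma_1$ in $D_m$. This works if $\sigma_2\le o_k-\omega$, giving (Bo).

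\textbf{(W).} This is not a blocking condition: the witness $P$ keeps $S$ in $u$, and two-circle exactness gives $\sigma_1+\sigma_2\le\alpha-\omega$.

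\textbf{The "in particular" clause.} From (Bo), $o_k<\sigma_2+\omega$, and $\sigma_2\le\sigma_1\le1$ gives $o_k<\sigma_2+\omega\le1+\omega$. Since $o_k\ge1=m$, every extra occupant lies within one width of $m$.

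\textbf{Main obstacle: legality of each $P'$.} We must check, for each placement, that no other piece moves and that all containers stay consistent. Moving $m$ into $u$ is valid because, by the choice of $m$ as the largest disagreement, the larger-than-$m$ occupants of $u$ agree in $F$ and $P$. Here $u$ has only $\alpha$'s hole as its container, and $F$ certified that $m$ fits alone in $u$, which is consistent with $S$ being exactly the set displaced. The vacated disk $D_m$ touches nothing else in the pan. Hole placements require the named holes to be empty; that is the free-hole hypothesis. The hole of $\sigma_1$ is free since $S$ carries no children in the pair template.

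With legality in hand, each wall follows directly from the failure of its placement.
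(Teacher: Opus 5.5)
Your proposal is correct and follows the paper's proof essentially verbatim. Each wall is the contrapositive of the same explicit two-resource placement: the pair goes into $D_m$, or $\sigma_2$ goes into $H_m$, into $\sigma_1$'s hole, beside $m$ in $u$, or into $o_k$'s hole while $\sigma_1$ goes into $D_m$; (W) is the legality of the witness's placement via the exact two-circle criterion, and your legality paragraph spells out what the paper leaves implicit.
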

\begin{proof}
Each line is the contrapositive of an explicit placement whose two
resources are disjoint and whose feasibility criteria (pair, nesting, row)
are exact: $\sigma_2\to H_m$ with $\sigma_1\to D_m$; $\sigma_2\subset
\sigma_1\to D_m$; $\sigma_2$ next to $m$ in $u$ ($1+\sigma_2\le
\alpha-\omega$, pair-exact) with $\sigma_1\to D_m$; $\sigma_2\subset o_k$
with $\sigma_1\to D_m$; the whole pair into $D_m$ (a free unit disk). (W)
is the legality of the witness's placement.
\end{proof}

\begin{proposition}[Occupant price]\label{prop:DV2}
Blocking in the free-hole template with $j\ge1$ extra occupants implies
\[
\rho\ >\ 2+\frac{j-2\omega}{o_1}\ \ge\ \frac{j+2}{1+\omega}
\quad(\omega\le j/2),
\qquad\text{and for }j=1,\ \omega\ge\tfrac12:\quad
\rho>\frac{4}{1+2\omega}.
\]
\end{proposition}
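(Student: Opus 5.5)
The argument reads a lower bound for $\rho$ off the tail ratio of the largest extra occupant $o_1$, using the walls of Lemma~\ref{lem:DV1}. In the decreasing order of the inventory, the rings smaller than $o_1$ include the other $j-1$ occupants, $m=1$, $\sigma_1$ and $\sigma_2$ (all at most $1\le o_1$; ties are broken by the first-copy convention). Hence
\[
\rho\ \ge\ \frac{(j-1)+1+\sigma_1+\sigma_2}{o_1}\ \ge\ \frac{j-1+1+\sigma_1+\sigma_2}{o_1}.
\]
The aim is to bound the numerator from below and $o_1$ from above in terms of the same quantity $\sigma_2$.

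For the occupants I use (Bo): $o_k<\sigma_2+\omega$ for every $k$. For the reinsertion pair I use (D), $\sigma_1+\sigma_2>1$, together with $\sigma_1\ge\sigma_2$. The cleanest route writes everything against $o_1$. Each $o_k\ge 1$ contributes at least $1$. From (Bo), $\sigma_2>o_1-\omega$, and then $\sigma_1\ge\sigma_2>o_1-\omega$. Therefore
\[
\rho\ >\ \frac{j+2(o_1-\omega)}{o_1}\ =\ 2+\frac{j-2\omega}{o_1}.
\]
This is the first claimed inequality. For $\omega\le j/2$ the fraction is nonnegative and decreasing in $o_1$. Since $o_1<\sigma_2+\omega\le1+\omega$, we get $2+(j-2\omega)/o_1\ge 2+(j-2\omega)/(1+\omega)=(j+2)/(1+\omega)$, which is the second inequality. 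The strictness of $\rho>2+\dots$ comes from the strict walls; the non-strict ``$\ge$'' step only needs $o_1\le1+\omega$.

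For the refined branch $j=1$, $\omega\ge\tfrac12$, the bound above degenerates, so I combine two tails. The dominant tail of $o_1$ is $(1+\sigma_1+\sigma_2)/o_1$ with $o_1<\sigma_2+\omega$. I also use the tail of $m$, $(\sigma_1+\sigma_2)/1>1$, or more usefully the tail of $\sigma_1$ when relevant. Put $x=\sigma_2$. By (D) and $\sigma_1\le 1$ we have $\sigma_1+\sigma_2>\max(1,2x)$ in the appropriate sense. Then $\rho>\max\bigl(\max(1,2x),\,(1+\max(1,2x))/(x+\omega)\bigr)$, and I minimize over $x$. For $x\le\tfrac12$ the second term is at least $2/(\tfrac12+\omega)=4/(1+2\omega)$. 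For $x\ge\tfrac12$ the two branches $2x$ and $(1+2x)/(x+\omega)$ must be balanced, and one checks that the minimum is still at least $4/(1+2\omega)$ when $\omega\ge\tfrac12$. Note that at $x=\tfrac12$ the value $(1+2x)/(x+\omega)$ equals exactly $4/(1+2\omega)$, while $2x=1\le 4/(1+2\omega)$ for $\omega\le\tfrac32$.

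The main obstacle is this last balancing step. It must hold uniformly for $\omega\ge\tfrac12$, including $\omega$ near $1$ and the regime where (B2) ($x>1-\omega$) is weak. I would first verify that $(1+2x)/(x+\omega)$ is increasing in $x$ exactly when $\omega>\tfrac12$, since its derivative has the sign of $2\omega-1$. That makes the infimum sit at the smallest admissible $x$, namely $x\downarrow\max(\tfrac12,1-\omega)=\tfrac12$, and the claim follows.
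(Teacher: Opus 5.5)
Your proof is correct and follows the paper's argument. The main bound uses the tail of $o_1$ with (Bo) and $\sigma_1\ge\sigma_2$, and the refined $j=1$, $\omega\ge\tfrac12$ branch adds (D). The only difference is that you parametrize the refined branch by $x=\sigma_2$, putting $o_1<x+\omega$ in the denominator, whereas the paper parametrizes by $o_1$, putting $\sigma_2>o_1-\omega$ in the numerator; this is the same optimization under the substitution $o_1=x+\omega$, and the tail of $m$ you bring in is never actually needed.
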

\begin{proof}
The tail of $o_1$ contains $\{o_2,\dots,o_j,m,\sigma_1,\sigma_2\}$ (the
first-copy convention of the preamble). By (Bo), $\sigma_1\ge\sigma_2>o_1-\omega$,
and each $o_i\ge1$:
$\rho\ge\bigl((j-1)+1+\sigma_1+\sigma_2\bigr)/o_1
>\bigl(j+2(o_1-\omega)\bigr)/o_1=2+(j-2\omega)/o_1$. If $\omega\le j/2$
the expression decreases in $o_1$ and $o_1<1+\omega$
(Lemma~\ref{lem:DV1}). For $j=1<2\omega$, add (D):
$\sigma_1+\sigma_2>\max(1,2(o_1-\omega))$, so
$\rho>\bigl(1+\max(1,2(o_1-\omega))\bigr)/o_1$, whose infimum over $o_1$
is $4/(1+2\omega)$ at $o_1=\tfrac12+\omega$.
\end{proof}

\begin{corollary}[Crossings; canonical optimality]\label{cor:DV34}
For $j=1$: $\rho>3/(1+\omega)>T$ for
$\omega\le\tfrac12$ (the range where Proposition~\ref{prop:DV2}
provides that branch; on it $3/(1+\omega)>T$ holds throughout since
$\tfrac12<3/T-1=0.6310\dots$), and combining with the fine branch
$\rho>4/(1+2\omega)$ for $\omega\ge\tfrac12$,
$\rho>T$ for all $\omega<2/T-\tfrac12=2T^2-2T-\tfrac52=0.5873\dots$, with
$\rho\ge\tfrac{13}7$ for $\omega\le\tfrac{15}{26}$. For $j\ge2$:
$\rho>4/(1+\omega)>2>T$ for all $\omega<1$. Moreover every
extra-occupant blocking has $\rho>2$ for $\omega<\tfrac12$, while the
entire canonical curve lies strictly below $2$ -- the witness branch by
the exact identity
$(2+\omega)^3-(1+\omega)\bigl((2+\omega)^2+(2+\omega)+1\bigr)=1>0$, i.e.\
$T_{1+\omega}<2+\omega$, hence $\Phi(\omega)<2$ -- so on
$\omega<\tfrac12$ adding occupants is
strictly suboptimal for the adversary and the canonical template is
the optimal $v$ in this class (for $j=1$ and
$\omega\ge\tfrac12$ the proven bound $4/(1+2\omega)$ may dip below
the canonical curve, so there the comparison is not settled by
these inequalities; for $j\ge2$ the bound $4/(1+\omega)>2$ covers
all widths). \qed
\end{corollary}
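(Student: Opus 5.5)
The plan is to read off each claim of the corollary from Proposition~\ref{prop:DV2} and Lemma~\ref{lem:DV1} by one-variable comparisons, and then to compare with the branch formulas of Proposition~\ref{prop:Ccurve} and Proposition~\ref{prop:Cwitness}. Beyond those results, the only algebra needed is the Tribonacci relation $T^3=T^2+T+1$ and one polynomial identity.

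\emph{Case $j=1$.} For $\omega\le\tfrac12$ (so $\omega\le j/2$ holds), Proposition~\ref{prop:DV2} gives $\rho>3/(1+\omega)$. The inequality $3/(1+\omega)>T$ is equivalent to $\omega<3/T-1\approx0.6310$, which covers all of $\omega\le\tfrac12$. For $\omega\ge\tfrac12$ I will use the fine branch $\rho>4/(1+2\omega)$. It exceeds $T$ iff $\omega<2/T-\tfrac12$. To get the closed form, divide the Tribonacci cubic by $T$: this gives $1/T=T^2-T-1$, hence $2/T-\tfrac12=2T^2-2T-\tfrac52$. Gluing the two ranges at $\omega=\tfrac12$ yields $\rho>T$ on all of $\omega<2/T-\tfrac12$. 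The $13/7$ bound follows the same pattern:
\begin{itemize}
\item $3/(1+\omega)\ge\tfrac{13}7$ iff $\omega\le\tfrac8{13}$, which covers $\omega\le\tfrac12$;
\item $4/(1+2\omega)\ge\tfrac{13}7$ iff $28\ge13+26\omega$, i.e.\ $\omega\le\tfrac{15}{26}$.
\end{itemize}

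\emph{Case $j\ge2$.} Here the side condition $\omega\le j/2$ holds automatically, since $\omega<1\le j/2$. Proposition~\ref{prop:DV2} gives $\rho>(j+2)/(1+\omega)\ge4/(1+\omega)$, and $4/(1+\omega)>2>T$ because $\omega<1$. In the same way, for $j=1$ and $\omega<\tfrac12$ we have $3/(1+\omega)>2$. So every extra-occupant blocking with $\omega<\tfrac12$ has $\rho>2$.

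\emph{Canonical curve below $2$.} The remaining task is to show that the canonical curve stays below $2$, taking one branch of Proposition~\ref{prop:Ccurve} at a time.
\begin{itemize}
\item On the first branch, $2(1-\omega)<2$ trivially.
\item On the middle branch, $\alpha_m(\omega)\in(1,2]$ gives $\alpha_m-\omega<2$.
\item On the witness branch, set $p(\alpha)=\alpha^3-(1+\omega)(\alpha^2+\alpha+1)$. I will expand $p(2+\omega)$ to check the identity $p(2+\omega)=1$. By Descartes' rule, $p$ has a single positive root $T_{1+\omega}$, with $p<0$ to its left and $p>0$ to its right. Since $p(2+\omega)=1>0$, it follows that $T_{1+\omega}<2+\omega$, i.e.\ $\Phi(\omega)<2$.
\end{itemize}
Combined with the $\rho>2$ bounds above, this shows that for $\omega<\tfrac12$ adding occupants is strictly worse for the adversary than the canonical template. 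For $j=1$ and $\omega\ge\tfrac12$ the corollary makes no comparison claim, so nothing needs proving there.

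The main obstacle is bookkeeping rather than depth. One must confirm that each case invokes the correct branch of Proposition~\ref{prop:DV2}, including the side condition $\omega\le j/2$, and that the ranges in Proposition~\ref{prop:Ccurve} (capped at $0.30$) fit inside $\omega<\tfrac12$. The identity $p(2+\omega)=1$ is a direct expansion, but it is the one computation worth writing out in full.
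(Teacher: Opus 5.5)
Your proposal is correct and follows the same route as the paper, which treats the corollary as immediate. The $j=1$ and $j\ge2$ bounds are read off Proposition~\ref{prop:DV2}, with $1/T=T^2-T-1$ giving the closed form, and the canonical curve is bounded branch by branch, the witness branch via $x^3-(x-1)(x^2+x+1)=1$ at $x=2+\omega$.

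One bookkeeping correction: the fact that Proposition~\ref{prop:Ccurve}'s range $(0,0.30]$ lies inside $\omega<\tfrac12$ is the wrong direction for the comparison on $(0.30,\tfrac12)$. What covers that interval is that your identity holds for every $\omega$, so $\Phi(\omega)<2$ there too; the paper notes the $0.30$ cap is only a working restriction.
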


\subsection{Occupied holes: the opposite-disk lemma and the silver floor}
\label{app:blockers-app}

\begin{lemma}[Opposite disk]\label{lem:DR}
Let a disk have capacity $c$, let $\sigma\le c$ be the largest piece and
$C$ a multiset of pieces $\le\sigma$. If $\sum C\le c-\sigma$, then
$\{\sigma\}\cup C$ packs in the disk.
\end{lemma}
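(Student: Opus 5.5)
We follow the parenthetical hint in the summary of Section~\ref{sec:generic}: place $\sigma$ internally tangent to the wall and fill the free disk left on the opposite side with the row of Lemma~\ref{lem:row}.

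First, put the container disk at the origin with radius $c$. Center the piece $\sigma$ at $(c-\sigma,0)$, so it is internally tangent to the wall at $(c,0)$ and occupies the ball of radius $\sigma$ around that point. If $\sigma=c$, then $\sum C\le0$ forces $C=\emptyset$ and there is nothing more to do. So assume $\sigma<c$.

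Second, consider the ball $D'$ of radius $c-\sigma$ centered at $(-\sigma,0)$. It is internally tangent to the container at $(-c,0)$, since $\sigma+(c-\sigma)=c$, so it lies inside the container disk. Its center is at distance $(c-\sigma)+\sigma=c$ from the center of $\sigma$, which is exactly the sum of the two radii. Hence $D'$ and the ball of $\sigma$ are externally tangent at the point $(c-2\sigma,0)$ and have disjoint interiors. Geometrically, $D'$ is the largest ball on the segment from $(-c,0)$ to the near side of $\sigma$.

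Third, apply Lemma~\ref{lem:row} inside $D'$ with capacity $C'=c-\sigma$. The hypothesis $\sum C\le c-\sigma$ is exactly the row condition, so the pieces of $C$ fit tangent in a row along a diameter of $D'$, with pairwise disjoint interiors, all inside $D'$. They therefore lie inside the container and have interiors disjoint from $\sigma$.

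The hypothesis that each piece of $C$ is at most $\sigma$ is not needed geometrically; it only fixes which piece plays the role of $\sigma$. The only step that needs care is the degenerate case $\sigma=c$, handled in the first step.
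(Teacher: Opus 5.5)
Your proof is correct and is essentially the paper's argument: place $\sigma$ wall-tangent, note that the wall-tangent disk of radius $c-\sigma$ on the opposite side is exactly tangent to it (center distance $c=\sigma+(c-\sigma)$), and row $C$ inside that free disk by Lemma~\ref{lem:row}. The paper uses the mirror-image coordinates and does not single out the case $\sigma=c$; your remark that the hypothesis ``pieces $\le\sigma$'' is not needed is also accurate.
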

\begin{proof}
Place $\sigma$ wall-tangent with center $(-(c-\sigma),0)$. The disk of
radius $r=c-\sigma$ centered at $(c-r,0)$ is wall-tangent and exactly
tangent to $\sigma$ (center distance $(c-\sigma)+(c-r)=c=\sigma+r$): a
free disk of radius $c-\sigma$, in which $C$ fits in a row
(Lemma~\ref{lem:row}).
\end{proof}

\begin{lemma}[General blocking wall Bo$''$]\label{lem:DBo}
Blocking implies, for every node $y$: $y<\sigma_2+\omega+X_y$.
\end{lemma}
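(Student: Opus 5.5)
We argue by contraposition: assuming $y\ge\sigma_2+\omega+X_y$ for some node $y$, we exhibit an explicit unblocking placement $P'$. It keeps the container assignments of all rings $\ge m=1$, sends $\sigma_1$ into the vacated disk $D_m$, and sends $\sigma_2$ into the hole of $y$. This is the occupied-hole analogue of the wall (Bo) of Lemma~\ref{lem:DV1}. There the hole of $o_k$ was free and the exact one-circle criterion $\sigma_2\le o_k-\omega$ sufficed. Here the hole of $y$, of capacity $c:=y-\omega$, already carries children of total radius $X_y$.

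The key step is the opposite-disk lemma, Lemma~\ref{lem:DR}, applied inside the hole of $y$. Let $\sigma$ be the largest piece among $\{\sigma_2\}\cup\mathrm{children}(y)$ and $C$ the rest, so the combined mass is $\sigma_2+X_y\le c$. The lemma needs $\sigma\le c$ and $\sum C\le c-\sigma$, that is, total mass $\le c$. That holds by hypothesis, since $\sigma_2+X_y\le y-\omega$. Each child's subtree travels rigidly with its root, because a child's own nested content lives inside the child's outer ball. So the new sibling group of the hole of $y$ packs, and nothing outside that hole moves.

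It remains to check that the rest of $P'$ is legal. First, $\sigma_1\le1$ fits alone in the free unit disk $D_m$, exactly as in Lemma~\ref{lem:DV1}. Second, $m$ moves into $u$ in place of $S$, whose other occupants were certified by the greedy; this is the standard exchange step of Theorem~\ref{thm:oblivious}. Third, $y$ itself is not $m$ and not $\alpha$, and it lives outside $m$. So the hole of $y$ is a resource disjoint from $D_m$ and from $u$, and is unaffected by moving $m$. This is why the definition of a node excludes $\alpha$, whose hole is $u$, and excludes $m$. Rings of radius $\ge1$ keep their containers, so $P'$ agrees with $F$ on all rings $\ge r_m$, and the exchange is unblocked. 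Hence blocking forces $\sigma_2+X_y>y-\omega$, which is the claimed strict inequality.

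The main obstacle is justifying that the hole contents of $y$ may be rearranged at all. Positions inside a container are existential, since feasibility is a property of the nesting forest. So we may repack the whole sibling group of the hole of $y$ from scratch, provided each sibling carries its subtree; the assignment of rings $\ge1$ nested deeper is preserved, because they stay children of the same parent. A secondary point is the case where $\sigma_2$ exceeds every child of $y$ versus the case where it does not. Lemma~\ref{lem:DR} only needs the largest piece to be $\le c$, which follows from total mass $\le c$, so both cases are handled uniformly. Finally, we read the inequality as a closure of the strict one, following the convention of the appendix.
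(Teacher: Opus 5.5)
Your proposal is correct and follows the paper's proof: it uses the contrapositive placement $\sigma_1\to D_m$ and puts $\sigma_2$ into the hole of $y$, repacked with its children by Lemma~\ref{lem:DR} (with the largest piece taken as the anchor, whichever it is), while every ring $\ge r_m$ keeps its container. Your closing remark about reading the inequality as a closure is unnecessary: the unblocking placement is valid at equality, so it gives the strict inequality directly.
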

\begin{proof}
Otherwise $\sum\mathrm{children}(y)\le(y-\omega)-\sigma_2$ and
Lemma~\ref{lem:DR} packs $\{\sigma_2\}\cup\mathrm{children}(y)$ in the
hole of $y$ (if some child exceeds $\sigma_2$, apply the lemma with that
child as the largest piece; the inequality is the same), with
$\sigma_1\to D_m$: unblocked. Positions are re-eligible, and inserting
$\sigma_2$ (a ring $<m$) only changes the container of $\sigma_2$: legal.
\end{proof}

\begin{theorem}[Silver floor; $m$ without children]\label{thm:DB}
Blocking with $j\ge1$, holes occupied arbitrarily at any depth, $m$
childless, implies
$\rho>\Psi(\omega)=(1-\omega)+\sqrt{(1-\omega)^2+1}$, the positive root of
$u^2-2(1-\omega)u-1$. In particular $\Psi(0)=1+\sqrt2$ (the silver ratio),
$\Psi(\tfrac14)=2$, and $\Psi>T\iff\omega<(T-1)^2/2=0.35220\dots$
\end{theorem}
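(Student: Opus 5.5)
We have $j\ge1$ extra occupants, holes occupied arbitrarily, and $m$ childless. The plan follows the sketch in Section~\ref{sec:generic}: track the \emph{minimal} node, then balance two tails. The target is $\Psi$, the positive root of $u^2-2(1-\omega)u-1$.

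\textbf{Step 1 (a small node).} Among all nodes (occupants $o_i$, and rings of radius $\ge1$ nested at any depth inside them) pick a minimal one $y$, in the sense that its hole contains no further node. Then every child of $y$ has radius $<1=r_m$, so $X_y$ consists only of rings smaller than $m$. By the wall of Lemma~\ref{lem:DBo},
\[
1\le y<\sigma_2+\omega+X_y .
\]

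\textbf{Step 2 (the walls inherited from $m$).} Since $m$ is childless, $H_m$ is free. Since $D_m$ is a free unit disk, the placements of Lemma~\ref{lem:DV1} give $\sigma_2>1-\omega$ (B2) and $\sigma_1+\sigma_2>1$ (D). These are unaffected by occupancy elsewhere, because they use only $D_m$ and $H_m$.

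\textbf{Step 3 (the first tail).} Consider the tail of $y$ under the first-copy convention. Since $y\ge1=m$, it contains $m$, $\sigma_1$, $\sigma_2$ and all of $X_y$, because these are all smaller rings. Hence
\[
\rho\ \ge\ \frac{1+\sigma_1+\sigma_2+X_y}{y}\ >\ \frac{1+2\sigma_2+X_y}{\sigma_2+\omega+X_y},
\]
using $\sigma_1\ge\sigma_2$ and Step 1. Care is needed if $y=1$ exactly and $y$ comes after $m$ in the index order. I would handle this by taking the tail of $m$ or of whichever copy comes first, which contains the other one.

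\textbf{Step 4 (the second tail, and balancing).} The tail of $\sigma_2$ (or of the largest piece of $X_y$ below $\sigma_2$) gives a second inequality. Alternatively, the tail of $m$ gives $\rho\ge\sigma_1+\sigma_2+X_y>2\sigma_2+X_y$ when $X_y$ consists of rings smaller than $m$. So we now have two bounds,
\[
\rho>2\sigma_2+X_y \quad\text{and}\quad \rho>\frac{1+2\sigma_2+X_y}{\sigma_2+\omega+X_y}.
\]
The first increases in $X_y$; the second decreases in $X_y$ when $1+2\sigma_2>\sigma_2+\omega$. Write $u=2\sigma_2+X_y$ and use $\sigma_2>1-\omega$. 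The second bound becomes $(1+u)/(u-\sigma_2+\omega)$, which is largest, for fixed $u$, at the smallest $\sigma_2$. This suggests that the binding configuration is $\sigma_2\to1-\omega$, with $u$ varying. Equating $u=(1+u)/(u-1+2\omega)$ gives $u^2-2(1-\omega)u-1=0$, that is $u=\Psi(\omega)$. Hence $\rho>\min_u\max(u,\cdot)=\Psi$.

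I expect this optimization to be the main obstacle. The constraint set mixes $\sigma_2\ge1-\omega$ with $\sigma_2\le\sigma_1\le1$, and the monotonicity directions must be checked. It must also be verified that the case $X_y=0$ is covered; there $u=2\sigma_2\le2$ and the minimax still lands at $\Psi$ or above. Strictness follows because all walls are strict.

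\textbf{Step 5 (the numerical consequences).} These are direct. $\Psi(0)=1+\sqrt2$, and $\Psi(\tfrac14)=\tfrac34+\tfrac54=2$. Finally $\Psi(\omega)>T$ iff $T^2-2(1-\omega)T-1<0$ iff $\omega<1-(T^2-1)/(2T)$. Using $T^3=T^2+T+1$, this threshold simplifies to $(T-1)^2/2$.
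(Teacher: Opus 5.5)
Your proof is essentially the paper's: a node whose children are all smaller than $m$ (you take a leaf node, the paper a node of minimal radius; either works), the wall of Lemma~\ref{lem:DBo} at that node, (B2) from the free $H_m$, the tails of $m$ and of that node, and a minimax whose crossing is $\Psi$. Your parametrization by $u=2\sigma_2+X_y$ also removes your $X_y=0$ worry (the paper's ``crossing at $X<0$'' case): $\rho\ge u$ and $\rho>(1+u)/(u-1+2\omega)$ hold for the actual $u$, and $\max\bigl(u,(1+u)/(u-1+2\omega)\bigr)\ge\Psi$ for every $u>1-2\omega$, so $\rho>\Psi$. One slip needs fixing: $(1+u)/(u-\sigma_2+\omega)$ is \emph{increasing} in $\sigma_2$, so for fixed $u$ it is smallest, not largest, at $\sigma_2=1-\omega$, and that is exactly why substituting $\sigma_2=1-\omega$ gives a valid lower bound.
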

\begin{proof}
Let $y^*$ be a node of minimal radius; its children are all $<1$ (a child
$\ge1$ would be a smaller node), so $X:=X_{y^*}$ is a sum of rings smaller
than $m$, living in the tail of $m$. With $\sigma:=\sigma_2>1-\omega$
(wall (B2), valid since $H_m$ is free): the tail of $m$ gives
$\rho\ge\sigma_1+\sigma_2+X\ge2\sigma+X$, and the tail of $y^*$ with
Lemma~\ref{lem:DBo} gives
$\rho\ge(1+\sigma_1+\sigma_2+X)/y^*>(1+2\sigma+X)/(\sigma+\omega+X)$.
Minimize $F(\sigma,X)=\max\bigl(2\sigma+X,\
(1+2\sigma+X)/(\sigma+\omega+X)\bigr)$ over $\sigma\ge1-\omega$, $X\ge0$:
for fixed $\sigma$ the first term grows in $X$ and the second decreases,
so $F(\sigma,\cdot)$ is bounded below by the crossing value
$u=2\sigma+X$ with $u(u+\omega-\sigma)=1+u$, i.e.\ the positive root of
$u^2-(1+\sigma-\omega)u-1=0$ (when the crossing falls at $X<0$, the same
algebra shows $F(\sigma,0)=2\sigma$ already exceeds the root). The root
increases in $\sigma$, so the program minimum is at $\sigma=1-\omega$,
giving $\Psi(\omega)$.
\end{proof}

\begin{theorem}[Evacuation dichotomy; $m$ with children]\label{thm:DBpp}
The conclusion of Theorem~\ref{thm:DB} holds without the hypothesis that
$m$ is childless.
\end{theorem}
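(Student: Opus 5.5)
Let $Y$ denote the total radius of the children of $m$, which ride in $H_m$, so $Y\le 1-\omega$. The proof of Theorem~\ref{thm:DB} used the hole of $m$ in only one place: wall (B2), $\sigma_2>1-\omega$, whose unblocking placement sends $\sigma_2$ into $H_m$ and $\sigma_1$ into $D_m$. With children in $H_m$ that wall weakens. I will split according to whether $H_m$ can still be used to unblock, that is, whether the current children of $m$ can be \emph{evacuated}.

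\emph{Evacuable case.} Suppose the children of $m$ together with $\sigma_2$ still pack in $H_m$ via the opposite-disk lemma (Lemma~\ref{lem:DR}). This holds when $\sigma_2+Y\le 1-\omega$ (taking $\sigma_2$ or the largest child as the big piece). Then blocking forces $\sigma_2>1-\omega-Y$. The children of $m$ are smaller than $m$ and lie in the tail of $m$, so the tail of $m$ now reads $\rho\ge \sigma_1+\sigma_2+X+Y\ge 2\sigma_2+X+Y$. With $\sigma':=\sigma_2+Y\ge 1-\omega$ this exceeds $2\sigma'+X-Y$, which is weaker, so I must be careful. Instead I will run the minimal-node argument of Theorem~\ref{thm:DB} with the pair $(\sigma,X)$ replaced by $(\sigma_2, X+Y)$. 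The tail of $y^*$ is $(1+\sigma_1+\sigma_2+X+Y)/y^*$, and Lemma~\ref{lem:DBo} gives $y^*<\sigma_2+\omega+X$.

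The key observation is that $Y$ enters both numerators additively, while $y^*$ does not grow with $Y$. So, compared with the childless program, I get $F\ge \max(2\sigma_2+X+Y,\,(1+2\sigma_2+X+Y)/(\sigma_2+\omega+X))$ subject to $\sigma_2+Y\ge 1-\omega$. Substituting $s=\sigma_2+Y$ and using $\sigma_2\le s$ in the denominator, the ratio is at least $(1+2s+X-Y)/(s+\omega+X-Y)$, which is the old expression with $X$ replaced by $X-Y$. This is fine when $X\ge Y$. When $X<Y$, the first term alone gives $2s+X-Y+Y\ge 2(1-\omega)+\dots$. I expect this reparametrization to close the case and return exactly the root of $u^2-2(1-\omega)u-1$, namely $\Psi$.

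\emph{Non-evacuable case.} Here $\sigma_2+Y>1-\omega$ fails to unblock only because $H_m$ is crowded, so the children of $m$ are heavy: $Y$ is large relative to $1-\omega$. Their mass sits in the tail of $m$, which gives $\rho\ge \sigma_1+\sigma_2+Y+X$. Alternatively, evacuate some children of $m$ into $D_m$ next to $\sigma_1$, or move $\sigma_1$ into a node's hole. Each such alternative placement gives a further wall. The paper announces that this branch yields a second metallic mean, the root of $u^2-(2-\omega)u-1$, and that it dominates $\Psi$. So I would aim for two tails, $\rho\ge 1+Y+\dots$ and a ratio at $y^*$, and balance them as before, checking that $(2-\omega)\ge 2(1-\omega)$ makes this root exceed $\Psi$.

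The main obstacle is choosing the evacuation placements so that each is legal (resources disjoint, and positions re-eligible as in Lemma~\ref{lem:DBo}) and their contrapositives chain into a clean two-term program. Deciding the exact threshold between the two cases is the delicate point.
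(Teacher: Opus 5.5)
As written, this does not prove the statement, and the trouble starts with the case split. The placement in your first case ($\sigma_1\to D_m$, and $\sigma_2$ into $H_m$ beside the children of $m$ via Lemma~\ref{lem:DR}) is legal whenever $\sigma_2+Y\le1-\omega$. Its contrapositive, $\sigma_2+Y>1-\omega$, therefore holds in \emph{every} blocked exchange. So your ``evacuable case'' contains no blocked exchange at all, and every blocked exchange lies in your ``non-evacuable case''. There, the wall you already derived is the relevant information, and nothing calls for a second metallic mean.

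Neither branch is actually finished. For $X<Y$ you only reach $2(1-\omega)+\dots$, which is below the target, since $\Psi(\omega)>2(1-\omega)$. The second case offers no argument beyond the announced root of $u^2-(2-\omega)u-1$. (Also, $Y\le1-\omega$ is false in general: several disks packed in a disk can have total radius exceeding its radius. You never use it.)

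The missing step is a regrouping of the tail mass; after it, your wall alone suffices. Put $c:=\sigma_1+Y\ge\sigma_2+Y>1-\omega$ and $q:=\sigma_2+X$.
The tail of $m$ gives $\rho\ge c+q>(1-\omega)+q$.
The tail of $y^*$ with Lemma~\ref{lem:DBo} ($y^*<q+\omega$) gives $\rho>(1+c+q)/(q+\omega)>(2-\omega+q)/(q+\omega)$.
For $\omega<1$ the first bound increases and the second decreases in $q$. They cross where $u=(1-\omega)+q$ solves $u^2-2(1-\omega)u-1=0$, so $\rho>\Psi(\omega)$ whatever $q$ is. The mechanism is that $Y$ enters both numerators but not the node wall.

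Equivalently, your own substitution already closes the case: the crossing argument of Theorem~\ref{thm:DB} never needs $X\ge0$. For your fixed $s=\sigma_2+Y$, the ratio $(1+2s+X')/(s+\omega+X')$ decreases in $X'=X-Y$ wherever its denominator $\sigma_2+\omega+X$ is positive, while $2s+X'$ increases.

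This is a genuinely different and shorter route than the paper's. The paper instead uses the evacuation placement: the children of $m$ are rowed with $\sigma_1$ into $D_m$, and $\sigma_2$ goes into the emptied $H_m$. This gives the dichotomy $\sigma_2>1-\omega$ or $\sigma_1+M>1$. The paper reruns Theorem~\ref{thm:DB} verbatim in the first branch and a $\Psi_B$ program in the second.

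The paper's dichotomy carries more information: the clean wall $\sigma_2>1-\omega$ in one branch and the stronger bound $\Psi_B$ in the other. Both are reused later (Theorem~\ref{thm:DGp}, Proposition~\ref{prop:DPsij}, and the evacuation case of Theorem~\ref{thm:DP}). Your cohabitation wall yields only $\Psi$, but in one step.
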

\begin{proof}
Evacuation to $D_m$: place $\sigma_1$ and all children of $m$ in a row
inside $D_m$ (possible if $\sigma_1+M\le1$, $M:=\sum\mathrm{children}(m)$;
children of $\sigma_1$ travel inside $\sigma_1$) and $\sigma_2$ in the
emptied $H_m$ (possible if $\sigma_2\le1-\omega$). Contrapositive:
blocking implies $\sigma_2>1-\omega$ \emph{or} $\sigma_1+M>1$. In branch A
the proof of Theorem~\ref{thm:DB} applies verbatim (it used only
$\sigma_2>1-\omega$, Bo$''$ and the two tails; the mass $M\ge0$ only
fattens them). In branch B, let $y^*$ be the minimal node,
$s:=\sigma_2+X>1-\omega$ (Bo$''$ and $y^*\ge1$) and
$A:=\sigma_1+\sigma_2+M+X>1+s$; the two tails give $\rho\ge A>1+s$ and
$\rho\ge(1+A)/y^*>(2+s)/(s+\omega)$, whose optimization yields the
metallic mean $\Psi_B(\omega)$, the positive root of $u^2-(2-\omega)u-1$.
Since the positive root of $u^2-bu-1$ increases in $b$ and
$2-\omega\ge2(1-\omega)$, $\Psi_B\ge\Psi$: branch B is dominated. (The
crossing $\Psi_B=T$ happens at $\omega=(T-1)^2$ -- twice the branch-A
threshold -- via the \emph{polynomial} identity
$(T-1)^2\,T-(2T-T^2+1)=T^3-T^2-T-1$.)
\end{proof}

\begin{corollary}\label{cor:DB2}
In the canonical template ($j=0$) with $m$ arbitrary: branch A gives the
full curve $T_{\mathrm{can}}(\omega)$ (its program uses (B2) only as an
inequality), branch B gives $\rho\ge\Phi(\omega)$
(Proposition~\ref{prop:Cwitness} uses only (B1) and (W)); in both,
$\rho>\Phi(\omega)>T$ for all $\omega>0$. \qed
\end{corollary}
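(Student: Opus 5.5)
The statement is Corollary~\ref{cor:DB2}, and I would assemble it by splitting on the evacuation dichotomy of Theorem~\ref{thm:DBpp} at $j=0$. The evacuation is ``$\sigma_1$ and all children of $m$ in a row inside $D_m$, $\sigma_2$ into the emptied $H_m$.'' It is a legal unblocking placement in the canonical template as well, since it touches only $D_m$ and $H_m$. Hence blocking forces branch~A, $\sigma_2>1-\omega$, or branch~B, $\sigma_1+M>1$ with $M=\sum\mathrm{children}(m)$.

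\emph{Branch~A.} I would check that every wall used in the program of Proposition~\ref{prop:Ccurve} survives when $m$ carries children.
\begin{itemize}
\item (B1) and (W) concern only the pan trio and the witness capacity, so they are unaffected.
\item (B2) is now available as the branch hypothesis, and the curve program uses it only as the inequality $\sigma_2\ge c_2$.
\item (B4) is the placement of $\sigma_2$ next to $m$ inside $u$ with $\sigma_1\to D_m$. It is unaffected because $m$'s children travel inside $m$'s hole.
\end{itemize}
With all four walls in hand, Proposition~\ref{prop:Ccurve} and Theorem~\ref{thm:corner} give $\rho\ge T_{\mathrm{can}}(\omega)$.

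\emph{Branch~B.} Here I would not try to recover (B2). Proposition~\ref{prop:Cwitness} needs only (B1) and (W), and Corollary~\ref{cor:Cmin} then yields $\sigma_1+\sigma_2\ge 1+b(\alpha)$ with $\alpha\ge T_{1+\omega}$, so $\rho\ge\Phi(\omega)$. The extra mass $M$ lies in the tails and only increases $\rho$. This branch uses no hole of $m$, so it is valid for every $\omega>0$, including the solid-pivot regime.

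\emph{Comparing the branches with $\Phi$ and $T$.} The chain $\Phi(\omega)=T_{1+\omega}-\omega>T$ follows from $\Phi$ strictly increasing with $\Phi(0)=T$ (Proposition~\ref{prop:Cwitness}). For $\omega$ beyond $0.30$, I would instead use the identity $b(\alpha)-(\alpha-1-\omega)$ of Corollary~\ref{cor:Cmin} directly: it gives $1+b(T_{1+\omega})=T_{1+\omega}-\omega$, which is increasing in $\omega$. In branch~A I then need $T_{\mathrm{can}}(\omega)>\Phi(\omega)$. On $[1/7,0.30]$ the curve equals $\Phi$, so strictness must come from the blocking being strict: the witness-branch infimum is unattained, by the strict inequality in Corollary~\ref{cor:Cmin} on $F>2\pi$. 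On $(0,1/7)$ I would use Theorem~\ref{thm:corner}, $T_{\mathrm{can}}\ge 13/7=\Phi(1/7)>\Phi(\omega)$.

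The main obstacle is this strictness bookkeeping in branch~A. The displayed curve is only a lower bound ($\ge$), so I would argue pointwise: any actual blocking has strict walls, and therefore $\rho$ strictly exceeds the infimum.
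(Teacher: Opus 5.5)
Your decomposition matches the paper's: the evacuation dichotomy at $j=0$, with branch A feeding the curve program using (B2) as the branch hypothesis, and branch B falling back on (B1)+(W) via Proposition~\ref{prop:Cwitness}. The gap is in how you close branch A against $\Phi$. You route it through $T_{\mathrm{can}}$ and compare piecewise. That comparison covers only $(0,1/7)$, via the $13/7$ corner, and $[1/7,0.30]$, the stated domain of Proposition~\ref{prop:Ccurve}. For $\omega\in(0.30,1)$ you have no curve, and there $13/7=\Phi(1/7)<\Phi(\omega)$, so nothing you wrote gives $\rho>\Phi(\omega)$. The same hole covers the solid-pivot regime $\omega\ge1$, which the corollary includes. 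That regime lies entirely in branch A, because $\sigma_2>0\ge1-\omega$. Your ``beyond $0.30$'' remark only re-proves $\Phi>T$, and that already follows from Proposition~\ref{prop:Cwitness}: monotonicity and concavity of $\Phi$ hold for every $\omega$, and only the genuineness of the approximating family is restricted to $[1/7,0.30]$.

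The missing observation is the one behind the paper's ``in both'': (B1) and (W) do not depend on the branch. Your branch-B argument therefore applies to every blocking.
\begin{itemize}
\item (B1) means $F>2\pi$, where $1+b(\alpha)$ is an unattained infimum (Corollary~\ref{cor:Cmin}). Hence $\rho\ge\sigma_1+\sigma_2>1+b(\alpha)$.
\item (W) forces $\alpha\ge T_{1+\omega}$, and $b$ is increasing. So $\rho>1+b(T_{1+\omega})=\Phi(\omega)$, uniformly in the branch and in $\omega>0$.
\end{itemize}
In particular $T_{\mathrm{can}}\ge\Phi$ holds trivially, since its program contains (B1)+(W). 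The piecewise comparison is unnecessary, and so is the strictness bookkeeping you call the main obstacle. The same strict inequality upgrades your branch-B conclusion from $\rho\ge\Phi(\omega)$ to the claimed $\rho>\Phi(\omega)$. Branch A's curve then only records the sharper bound $T_{\mathrm{can}}(\omega)$ and is not needed to reach $\Phi$.
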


\subsection{The double-pocket wall, the golden line, and \texorpdfstring{$\Psi_j$}{Psi j}}
\label{app:pocket-app}

Throughout this subsection $j\ge1$ and the walls above are in force; write
$X:=X_{o_1}$, $M:=\sum\mathrm{children}(m)$,
$X_\sigma:=\sum\mathrm{children}(\sigma_1)$, and add the tariffed-nesting
wall (B3$'$): $\sigma_2+X_\sigma>\sigma_1-\omega$ (Lemma~\ref{lem:DR}
applied to the hole of $\sigma_1$).

\begin{lemma}[Double pocket]\label{lem:DG}
Blocking (template $j=1$, arbitrary occupancy) implies
$\sigma_1>b_2(\alpha,o_1)$, where
$b_2(A,B)=AB(A+B)/(A^2+AB+B^2)$.
\end{lemma}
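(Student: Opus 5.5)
We argue by contrapositive: assuming $\sigma_1\le b_2(\alpha,o_1)$, we exhibit an unblocking placement $P'$ in which $\alpha$, $o_1$ and $m$ keep their containers. The idea is to use the pan $v$ itself as the reinsertion resource. Move $m$ into $u$ (the hole of $\alpha$), where the greedy certified it. Send $\sigma_2$ into the vacated disk $D_m$, which is legal because $\sigma_2\le\sigma_1\le1=r_m$. Put $\sigma_1$ into a Descartes pocket of the pair $\{\alpha,o_1\}$ inside $v$. The whole difficulty is to show that this pocket is genuinely free in the witness geometry.

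\emph{Step 1 (the pocket exists in $v$).} Any placement has $\alpha$ and $o_1$ as disjoint sibling balls inside $v$, so $R\ge\alpha+o_1$. By the containment remark in the proof of Remark~\ref{rem:slack}, it suffices to work in the disk $\bar R=\alpha+o_1$, translated so that it contains both balls. In that disk the pair is diametrically rigid, exactly as in the rigidity step of Proposition~\ref{prop:S5}. Rescaling that proposition by $o_1$ (or $\alpha$) shows that each side of the diameter carries a wall-tangent pocket circle of radius exactly $b_2(\alpha,o_1)$, tangent to both balls. The two mirror pockets have centres $4b_2$ apart, so they have disjoint interiors.

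\emph{Step 2 (choosing a clean configuration).} The obstacle is that the remaining occupants of $v$ may sit in those pockets: $m$ with its subtree, and the remaining $o_i$ with theirs. Since only sibling packability matters, positions may be chosen freely. I would therefore take a fresh sibling packing of the pan consisting of $\alpha$ and $o_1$ in the rigid diametral position of Step 1, $\sigma_1$ concentrically inside one pocket, and $\sigma_2$ inside the other pocket (allowed since $\sigma_2\le\sigma_1\le b_2$). The hole contents of $\alpha$ and $o_1$ are carried along, and $m$ is placed in $u$.

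This construction is only legal if $v$ has no occupants beyond $\alpha$, $o_1$ and $m$. That is the template $j=1$, and there the only extra occupant of the original pan is $m$. Since $m$ has moved into $u$, the pan in $P'$ holds exactly $\{\alpha,o_1,\sigma_1,\sigma_2\}$. Thus the whole lemma reduces to packability of these four circles in the disk of radius $\bar R$, which the two pockets provide.

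\emph{Step 3 (legality of the move into $u$).} It remains to check that $u$ can accept $m$ together with whatever else $P$ keeps there. The rings larger than $m$ that $P$ assigns to $u$ coincide with those $F$ assigns there, and $F$ certified $m$ together with them. The displaced set $S=\{\sigma_1,\sigma_2\}$ has been rehoused in the pan. Children of $m$ travel inside $m$, and children of $\sigma_i$ travel inside $\sigma_i$. The result is an unblocked exchange, which contradicts blocking and yields $\sigma_1>b_2(\alpha,o_1)$.

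I expect Step 2 to be the main obstacle. One must justify that $m$, and any small rings $P$ had elsewhere in $v$, are genuinely accounted for, so that the pan in $P'$ contains only the four circles placed above. If small dust rings also occupy $v$, I would first move them into $D_m$ alongside $\sigma_2$ in a row via Lemma~\ref{lem:row}, provided their total together with $\sigma_2$ stays at most $1$. Failing that, I would place them in the unused part of a pocket, since $\sigma_2$ only needs the second pocket.
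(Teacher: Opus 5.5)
Your Steps~1--3 are the paper's argument. After $m$ moves to $u$, you repack the pan with $\{\alpha,o_1,\sigma_1,\sigma_2\}$, pass by containment to the disk of radius $\alpha+o_1$ where the pair is diametral, and place the two $\sigma$'s concentrically in the two mirror pockets of radius $b_2(\alpha,o_1)$, which are disjoint because their centres are $4b_2$ apart.

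Drop the final paragraph and the opening suggestion $\sigma_2\to D_m$. In the $j=1$ template the pan holds only $\alpha$, $o_1$ and $m$ at top level, and Corollary~\ref{cor:DS} states explicitly that this wall does not cover small rings in $v$. Your fallback would also fail on its own terms. After a fresh repack of $v$ there is no vacated disk $D_m$ left to fill, since positional and fresh-repack placements cannot be mixed in one container. And when $\sigma_1=\sigma_2=b_2$ both pockets are exactly filled, so no room for dust is guaranteed.
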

\begin{proof}
Blocking implies $\{\alpha,o_1,\sigma_1,\sigma_2\}$ does not pack in the
pan (full repacking is a legal resource: children travel inside their
parents, positions are existential). Since $\alpha+o_1\le R$,
non-packability is inherited by containment in the disk
$\bar R=\alpha+o_1$, where the pair $\{\alpha,o_1\}$ is diametrically
rigid: $|c_\alpha|\le o_1$, $|c_{o_1}|\le\alpha$,
$|c_\alpha-c_{o_1}|\ge\alpha+o_1$ force equality throughout -- the
rescaled rigidity of Proposition~\ref{prop:S5}. By the exact necessity of
that proposition (rescaled), any third circle disjoint from both has
radius $\le b_2(\alpha,o_1)$, and there are \emph{two} pockets, one on
each side of the diameter, mutually disjoint with slack: their centers are
at $(x_0,\pm y_0)$ with $y_0^2-b_2^2=3b_2^2$, i.e.\ $y_0=2b_2$ exactly, so
the mirror circles are $4b_2$ apart. If $\sigma_1\le b_2$ then also
$\sigma_2\le b_2$, and placing each $\sigma$ concentric in a pocket packs
the quadruple in $\bar R\subseteq$ the pan: contradiction.
\end{proof}

\begin{theorem}[Golden line, both branches]\label{thm:DGp}
Blocking in the template ($j=1$, arbitrary occupancy) implies
$\rho>\varphi^2-\tfrac{\varphi}{2}\omega$ for all $\omega$ in every case
except the corner \{branch B, $\alpha<o_1$, $o_1>\tilde o=1.2956$,
node-child, $\omega<\tfrac12$\}, where the proved bound is $\ge2>T$. In
particular $\rho>T$ for all $\omega<\omega_A=2(\varphi^2-T)(\varphi-1)
=2-2(T-1)(\varphi-1)=0.962585\dots$
\end{theorem}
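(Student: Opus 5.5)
We split exactly as in the evacuation dichotomy of Theorem~\ref{thm:DBpp}, applied now to the template with $j=1$.
\begin{itemize}
\item \emph{Branch A} is $\sigma_2>1-\omega$.
\item \emph{Branch B} is $\sigma_1+M>1$.
\end{itemize}
Within each branch we split further on whether $o_1$ has a node child, i.e.\ a nested ring of radius $\ge1$, and on the order of $\alpha$ and $o_1$. Every case is a finite program in the variables $(\alpha,o_1,\sigma_1,\sigma_2,X,M,\omega)$. Its constraints are the walls of Lemma~\ref{lem:DV1}, the tariffed walls Bo$''$ (Lemma~\ref{lem:DBo}) and (B3$'$), the witness wall (W), and the new geometric wall $\sigma_1>b_2(\alpha,o_1)$ of Lemma~\ref{lem:DG}. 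The objective is the maximum of three tails:
\begin{itemize}
\item the tail of $\alpha$, which is $(o_1+1+\sigma_1+\sigma_2+\dots)/\alpha$ or the reverse quotient, depending on the order of $\alpha$ and $o_1$;
\item the tail of $o_1$;
\item the tail of $m$.
\end{itemize}
We lower-bound the program value by $G_\omega=\varphi^2-\tfrac{\varphi}{2}\omega$.

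\textbf{Reductions.} First we eliminate the small mass. The variables $X$ and $M$ enter every tail additively in the numerator. They enter the walls only through Bo$''$, in the form $o_1<\sigma_2+\omega+X$. So, as in the proof of Theorem~\ref{thm:DB}, we balance the two tails in $X$ and reduce to the boundary where Bo$''$ is tight or $X=0$. Next we use two facts.
\begin{itemize}
\item $b_2$ is concave in $\alpha$, by the stated second-derivative formula. Hence the pressure $\sigma_1+\sigma_2>b_2(\alpha,o_1)+\sigma_2$, combined with the tail of $\alpha$, is minimized at an endpoint of the admissible $\alpha$-interval.
\item The positivity $D^2-\alpha o^2(o+2\alpha)=(\alpha+o)(\alpha^3+\alpha^2o+o^3)>0$ shows that $b_2$ increases in $o_1$ at the relevant rate. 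This pins the optimum on the curve where the tails of $\alpha$ and $o_1$ balance.
\end{itemize}
These reductions leave one-variable problems in $o_1$ for each fixed $\omega$.

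\textbf{The anchor.} We expect the binding point to be the self-dual corner $\alpha=2$, $o_1=\sqrt5-1$, where $b_2=1$. There we verify the identity $f_1\equiv G_\omega$ for the reduced objective $f_1$. Away from it we certify $f_1>G_\omega$ by one-variable boundary inequalities, namely polynomial sign conditions in $\mathbb Q[\sqrt5]$, which are checkable exactly.

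\textbf{The exceptional corner.} In branch B with $\alpha<o_1$, a node child and $\omega<\tfrac12$, the golden-line certificate may fail for large $o_1$. There we instead fall back on the metallic bound: the branch-B program dominates $\Psi_B(\omega)$, the root of $u^2-(2-\omega)u-1$, which decreases in $\omega$. Since $\Psi_B(\tfrac12)=2$, this gives $\rho\ge2>T$ on that corner. The threshold $\tilde o$ is the crossing point at which the two bounds exchange roles.

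\textbf{Conclusion.} Finally $G_\omega>T$ if and only if $\omega<2(\varphi^2-T)/\varphi$. Since $1/\varphi=\varphi-1$, this equals $2(\varphi^2-T)(\varphi-1)=\omega_A$, which gives the last claim.

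\textbf{Main obstacle.} We expect the hardest step to be the case in branch B where $o_1$ has a node child. There the relevant tail involves a nested node $y$ rather than $o_1$ itself, and the double-pocket wall still refers to $o_1$, so the two constraints are no longer coupled through a single variable. Our first attempt would be to use the minimal node $y^*$ as in Theorem~\ref{thm:DB}, and to bound $o_1\ge y^*+\omega$ through its own hole. If that coupling proves too weak, the corner must be handled by the $\Psi_B$ fallback described above.
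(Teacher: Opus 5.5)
Your outline has the same architecture as the paper's proof: the evacuation dichotomy, the walls Bo$''$, (B3$'$), (W) and Lemma~\ref{lem:DG}, endpoints in $\alpha$, the anchor $b_2(2,\sqrt5-1)=1$, the $\Psi_B$ fallback at the corner, and the correct $\omega_A$ arithmetic. However, the bounds that carry the argument are never derived, and several of the announced reductions are not the ones that work.
\begin{itemize}
\item No balancing in $X$ is needed. Bo$''$ at $o_1$ gives $X>o_1-\sigma_2-\omega$ directly in the tail of $o_1$, and (B3$'$) handles $X_\sigma$, which is missing from your variable list. $M$ enters through the branch hypothesis $\sigma_1+M>1$, not through Bo$''$. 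Together with (W) and $\sigma_1>b_2$, this gives in branch B the bound $\rho>1+\bigl(2-\omega+2b_2(\alpha,o_1)-\alpha\bigr)/o_1$, which is concave in $\alpha$.
\item The identity $D^2-\alpha o_1^2(o_1+2\alpha)>0$ says $\alpha\,\partial b_2/\partial\alpha<o_1$. It makes the tail-of-$\alpha$ bound (available for $\alpha\ge o_1$) decreasing in $\alpha$, so that bound is evaluated at $A_{\max}(o_1)$, where $b_2(A_{\max},o_1)=1$. No balancing curve appears.
\item Branch A needs no endpoint argument. (W) with $\sigma_2\ge1-\omega$ gives $\alpha\ge1+\sigma_1$, so the pocket wall becomes $o_1<N(\sigma_1)$. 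The tail of $o_1$ then gives a bound decreasing in $\sigma_1$, with value $G_\omega$ at $\sigma_1=1$.
\end{itemize}

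The genuine gaps are at the exceptional corner.

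\textbf{First, the node-child difficulty is misplaced.} Bo$''$ at $o_1$ and the tails of $o_1$ and of $\alpha$ are insensitive to node children: these are part of $X_{o_1}$, and they lie in both tails when $\alpha\ge o_1$. So node children need nothing special when $o_1\le\tilde o$ or $\alpha\ge o_1$. Moreover, the $\Psi_B$ fallback you hold in reserve cannot deliver the golden line anywhere, since $\Psi_B(\omega)<G_\omega$ on all of $[0,1]$.

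What a node child actually destroys is the tail-of-$m$ bound $\rho>1+o_1-\omega$. That bound is valid only when $X_{o_1}$ is dust, and the paper pairs it with the $\alpha$-free tail-of-$o_1$ bound to close $\{\alpha<o_1,\ o_1>\tilde o\}$. Your plan never produces it, and the other tails do not suffice there. For $\alpha<o_1$ they give $\rho>1+(2-\omega+2\sigma_1)/o_1$ and $\rho>(2+\sigma_1-\omega)/\alpha$. Take $\omega\to0$, $o_1=2.48$, $\sigma_1\to1$ and $\alpha\to A_{\max}(o_1)\approx1.146$. These bounds are then about $2.613$ and $2.6177$, both below $\varphi^2\approx2.6180$.

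\textbf{Second, the corner's width restriction is assumed, not proved.} The fallback $\rho>\Psi_B(\omega)$ covers the corner only because that region is empty for $\omega\ge\tfrac12$, and you take this from the statement. It matters: $\Psi_B$ falls below $T$ once $\omega>(T-1)^2\approx0.704$. A node-child configuration with $\alpha<o_1$ at larger width would then be covered neither by the golden line nor by the claim $\rho>T$ for $\omega<\omega_A$.

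The missing step is short:
\begin{itemize}
\item $m$ fits in the hole of $\alpha$, so $\alpha\ge1+\omega$.
\item Lemma~\ref{lem:DG} with $\sigma_1\le1$ and monotonicity of $b_2$ give $o_1<N_1(\omega)$, where $b_2(1+\omega,N_1)=1$.
\item $N_1$ decreases with $N_1(\tfrac12)=\tfrac32$, so $1+\omega\le\alpha<o_1$ forces $\omega<\tfrac12$.
\end{itemize}
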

\begin{proof}
Write $g:=\sqrt5-1$. \emph{Tail chain.} The tail of $o_1$ contains
$\{m,\sigma_1,\sigma_2\}\cup\mathrm{children}(o_1)\cup\mathrm{children}(m)
\cup\mathrm{children}(\sigma_1)$, so
$\rho\,o_1\ge1+\sigma_1+\sigma_2+X+M+X_\sigma$; eliminating $X$ with
Bo$''$ ($X>o_1-\sigma_2-\omega$):
\[
(*)\qquad \rho\ >\ 1+\frac{1+\sigma_1-\omega+M+X_\sigma}{o_1}.
\]

\emph{Branch A} ($\sigma_2\ge1-\omega$). From (W),
$\alpha\ge\sigma_1+\sigma_2+\omega\ge1+\sigma_1$. Lemma~\ref{lem:DG} with
$b_2$ increasing in $\alpha$ gives $\sigma_1>b_2(1+\sigma_1,o_1)$, i.e.\
$o_1<N(\sigma_1):=(1+\sigma_1)\bigl(\sqrt{1+4\sigma_1}-1\bigr)/2$ (the
condition $b_2(1+\sigma_1,N)=\sigma_1$ is the quadratic
$N^2+(1+\sigma_1)N-\sigma_1(1+\sigma_1)^2=0$). With $(*)$ and
$M,X_\sigma\ge0$: $\rho>1+(1+\sigma_1-\omega)/N(\sigma_1)=:1+h(\sigma_1)$,
and $h$ is decreasing in $\sigma_1$: the comparison of derivatives reduces
to the polynomial identity
$(1+4\sigma_1)-(1+2\sigma_1-2\sigma_1^2)^2=4\sigma_1^3(2-\sigma_1)>0$.
The infimum is at $\sigma_1=1$: $N(1)=g$, $1/g=\varphi/2$, and
$\rho>1+(2-\omega)\varphi/2=\varphi^2-\tfrac{\varphi}{2}\omega$.

\emph{Branch B} ($\sigma_1+M>1$). Using $\sigma_1+M>1$, Bo$''$, (B3$'$),
(W) and Lemma~\ref{lem:DG}:
\begin{align*}
\text{(I)}\quad
\rho\,o_1&\ge1+\sigma_1+\sigma_2+M+X+X_\sigma
\ >\ 2+o_1-\omega+2b_2(\alpha,o_1)-\alpha,\\
\text{(II)}\quad
\rho\,\alpha&\ge o_1+1+\sigma_1+\sigma_2+M+X+X_\sigma
\ >\ 2o_1+2-\omega+2b_2(\alpha,o_1)-\alpha
\quad[\text{if }\alpha\ge o_1],
\end{align*}
(in (I): $X>o_1-\omega-\sigma_2$, $X_\sigma>\sigma_1-\omega-\sigma_2$,
$-\sigma_2\ge\sigma_1+\omega-\alpha$ by (W), $\sigma_1>b_2$; (II) is the
tail of $\alpha$, which contains $o_1$ and its children only when
$\alpha\ge o_1$). Set $f_1:=1+(2-\omega+2b_2-\alpha)/o_1$ and
$f_2:=(2o_1+2-\omega+2b_2-\alpha)/\alpha$. If $o_1\le g$, the short chain
(only $\sigma_1+M>1$ and Bo$''$, valid in any order) gives
$\rho>1+(2-\omega)/o_1\ge1+(2-\omega)/g=\varphi^2-\tfrac\varphi2\omega$.
If $o_1>g$, the admissible $\alpha$-region is $[1+\omega,A_{\max}(o_1)]$
where $b_2(A_{\max},o_1)=1$, and two exact facts drive the optimization:
$b_2$ is concave in $\alpha$
($\partial^2b_2/\partial\alpha^2=-6\alpha o_1^3(\alpha+o_1)/D^3<0$, $D$
the denominator), so $f_1$ is concave in $\alpha$ and minimized at the
endpoints; and
$D^2-\alpha o_1^2(o_1+2\alpha)=(\alpha+o_1)(\alpha^3+\alpha^2o_1+o_1^3)>0$
gives $\alpha\,\partial b_2/\partial\alpha<o_1$, so $f_2$ is strictly
decreasing in $\alpha$. For $o_1\in(g,\tilde o]$ ($\tilde o=1.29556\dots$,
the root of $c_{20}$ below;
any order of $\alpha,o_1$, only (I) used): at $\alpha=A_{\max}$,
$f_1-G_\omega=c_{10}(o_1)+\omega(1/g-1/o_1)$ with the
$\omega$-coefficient $\ge0$ and $c_{10}\ge0$ on $[g,o^\ast]$,
$o^\ast=1.59557\dots$ the root of $c_{10}$, with
the exact identity $f_1\equiv G_\omega$ at $o_1=g$ (the golden
corner, for all $\omega$); at $\alpha=1+\omega$,
$f_1\ge G_\omega$ on $[g,o^\ast]\times[0,1]$ with contact only at
$(g,\omega\to1)$. For $o_1>\tilde o$ with $\alpha\ge o_1$ (forcing
$o_1\le\tfrac32$, since $A_{\max}$ is decreasing with
$A_{\max}(\tfrac32)=\tfrac32$ exactly; also $A_{\max}(g)=2$,
$A_{\max}(2)=g$ -- the corner is self-dual): $f_2\ge f_2(A_{\max})$ and
$f_2(A_{\max})-G_\omega=c_{20}(o_1)+\omega\,c_{21}(o_1)$ with
$c_{21}\ge0$ for $o_1\le2$ and
$c_{20}\ge0$ on $[\tilde o,\tfrac32]$ (the sign certificates
$c_{10}\ge0$, $c_{20}\ge0$ are \emph{exact}: after isolating the
radical $\sqrt{o_1^2+2o_1-3}$ the numerators are linear in it, their
zero sets rationalize to polynomials in $\mathbb{Q}[o_1]$, and exact
Sturm isolation shows the only roots in range are the golden corner
$g$ and the endpoints themselves -- so $o^\ast$ and $\tilde o$ are
exact algebraic numbers, the sign is constant between roots, and one
exact rational sample settles it; $c_{21}\ge0$ is the linear-in-$\omega$
coefficient, elementary; script \texttt{goldencert}, 5/5). For $o_1>\tilde o$ with
$\alpha<o_1$ (a region requiring $\omega<\tfrac12$: it needs
$1+\omega\le\alpha<o_1<N_1(\omega)$, and $N_1(\tfrac12)=\tfrac32$ exactly
with $N_1$ decreasing, where $b_2(1+\omega,N_1)=1$): if the children of
$o_1$ are all $<1$, the two $\alpha$-free bounds
$\rho>1+o_1-\omega$ (tail of $m$: $\sigma_1+M>1$ and Bo$''$) and
$\rho>1+\bigl(2-\omega+2b_2(1+\omega,o_1)\bigr)/o_1$ (tail of $o_1$, which
now contains $\alpha$; Bo$''$, (B3$'$), (W) cancel~$\alpha$, and
Lemma~\ref{lem:DG} applies with $\alpha\ge1+\omega$) have
$\max\ge G_\omega+\tfrac14$ on
$[\tilde o,3.5]\times[0,\tfrac12]$, box-certified by subdivision with
corner minorants, while for $o_1\ge3.5$ the first bound alone already
gives $1+o_1-\omega\ge4>G_\omega+\tfrac14$ -- together they
cover the whole region, whose $o_1$-extent $N_1(\omega)$ is unbounded
as $\omega\to0$ (script \texttt{goldencert}; the companion bound
$f_1\ge G_\omega$ on $[g,o^\ast]\times[0,1]$ at $\alpha=1+\omega$
is box-certified as well, its single tangency at $(g,\omega\to1)$
handled by certified local monotonicity around the contact); if $o_1$ has a node child we are in branch B
with $\omega<\tfrac12$, and Theorem~\ref{thm:DBpp} gives
$\rho>\Psi_B(\omega)>\Psi_B(\tfrac12)=2>T$ ($\Psi_B(\tfrac12)=2$ exact).
\end{proof}

\begin{proposition}[The $\Psi_j$ ladder, unconditional]\label{prop:DPsij}
Blocking with $j\ge1$ extra occupants and arbitrary nested occupancy
implies
\[
\rho\ >\ \Psi_j(\omega)=(1-\omega)+\sqrt{(1-\omega)^2+j}
\qquad(\text{root of }u^2-2(1-\omega)u-j),
\]
with $\Psi_j>T\iff\omega<1-(T^2-j)/(2T)$: $0.3522$ ($j=1$), $0.6240$
($j=2$), $0.8959$ ($j=3$), and all $\omega\in(0,1)$ for $j\ge4$
($\Psi_j(\omega)>\Psi_j(1)=\sqrt j\ge2$ there; for $\omega>1$ the
$(1-\omega)$ term is negative and $\Psi_j$ drops below $\sqrt j$).
\end{proposition}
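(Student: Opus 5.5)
The plan follows the mechanism sketched in Section~\ref{sec:generic} (the leaf argument) and reuses the two-tail optimization of Theorem~\ref{thm:DB}, now with $j$ units of mass in place of one.

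\textbf{Step 1: leaves and the blocking wall.} Each extra occupant $o_i$ carries a subtree of nodes. These are rings of radius $\ge1$ nested at any depth inside $o_i$. Every such subtree is finite, so it contains a \emph{leaf}: a node $L_i$ none of whose hole contents has radius $\ge r_m=1$. This gives $j$ leaves $L_1,\dots,L_j$, one per occupant subtree. They are pairwise distinct and none is an ancestor of another, since they lie in disjoint subtrees. Each satisfies $L_i\ge1$, and $X_{L_i}$ is a sum of rings smaller than $m$. By Lemma~\ref{lem:DBo} (wall Bo$''$), blocking gives $L_i<\sigma_2+\omega+X_{L_i}$ for each $i$.

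\textbf{Step 2: the branch split.} I split as in Theorem~\ref{thm:DBpp} via the evacuation dichotomy.
\begin{itemize}
\item In branch A, $\sigma_2>1-\omega$.
\item In branch B, $\sigma_1+M>1$, where $M$ is the total radius of the children of $m$.
\end{itemize}
Let $L$ be the largest leaf and $W$ the total small mass in the tail of $L$. Here $W$ includes $X_L$, the children of $m$, and the other small contents. Ties are broken by the first-copy convention.

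\textbf{Step 3: the two tails in branch A.} The tail of $L$ contains the other $j-1$ leaves (each $\ge1$), $m$, $\sigma_1$, $\sigma_2$ and $W$. The wall from Step~1 for $L$ reads $L<\sigma_2+\omega+W$. This gives
\[
\rho>\frac{j+\sigma_1+\sigma_2+W}{\sigma_2+\omega+W}\ge\frac{j+2\sigma+W}{\sigma+\omega+W},
\]
where $\sigma:=\sigma_2$. The tail of $m$ gives $\rho\ge2\sigma+W$, because it collects $\sigma_1$, $\sigma_2$, $X_L$ and the children of $m$. For fixed $\sigma$, the first bound decreases in $W$ and the second increases. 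So the max is bounded below by the crossing value $u=2\sigma+W$, which satisfies $u(u+\omega-\sigma)=j+u$. This $u$ is the positive root of $u^2-(1+\sigma-\omega)u-j$. The root increases in $\sigma$, and $\sigma>1-\omega$, so $\rho>\Psi_j(\omega)$. If the crossing would fall at $W<0$, I check directly that $2\sigma$ already exceeds the root, as in the proof of Theorem~\ref{thm:DB}. The bound is strict because the walls are strict.

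\textbf{Step 4: branch B.} Set $s:=\sigma_2+W$. The wall at $L$ together with $L\ge1$ gives $s>1-\omega$. The tail of $m$ gives $\rho>1+s$. The tail of $L$ gives $\rho>(j+1+s)/(s+\omega)$. Optimizing as before yields the positive root of $u^2-(2-\omega)u-j$. Since $2-\omega\ge2(1-\omega)$, this root dominates $\Psi_j$, so branch B is no worse.

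\textbf{Step 5: the crossing values.} $\Psi_j=T$ exactly when $T^2-2(1-\omega)T-j=0$, that is, when $\omega=1-(T^2-j)/(2T)$. Monotonicity of $\Psi_j$ in $\omega$ gives the stated ranges. For $j\ge4$, $\Psi_j(\omega)>\Psi_j(1)=\sqrt j\ge2>T$.

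\textbf{Main obstacle.} The hard part is the bookkeeping in Steps~1 and~3. I must verify that the tail of $L$ really collects all $j-1$ other leaves with weight at least $1$ each. I must also check that $W$ can be the same quantity in both tails without double counting and without using mass that sits outside the tail of $m$. If $X_L$ fails to lie in the tail of $m$, I would instead bound the tail of $m$ by $2\sigma$ alone and redo the crossing, hoping the optimum still occurs at $W=0$.
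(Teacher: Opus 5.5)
Your Steps 1--3 and 5 reproduce the paper's leaf argument: one leaf per occupant subtree, the wall of Lemma~\ref{lem:DBo} at the largest leaf $L$, the tails of $L$ and of $m$, and the metallic-mean crossing. Branch A is correct as you wrote it.

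The gap is in Step~4. You set $s:=\sigma_2+W$, but you defined $W$ to contain the children of $m$, so $M$ is already inside $s$. The tail of $m$ is then exactly $\sigma_1+\sigma_2+W=\sigma_1+s$. Since $\sigma_1<1$, the claimed bound $\rho>1+s$ does not follow. The branch-B hypothesis $\sigma_1+M>1$ can supply the extra unit only if $M$ is \emph{not} also counted in $s$. Your tail-of-$L$ bound $\rho L>j+1+s$ has the same double count, since its ``$+1$'' again comes from $\sigma_1+M>1$.

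The missing observation is that $X_L$ and $M$ are disjoint masses. The children of $L$ sit in the hole of a node, and nodes live outside $m$; $M$ sits in the hole of $m$. So take $s:=\sigma_2+X_L$ instead. Then:
\begin{itemize}
\item The wall at $L$ reads $L<s+\omega$, and $L\ge1$ gives $s>1-\omega$.
\item The tail of $m$ gives $\rho\ge(\sigma_1+M)+(\sigma_2+X_L)>1+s$.
\item The tail of $L$ contains the other $j-1$ leaves, $m$, $\sigma_1$, $\sigma_2$, $M$ and $X_L$, so $\rho L\ge j+(\sigma_1+M)+s>j+1+s$.
\end{itemize}
Your optimization then goes through unchanged. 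With $u=1+s$ it yields the root of $u^2-(2-\omega)u-j$, which dominates $\Psi_j$; this is exactly the paper's branch B.

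The fallback in your ``main obstacle'' paragraph is unnecessary. Every ring of radius below $r_m=1$, in particular all of $X_L$, lies in the tail of $m$ by definition. So in branch A the tail of $m$ always collects $\sigma_1+\sigma_2+W$. The real double-counting risk was in branch B, not branch A.
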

\begin{proof}[Proof (leaf argument)]
Call a node a \emph{leaf} if it has no node children. The node subtree of
each occupant $o_i$ is finite and non-empty, hence contains a leaf
$\ell_i$; the subtrees are disjoint, so there are $j$ distinct leaves
$\ell_1,\dots,\ell_j\ge1$. Let $L$ be the largest,
$X_L=\sum\mathrm{children}(L)$ (all $<1$, as $L$ is a leaf),
$M=\sum\mathrm{children}(m)$, and $W$ the total mass of rings $<1$ other
than $\sigma_1,\sigma_2$ (so $X_L\le W$, $M\le W$, and $X_L$, $M$ are
disjoint). Three facts:
(1) \emph{wall at the leaf} (Lemma~\ref{lem:DBo}):
$L<\sigma_2+\omega+X_L\le\sigma_2+\omega+W$;
(2) \emph{tail of $L$}: the other $j-1$ leaves, $m$, $\sigma_1$,
$\sigma_2$ and all mass $<1$ are $\le L$ (first-copy convention), so
$\rho\,L\ge(j-1)+1+\sigma_1+\sigma_2+W\ge j+2\sigma_2+W$;
(3) \emph{tail of $m$}: $\rho\ge\sigma_1+\sigma_2+W\ge2\sigma_2+W$.
In branch A of the evacuation dichotomy ($\sigma_2\ge1-\omega$), facts
(1)--(3) give
$\rho>\max\bigl(2\sigma+W,\ (j+2\sigma+W)/(\sigma+\omega+W)\bigr)$ at
$\sigma=\sigma_2$; minimizing over $\sigma\ge1-\omega$, $W\ge0$, the
crossing $u^2+u(\omega-\sigma-1)-j=0$ ($u=2\sigma+W$) increases in
$\sigma$, with minimum at $\sigma=1-\omega$: exactly $\Psi_j(\omega)$.
(If the crossing falls at $W<0$ -- possible for $\omega>\tfrac12$, $j=1$,
$\sigma\to1$ -- the value $2\sigma$ at $W=0$ satisfies
$(2\sigma)^2-2(1-\omega)(2\sigma)-j\ge j>0$, so $2\sigma>\Psi_j$ anyway.)
In branch B ($\sigma_1+M>1$), set $s:=\sigma_2+X_L$: by fact (1) and
$L\ge1$, $s>1-\omega$; the tail of $L$ also contains $M$, disjoint from
$X_L$, so $\rho\,L\ge j+\sigma_1+M+\sigma_2+X_L>j+1+s$ with $L<s+\omega$,
and the tail of $m$ gives $\rho\ge\sigma_1+M+\sigma_2+X_L>1+s$;
minimizing $\max\bigl(1+s,\ (j+1+s)/(s+\omega)\bigr)$ yields, with
$u=1+s$, the metallic mean $u^2-(2-\omega)u-j=0$, whose root dominates
$\Psi_j$ ($2-\omega\ge2(1-\omega)$, and the root increases in the linear
coefficient).
\end{proof}

The leaf argument is what closes the case caught by the adversarial
verification (a node child inside the largest occupant): towers of nested
nodes inflate $X_{o_1}$ without paying the tail of $m$, but they always
leave a leaf at the bottom of each subtree, and the largest leaf carries
both the $j$-bonus in its tail and the leaf ceiling
$\sigma_2+\omega+W$.

\begin{corollary}[Small rings are free for the combinatorial walls]
\label{cor:DS}
Proposition~\ref{prop:DV2}, Theorems~\ref{thm:DB} and~\ref{thm:DBpp} and
Proposition~\ref{prop:DPsij} hold unchanged if the instance contains
additional rings smaller than $m$ anywhere compatible with each theorem's
template (with $S$ still the pair $\{\sigma_1,\sigma_2\}$).
\end{corollary}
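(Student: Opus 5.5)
The plan is to read the corollary as a bookkeeping statement. Each cited result is a finite set of blocking walls plus a tail-ratio optimization, so it suffices to check two things. First, every wall stays valid in the presence of extra rings smaller than $m$. Second, every tail inequality only gains mass from such rings. The proof will therefore audit the ingredients one at a time rather than redo any optimization.

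\emph{Step 1: walls.} Each wall (B2), (B3), (B3$'$), (B4), (Bo), (D) and Bo$''$ is the contrapositive of an explicit unblocking placement. Each placement moves only $\sigma_1,\sigma_2$, and in the evacuation dichotomy also the children of $m$. It uses only the resources $D_m$, $H_m$, the slot beside $m$ in $u$, or the hole of a node. The plan is to argue case by case that extra small rings either leave these resources untouched or have already been accounted for.
\begin{itemize}
\item A small ring sitting elsewhere in $v$, in $u$, or in unrelated holes is never moved and is not inside the resources used. So the placement remains legal by downward closure and the fact that positions are existential.
\item A small ring sitting inside a resource hole is, by definition, part of that hole's content ($X_y$, $M$ or $X_\sigma$). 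The walls already carry this mass: Lemma~\ref{lem:DBo} and the evacuation argument in Theorem~\ref{thm:DBpp} were proved for arbitrary hole contents.
\item The free-hole walls of Lemma~\ref{lem:DV1} used in Proposition~\ref{prop:DV2} need $H_m$, the hole of $\sigma_1$ and the holes of the $o_k$ to be free. If small rings occupy them, I would replace each free-hole wall by its tariffed version Bo$''$, (B3$'$), or the evacuation dichotomy. I then check that the extra content term only enlarges the tail of $o_1$, so the bound survives.
\end{itemize}

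\emph{Step 2: tails.} Every extra ring smaller than $m$ lies in the tail of $m$, of $o_1$ (or the largest leaf $L$), and of $\alpha$, because these all have radius at least $1$. Adding it therefore only increases every $\sum_{l>j}x_l/x_j$ used. The two-term max programs in Theorems~\ref{thm:DB}, \ref{thm:DBpp} and Proposition~\ref{prop:DPsij} are already minimized over an arbitrary small-mass variable $W\ge0$ (or $X$, $M$), with the infimum at zero mass. This is exactly the monotonicity that absorbs the new rings. Under the first-copy convention, ties among equal small rings are harmless.

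\emph{Step 3: scope.} The hypothesis that $S$ is still the pair $\{\sigma_1,\sigma_2\}$ guarantees that no new ring joins the displaced profile. So the combinatorial reinsertion problem itself is unchanged, and only the ambient mass grows.

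The main obstacle is Step 1 for Proposition~\ref{prop:DV2}, whose walls were stated for free holes. I would first try to rederive the occupant price from Bo$''$ with $X_{o_1}$ included in the tail of $o_1$. This gives the same inequality with an extra $+X/o_1$ term that only helps.
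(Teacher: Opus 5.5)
Your skeleton is the paper's argument. Every wall is the contrapositive of an unblocking placement that lives only in $D_m$, $H_m$, node holes, the hole of $\sigma_1$, or the slot beside $m$ in $u$, never in the free space of $v$, and sub-$m$ mass only feeds tails. Two of your steps, however, are wrong as written.

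\textbf{The step you call the main obstacle fails, and it is not needed.} Proposition~\ref{prop:DV2} lives in the free-hole template of Lemma~\ref{lem:DV1}. ``Compatible with the template'' therefore means that the holes of $m$, $\sigma_1$ and the $o_i$ stay free, so extra rings can only sit where none of those placements look. Occupied holes are exactly what Theorems~\ref{thm:DB}, \ref{thm:DBpp} and Proposition~\ref{prop:DPsij} treat, and they give weaker bounds.

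Your rederivation also has the sign backwards. Bo$''$ only gives $\sigma_2>o_1-\omega-X$, so the tail of $o_1$ yields $\rho\,o_1\ge j+\sigma_1+\sigma_2+X>j+2(o_1-\omega)-X$, an extra $-X/o_1$ rather than $+X/o_1$. The cap $o_1<1+\omega$, which the bound $(j+2)/(1+\omega)$ needs, becomes $o_1<\sigma_2+\omega+X$ and is lost.

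No repair from the combinatorial walls exists. Take $\omega\to0$, $\sigma_1=\sigma_2\to1$, $o_1=\sqrt2$, $X=\sqrt2-1$, $\alpha\to2$; this realizes the optimum of the program in Theorem~\ref{thm:DB}. It satisfies (B2), (B3), (B4), (W), (D) and Bo$''$, and has $\rho\to1+\sqrt2<2+1/\sqrt2$, below the first bound of Proposition~\ref{prop:DV2}.

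\textbf{Your Step~2 justification is false as stated.} The two-term programs do not attain their infimum at zero mass. In Theorem~\ref{thm:DB} the minimizer has $X=1/\Psi(\omega)>0$. In Proposition~\ref{prop:DPsij} it has $W=j/\Psi_j(\omega)>0$. Moreover $X$ also appears in the wall $y^*<\sigma_2+\omega+X$, so the bound is not monotone in hole mass.

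The correct bookkeeping splits the new rings into two groups:
\begin{itemize}
\item Rings inside the hole of the node whose wall is used, or among the children of $m$, are covered because those results are already stated for arbitrary occupancy. The bound is an infimum over all values of $X$, $M$, $W$.
\item Every other extra ring adds only to the numerators of the tails of $m$ and of $y^*$ (or $L$), so both terms of each maximum increase.
\end{itemize}

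One smaller point: a sub-$m$ direct child of $u$ would by definition belong to $S$. It is your Step~3 hypothesis, not the first bullet of your Step~1, that keeps clean the slot beside $m$ used by (B4).

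With these corrections your argument coincides with the paper's.
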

\begin{proof}
Every unblocking placement used by those walls is \emph{local}: it lives
in $D_m$ (freed by the exchange), in $H_m$, in the holes of nodes or of
$\sigma_1$, or next to $m$ inside $u$ -- never in the free space of $v$.
An extra ring smaller than $m$, wherever it sits, intersects none of
these resources (hole contents are already counted in the $X$'s; rings in
$v$ proper stay put, which is legal since no other position changes), and
tails can only grow. (The geometric wall of Lemma~\ref{lem:DG} is
\emph{not} covered: it uses the global repacking of the pan, which does
see small rings in $v$; this is the ``gap lemma'' left open.)
\end{proof}

\subsection{Three-piece profiles in the canonical template}
\label{app:triple-app}

Here $v$ is the canonical pan of radius $R=\alpha+1$ with $\{\alpha,m\}$
diametral, $u$ is the hole of $\alpha$, and the witness placed
$S=\{\sigma_1\ge\sigma_2\ge\sigma_3\}\subset(\omega,1)$ in $u$; the hole
of $m$ may carry mass $M\ge0$. The reinsertion resources are $D_m$ (rows
of sum $\le1$, with $\alpha$ in place), $H_m$ (capacity $1-\omega$;
its content $M$ can be evacuated in a row to $D_m$), the slot next to $m$
in $u$ (row $1+\Sigma\le\alpha-\omega$), nesting inside $S$, and the
repack of $v$: placing $\{\alpha\}\cup(\text{roots of }S)$ as a corona in
the pan -- incompatible with using $D_m$ or the evacuation, both of which
pin $\alpha$. The witness gives (W) $\sigma_1+\sigma_2\le\alpha-\omega$.

\begin{theorem}[Three pieces; the zigzag pays]\label{thm:DT3}
Blocking with $S$ a triple in $(\omega,1)$ (as in the preamble above) and
arbitrary $M$ implies
$\rho>\max\bigl(\Phi(\omega),\tfrac{13}7\bigr)$ for all
$\omega\in(0,1)$; in the zigzag branch below, $\rho>\tfrac{17}7$.
\end{theorem}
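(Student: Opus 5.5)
We follow the three-way split announced in the overview of Section~\ref{sec:generic}, organized by how $\sigma_3$ and the trio $\{\alpha,\sigma_1,\sigma_2\}$ interact. Throughout we use only unconditional walls: contrapositives of explicit unblocking placements, the constructive direction of the angular criterion (Lemmas~\ref{lem:S2}, \ref{lem:DU}), and the exact corona criterion Theorem~\ref{thm:DU4}. Since $M$ is arbitrary, every placement that uses $H_m$ is first preceded by evacuating $M$ in a row to $D_m$. Otherwise we use only resources that never touch $H_m$, so that the argument also survives large $\omega$.

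\emph{Case 1 (dust): $\sigma_3\le\sigma_1-\omega$.} Nest $\sigma_3$ inside $\sigma_1$. Any unblocking placement of the pair $\{\sigma_1,\sigma_2\}$ then unblocks the triple, because $\sigma_3$ travels inside $\sigma_1$. Hence each wall of the canonical pair program, (B1), (B2), (B4) and (W), holds for the pair. We then invoke Corollary~\ref{cor:DB2} together with Theorem~\ref{thm:corner} and Proposition~\ref{prop:Cwitness}. The extra ring $\sigma_3$ only enlarges tails, so this gives $\rho>\max(\Phi(\omega),\tfrac{13}7)$.

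\emph{Case 2: nothing nests, and the trio $\{\alpha,\sigma_1,\sigma_2\}$ admits no corona in the pan.} This is the non-packable (B1) situation. Corollary~\ref{cor:Cmin} gives $\sigma_1+\sigma_2\ge1+b(\alpha)$, and (W) forces $\alpha\ge T_{1+\omega}$. Therefore $\rho\ge\sigma_1+\sigma_2+\sigma_3>1+b(T_{1+\omega})=\Phi(\omega)$, the inequality being strict because $\sigma_3>\omega>0$. The $13/7$ bound follows in the same way as in the proof of Theorem~\ref{thm:corner}. If $\alpha\ge2$ we get directly $\rho>1+b(2)$. If $\alpha<2$ we need $\omega<1/7$, and there the additional mass $\sigma_3$, together with (B2) after evacuation, supplies the margin exactly as in the $H_m$ and mixed branches.

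\emph{Case 3 (zigzag): nothing nests, but the trio packs.} This is the main obstacle. Blocking now forbids the repack of $v$ as a four-circle corona $\{\alpha,\sigma_1,\sigma_2,\sigma_3\}$. By Theorem~\ref{thm:DU4} a four-corona exists as soon as the top trio and the zigzag total both pass. Since the top trio is $\{\alpha,\sigma_1,\sigma_2\}$, which passes by assumption, the zigzag inequality must fail. Concretely, writing $A,B$ for the half-angles of $\theta(\alpha,\sigma_2)$ and $\theta(\alpha,\sigma_3)$, failure of the zigzag should reduce via the half-angle identity of Lemma~\ref{lem:S1} with $f(\alpha)=\alpha$ to $\sin^2A+\sin^2B>1$. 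The plan is then:
\begin{enumerate}
\item Bound $\sin^2B$ using $\sigma_3\le\sigma_2$, so that the condition becomes $2\alpha f(\sigma_2)>1$.
\item Observe that $2\alpha f(\sigma_2)>1$ is exactly $\sigma_2>b(\alpha)$.
\end{enumerate}
Combining $\sigma_2>b(\alpha)$ with (W) and with the tail of $\alpha$ yields the cubic $\alpha^3=\alpha^2+\alpha+\omega(\alpha^2+\alpha+1)$, whose root is $\gamma_\omega$. The two tail bounds, $\sigma_1+\sigma_2+\sigma_3$ and $(1+\sum S)/\alpha$, are then balanced. I expect the optimum to be at the corner $\gamma_{2/7}=2$ with value $17/7$.

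The delicate points in Case 3 are twofold. First, we must verify that the rows used by the remaining unblocking placements are compatible with the orders $\sigma_3\le\sigma_2\le\sigma_1$: the placements are $D_m$ with $\alpha$ pinned, the slot next to $m$ inside $u$, and $H_m$ after evacuation. These force, respectively, $\sigma_2+\sigma_3>1$, $\sigma_3>\alpha-\omega-1$ and $\sigma_3>1-\omega$. Second, we must check that the resulting one-variable programs do not undercut $17/7$ when $\omega$ is near $0$ or near $1$. I would settle both by monotonicity in $\alpha$, as in Lemma~\ref{lem:CE1}, followed by exact root isolation at the corner.
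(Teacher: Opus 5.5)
Your three-way split (dust, no trio corona, zigzag) matches the paper's. However, the decisive step of the zigzag branch fails as written, and the $13/7$ part of your first two cases has a hole.

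\emph{The zigzag branch.} Bounding the two $\sigma_1$-angles of the zigzag by the corresponding $\alpha$-angles is legitimate. But it only yields $\alpha f(\sigma_2)+\alpha f(\sigma_3)>1$, hence $2\alpha f(\sigma_2)>1$. With $R=\alpha+1$ this reads $\sigma_2>(\alpha+1)/(2\alpha+1)$. That is \emph{not} $\sigma_2>b(\alpha)$, since $b(\alpha)-\frac{\alpha+1}{2\alpha+1}=\frac{(\alpha+1)(\alpha^2-1)}{(\alpha^2+\alpha+1)(2\alpha+1)}>0$ for $\alpha>1$; at $\alpha=2$ you get $3/5$ against $6/7$.

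With this weaker pocket the cubic for $\gamma_\omega$ never appears. Your listed walls then give roughly $\rho>2.2-\omega$, attained near $\alpha=2$. This stays below $17/7$ for small and moderate $\omega$, and even falls below $\Phi(\omega)$ in the middle range (about $1.7$ against $\Phi(\tfrac12)\approx1.89$).

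What you discarded is $\sigma_1\le1$. Instead, replace $\sigma_3$ by $\sigma_2$ in the two $\sigma_3$-angles. This gives $\theta(\alpha,\sigma_2)+\theta(\sigma_1,\sigma_2)>\pi$, i.e.\ $f(\sigma_2)\bigl(\alpha+f(\sigma_1)\bigr)>1$. Then $f(\sigma_1)\le f(1)=1/\alpha$ gives exactly $f(\sigma_2)(\alpha+1/\alpha)>1\iff\sigma_2>b(\alpha)$.

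From there, $\gamma_\omega$ comes from (W) alone: $2b(\alpha)<\sigma_1+\sigma_2\le\alpha-\omega$. The two bounds to balance both come from the tail of $m$; the tail of $\alpha$ is dominated.
\begin{itemize}
\item The first is $\rho\ge\Sigma+M>2b(\alpha)+1-\omega$, from the wall $\sigma_3+M>1-\omega$: put $\sigma_3$ and $M$ in a row in $H_m$ beside the trio corona. $M$ cannot be evacuated here, because the corona repack of $v$ is incompatible with using $D_m$. So your wall $\sigma_3>1-\omega$ is not available.
\item The second is $\rho>2b(\alpha)+\alpha-1-\omega$, from $\sigma_3>\alpha-1-\omega$.
\end{itemize}
These two bounds cross at $\gamma_{2/7}=2$ with value $17/7$.

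\emph{The $13/7$ bound in your Cases 1 and 2.} With $M$ arbitrary, (B2) is not a wall. Only the dichotomy ``$\sigma_2>1-\omega$ or $\sigma_1+M>1$'' is. In the second branch, Corollary~\ref{cor:DB2} yields only $\rho\ge\Phi(\omega)$, which is $<13/7$ for $\omega<\tfrac17$. Adding $\sigma_3>\omega$ to the tails gives only about $\Phi(\omega)+\omega$, still below $13/7$ for small $\omega$.

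The missing ingredient is the wall (BH): sending $\sigma_1$ (carrying $\sigma_3$) into $D_m$, with $\sigma_2+M$ in a row in $H_m$, must fail. So $\sigma_2+M>1-\omega$. The tail of $m$ together with $\sigma_3>\omega$ then gives $\rho>1+\sigma_1\ge1+\Phi/2\ge1+T/2>13/7$.

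In your Case 2, the appeal to ``(B2) after evacuation'' fails in the same branch. What closes it is the defining inequality of that case, $\sigma_3>\sigma_1-\omega\ge\Phi/2-\omega$. This gives $\rho>\tfrac32\Phi-\omega>2.64$ on $(0,\tfrac17]$, while $\rho>\Phi\ge13/7$ for $\omega\ge\tfrac17$.
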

\begin{proof}
\emph{Branch 1 ($\sigma_3\le\sigma_1-\omega$): the dust rides the pair.}
Nest $\sigma_3$ in $\sigma_1$ inside each pair placement; their failures
give five walls: (B1) ``$\sigma_3\subset\sigma_1$; corona
$\{\alpha,\sigma_1,\sigma_2\}$ in $v$'' fails, so the trio has no corona
and $F(\sigma_1,\sigma_2)>2\pi$ (corona $\iff F\le2\pi$ is exact for
three circles); (B2/dichotomy) ``$\sigma_1(\sigma_3)+M$ in a row in
$D_m$; $\sigma_2\to$ emptied $H_m$'' fails, so $\sigma_2>1-\omega$ or
$\sigma_1+M>1$; (B4) ``$\sigma_2\to u$ next to $m$; corona
$\{\alpha,\sigma_1(\sigma_3)\}$'' fails, so $\sigma_2>\alpha-\omega-1$;
(BH) ``$\sigma_1(\sigma_3)\to D_m$; $\sigma_2+M$ in a row in $H_m$''
fails, so $\sigma_2+M>1-\omega$; and (W). From (B1)+(W), by
Corollary~\ref{cor:Cmin}, in \emph{both} branches of the dichotomy
$\sigma_1+\sigma_2\ge1+b(\alpha)$, $\alpha\ge T_{1+\omega}$ and
$\rho\ge\sigma_1+\sigma_2\ge\Phi(\omega)>T$. Moreover: if
$\sigma_2>1-\omega$, the program (B1)+(B2)+(B4)+(W) is exactly that of
Theorem~\ref{thm:corner}, whose lower bound gives
$\rho\ge\max\bigl(\sigma_1+\sigma_2,
\tfrac{1+\sigma_1+\sigma_2+\sigma_3}{\alpha}\bigr)
\ge\max\bigl(\sigma_1+\sigma_2,\tfrac{1+\sigma_1+\sigma_2}{\alpha}\bigr)
\ge T_{\mathrm{can}}(\omega)\ge\tfrac{13}7$; if instead $\sigma_1+M>1$ and $\sigma_3>\omega$, then (BH) gives
$\rho\ge\sigma_1+\sigma_2+\sigma_3+M>\sigma_1+(1-\omega)+\omega
=1+\sigma_1\ge1+\Phi(\omega)/2\ge1+T/2=1.9196>\tfrac{13}7$.

\emph{Branch 2 ($\sigma_3>\sigma_1-\omega$): nothing nests} (also
$\sigma_2,\sigma_3>\sigma_2-\omega$ and the star needs
$\sigma_2+\sigma_3\le\sigma_1-\omega$, impossible). Split by the trio.

\emph{2A: the trio $\{\alpha,\sigma_1,\sigma_2\}$ has no corona.} Then
$F>2\pi$ and, as above, $\rho\ge\sum S>\sigma_1+\sigma_2\ge\Phi(\omega)$.
Furthermore $\sigma_1\ge(\sigma_1+\sigma_2)/2\ge\Phi/2$, so
$\sigma_3>\sigma_1-\omega\ge\Phi/2-\omega$ and
$\rho>\tfrac32\Phi-\omega$ when $\omega\le\Phi/2$; by the chord bound
$\Phi\ge T+(13-7T)\omega$ this exceeds $2.64$ on $(0,\tfrac17]$, while
for $\omega\ge\tfrac17$, $\Phi\ge\tfrac{13}7$: in all cases
$\rho\ge\tfrac{13}7$.

\emph{2B: the trio has a corona.} Three mixed placements fail:
``$\sigma_3(+M)$ in a row in $H_m$ $+$ trio corona'' gives (m1)
$\sigma_3+M>1-\omega$; ``$\sigma_3\to u$ next to $m$ $+$ trio corona''
gives (m2) $\alpha<1+\omega+\sigma_3$; and ``corona of the quadruple''
fails, so by Theorem~\ref{thm:DU4} -- the top trio passing its
certificate, by Lemma~\ref{lem:Dsubset} -- the \emph{zigzag} fails:
$\theta(\alpha,\sigma_2)+\theta(\alpha,\sigma_3)
+\theta(\sigma_1,\sigma_2)+\theta(\sigma_1,\sigma_3)>2\pi$. By
monotonicity, $2[\theta(\alpha,\sigma_2)+\theta(\sigma_1,\sigma_2)]>2\pi$,
so $A+B>\pi/2$ with $A=\theta(\alpha,\sigma_2)/2$,
$B=\theta(\sigma_1,\sigma_2)/2$, both $<\pi/2$ ($A\ge\pi/2$ would force
$\sigma_2\ge1$; $\sigma_1+\sigma_2<R$). Hence $\sin A>\cos B$, i.e.\
$\sin^2\!A+\sin^2\!B>1$:
$f(\sigma_2)\bigl[f(\alpha)+f(\sigma_1)\bigr]>1$ with $f(\alpha)=\alpha$
and $f(\sigma_1)\le1/\alpha$, so $f(\sigma_2)(\alpha+1/\alpha)>1$, which
is \emph{exactly} $\sigma_2>b(\alpha)$. With (W):
$2b(\alpha)<\sigma_1+\sigma_2\le\alpha-\omega$, and since
$\alpha\mapsto\alpha-\omega-2b(\alpha)$ is strictly increasing
($1-2b'>0$ for $\alpha\ge1$), $\alpha>\gamma_\omega$, the root of
$2b(\alpha)=\alpha-\omega$ -- equivalently of
$\alpha^3=\alpha^2+\alpha+\omega(\alpha^2+\alpha+1)$, with
$\gamma_0=\varphi$. The tails: by (m1),
$\rho\ge\sum S+M>2b(\alpha)+(1-\omega)\ge\gamma_\omega+1-2\omega$, and by
(m2), $\rho>2b(\alpha)+\alpha-1-\omega\ge2\gamma_\omega-1-2\omega$. The
first bound decreases in $\omega$ and the second increases
($1<\gamma'<2$), and they cross \emph{exactly} at $\gamma=2$,
$\omega=\tfrac27$ ($8=4+2+2$ in the cubic), where both equal
$3-\tfrac47=\tfrac{17}7$: $\rho>\tfrac{17}7$ throughout branch 2B.
\end{proof}

\begin{proposition}[Reduction branch with occupants]\label{prop:DT3j}
In the templates with $j\ge1$ extra occupants and $S$ of $k\ge3$ pieces
with $\sum_{i\ge3}\sigma_i\le\sigma_1-\omega$, all pair walls and bounds
(Proposition~\ref{prop:DV2}, Theorems~\ref{thm:DB} and~\ref{thm:DBpp},
Proposition~\ref{prop:DPsij}) hold unchanged: blocking implies
$\rho>\Psi_j(\omega)$.
\end{proposition}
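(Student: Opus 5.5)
The plan is to reduce the $k\ge3$ profile to the pair profile $\{\sigma_1,\sigma_2\}$ already treated, by nesting the \emph{dust} $D:=\{\sigma_3,\dots,\sigma_k\}$ inside $\sigma_1$, exactly as Branch~1 of Theorem~\ref{thm:DT3} does for a single third piece. The hypothesis $\sum_{i\ge3}\sigma_i\le\sigma_1-\omega$ says the dust fits in a row in the hole of $\sigma_1$ (Lemma~\ref{lem:row}). More generally, by the opposite-disk Lemma~\ref{lem:DR}, whenever $\sigma_1$'s hole must also host another piece, the dust simply enlarges $X_\sigma$.

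\emph{Step 1: transfer the walls.} Take each unblocking placement used in the proofs of Lemma~\ref{lem:DV1}, Lemma~\ref{lem:DBo}, Theorems~\ref{thm:DB}, \ref{thm:DBpp} and Proposition~\ref{prop:DPsij}. Modify it so that $\sigma_1$ carries $D$ in its hole and otherwise travels as before. Every such placement is local (it lives in $D_m$, $H_m$, node holes, the hole of $\sigma_1$, or the slot next to $m$ in $u$), and $\sigma_1$'s outer radius is unchanged, so its feasibility criterion is unchanged. Hence the contrapositive gives the same walls: (B2), (B4), (Bo$''$) at every node, and the evacuation dichotomy $\sigma_2>1-\omega$ or $\sigma_1+M>1$. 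For the nesting wall (B3$'$), I would set $X_\sigma$ to include the dust mass. The only wall that moves is the one where $\sigma_2$ itself nests in $\sigma_1$: Lemma~\ref{lem:DR} then gives $\sigma_2+X_\sigma+\sum D>\sigma_1-\omega$, which is the tariffed form already in use with a larger $X_\sigma$.

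\emph{Step 2: tails only grow.} Every tail used in these proofs (those of $m$, of the minimal node $y^*$, of the largest leaf $L$, of $o_1$) already contains $\sigma_1,\sigma_2$ and all mass $<1$. Adding $D$ therefore adds nonnegative mass to each tail, which can be absorbed into $W$ (or into $X$ or $M$ as appropriate). This is exactly the mechanism of Corollary~\ref{cor:DS}. The two-branch optimizations of Theorem~\ref{thm:DB} and Proposition~\ref{prop:DPsij} are monotone in $W\ge0$, so they return the same minima $\Psi$, $\Psi_B$, $\Psi_j$. Proposition~\ref{prop:DV2} uses only (Bo) and the tail of $o_1$, so it also persists.

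\emph{Main obstacle.} The delicate point is that the dust is \emph{not} outside $S$ in the witness. The blocking hypothesis concerns reinserting all of $S$, and I must check that no pair wall used an unblocking placement in which $\sigma_1$'s hole was already committed. I expect to handle this by a case check over the finitely many placements. Where $\sigma_1$ goes to $D_m$ or a row in $D_m$ (the evacuation), it carries $D$ inside. Where $\sigma_2$ is inserted into $\sigma_1$'s hole (wall B3$'$), Lemma~\ref{lem:DR} is applied to the combined content $\{\sigma_2\}\cup D\cup\mathrm{children}(\sigma_1)$. The geometric double-pocket wall of Lemma~\ref{lem:DG} is not among the listed results, so its repacking issue does not arise. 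With these checks done, blocking implies $\rho>\Psi_j(\omega)$, since the largest-leaf argument goes through verbatim.
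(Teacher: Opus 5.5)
Your proposal is correct and is essentially the paper's own argument. You nest $\sigma_3,\dots,\sigma_k$ in a row inside the hole of $\sigma_1$, so every unblocking placement behind the pair walls moves $\sigma_1$ (with this cargo) and $\sigma_2$ exactly as in the pair case, and every tail only gains mass.

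One caveat applies to the paper's one-line proof as well: the row fits only if $\sigma_1$'s hole has room beside whatever it already holds. So in templates where $\sigma_1$ may carry children, the hypothesis should be read as $\sum_{i\ge3}\sigma_i\le\sigma_1-\omega-X_{\sigma_1}$, as in Theorem~\ref{thm:DPp}(ii).
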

\begin{proof}
Nest $\sigma_3,\dots,\sigma_k$ in a row inside the hole of $\sigma_1$;
every unblocking placement of those walls moves $\sigma_1$ (with its
cargo travelling inside) and $\sigma_2$ exactly as in the pair case, and
the tails only grow.
\end{proof}

\subsection{Golden floor for pan exchanges}\label{app:pan-app}

Here $u$ is the pan itself: $F$ places $m$ at top level, the witness $P$
keeps $m$ in the hole of a ring $y$ ($1+X_y^{\mathrm{rest}}\le
y-\omega$), the pan holds occupants $O=\{o_1\ge\dots\ge o_j\}$ ($>1$,
top level, shared by $F$ and $P$, arbitrarily occupied) and, per $P$, the
pair $S=\{\sigma_1\ge\sigma_2\}$. Resources: $D_m$ (free unit disk in
$y$'s hole), $H_m$, hole tariffs (Lemma~\ref{lem:DR}), nesting, and the
pan repack. $F$'s pan gives the pair necessities $o_i+1\le R$ and
$o_i+o_\ell\le R$ for all $i\ne\ell$. Write $S_0:=\sigma_1+\sigma_2$;
the row placement of the pair in $D_m$ fails, so (D) $S_0>1$; and the
joint placement of the pair in $y$'s hole beside its remaining content
$X_y^{\mathrm{rest}}$ fails, which by Lemma~\ref{lem:DR} gives the wall
\begin{equation}
\sigma_1+\sigma_2+X_y^{\mathrm{rest}}>y-\omega.\tag{Ry}
\end{equation}
\emph{Strict leaf}: a leaf whose hole is not $y$'s; with $j\ge2$ at
least $j-1$ exist ($j$ if $y$ is not a leaf).

\begin{theorem}[Golden floor, pan exchange, pair profiles]
\label{thm:DP}
Blocking implies $\rho>\varphi$ in each of the following cases, which
together exhaust the template: (i)~$j=1$, every width $\omega>0$
(including the solid-pivot regime $\omega\ge1$: the proof uses no width)
and every occupancy; (ii)~$j=2$, likewise every $\omega>0$ and occupancy;
(iii)~the evacuation branch $\sigma_1+M>1$, every $j\ge2$ and every
$\omega\in(0,1)$; (iv)~$j\ge3$, every $\omega\in(0,1)$. (Note $j\ge1$
always: the carrier $y$ has $y\ge1+\omega>1$, so $y$ is an occupant or
lies inside one.) The infimum is exactly $\varphi$, realized by the
golden family of Theorem~\ref{thm:golden} inside case (i).
\end{theorem}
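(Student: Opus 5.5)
The proof will be a case-by-case combination of the walls of the pan template with tail bounds, mirroring the leaf argument of Proposition~\ref{prop:DPsij}. The walls are (D) $S_0>1$, (Ry), and the hole tariffs of Lemma~\ref{lem:DR} at every node. The tails are those of $m$, of the occupants and of the leaves. We do not need $H_m$ in cases (i)--(ii), so those cases cover every $\omega>0$. The tail of $m$ always gives $\rho\ge S_0+M+(\text{mass}<1)>1$. The golden value enters through the carrier $y$. The witness keeps $m$ in $y$'s hole, so $y\ge1+\omega+X_y^{\mathrm{rest}}>1$. Hence the tail of $y$ (or of an occupant $o\ge y$ containing it) collects $m$, $\sigma_1$, $\sigma_2$ and $X_y^{\mathrm{rest}}$.

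The key two-tail program is
\[
\rho\ge\max\Bigl(S_0,\ \frac{1+S_0+X}{o}\Bigr),
\]
combined with an upper bound on $o$. For case (i), $j=1$, the carrier $y$ is $o_1$ or lies inside it. The pair necessity $o_1+1\le R$ is available, but no geometry is needed. The first step is to bound $y$ from above using (Ry): $y<S_0+\omega+X_y^{\mathrm{rest}}$. I would try to get rid of $\omega$ using the witness inequality $y\ge1+\omega+X_y^{\mathrm{rest}}$ only in the direction that helps. Then $\rho y\ge1+S_0+X_y^{\mathrm{rest}}$ together with $\rho\ge S_0$ gives $\rho\ge\max\bigl(s,(1+s)/s\bigr)$ with $s=S_0$, whose minimum is exactly $\varphi$, at $s^2=s+1$. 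Strictness comes from the strict wall (D) or the strict (Ry). If $y$ is nested deeper inside $o_1$, the same bound applies to the largest node on the path, since tails only grow.

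Case (ii), $j=2$, follows the same computation. The extra occupant only adds mass $\ge1$ to the tail of the larger occupant. Suppose instead that the second occupant exceeds the first contribution. Then the pair necessity $o_1+o_2\le R$ is irrelevant, and the tail of $o_1$ gains $o_2\ge1$, giving $\rho o_1\ge2+S_0+\dots$, which dominates. Case (iii), the evacuation branch with $\sigma_1+M>1$, uses the branch-B computation of Theorem~\ref{thm:DBpp} with the $j$-bonus. Taking $L$ the largest strict leaf (at least $j-1\ge1$ exist), we get $\rho L\ge(j-1)+1+\sigma_1+M+\sigma_2+X_L$ and $L<\sigma_2+\omega+X_L$. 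This is the ladder recurrence $u^2-(2-\omega)u-j'=0$, whose root exceeds $\varphi$ for $j'\ge1$, $\omega<1$. Case (iv), $j\ge3$, uses the leaf argument directly: with at least two strict leaves, $\Psi_{j-1}(\omega)\ge\Psi_2(1)=\sqrt2$. This is too small, so I would instead combine the carrier tail with the leaf tail. I expect the largest strict leaf's tail to contain $m$, $S_0$ and two further leaves, giving $\rho\ge(3+S_0)/(S_0+\omega)$ against $\rho\ge S_0$. This root exceeds $\varphi$ for $\omega<1$.

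The main obstacle I anticipate is exhaustiveness and the order issue in case (ii). When $y$ is not the largest occupant, or is a deep node whose tail does not contain the other occupant, the simple program may lose the $+1$. I would handle this by always taking the tail of the largest ring among $\{o_1,y\}$ and splitting on whether $y$ is a leaf. Finally, the infimum statement follows from Theorem~\ref{thm:golden}: its execution is a pan exchange with $j=1$ (occupant $\varphi$, carrier $y=\varphi$), and $\rho=\varphi+3\varepsilon\downarrow\varphi$.
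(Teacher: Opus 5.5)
\textbf{Case (i) has a genuine gap.} The claim that ``no geometry is needed'' is exactly where it breaks.
\begin{itemize}
\item Wall (Ry) only gives $y<u+\omega$, with $u=S_0+X_y^{\mathrm{rest}}$.
\item The witness inequality $y\ge1+\omega+X_y^{\mathrm{rest}}$ is a \emph{lower} bound on $y$. It cannot remove the $\omega$ from the tail estimate $\rho\ge(1+u)/y$.
\item What your walls actually yield is $\rho>\max\bigl(u,(1+u)/(u+\omega)\bigr)$. Its minimum is the positive root of $u^2-(1-\omega)u-1=0$, which is strictly below $\varphi$ for every $\omega>0$ (about $1.28$ at $\omega=\tfrac12$). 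It tends to $1$ as $\omega\to1$, so this route could never reach the solid-pivot regime the statement includes.
\end{itemize}
A concrete check: take $j=1$, $y=o_1=1.75$, $\omega=\tfrac12$, $\sigma_1=0.66$, $\sigma_2=0.64$, and empty $X_y^{\mathrm{rest}}$. This satisfies (D), (Ry), and the $H_m$ and nesting walls, and has $\rho=2.3/1.75\approx1.31$. The exchange is unblocked only because $\sigma_2$ fits the Descartes pocket $b(1.75)\approx0.83$ beside the rigid pair $\{o_1,m\}$ in the pan.

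The missing idea is this pan-repack wall. The placement ``$\sigma_i$ to the pan, the other to $D_m$'' must fail for both $i$. By antitone containment into the disk of radius $o_1+1$ and the exact necessity in Proposition~\ref{prop:S5}, this forces $\sigma_1,\sigma_2>b(o_1)$. Then $\rho>\max\bigl(2b(o_1),(1+2b(o_1))/o_1\bigr)$. The crossing $2b(A)A=1+2b(A)$ factors as $(A^2-A-1)(2A+1)$, giving exactly $\varphi$ at $A=\varphi$, with no width anywhere. Consistently, in the golden family (Ry) has slack about $\omega$, while the tight wall is the pocket $b(\varphi)=\varphi/2$.

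\textbf{Case (ii) inherits the same defect.} With $y=o_2$ your program is that of (i). With $y=o_1$ it becomes $\max\bigl(u,(2+u)/(u+\omega)\bigr)$, which drops below $\varphi$ once $\omega>1/\varphi$. What closes it is again geometric: the quadruple $\{o_1,o_2,m,\sigma_2\}$ does not repack, forcing $b_2(o_1,o_2)<1$ (the mirror-pocket wall). The tails $\rho>\max\bigl((o_2+2)/o_1,\,2/o_2\bigr)$ are then bounded below by $\varphi$ through the corner $b_2(2,\sqrt5-1)=1$.

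\textbf{Case (iii)} is the branch-B argument and is fine. One strict leaf suffices; the $(j-1)$ extra leaves you put in the largest leaf's tail overcount by one, harmlessly.

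\textbf{Case (iv)} rests on an unproved expectation: for $j=3$ with $y$ itself a leaf, only two strict leaves exist. It is repairable without the pincer.
\begin{itemize}
\item Let $Z$ be the largest of the two strict leaves and $y$.
\item The tail of $Z$ contains the other two leaves, $m$, $S_0$ and all sub-unit mass $W$.
\item (Bo) or (Ry) at $Z$ gives $Z<S_0+\omega+W$.
\item With $u=S_0+W$, this yields $\rho>\max\bigl(u,(3+u)/(u+\omega)\bigr)\ge\sqrt3>\varphi$ for $\omega\le1$.
\end{itemize}
As written, however, this step is missing.
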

\begin{proof}
\emph{$j=1$.} The placement ``$\sigma_i\to$ pan, the other to $D_m$''
fails, so $\{o_1,m,\sigma_i\}$ does not pack in $R$, hence (antitone
containment, $R\ge o_1+1$) not in the disk $o_1+1$, where
$\{o_1,1\}$ is rigid: $\sigma_i>b(o_1)$ by
Proposition~\ref{prop:S5}. The tails give
$\rho>\max\bigl(2b(o_1),\,(1+2b(o_1))/o_1\bigr)$; $b$ is increasing,
$g(A)=(1+2b)/A$ (notation local to this proof) decreasing (the certificate
$(3A^2+3A+1)(A^2+A+1)-2A(2A+1)$ has positive coefficients), and the
crossing $2b(A)A=1+2b(A)$ factors as $(A^2-A-1)(2A+1)$: unique positive
root $A=\varphi$, value $\varphi$ ($2b(\varphi)=\varphi$,
$g(\varphi)=(1+\varphi)/\varphi=\varphi$).

\emph{$j=2$: the mirror-pocket wall.} The same placement fails, so
$\{o_1,o_2,m,\sigma_2\}$ does not pack in $R$, hence not in the disk
$o_1+o_2$, where $\{o_1,o_2\}$ is rigid and the quadruple packs
\emph{iff} $m\le b_2(o_1,o_2)$ and $\sigma_2\le b_2$ (necessity by
Proposition~\ref{prop:S5} per circle; sufficiency by placing them
concentric in the two mirror pockets, disjoint since $y_0=2b_2$, as in
Lemma~\ref{lem:DG}). Thus $b_2(o_1,o_2)<1$, i.e.\ $o_2<\bar A(o_1)$,
the decreasing root of $b_2(o_1,\cdot)=1$ with $\bar A(2)=\sqrt5-1$ and
$\bar A(\tfrac32)=\tfrac32$. The tails of $o_1$ and $o_2$, with (D)
$\sigma_1+\sigma_2>1$ (row in $D_m$), give
$\rho>\max\bigl((o_2+2)/o_1,\ 2/o_2\bigr)$ on $o_2\in(1,\bar A(o_1))$:
for $o_1\le\tfrac32$, $3/o_1\ge2$; for $o_1\in(\tfrac32,2)$ the
crossing $o_2^\ast=\sqrt{1+2o_1}-1$ is interior
($b_2(o_1,o_2^\ast)<1$ there, with exact equality at $o_1=2$) and its
value $2/o_2^\ast>2/(\sqrt5-1)=\varphi$; for $o_1\ge2$,
$2/o_2>2/\bar A(2)=\varphi$. The crossing at $o_1=2$ is the golden
corner $b_2(2,\sqrt5-1)=1$ once more.

\emph{Evacuation branch, $j\ge2$.} A strict leaf $L'$ with
$q:=\sigma_2+X_{L'}>1-\omega$ satisfies (Lemma~\ref{lem:DR})
$L'<q+\omega$, and its tail collects $\{m,\sigma_1,\sigma_2,M\}\cup
\mathrm{children}(L')$: with $\sigma_1+M>1$, exactly the branch-B
program of Theorem~\ref{thm:DBpp}, so $\rho>\Psi_B(\omega)$, and
$\Psi_B(1)=\varphi$ \emph{exactly} with $\Psi_B$ decreasing:
$\rho>\varphi$ for all $\omega<1$.

\emph{$j\ge3$, remaining branch.} The strict-leaf ladder of
Proposition~\ref{prop:DPsij} needs only $k$ \emph{distinct} strict
leaves, each a node with the wall (Bo) at it (not one per occupant):
it gives $\rho>\Psi_k(\omega)$, and here $k\ge j-1$ always ($k\ge j$ if
$y$ is not a leaf). The golden
crossings are exact: $\Psi_1(\tfrac12)=\varphi$,
$\Psi_2(\varphi/2)=\varphi$, $\Psi_3(1)=\sqrt3>\varphi$ -- covering
$j\ge4$ outright and $j=3$ up to $\varphi/2$ (always if $y$ is not a
leaf). The rest of $j=3$ is a case tree: $o_2<3/\varphi$ (tail of
$o_2$ exceeds $3/o_2$), or $o_1<3$ (tail of $o_1$ exceeds
$(3/\varphi+3)/o_1$, and $(3/\varphi+3)/\varphi=3$ exactly), or
$o_1\ge3$ with the node/dust dichotomy on $o_1$'s hole content: a node
makes $y$'s subtree or $o_1$'s contribute a third strict leaf
($\Psi_3$), dust lands in $m$'s tail. In detail, for $o_1\ge3$: if
$y=o_1$, (Ry) gives $X_1\ge o_1-\omega-S_0>0$ (as $S_0<2\le o_1-\omega$);
a node inside makes $y$ a non-leaf, whence $k=3$ and $\Psi_3$ applies,
while pure dust lives in $m$'s tail and
$\rho\ge S_0+X_1\ge o_1-\omega\ge2$. If $y$ lies in another
occupant's subtree and is not a leaf, its subtree supplies the third
strict leaf ($k=3$); if $y$ is a leaf and $\omega<\varphi/2$,
$\Psi_2>\varphi$; and if $y$ is a leaf, $\omega\ge\varphi/2$ and
$\sigma_2>\omega$, then
$\rho\ge\sigma_1+\sigma_2>2\omega\ge\varphi$. If $y$ lies
\emph{strictly inside} $o_1$'s subtree and is a leaf, two sub-cases
close it: if some node of that subtree has two node children, one of
its two disjoint subtrees avoids $y$ and contributes a strict leaf,
and $o_2$'s and $o_3$'s subtrees (neither containing $y$) contribute
the other two: $\Psi_3$. Otherwise the subtree is a chain
$o_1\supset v_1\supset\dots\supset y$ and the pincer below runs with
$V$ defined as there: if $v^{*}\ne y$ the argument is verbatim ---
(Bo) holds at every node of the chain other than $y$, and $m$, now
inside $v^{*}$'s subtree (it sits in $y$'s hole), is counted through
$T_{v^{*}}\ge X_{v^{*}}+1$ instead of externally, which reassembles
(C1) unchanged; if $v^{*}=y$, the tail of $y$ contains
$o_2,o_3,m,\sigma_1,\sigma_2$, so
$\varphi y>o_2+3\ge3/\varphi+3=3\varphi$ gives $y>3$, while (Ry) plus
the tail of $m$ give
$y-\omega<\sigma_1+\sigma_2+X_y^{\mathrm{rest}}\le\rho\le\varphi$, so
$y<\varphi+\omega<\varphi^2<3$: contradiction. What survives is
$y$ a leaf outside $o_1$'s subtree, closed by the pincer below.

\emph{The pincer (closes $y$ a leaf outside $o_1$'s subtree; the
interior-chain case above reuses it as noted).} Here
$o_2\ge3/\varphi$, and no width, solid-disk or $o_1$ hypothesis is
needed: it suffices that $s:=\sigma_2+\omega<2$, which holds since
$\sigma_2\le1$ (the profile consists of rings smaller than $m$) and
$\omega<1$. Suppose $\rho\le\varphi$. Write $D$ for the total radius of
the rings $<1$ \emph{other than} $\sigma_1,\sigma_2$, and for a node $v$
let $X_v$ be its hole content and $T_v\ge X_v$ its whole subtree.
(a)~The tail of $m$ collects $\sigma_1,\sigma_2$ and all of $D$, whence
$\rho>1+D$ and so $D<\varphi-1$. (b)~If some node of $\{o_1\}\cup$
$o_1$'s subtree had two node children, their subtrees are disjoint and
each contains a leaf, and the occupant of $\{o_2,o_3\}$ not containing
$y$ contributes a third; all three are strict (their holes are not
$y$'s, as $y$ lies outside $o_1$'s subtree), so
$\rho>\Psi_3(\omega)>\Psi_3(1)=\sqrt3>\varphi$ in branch~A
(branch~B is case (iii)). (c)~Hence $\{o_1\}\cup$ $o_1$'s subtree is a
chain. Let $V$ be the set of its nodes whose tail contains both $o_2$
and $o_3$ (so $o_1\in V$, by the first-copy convention even if
$o_1=o_2$), and let $v^{*}=\min V$. The pieces $o_2,o_3,m,\sigma_1,
\sigma_2$ lie outside $v^{*}$'s subtree --- $o_2,o_3$ are top-level
occupants, $m$ sits in $y$'s hole, and $P$ keeps the profile in the pan
--- so the tail of $v^{*}$ counts them and $T_{v^{*}}$ without
repetition. Now (Bo) applies at every node of $o_1$'s subtree (its hole
is not $y$'s, and $\sigma_1\le1$), giving $X_{v^{*}}>v^{*}-s$, so
\[
o_2+3+(v^{*}-s)<\varphi\,v^{*}
\;\Longrightarrow\;
o_2<(\varphi-1)v^{*}-3+s,
\tag{C1}
\]
using $o_3>1$ and $\sigma_1+\sigma_2>1$. With $o_2\ge3/\varphi$ and
$3/\varphi+3=3\varphi$ this forces $v^{*}>\varphi(3\varphi-s)$~(C2).
Moreover $v^{*}$ has a node child: otherwise $X_{v^{*}}\le D<\varphi-1$
and (Bo) would give $v^{*}<s+\varphi-1$, against (C2) whenever
$s\le(2\varphi+4)/\varphi^2=2.7639\dots$; and that child is $\le o_2$ by
minimality of $v^{*}$, so $X_{v^{*}}<o_2+(\varphi-1)$ and (Bo) gives
$o_2>v^{*}-s-(\varphi-1)$~(C3). Combining (C1) and (C3) with
$1/(2-\varphi)=\varphi^2$,
\[
v^{*}(2-\varphi)<2s+\varphi-4
\;\Longrightarrow\;
v^{*}<\varphi^{2}(2s+\varphi-4).
\tag{C4}
\]
(C2) and (C4) are incompatible exactly when
$6\varphi-1\ge s(2\varphi+1)$, that is when
\[
s\;\le\;\frac{6\varphi-1}{2\varphi+1}\;=\;11-4\sqrt5\;=\;15-8\varphi
\;=\;2.05572\dots,
\]
and $s<2$ here. Contradiction; so $\rho>\varphi$. At the extreme $s\to2$
the two bounds are $\varphi+3=4.618\dots$ and
$\varphi^3=2\varphi+1=4.236\dots$, a margin of $2-\varphi$. The
constant is sharp for the argument: for $s>11-4\sqrt5$ the pincer no
longer closes, which is why it says nothing about a solid pivot.
The adversarial reports (three rounds, including two repaired steps and
a hostile audit of this pincer) and the verification scripts are in the
repository (\texttt{code/batalla2.py}, 6/6;
\texttt{code/microcelda.py}, 5/5).
\end{proof}

For larger profiles the walls depend on how the rest of the profile is
stashed; write $W:=\sum_{i\ge3}\sigma_i$, $\Sigma:=\sigma_1+\sigma_2+W$,
$M:=\sum\mathrm{children}(m)$ and $X_{\sigma_1}$ for the prior content
of $\sigma_1$'s hole.

\begin{theorem}[Golden floor, pan exchange, larger profiles]\label{thm:DPp}
Let the witness keep $p\ge3$ rings $\sigma_1\ge\dots\ge\sigma_p$
(all $<1$) in the pan. Blocking implies $\rho>\varphi$ in each of:
(i)~\emph{light}, $\sigma_1+W\le1$; (ii)~\emph{nested},
$W\le\sigma_1-\omega-X_{\sigma_1}$ (each with the $\omega$ domains of
Theorem~\ref{thm:DP}); (iii)~\emph{heavy large}, $\sigma_1+W>1$ and
$\sigma_2>\varphi-1$, every $j$ and every $\omega>0$;
(iv)~\emph{heavy $\Psi_B$}, $\sigma_1+W>1$, $\sigma_1+M>1$ and some
strict leaf exists (in particular whenever $j\ge2$), $\omega\in(0,1)$;
(v)~\emph{heavy mirrors}: for $p=3$, $j=1$, $\sigma_2\le\varphi-1$,
blocking is \emph{impossible} -- with (iii), $p=3$ profiles with one
occupant are closed at every $\omega>0$; (vi)~\emph{heavy swap}, $p=3$,
$j=2$, $\sigma_1+M\le1$, $\sigma_2\le\varphi-1$, except the sliver
$\{\sigma_2,\sigma_3>1-\omega\}$ (which forces $\omega>2-\varphi$).
Regions outside these particular constructions are discussed in
Appendix~\ref{app:campaign}; the uniform theorem does not rely on them.
\end{theorem}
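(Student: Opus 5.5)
The plan is to treat each of the six cases as a reduction to a program already solved in the proof of Theorem~\ref{thm:DP}. The surplus $W=\sum_{i\ge3}\sigma_i$ is either absorbed into a resource that the pair argument already uses, or it is charged to a tail. Throughout, the pan-exchange template and its walls from Appendix~\ref{app:pan-app} stay in force: the row wall (D), the hole wall (Ry) in its tariffed form via Lemma~\ref{lem:DR}, the opposite-disk tariffs at nodes (Lemma~\ref{lem:DBo}), and the rigid-pocket consequences of Proposition~\ref{prop:S5}.

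Cases (i) and (ii) are pure reductions. In (i), $\sigma_1+W\le1$ means $\{\sigma_1,\sigma_3,\dots,\sigma_p\}$ rows inside $D_m$ by Lemma~\ref{lem:row}. Every unblocking placement used in Theorem~\ref{thm:DP} that sends $\sigma_1$ to $D_m$ therefore still works with the cargo $W$ beside it. Placements that send $\sigma_1$ elsewhere, such as the pan corona in the $j=1$ argument, are handled by letting the small pieces ride in $D_m$ alone, since $W\le1-\sigma_1<1$. Every wall of the pair proof then persists, the tails only grow, and the pair bound $\rho>\varphi$ is inherited with the same $\omega$ domains. In (ii), the pieces $\sigma_3,\dots,\sigma_p$ nest in a row inside the hole of $\sigma_1$ beside $X_{\sigma_1}$ by Lemma~\ref{lem:DR}, exactly as in Proposition~\ref{prop:DT3j}, and the pair proof is again inherited verbatim.

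Cases (iii) and (iv) are tail estimates. In (iii), the tail of $m$ contains the whole profile, so
\[
\rho\ge\sigma_1+W+\sigma_2>1+(\varphi-1)=\varphi,
\]
which uses no width and no occupant hypothesis. In (iv), a strict leaf $L'$ together with $q=\sigma_2+X_{L'}$ satisfies $L'<q+\omega$. With $\sigma_1+M>1$ this is exactly the branch-B program of Theorem~\ref{thm:DBpp}, now with additional mass $W$ in both tails, so $\rho>\Psi_B(\omega)>\Psi_B(1)=\varphi$ for $\omega<1$.

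Cases (v) and (vi) carry the geometry, and (v) is the main obstacle. Here $p=3$, $j=1$, and $\sigma_3\le\sigma_2\le\varphi-1$; by (iii) we may also assume $\sigma_1+\sigma_3>1$. I would show that some explicit placement always succeeds, so that blocking is impossible. The candidate placement sends $\sigma_2$ and $\sigma_3$ into the pan beside the rigid pair $\{o_1,m\}$, using the two mirror pockets of the diametral configuration in the disk of radius $o_1+1$, whose centers are $4b$ apart as in Lemma~\ref{lem:DG}, and sends $\sigma_1$ to $D_m$. This requires $\sigma_2\le b(o_1)$. The complementary regime $\sigma_2>b(o_1)$ should be excluded by the tails: we have $\rho\ge\sigma_1+\sigma_2+\sigma_3>1+b(o_1)$ and $\rho o_1\ge1+\sigma_1+\sigma_2+\sigma_3$. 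Using $b(o_1)\le\sigma_2\le\varphi-1$ to bound $o_1$, and comparing with the $j=1$ crossing at $A=\varphi$, I expect these to give $\rho>\varphi$ whenever $o_1\ge\varphi$. For smaller $o_1$ the second tail alone should suffice.

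The hard part is the intermediate regime, where one pocket fits $\sigma_2$ but the other pocket and $D_m$ cannot jointly host $\sigma_1$ and $\sigma_3$. There I would use the swap placement: $\sigma_1$ goes to the pan pocket and $\{\sigma_2,\sigma_3\}$ go to $D_m$ or $H_m$, with $\sigma_2+\sigma_3\le2(\varphi-1)<1$. Ruling out every residual subcase will need a careful case tree built on the exact pocket identities.

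Case (vi) follows the same template with the $j=2$ mirror-pocket wall of Theorem~\ref{thm:DP}, where the pair $\{o_1,o_2\}$ is diametrally rigid and $\sigma_1+M\le1$ allows evacuation to $D_m$. The swap placement there sends $\sigma_2$ and $\sigma_3$ to the two mirror pockets. The only configurations that escape are those in which both $\sigma_2$ and $\sigma_3$ exceed the $H_m$ capacity $1-\omega$, which gives the stated sliver. Since $\sigma_2\le\varphi-1$, that sliver forces $1-\omega<\varphi-1$, that is, $\omega>2-\varphi$.
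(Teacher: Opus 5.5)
\paragraph{Review.} Your cases (i)--(iii) match the paper's argument. Case (v), which you single out as the main obstacle, is left unproved, and the step that closes it is a single observation. The pocket $b(o_1)=o_1(o_1+1)/(o_1^2+o_1+1)$ of the rigid pair $\{o_1,m\}$ is increasing in $o_1$, and every occupant exceeds $1$. Hence $b(o_1)>b(1)=\tfrac23>\varphi-1\ge\sigma_2\ge\sigma_3$. So your candidate placement always succeeds: $\sigma_2,\sigma_3$ go concentrically into the two mirror pockets of the disk of radius $o_1+1\le R$, which are disjoint by Lemma~\ref{lem:DG}, and $\sigma_1\le1$ goes into $D_m$. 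The ``complementary regime'' $\sigma_2>b(o_1)$ is empty, so no tail estimate is needed there. Your ``intermediate regime'' cannot occur either, because both pockets have the same radius and $\sigma_3\le\sigma_2$. As written, your (v) rests on expectations (``should be excluded'', ``will need a careful case tree'') about regimes that never arise, and it never reaches a conclusion.

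Case (vi) has a similar defect. In the pan exchange $m$ must return to the pan, so after containment to the disk $o_1+o_2$ one of the two mirror pockets is taken by $m$. The swap that sends both $\sigma_2$ and $\sigma_3$ to the pockets is therefore unavailable. The working placement rows $\sigma_1$ together with $M$ in $D_m$ (legal because $\sigma_1+M\le1$), sends one of $\sigma_2,\sigma_3$ into the emptied $H_m$, and puts the other into the pan; a second placement exchanges the roles of $\sigma_2$ and $\sigma_3$. Outside the sliver $\sigma_3\le1-\omega$, so blocking forces $\{o_1,o_2,m,\sigma_2\}$ not to repack. Antitone containment and Proposition~\ref{prop:S5} then give $b_2(o_1,o_2)<1$. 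You still have to rerun the $j=2$ tail optimization of Theorem~\ref{thm:DP}; it goes through because the heavy hypothesis gives $1+\Sigma>2$. Your derivation that the sliver forces $\omega>2-\varphi$ is fine.

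A smaller repair is needed in (iv). Since $\sigma_1+W>1$, you cannot row $W$ beside $\sigma_1$ in $D_m$, so your wall $L'<\sigma_2+X_{L'}+\omega$ is not justified. The wall must come from rowing all of $\sigma_2,\dots,\sigma_p$ into the leaf's hole, which gives $L'<\sigma_2+W+\omega+X_{L'}$. The branch-B program then runs in the variable $\tilde q=\sigma_2+W+X_{L'}$.
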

\begin{proof}
(i) Every placement of the proof of Theorem~\ref{thm:DP} sends one of
$\sigma_1,\sigma_2$ to a target and the other to a row in $D_m$; adding
$W$ to that row is legal ($\sigma_j+W\le\sigma_1+W\le1$), so every wall
-- $(D_p)$: $\Sigma>1$; (G) with either $\sigma_i$; (Ry); (Bo); the
evacuation dichotomy, now $\sigma_2>1-\omega$ \emph{or}
$\sigma_1+M+W>1$ -- ports verbatim, and every tail only grows by $W$.
In branch B the bookkeeping is
$A=\sigma_1+\sigma_2+W+M+X=(\sigma_1+M+W)+(\sigma_2+X)>1+q$ with
$q:=\sigma_2+X$: the same $\Psi_B$ program. The pincer is unchanged
($s=\sigma_2+\omega<2$).
(ii) By Lemma~\ref{lem:DR}, $W$ rows inside $\sigma_1$'s hole beside
$X_{\sigma_1}$, and $\{\sigma_1$ carrying $W\}$ is a single piece of
radius $\sigma_1\le1$ \emph{wherever} $\sigma_1$ goes -- to $D_m$ in
(G$_{\sigma_2}$), (Bo) and the evacuation, to $y$'s hole in (Ry), to
the pan in (G$_{\sigma_1}$): Theorem~\ref{thm:DP} ports again.
(iii) The tail of $m$ collects the whole profile:
$\rho\ge\Sigma=\sigma_2+(\sigma_1+W)>(\varphi-1)+1=\varphi$.
(iv) The row $\{\sigma_2,\dots,\sigma_p\}$ into the strict leaf's hole
($\sigma_1\le1$ to $D_m$) gives the wall
$L'<(\sigma_2+W)+\omega+X_{L'}$; with
$\tilde q:=\sigma_2+W+X_{L'}>1-\omega$ (from $L'\ge1$) and
$A:=(\sigma_1+M)+\tilde q>1+\tilde q$, the two tails give $\rho\ge A$
and $\rho>(2+\tilde q)/(\tilde q+\omega)$: the branch-B program,
$\rho>\Psi_B(\omega)>\Psi_B(1)=\varphi$.
(v) $\sigma_2,\sigma_3\le\varphi-1<\tfrac23=b(1)\le b(o_1)$ fit
concentrically in the two mirror pockets of $\{o_1,m\}$ in the disk
$o_1+1\le R$ (disjoint, Lemma~\ref{lem:DG}), and $\sigma_1\le1$ rows in
$D_m$: the exchange is never blocked there.
(vi) The placement ``$\sigma_2\to H_m$, $\sigma_1{+}M\to D_m$ (row
$\le1$ by hypothesis), $\sigma_3\to$ pan'' and its
$\sigma_2\leftrightarrow\sigma_3$ mirror force either
$\{o_1,o_2,m,\sigma_3\}$ not to repack -- by containment to the disk
$o_1+o_2$ and Proposition~\ref{prop:S5}, $b_2(o_1,o_2)<1$, the
mirror-pocket wall of case (ii) of Theorem~\ref{thm:DP}, whose
optimization ports with fatter tails ($1+\Sigma>2+\sigma_2$; the mixed
case, $\{o_1,o_2,m,\sigma_3\}$ repacking while $\{o_1,o_2,m,\sigma_2\}$
does not, lands on the same wall, since $\sigma_2\le1$ forces
$b_2(o_1,o_2)<1$) -- or
$\sigma_2,\sigma_3>1-\omega$, and then
$\rho\ge\Sigma>1+\sigma_2>2-\omega$ closes $\omega<2-\varphi$. (Only
$j=2$: non-packability is not inherited by subsets, so the containment
step fails for $j\ge3$ -- the same reason case (ii) of
Theorem~\ref{thm:DP} is a pair statement.) The case tree, the two
coverage cells missed by a first version and repaired, and the
verification script are in the repository
(\texttt{docs/drafts/perfilp.md}; \texttt{code/perfilp.py}, 5/5;
adversarially verified).
\end{proof}

The remaining heavy region shrinks further. The tool is a
\emph{mural-placement criterion} corrected for the pentagram
phenomenon: circles $a_0,\dots,a_{k+1}$ tangent to the wall of a disk
$\bar R$, with $a_0,a_{k+1}$ diametral (so their own separation is
exactly $\pi$ on the far side), can be placed with \emph{all} pairs
disjoint if and only if, in the chosen order, the longest path
\[
\max_{S\subseteq\{1,\dots,k\}}\quad
\sum_{\text{consecutive in }a_0,S,a_{k+1}}\theta(a_i,a_j)
\]
is at most $\pi$ (difference-constraint scheduling
over Lemma~\ref{lem:S1}; summing adjacent arcs alone is the criterion
that the pentagram refutes).

\begin{theorem}[The heavy region closes except one cell]\label{thm:DPr}
In the heavy non-nestable region of Theorem~\ref{thm:DPp}
($\sigma_1+W>1$, $W+X_{\sigma_1}>\sigma_1-\omega$,
$\sigma_2\le\varphi-1$, $\sigma_1+M\le1$ where applicable), blocking
implies $\rho>\varphi$ in each of: (i)~$p=3$, $j\ge3$: the $j=3$ case
tree of Theorem~\ref{thm:DP} ports with the row wall
$s':=\sigma_2+\sigma_3+\omega$ and the profile mass $\Sigma$ in every
tail, and the pincer closes whenever
$s'\le(\varphi-1)\Sigma+(16-9\varphi)$ -- which holds on the whole
cell: $\sup(s'-(\varphi-1)\Sigma)=5\varphi-7$, a margin of exactly
$23-14\varphi>0$, and at $\Sigma=1$ the frontier degenerates to the
$11-4\sqrt5$ of Theorem~\ref{thm:DP}(iv); (ii)~$p=3$, $j=2$: three
branches -- $\sigma_2+\sigma_3\le1$ resurrects the mirror wall of
Theorem~\ref{thm:DP}(ii); $\sigma_2+\sigma_3>1$, $\omega\le\tfrac12$
runs a $\Psi$-program with interior crossing
$\sqrt{1+(1-\omega)^2}$ and $\Psi(\tfrac12)=\varphi$ exactly; and for
$\omega>\tfrac12$ the cell is \emph{empty of blockings}: the tails of
$\rho\le\varphi$ force $o_2\ge(1+\Sigma)/\varphi$,
$o_1\ge(o_2+1+\Sigma)/\varphi$, and there the mural placement of
$\{o_2,m,\sigma_2,\sigma_3,o_1\}$ in the disk $o_1+o_2$ always fits
(longest path $\le2.648<\pi$, computer-assisted maximization);
(iii)~$p\ge4$, $j=1$: mural coronas with margins $\ge0.54$
(computer-assisted); (iv)~$p\ge4$, $j=2$: the supremum of the longest
path over the closed domain is exactly $\pi$, attained only at the
excluded boundary corner $\{\sigma_1=1,\,W=0\}$, where $o_2=2/\varphi$,
$o_1=2$, $\bar R=2\varphi$ and
$\sin^2\tfrac{\theta(o_2,m)}2=\tfrac12-\tfrac{\sqrt5}{10}$,
$\sin^2\tfrac{\theta(m,o_1)}2=\tfrac12+\tfrac{\sqrt5}{10}$ sum to $1$
exactly; the open domain stays strictly below $\pi$. The cell not
covered by these particular constructions is
\[
\{\,p\ge4,\ \sigma_1+M\le1,\ j\ge3\,\}
\]
(the containment to the disk $o_1+o_2$ does not re-place $o_3$, and
the pan-level cyclic corona is outside this construction), together with extra small
rings and the solid-pivot regime $\omega\ge1$ for $j\ge3$.
\end{theorem}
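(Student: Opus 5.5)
The proof handles the four cases of Theorem~\ref{thm:DPr} separately. In each, I assume $\rho\le\varphi$ and derive either a contradiction or an explicit unblocking placement. Throughout, I use the tails of $m$, $o_1$, $o_2$ (which collect the profile mass $\Sigma$) to pin the occupant radii from below.

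\emph{Case (i), $p=3$, $j\ge3$.} I would rerun the $j=3$ case tree of Theorem~\ref{thm:DP} verbatim, with three replacements:
\begin{itemize}
\item the leaf wall (Bo) uses the row $\{\sigma_2,\sigma_3\}$ placed in the strict leaf's hole (Lemma~\ref{lem:DR}), so $s=\sigma_2+\omega$ becomes $s'=\sigma_2+\sigma_3+\omega$;
\item the constant $3$ in (C1) becomes $2+\Sigma$;
\item the bound $o_2\ge 3/\varphi$ becomes $o_2\ge(1+\Sigma)/\varphi$.
\end{itemize}
Redoing the algebra (C1)--(C4) should give incompatibility exactly when $s'\le(\varphi-1)\Sigma+(16-9\varphi)$. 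I would then maximize $s'-(\varphi-1)\Sigma$ over the heavy cell. The constraints there are $\sigma_2\le\varphi-1$, $\sigma_3\le\sigma_2$, $\omega<1$ and $\Sigma>1$. This maximization is linear and ends at a vertex, and it should return $5\varphi-7$. The margin $23-14\varphi>0$ is then an exact $\mathbb{Q}[\sqrt5]$ inequality.

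\emph{Case (ii), $p=3$, $j=2$.} This splits into three branches.
\begin{itemize}
\item If $\sigma_2+\sigma_3\le1$, the pair $\{\sigma_2,\sigma_3\}$ rows in $D_m$. The placement failure then forces $\{o_1,o_2,m,\sigma_1\}$ not to repack, and the mirror-pocket optimization of Theorem~\ref{thm:DP}(ii) applies, with tails only fatter.
\item If $\sigma_2+\sigma_3>1$ and $\omega\le\tfrac12$, the leaf wall plus two tails gives a $\Psi$-type max-of-two program. Its crossing is $\sqrt{1+(1-\omega)^2}+(1-\omega)$, and this is $\ge\Psi(\tfrac12)=\varphi$.
\item If $\omega>\tfrac12$, the tails force $o_2\ge(1+\Sigma)/\varphi$ and $o_1\ge(o_2+1+\Sigma)/\varphi$. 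By containment to the disk $o_1+o_2$, where $\{o_1,o_2\}$ is rigid, it suffices to show that the mural longest path for $\{o_2,m,\sigma_2,\sigma_3,o_1\}$ is at most $\pi$. That gives an explicit unblocking repack, with $\sigma_1$ in $D_m$ since $\sigma_1+M\le1$.
\end{itemize}

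\emph{Cases (iii) and (iv), $p\ge4$.} I would use the same mural criterion. For $j=1$ the diametral pair is $\{o_1,m\}$ in the disk $o_1+1$. For $j=2$ it is $\{o_1,o_2\}$ with $m$ and the small pieces as muralists. I would bound each $\theta$ via Lemma~\ref{lem:S1} using monotonicity in the radii, reducing to the worst case in which the pieces are as large as allowed and the occupants are as small as the tails permit.

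For $j=2$ the supremum is attained at $\sigma_1=1$, $W=0$, $o_1=2$, $o_2=2/\varphi$, $\bar R=2\varphi$. There the two half-angle sines sum to $1$, which is exactly the golden $\pi$-trio identity. Strictness on the open domain would come from strict monotonicity away from that corner. Finally, I would state the uncovered cell, noting that containment to the disk $o_1+o_2$ loses $o_3$.

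\textbf{Main obstacle.} The longest-path maximizations in (ii) for $\omega>\tfrac12$, and in (iii)--(iv), are multi-parameter and not monotone in every variable. I would attempt them by box subdivision with monotone corner minorants, as in \texttt{goldencert}. For (iv), I would add a local expansion at the golden corner to certify strictness. I expect this to remain computer-assisted rather than closed-form.
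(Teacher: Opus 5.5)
Your decomposition is the paper's own, and the paper's proof is itself only a sketch: exact chains checked symbolically, plus computer-assisted sweeps for the mural longest-path maximizations. Cases (ii)--(iv) are set up the same way, and your box subdivision would if anything be more rigorous than the paper's grid-plus-refinement. The step that does not work as written is the pincer in case (i).

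\textbf{The pincer in case (i).} Your third replacement, $o_2\ge(1+\Sigma)/\varphi$, drops $o_3$ from the tail of $o_2$. In general, take $o_2\ge O$, tail constant $K$ in (C1), and dust bound $D<D_{\max}$. Then (C2) and (C4) are incompatible exactly when
$\varphi^3s'\le O+\varphi^2K-\varphi D_{\max}$.
Your data are $O=(1+\Sigma)/\varphi$, $K=2+\Sigma$, $D_{\max}=\varphi-1$. With these the condition is $s'\le(2-\varphi)(3+2\Sigma)$, not $(\varphi-1)\Sigma+(16-9\varphi)$.
\begin{itemize}
\item This line lies strictly below the stated one for every $\Sigma<2$.
\item At $\Sigma=1$ it equals $10-5\varphi\approx1.910$, not $11-4\sqrt5$.
\end{itemize}
So your algebra establishes neither the stated frontier nor its degeneration. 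The figures $5\varphi-7$ and $23-14\varphi$ you quote are not what your own computation produces. Carried out consistently, your weaker line still dominates the cell: the supremum of $s'-(4-2\varphi)\Sigma$ there is $3\varphi-4<6-3\varphi$, so the pincer itself closes. What fails is the quantitative claim in the statement.

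\textbf{The correct port.} Keep $o_2\ge3/\varphi$. The tail of $o_2$ contains $o_3>1$, $m$ and $\Sigma>1$, so the first branch of the case tree still disposes of $o_2<3/\varphi$. Use $K=2+\Sigma$ and $D<\varphi-1$. Then the right-hand side is
$3/\varphi+\varphi^2(2+\Sigma)-\varphi(\varphi-1)=5\varphi-2+\varphi^2\Sigma$.
Since $\varphi^3=2\varphi+1$ and $(16-9\varphi)(2\varphi+1)=5\varphi-2$, this is exactly $s'\le(\varphi-1)\Sigma+(16-9\varphi)$.

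If you put $\Sigma$ into every tail, i.e.\ $o_2\ge(2+\Sigma)/\varphi$ and $D\le\varphi-\Sigma$, you get $\varphi^3s'\le3\varphi-1+3\varphi\Sigma$. This agrees with the stated line at $\Sigma=1$ and lies above it for $\Sigma>1$, so the stated line is the conservative version.

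\textbf{The linear maximization.} List the ordering $\sigma_1\ge\sigma_2$ explicitly. It is the binding constraint at the supremum $\sigma_1=\sigma_2=\sigma_3=\varphi-1$, $\omega\to1$. Without it, the supremum of $s'-(\varphi-1)\Sigma$ would be $4\varphi-5>16-9\varphi$, and the pincer would not close.

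\textbf{The corner identity in (iv).} It is the \emph{squared} half-angle sines that sum to $1$. At $\bar R=2\varphi$,
$f(2/\varphi)f(1)+f(1)f(2)=(1/\varphi+\varphi)/\sqrt5=1$,
so $\theta(o_2,m)+\theta(m,o_1)=\pi$. This is a two-arc identity. It is not the golden $\pi$-trio identity $\theta(\varphi,1/\varphi)+\theta(1/\varphi,1)+\theta(1,\varphi)=\pi$, which concerns different radii.
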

\begin{proof}[Proof sketch and verification]
The chains and identities are exact (sympy); the two maximizations are
grid-plus-refinement sweeps over compact four-parameter domains with
the boundary behaviour controlled by the exact identities above
(script \texttt{code/rstar.py}, 6/6, and a hostile adversarial round
that refuted and repaired two steps: the naive adjacent-arc criterion
-- the pentagram -- and a $j\ge3$ corona that failed to re-place
$o_3$, whence the surviving cell; report in the repository).
\end{proof}

\section{The corona-versus-tails campaign: closure of the residual cells}
\label{app:campaign}

This appendix records the machinery and the results that close the
cells left open by Theorems~\ref{thm:DP}--\ref{thm:DPr} and by the
Status paragraph of Section~\ref{sec:generic}. Epistemic labels are
stated with each claim: \emph{theorem} means a written proof (with
exact identities delegated to \texttt{sympy}/Lean);
\emph{computational closure} means per-instance certificates over
swept parameter ranges, with the certificate soundness proved but the
extension to unswept $(j,p,k)$ ranges supported numerically.
Throughout, \emph{certified maximization} historically meant the
standard of Theorem~\ref{thm:DPr} -- directed grid-plus-refinement
with exact per-instance criteria, not interval arithmetic. Following
an external review, the \emph{central} such maximization (the corona
wall of cell C3.3, the supremum that closes the heaviest DPr regime)
has been upgraded to a genuine box-by-box certificate: an exact
corner majorant (each $f$-factor is coordinatewise monotone, and the
products against $f_{o_1}$ are capped by their monotone limits
$x/o_2$, so no sweep ceiling is needed) drives a branch-and-bound
that certifies $\sup g<\pi$ over the \emph{continuous} unbounded
domain in $71$ boxes plus two analytic tail regimes (script
\texttt{rstarcert}, 5/5); the remaining carriers of the historical
label have since been upgraded as well -- see item (v) of the
honest residue below, now closed throughout. \emph{Certified}
without qualification is
reserved for branch-and-bound bounds valid box by box. Every
script below is paired with an adversarial verification report in the
repository (\texttt{docs/drafts/VEREDICTOS.md}). Instance and family
totals attributed below to adversarial rounds (e.g.\ the
${\sim}45$k-family, $55$k-instance, $20\,000$-fuzz and
$1\,460$-test figures) are \emph{report figures} from those rounds;
a default run of the published scripts uses smaller budgets and
prints correspondingly smaller totals.

\paragraph{Small rings adjoin to the profile.}

\begin{corollary}[Pan exchange, extra small rings]\label{cor:DSpan}
In the pan-exchange template, every extra ring $e<r_m$ that $P$ keeps
in the pan adjoins the reinsertion profile: writing
$S^{+}:=S\cup\{\text{extras}\}$, re-indexed decreasingly, the enlarged
template \emph{is} the pan-exchange template with profile $S^{+}$, so
Theorem~\ref{thm:DP} (for $|S^{+}|=2$) and
Theorems~\ref{thm:DPp}--\ref{thm:DPr} (for $|S^{+}|\ge3$) apply to
$S^{+}$ with their stated case assignments and $\omega$ domains, and
the residual cell of the assignment over $S^{+}$ is again
$\{p\ge4,\ \sigma_1+M\le1,\ j\ge3\}$ with $p=|S^{+}|$ (closed, for
every small-ring count, by Theorem~\ref{thm:D1written}).
\end{corollary}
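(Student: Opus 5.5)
The plan is to show that adjoining the extras produces, literally, another instance of the pan-exchange template. Once that is established, no new wall has to be proved: the statement follows by citing the earlier theorems with $S^{+}$ in place of $S$. I will check the template data one at a time.

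\emph{Template data.} The pan exchange is specified by five things:
\begin{itemize}
\item $F$ places $m$ at top level;
\item $P$ keeps $m$ in the hole of a carrier $y$;
\item the occupants $O$ (rings $>1$) are shared by $F$ and $P$;
\item $P$ keeps a profile of rings $<1$ at top level in the pan;
\item the resources are $D_m$, $H_m$, the hole tariffs of Lemma~\ref{lem:DR}, nesting, and the pan repack.
\end{itemize}
An extra ring $e<r_m$ that $P$ keeps in the pan is smaller than $m$. By maximality of $m$ in the exchange step of Theorem~\ref{thm:oblivious}, $e$ is one of the rings that must be reinserted when $m$ moves. It is therefore indistinguishable, from the template's point of view, from a profile piece. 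The occupants, $y$, $X_y^{\mathrm{rest}}$, $M$ and the hole contents of nodes are untouched. Hence $S^{+}$, re-indexed decreasingly as $\sigma_1\ge\dots\ge\sigma_p$, is a legitimate profile.

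\emph{Hypotheses of the cited theorems.} Two things must hold for every piece of $S^{+}$. First, each piece is $<1$. Second, the exchange is still blocked, meaning no placement of $S^{+}$ into the resources succeeds. The blocked condition is literally the same statement, because the extras were already among the rings to be reinserted. The tails $\rho_i$ are computed on the whole inventory, so they are unchanged. The walls (D), (Ry), (Bo), (G) and the evacuation dichotomy are all contrapositives of placements of the full profile. They are stated for an arbitrary profile in Theorems~\ref{thm:DP}--\ref{thm:DPr}, with the indices recomputed for $S^{+}$.

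\emph{Completing the argument.} Branch on $p=|S^{+}|$. If $p=2$, apply Theorem~\ref{thm:DP}. If $p\ge3$, apply Theorems~\ref{thm:DPp} and~\ref{thm:DPr}. Their case assignments ($W$, $\sigma_1+M$, $j$, $\sigma_2$) are read off from $S^{+}$, so the residual cell of those theorems, $\{p\ge4,\ \sigma_1+M\le1,\ j\ge3\}$, becomes the residual cell over $S^{+}$. That cell is then closed by Theorem~\ref{thm:D1written}, cited as given.

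The main obstacle is one subtle point, which I would address explicitly. Corollary~\ref{cor:DS} warned that small rings \emph{sitting in $v$} break the geometric walls, because those walls use a global repack of the pan. Here the extras are not passive obstacles. Being part of the profile, they are moved by every unblocking placement. They are therefore either placed (as profile pieces) or vacated (as free space) in every repack, so the containment steps to the disks $o_1+1$ and $o_1+o_2$ see exactly the configuration the cited theorems assume.

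One residual case must be ruled out: an extra $e$ that is not in $u=$ the pan but elsewhere in $v$. This case does not arise, because here $u$ \emph{is} the pan. I would state this as the key observation.
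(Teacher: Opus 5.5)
Your proposal is correct and is essentially the paper's proof. Since $u$ is the pan, any sub-unit ring $P$ keeps there is by definition part of the displaced set, so the enlarged template literally is the template with profile $S^{+}$. Blocking is then the same statement, and the cited theorems, their case assignment over $S^{+}$, and the residual cell closed by Theorem~\ref{thm:D1written} apply verbatim.

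Two small inaccuracies do not affect the argument. First, $e\in S^{+}$ follows from the definition of the displaced set (rings below $m$ that $P$ keeps in $u$), not from maximality of $m$. Second, sub-unit rings in $v$ (the hole of $y$) do occur. They are the template's $X_y^{\mathrm{rest}}$, already a parameter and outside the corollary's scope, rather than a case that ``does not arise''.
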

\begin{proof}
The profile of the template is by definition the multiset of sub-unit
rings the witness keeps in the pan, ordered decreasingly, and the
hypotheses of the cited theorems use no property of the $\sigma_i$
beyond $\sigma_i<1$ and that placement; the extras satisfy both
($e<r_m=1$). The enlarged template is therefore literally the template
with profile $S^{+}$: blocking in the enlarged template \emph{is}
blocking for $S^{+}$, and every witness placement in the cited proofs
places all of $S^{+}$, so no implication between blocking for $S$ and
blocking for $S^{+}$ is invoked. The case assignment
over $(|S^{+}|,\,j,\,\sigma_1+W,\,\sigma_2,\,\sigma_1+M)$ is the
finite trichotomy already recorded: the cited cases cover
$|S^{+}|=3$ for every $j$, $|S^{+}|\ge4$ with $j\le2$, and
$|S^{+}|\ge4$ with $\sigma_1+M>1$, and the complement is exactly the
residual cell.
\end{proof}

\subsection*{Certified computational closures}

The remaining cells are closed
by a necessity/sufficiency pair sharing one wall-tangency function:
$\sin^2(\theta(a,b)/2)=f(a)f(b)$, $f(x)=x/(R-x)$, as in
Lemma~\ref{lem:S1}. \emph{Necessity} (a true lower bound $R_{\mathrm{lb}}$
for the radius of any disk packing a given family): for any pair in
any packing, the angular separation of centers satisfies
$\cos\gamma\le h(d_a,d_b)=(d_a^2+d_b^2-(a+b)^2)/(2d_ad_b)$, whose
maximum over the admissible box of center distances is attained at a
corner; for a non-stackable pair ($R<\max+2\min$) without confinement
the binding corner is mural and $h(R-a,R-b)=1-2f(a)f(b)$ exactly, so
$\gamma\ge\theta(a,b)$; stackable pairs are handled by subset
certificates and by confinement ($|c|\ge2g+r-R$ when a giant $g$ is
present), and the cyclic sum of separations in any packing is $2\pi$.
\emph{Sufficiency} (a constructive placement at $R\ge R_{\mathrm{lb}}$):
the cyclic corona by longest-path positions with all pairs validated
and small rings assigned to Descartes pockets
($k_p=k_a+k_b-1/R+2\sqrt{k_ak_b-(k_a+k_b)/R}$) as row bins. Its
soundness is a theorem, by maximality of the longest path: every
skipped ring $s$ between consecutive spine members $a,b$ satisfies
$\theta(a,s)+\theta(s,b)\le\theta(a,b)$, and this margin vanishes
\emph{exactly} at the Descartes pocket radius (the three mutually
tangent mural circles have adjacent wall angles summing to the
pair's), so each skipped ring sits murally inside its gap; positions
satisfy $\alpha_j-\alpha_i\ge\theta(i,j)$ for all $i<j$ by
construction, so only wrap-around pairs need (and receive) explicit
validation. At the golden instance the margin is critically zero:
$p(\varphi,1;\varphi+1)=\varphi/2$ with vanishing Descartes
discriminant, and
$f(\varphi)f(\varphi/2)+f(1)f(\varphi/2)=1=f(\varphi)f(1)$, so the
counterexample of Theorem~\ref{thm:golden} is the exact critical
point of the certificate system and is never unblocked by it
(script \texttt{zigzag}, 5/5; the adversarial round refuted a
stronger induction -- consecutive triple margins do not imply global
pair legality -- and the construction therefore validates all pairs
explicitly, as it always did).

\paragraph{Closure of the residual cells.} With tails in cascade
($\rho\le\varphi$ forces $o_k\ge(\sum_{i>k}o_i+1+\Sigma)/\varphi$,
and $o\ge\varphi(1+\Sigma)$ at near-ties), the constructive corona at
$R=R_{\mathrm{lb}}$ succeeds with \emph{uniform tangent duality}
(deficit $0.0$ within the $2\cdot10^{-3}$ bisection tolerance of
$R_{\mathrm{lb}}$, with the complementary probe that the deficit
strictly decreases in $R$; the boundary placement is tangent, hence
legal)
across: the heavy cell $\{p\ge4,\sigma_1+M\le1,j\ge3\}$ of
Theorem~\ref{thm:DPr} ($j\le5$, $p\le6$ swept); the
solid-pivot regime $\omega\ge1$, $j\ge3$ (no step uses the width);
and, in the nested template of Section~\ref{sec:generic}, the three
golden-level gaps declared in the Status paragraph -- the $j=2$ tip
$\omega\in[\varphi/2,1)$, profiles of four or more pieces beyond the
reduction branch, and the quantitative gap lemma (extra smalls adjoin
as in Corollary~\ref{cor:DSpan}; the reduction re-routes through the
corona where the light-cell inheritance fails, and sweeps directly
with the floor $\alpha\ge\max(1+\omega,\text{true pair of }u+\omega)$
where transport is illegitimate). Scripts \texttt{coronacolas} (5/5)
and \texttt{coronanidada} (5/5), each with an adversarial round that
repaired the routing (six repairs in the pan campaign, four in the
nested one; reports in the repository). These are computational
closures: the per-instance certificate is theorem-backed as above,
the swept ranges are stated, and the extension in $(j,p,k)$ rests on
the monotonicity evidence recorded by the scripts.

\subsection*{Written closure theorems}

\paragraph{From computational closure to written proof: the
insertion lemma.} Two later results upgrade the central pan cells
from computational closure to written mathematics.

\begin{lemma}[Mural compaction]\label{lem:compact}
If a family $C$ packs in the disk $R$ and every pair of $C$ is
non-stackable ($R<\max+2\min$), then $C$ packs murally in $R$:
pushing every circle to the wall at the angle of its true center is
legal. \emph{Proof.} Angular separations are unchanged; for any pair
in any packing, $\cos\gamma\le h(d_a,d_b)$ with the box maximum at
the mural corner $h(R-a,R-b)=1-2f(a)f(b)=\cos\theta$ (the corners
with a vanishing coordinate give $h\to-\infty$ for non-stackable
pairs, whose centers live in an annulus of positive margin
$d_a\ge a+2b-R$), so $\gamma\ge\theta(a,b)$, which is exactly mural
disjointness. The hypothesis is tight in two directions (a central
ring family, and a family non-stackable only against its largest
member, both packable but not murally). \qed
\end{lemma}

\begin{lemma}[Insertion by shadows]\label{lem:insert}
Let $P$ pack $F=\{x_i\}$ in $R$ and let $s$ satisfy the regime
$R>2s+x_i$ for every $i$. If
$\sum_i2\arcsin\bigl((s+x_i)/(R-s)\bigr)<2\pi$, then $s$ inserts
murally into $P$ without moving any piece. \emph{Proof.} The
forbidden arc of $x_i$ at any depth $d_i$ has half-width
$\arccos h(R-s,d_i)$, and on $v\in(0,\infty)$ the map
$v\mapsto h(R-s,v)$ has the single interior minimum
$v^*=\sqrt{u^2-w^2}$ ($u=R-s$, $w=s+x_i$) with
$h(u,v^*)=\sqrt{u^2-w^2}/u=\cos\arcsin(w/u)$ exactly: the shadow
$\arcsin((s+x_i)/(R-s))$ bounds every arc uniformly in depth. If the
shadows sum below $2\pi$, a free angle exists. Without the regime
the statement fails (a deep piece of a non-stackable pair forbids
arcs up to $\pi$); the two regimes are mutually exclusive. \qed
\end{lemma}

\begin{theorem}[The heavy pan cells, written]\label{thm:D1written}
In the cell $\{j\ge3,\ \sigma_2\le\varphi-1\}$
of the pan exchange -- for every profile size $p$, every small-ring
count and every width, including the solid pivot $\omega\ge1$
(where the branch $\sigma_2>\varphi-1$ is also closed --
Step~6 of the proof) -- $\rho\le\varphi$ implies the exchange does
not block.
\end{theorem}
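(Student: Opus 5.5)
The plan is to argue by contradiction: assume $\rho\le\varphi$ in the cell $\{j\ge3,\ \sigma_2\le\varphi-1\}$ and exhibit an explicit unblocking placement. The first step is to collect the tail cascade forced by $\rho\le\varphi$. The tail of $m$ gives $\Sigma+M+D\le\varphi$, so all sub-unit mass other than $\sigma_1$ is below $\varphi-1$. The tails of the occupants give $o_k\ge(\sum_{i>k}o_i+1+\Sigma)/\varphi$. In particular $o_3\ge(1+\Sigma)/\varphi$, $o_2\ge(o_3+1+\Sigma)/\varphi\ge 3/\varphi$, and $o_1\ge(o_2+o_3+1+\Sigma)/\varphi$. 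Since $j\ge3$, these force the occupants to be large relative to $m$ and the profile. The case $\sigma_1+M>1$ (or $\sigma_1+W>1$ with a strict leaf) is already closed by the $\Psi_B$ branches of Theorems~\ref{thm:DP}(iii) and~\ref{thm:DPp}(iv). Light and nested profiles port verbatim by Theorem~\ref{thm:DPp}(i)--(ii). So I will reduce to $\sigma_1+M\le1$ with a heavy, non-nestable profile of arbitrary size $p$, with small rings adjoined to $S^{+}$ as in Corollary~\ref{cor:DSpan}.

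The core step replaces the computational corona sweeps by the insertion lemma. I will send $\sigma_1$ together with $M$ as a row into $D_m$, which is legal since $\sigma_1+M\le1$. I then try to insert the remaining profile pieces $\sigma_2\ge\sigma_3\ge\dots$ one at a time into the existing pan packing $F$, using Lemma~\ref{lem:insert}. Each piece $s\le\sigma_2\le\varphi-1$ is small, while every occupant satisfies $o_i>1$, so the regime $R>2s+x_i$ should follow from $R\ge o_1+o_2$ and the cascade. The shadow bound to verify is $\sum_i 2\arcsin\bigl((s+x_i)/(R-s)\bigr)<2\pi$, summed over the occupants, $m$, and the already inserted pieces. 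I will bound it using $R\ge o_1+o_2$ (pair necessity in $F$) and use concavity to push the occupant radii to the cascade extremes. Because the lower occupants are the smallest admissible, the worst case should be $o_3,\dots,o_j$ equal to their cascade minima; adding occupants raises $R$ faster than it adds shadow. When a single insertion sum fails near the golden corner, the fallback is Lemma~\ref{lem:compact}. All pairs among occupants are non-stackable there, so $F$ can be made mural, and the spine-plus-Descartes-pocket construction can be applied, whose soundness is proved above.

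The last step is the solid pivot $\omega\ge1$. No step above uses $H_m$, so the same argument applies. For the extra branch $\sigma_2>\varphi-1$ there, the tail of $m$ gives $\rho\ge\sigma_1+\sigma_2>1$ only. I will close it by inserting $\sigma_2$ (and the rest) via shadows in the same way, now with $\sigma_1$ in $D_m$, or by Theorem~\ref{thm:DPp}(iii) when $\sigma_1+W>1$.

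I expect the main obstacle to be the uniform shadow estimate: it must hold for every $j\ge3$, every $p$, and arbitrarily many small rings, with a margin surviving the golden critical point where the certificate is exactly tight. I will first try to prove monotonicity of the shadow deficit in $j$, reducing to $j=3$. I will then handle accumulated small-ring mass by treating all pieces after $\sigma_2$ as a single row of total radius below $\varphi-1-\sigma_2$, inserted as one disk through Lemma~\ref{lem:row}.
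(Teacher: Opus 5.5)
Your skeleton matches the paper's: adopt $F$'s pan, use the vacated unit disk $D_m$, insert the rest murally by Lemma~\ref{lem:insert} with the occupant cascade supplying the regime, and bound the shadow budget uniformly in $j$. The gap is in the step that decides \emph{what} has to be inserted. You place only $\sigma_1$ (with $M$) in $D_m$. You then assert that everything after $\sigma_2$ has total radius below $\varphi-1-\sigma_2$, and earlier that all mass other than $\sigma_1$ is below $\varphi-1$. The tail of $m$ gives only $\varphi-\sigma_1-\sigma_2$ (resp.\ $\varphi-\sigma_1$), and $\sigma_1<1$, so the bound is false.

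The failure is not cosmetic. Take $j=3$ and a heavy profile of many tiny pieces with $T_s=1.5$. The cascade yields only $o_3\approx1.55$, $o_2\ge1+T_s=2.5$ and $o_1\approx4.05$. At the floor $R=o_1+o_2\approx6.55$, your row-disk of radius $s\approx1.5$ violates the regime, since $R-o_1=o_2<2s$. It even has $(s+o_1)/(R-s)>1$, so no shadow is defined. Inserting the pieces one at a time instead would need a budget uniform in an unbounded piece count, which you do not supply.

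The missing idea is the \emph{greedy row}. First, $m$ moves to $F$'s position with $M$ riding inside its own hole, so nothing needs evacuating. Then $D_m$ is filled with \emph{all} displaced pieces other than $\sigma_2$, in decreasing order, while the running mass $B$ stays at most $1$. Every spilled piece exceeds $1-B$ and is at most $\sigma_3$, so $B>1-\sigma_3$. The spillover is therefore $W'=T_s-\sigma_2-B<(\varphi-1)-(\sigma_2-\sigma_3)\le1/\varphi$. Exactly two insertands remain: $\sigma_2\le\varphi-1$ and one row-disk of radius $W'$. Since a spill forces $T_s>1$, the cascade gives $o_2\ge1+T_s>2>2s$ for both.

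This uniform witness is also what makes the width claim true, and your route does not deliver it. Your preliminary reductions are: $\sigma_1+M>1$ via the $\Psi_B$ branches of Theorems~\ref{thm:DP}(iii) and~\ref{thm:DPp}(iv), and light or nested profiles via Theorem~\ref{thm:DPp}(i)--(ii). These are stated only for $\omega\in(0,1)$, and $\Psi_B(\omega)<\varphi$ once $\omega>1$. So ``no step uses $H_m$'' does not rescue them, and the solid pivot stays open in exactly those branches. The paper needs no reduction at all, because the greedy-row witness never looks at lightness, nesting, $\sigma_1+M$ or $\omega$.

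For the extra branch $\sigma_2>\varphi-1$ at $\omega\ge1$, use $\rho\ge\sigma_1+\sigma_2\ge2\sigma_2$ to empty $\sigma_2>\varphi/2$ by mass. For $\sigma_2\le\varphi/2$, the fattened cascade $o_2\ge1+2\sigma_2>2\sigma_2$ restores the regime.

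Smaller points:
\begin{itemize}
\item The compaction fallback needs every pair non-stackable, and pan slack destroys that.
\item Your reduction in $j$ is not automatic. It needs the leader's own cascade $t_1\ge(t_2+S_3)/\varphi$; without it the majorant reaches $6.37>2\pi$.
\item The budget is not golden-tight in this cell. Its supremum sits at $(2\varphi,2,2/\varphi,1)$ with margins $1.56$ and $1.02$; the exact razor belongs to the nested $j\le1$ case.
\end{itemize}
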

\begin{proof}
Throughout suppose $\rho\le\varphi$. Write $T_s$ for the total mass
of the sub-unit rings that $P$ keeps in the pan -- the profile
$\sigma_1\ge\sigma_2\ge\dots$, the extra small rings and the pan
dust; this is exactly the mass the exchange displaces, since hole
contents ride inside their parents. All of it lies in the tail of
$m$, so $T_s\le\varphi$.

\emph{Step 1 (the pan).} The witness adopts $F$'s pan verbatim: the
occupants at the positions $F$ certified when it placed $m$, each
carrying its $P$-subtree (hole contents are interior to the parent
disk, so outer disjointness depends only on the outer radii, which
are the ones $F$ certified), and $m$ at $F$'s position with $M$
riding inside. The unit disk $D_m$ is vacated in the carrier's hole,
whose remaining content is untouched.

\emph{Step 2 (the row).} Fill $D_m$ greedily: process the displaced
pieces other than $\sigma_2$ in decreasing order, adding each piece
while the running mass stays $\le1$. The rowed collection, of mass
$B\le1$, packs inside the vacated unit disk without disturbing
anything (Lemma~\ref{lem:row}); $\sigma_1<1$ enters first.

\emph{Step 3 (the spillover bound).} If nothing spills, the witness
is complete and the exchange is performed. Otherwise let $W'$ be the
unrowed mass. Every unrowed piece $\pi$ was rejected while the
running mass was at most $B$, so $\pi>1-B$; and, after the
extras and dust adjoin the profile and it is re-indexed
decreasingly (Corollary~\ref{cor:DSpan}), $\pi\le\sigma_3\le
\sigma_2$, since $\sigma_1$ is rowed and $\sigma_2$ is reserved.
Hence $B>1-\sigma_3$, and the tail of $m$ gives
\[
W'\;=\;T_s-\sigma_2-B\;<\;\varphi-\sigma_2-(1-\sigma_3)
\;=\;(\varphi-1)-(\sigma_2-\sigma_3)\;\le\;1/\varphi .
\]
(When $\sigma_1+\sigma_2>1$ this recovers the earlier bound
$W'\le\varphi-\sigma_1-\sigma_2$ through the wall $(D)$; the greedy
row makes that wall unnecessary, which is what removes the cell
hypothesis $\sigma_1+\sigma_2>1$ of a previous version -- the gap
was found by an external review and is closed by the bound above.)
The remainder packs as a single row-disk of radius $W'$
(Lemma~\ref{lem:row}).

\emph{Step 4 (regime of the insertions).} Two mural insertions
remain: $\sigma_2\le\varphi-1$ and the $W'$-disk, $W'\le1/\varphi
=\varphi-1$. Both go into the pan of Step~1 by
Lemma~\ref{lem:insert}, whose hypotheses we verify on the whole
cell. Since the insertions are needed only in the spill case,
$T_s\ge B+\pi>1$; under $\rho\le\varphi$ the tails then cascade:
$o_3\ge(1+T_s)/\varphi>2/\varphi$,
$o_2\ge(o_3+1+T_s)/\varphi\ge(2/\varphi+2)/\varphi=2$ and
$o_1\ge(o_2+o_3+1+T_s)/\varphi\ge2\varphi$ (the identity
$(2/\varphi+2)/\varphi=2$ once more). The shadows
$2\arcsin\bigl((s+x_i)/(R-s)\bigr)$ decrease in $R$, and the pan
packs the disjoint pair $o_1,o_2$, so evaluating them at the floor
$R=o_1+o_2$ majorizes. For every pan member $x_i\le o_1$ and either
insertand $s\le\varphi-1$,
\[
R-x_i\;\ge\;o_2\;\ge\;2\;>\;2(\varphi-1)\;\ge\;2s ,
\qquad o_2-2s\;\ge\;4-2\varphi\;>\;0\ \text{exactly},
\]
so the regime $R>2s+x_i$ of Lemma~\ref{lem:insert} holds at every
member, including, for the second insertion, the first insertand
($R-s_1\ge o_1+o_2-(\varphi-1)>2s_2$ a fortiori).

\emph{Step 5 (the budget; the delimited certificate).} It remains
to check that the shadow sums stay below $2\pi$ for both insertions,
over every occupant count, every profile and dust configuration and
the whole cell. This is the single computational step of the proof,
and it is box-certified: grouping the $k\ge3$ occupants by their
mass $m$ and count $n\le m$, the convexity chord of $\arcsin$
($\arcsin(z)/z$ increasing) bounds their total shadow by
$C(z_{\max})\,(1{+}s)\,m/u$, the polytope collapses to four
aggregated variables, and a branch-and-bound with exact corner
majorants and monotone-limit caps (no sweep ceiling) certifies
$\sup<2\pi$ for both insertions over every occupant count and
unbounded slack ($149$ and $173$ boxes plus closed tail cells;
script \texttt{insercioncert}, 5/5, with an adversarial round that
added one coupled-corner gate; it supersedes the directed
maximization of script \texttt{insercion}, 7/7, whose adversarial
report had refuted and repaired the naive statement of
Lemma~\ref{lem:insert} with mural angles in place of shadows). The
occupant-count direction is removed by the geometric-tail lemma
(suffix masses decay geometrically, $S_i\ge\varphi S_{i+1}$, and
the leader's own cascade $t_1\ge(t_2+S_3)/\varphi$ forbids a tight
pair with a full tail; script \texttt{colageometrica}, 5/5). The
supremum is attained at the cascade corner $j=3$,
$(o_1,o_2,o_3,m)=(2\varphi,2,2/\varphi,1)$ -- the floors of Step~4
with $T_s=1$ -- where the two sums are $4.7225$ and $5.2644$
against $2\pi$, margins $1.56$ and $1.02$; the $o_2$-direction is
exactly monotone and the $o_1$-direction an exact bathtub with
limit $\pi<2\pi$. With both insertions placed, all of the displaced
mass is reinserted: the exchange is not blocked.

\emph{Step 6 (the solid pivot).} For $\omega\ge1$ the branch
$\sigma_2>\varphi-1$ also closes, so no width hypothesis remains.
If $\sigma_2>\varphi/2$, the tail of $m$ gives
$\rho\ge\sigma_1+\sigma_2\ge2\sigma_2>\varphi$, against
$\rho\le\varphi$: the branch is empty by mass. If
$\sigma_2\in(\varphi-1,\varphi/2]$, run the same witness with
insertand $\sigma_2\le\varphi/2$: now $T_s\ge\sigma_1+\sigma_2\ge
2\sigma_2$ fattens the cascade to
$o_2\ge(1+2\sigma_2)(1+\varphi)/\varphi^2=1+2\sigma_2>2\sigma_2$,
so the regime holds with margin $\ge1$, and the budget for
insertands up to $\varphi/2$ under $T_s\ge2\sigma_2$ is part of the
certified parametric block of \texttt{insercioncert}. No step of
the witness uses the width: $D_m$ is a unit disk and every cap is a
mass bound.
\end{proof}

This supersedes the computational closure of
$\{p\ge4,\sigma_1+M\le1,j\ge3\}$ and of the solid-pivot regime: both
cells now carry written proofs, with the budget maximization
box-certified by \texttt{insercioncert} (as the C3.3
wall is by \texttt{rstarcert}): no optimization asterisk remains in
these cells.

\begin{theorem}[Nested exchange, two or more occupants,
written]\label{thm:nestedwritten}
In the nested template ($u$ the hole of a top-level $\alpha$, $v$
the pan) with $j\ge2$ occupants, $\rho\le\varphi$ implies the
exchange does not block -- for every width, every $\sigma_2$, every
profile size and any extra small rings.
\end{theorem}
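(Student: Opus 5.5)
I will model the proof on Theorem~\ref{thm:D1written}. The plan is to build, under $\rho\le\varphi$, an explicit unblocking witness for the nested exchange.

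\emph{Step 1 (the pan).} Keep $F$'s pan verbatim. This means $\alpha$, the occupants $o_1\ge\dots\ge o_j$ and $m$ sit at the positions $F$ certified, each carrying its $P$-subtree, with $M$ riding inside $m$. The vacated unit disk $D_m$ sits in the pan, since $v$ is the pan in this template. The hole $u$ of $\alpha$ now contains $m$, and whatever else $F$ certified there.

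\emph{Step 2 (the row and the spillover).} The displaced mass is the profile $S$ together with any small rings $P$ kept in $u$. All of it lies in the tail of $m$, so its total is $T_s\le\varphi$. Process it greedily in decreasing order into $D_m$, reserving $\sigma_2$, exactly as in Step~2 of Theorem~\ref{thm:D1written}. The same rejection argument then bounds the unrowed remainder by $W'<(\varphi-1)-(\sigma_2-\sigma_3)\le1/\varphi$. Both $W'$ and the reserved $\sigma_2$ must then be inserted elsewhere.

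\emph{Step 3 (routing of the two insertands).} I will first dispose of the easy branches, using the walls already proved for the nested template:
\begin{itemize}
\item The branch $\sigma_2>\varphi/2$ is empty by the tail of $m$, as in Step~6.
\item If $\sigma_2\le 1-\omega$, then $\sigma_2$ goes to $H_m$.
\item If $\sigma_2\le\alpha-\omega-1$, then $\sigma_2$ goes next to $m$ in $u$.
\end{itemize}
Otherwise I insert $\sigma_2$ and the $W'$-disk murally into the pan by Lemma~\ref{lem:insert}. For this I need the regime and the shadow budget. Tails cascade under $\rho\le\varphi$ in the spill case $T_s>1$. With $\alpha$ and the $j\ge2$ occupants all at least $1$, the three largest top-level members give floors of the form $(2\varphi,2,2/\varphi)$, as in Step~4. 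Consequently $R\ge$ (sum of the two largest) and $R-x_i\ge2>2s$, so the regime holds. I expect that $j\ge2$ is exactly what supplies three large top-level rings $\{\alpha,o_1,o_2\}$, which replace the three occupants of Theorem~\ref{thm:D1written}.

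\emph{Step 4 (the budget, the main obstacle).} The hardest step is the shadow-sum budget $<2\pi$ for both insertions. The difference from the pan case is that $\alpha$ is now also a pan member, and its size is only controlled through (W) and through its position in the cascade. I will try to reuse the box certificate of \texttt{insercioncert} by treating $\alpha$ as one more occupant. In the cascade ordering, $\alpha$ takes the rank its radius dictates, and the geometric-tail lemma removes the occupant-count direction. The step I am least sure of is whether $\alpha<o_2$ can spoil the cascade corner. If it does, I will fall back on the $\Psi_j$ ladder of Proposition~\ref{prop:DPsij} together with the golden line of Theorem~\ref{thm:DGp}. These give $\rho>\varphi$ on $\omega<\varphi/2$ for $j=2$, which leaves only the $j=2$ tip $\omega\ge\varphi/2$ for the insertion argument. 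On that tip $\sigma_2>1-\omega$ forces $\sigma_2$ small relative to the occupants, and the certified parametric block of \texttt{insercioncert} should apply directly.
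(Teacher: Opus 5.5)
Your main line is the paper's. You move $m$ into $u$ under $F$'s certificate, fill the vacated $D_m$ with a greedy row, and insert the two leftover pieces murally into the pan by Lemma~\ref{lem:insert}. The regime comes from three top-level rings of radius at least $1$ (here $\alpha,o_1,o_2$) via $t_2\ge1+\Lambda\ge2>\varphi\ge2s$, and the shadow budget is the single certified computation. Reserving $\sigma_2$ as in Theorem~\ref{thm:D1written}, instead of taking the first non-fitting piece $s'$, is harmless: both schemes give insertands capped by $\varphi/2$ and $1/\varphi$.

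\textbf{Step~1 fails as written.} In the nested template $F$ never put $m$ in the pan, so ``$F$'s pan verbatim'' has no vacated unit disk. $D_m$ exists only in $P$'s arrangement of the pan, and $F$'s positions for $\alpha$ and the occupants may overlap it. Pairing $F$'s pan positions with $P$'s $D_m$ mixes the positional and fresh-certificate modes in one container, which the paper's repack discussion explicitly rules out. When you copy Theorem~\ref{thm:D1written} you must swap roles, since there $u$ was the pan. The fix, which is what the paper does:
\begin{itemize}
\item start from $P$ and keep the pan at $P$'s positions;
\item use $F$'s certificate only inside $u$: delete $u$'s sub-$m$ content and put $m$, with $M$ inside, at $F$'s position among the shared larger rings of $u$.
\end{itemize}

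\textbf{Step~3's shortcut branches are unsound as stated.}
\begin{itemize}
\item $H_m$ carries $M$, so $\sigma_2\le1-\omega$ does not put $\sigma_2$ there.
\item $u$ may hold other shared rings larger than $m$, so $\sigma_2\le\alpha-\omega-1$ is not a two-circle certificate.
\item In both branches the $W'$-disk is left without a home.
\end{itemize}
Delete them. The insertion route already covers every $\sigma_2\le\varphi/2$ and uses no width.

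\textbf{Step~4's worry and fallback.} The worry about $\alpha<o_2$ is unfounded. The cascade, the regime and the shadow sums depend only on the multiset of top-level radii, so $\alpha$ simply takes its rank. The fallback could not prove the statement anyway. Proposition~\ref{prop:DPsij} needs $\omega<1$ and a pair profile (or the reduction branch), and Theorem~\ref{thm:DGp} is a $j=1$ result. Neither reaches the solid pivot or arbitrary profile sizes.

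\textbf{Reusing \texttt{insercioncert}.} That certificate was built for the pan template, where the pan holds only the large members and one unit disk. To transplant it, also displace every sub-$m$ ring that $P$ keeps at top level. Their mass lies in $m$'s tail, so $\Lambda\le\varphi$ survives. Left in place, each would cost a shadow of at least $2\arcsin\bigl(s/(R-s)\bigr)$ however small it is. The paper instead certifies the budget with a dedicated maximization, \texttt{optimizacion}, over $t_2\ge1+\Lambda$ with both insertands.
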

\begin{proof}
Suppose $\rho\le\varphi$. Here the exchange sends $m$ from the pan
into $u$; the displaced mass is the sub-$m$ content that $P$ keeps
in $u$ -- the profile, extras and dust -- of total mass $\Lambda$,
all of it in the tail of $m$, so $\Lambda\le\varphi$.

\emph{Step 1 ($m$ into $u$ by $F$'s certificate).} The exchange is
taken at the largest disagreement, so $F$ and $P$ place every ring
above $m$ identically; $F$ processes decreasingly, so when it
placed $m$ into $u$ the hole contained exactly those shared rings.
The witness starts from $P$, removes $u$'s sub-$m$ content, and
moves $m$ -- with $M$ riding inside -- to $F$'s certified position
in $u$: disjointness inside $u$ is $F$'s certificate on identical
outer radii. The unit disk $D_m$ is vacated at top level.

\emph{Step 2 (the row and its caps).} Fill $D_m$ greedily with the
loose pieces in decreasing order (all are $<m=1$). If nothing
spills the witness is complete. Otherwise let $s'$ be the first
non-fitting piece and $B$ the mass already rowed, so $B+s'>1$ and
in particular $\Lambda>1$. Three caps are exact. The largest loose
piece rows first, so $s'$ is at most the second largest; the two
largest sum within $\Lambda$ and within the tail of $m$, whence
\[
s'\;\le\;\min(\Lambda/2,\ \varphi/2).
\]
The remainder $w^{*}=\Lambda-B-s'$ satisfies $w^{*}<1/\varphi$
($B+s'>1$ and $\Lambda\le\varphi$), and packs as one row-disk
(Lemma~\ref{lem:row}).

\emph{Step 3 (automatic regime).} With $j\ge2$ the top level holds
$j+1\ge3$ pieces besides $m$; order them $t_1\ge t_2\ge t_3\ge
\dots$ Under $\rho\le\varphi$ the cascade gives
$t_3\ge(1+\Lambda)/\varphi$ and
\[
t_2\;\ge\;\frac{t_3+1+\Lambda}{\varphi}
\;\ge\;\frac{(1+\Lambda)(1+\varphi)}{\varphi^{2}}
\;=\;1+\Lambda\;\ge\;2
\qquad(\varphi^{2}=1+\varphi).
\]
The pan packs the disjoint pair $t_1,t_2$, so $R\ge t_1+t_2$, and
for every pan member $x\le t_1$,
\[
R-x\;\ge\;t_2\;\ge\;2\;>\;\varphi\;\ge\;2s'
\qquad\text{and}\qquad
2\;>\;2/\varphi\;\ge\;2w^{*},
\]
with margin $2-\varphi$: the regime of Lemma~\ref{lem:insert} holds
for both insertions at every member (including, for the second, the
first insertand: $R-s'\ge t_1+t_2-\varphi/2>2w^{*}$), uniformly in
$\omega$ -- no step of the witness uses the width.

\emph{Step 4 (insert).} Insert $s'$ murally into the pan as $P$
placed it (only $m$ has left; Lemma~\ref{lem:insert} accepts
arbitrary positions, since the shadow bounds every forbidden arc
uniformly in depth), then the $w^{*}$-disk likewise.

\emph{Step 5 (the budget; the delimited certificate).} The shadow
sums stay below $2\pi$ across the domain: this is the single
computational step, a certified maximization with deterministic
corners and margin $\ge1.07$ (script
\texttt{optimizacion}, 5/5 — the uniform closure over
$t_2\ge1+\Lambda$, unbounded slack and both insertands — with
\texttt{insercionanidada}, 6/6, as numerical contrast, whose large-slack block sweeps pieces up to $10^{4}$ above the
cascade floors -- one and two inflated at a time -- and widths up to
$\omega=1.35$, with the worst budget attained \emph{at} the floor
and the $t\to\infty$ limit closing by formula at $\pi<2\pi$: the
certified domain admits arbitrary values above the floors); the
occupant-count caps drop by the geometric-tail lemma
(script \texttt{colageometrica}, 5/5). With both insertions placed
the displaced mass is reinserted: the exchange is not blocked. For
$j\le1$ the second insertion dies exactly on the golden razor
($o_1\to2/\varphi$ against $2w^{*}=2/\varphi$, shadow ratio
identically $1$), which is why the hypothesis $j\ge2$ is stated:
that strip is Theorem~\ref{thm:gapwritten}.
\end{proof}

Theorem~\ref{thm:nestedwritten} supersedes the computational
closures of the golden-level nested gaps with two or more occupants
-- in particular the $j=2$ tip $\omega\in[\varphi/2,1)$ entirely, and
profiles of four or more pieces beyond the reduction branch (profile
size is mass here).

\begin{theorem}[Gap lemma, nested $j\le1$, written]\label{thm:gapwritten}
In the nested template with $j\le1$ occupants, $\rho\le\varphi$
implies the exchange does not block, for every width (solid pivot
included), every profile and any extra small rings.
\end{theorem}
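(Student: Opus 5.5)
The plan follows the template of Theorem~\ref{thm:nestedwritten} and isolates the one place where it fails for $j\le1$: the second insertion, which sits on the golden razor. Suppose $\rho\le\varphi$. Step~1 is copied verbatim. The exchange sends $m$, carrying its content $M$, into $u$ at $F$'s certified position. This vacates the unit disk $D_m$ at top level and displaces the sub-$m$ content of $u$, of total mass $\Lambda\le\varphi$ (tail of $m$). We then fill $D_m$ greedily in decreasing order. If nothing spills, we are done. Otherwise we obtain the first rejected piece $s'\le\min(\Lambda/2,\varphi/2)$ and a residual row-disk $w^{*}<1/\varphi$.

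The top level of the pan now holds $\alpha$ and at most one occupant $o_1$. The cascade gives only $t_2\ge(1+\Lambda)/\varphi$ for the second-largest top-level piece, which can be as small as $2/\varphi$. That is exactly what kills the regime hypothesis $R>2s+x_i$ of Lemma~\ref{lem:insert}, so the insertions must be routed differently.

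\textbf{Case $j=0$ (canonical pan).} The pan holds $\{\alpha,m\}$, and after the exchange it holds only $\alpha$. Everything displaced must therefore be placed beside $\alpha$. We use $D_m$ for the row and need the pan pocket next to $\alpha$ for the spill. Since $R\ge\alpha+1$ and the pan around $\alpha$ is empty apart from $D_m$, containment to the disk $\alpha+1$ makes the pair $\{\alpha,D_m\}$ rigid. The two mirror Descartes pockets of Lemma~\ref{lem:DG} then have radius $b(\alpha)$ and are disjoint, since $y_0=2b$. It suffices to show that $s'\le b(\alpha)$ and $w^{*}\le b(\alpha)$. For this we plan to use (W), $\Lambda\le\alpha-\omega$, together with the tail of $\alpha$: $\rho\alpha\ge1+\Lambda$, so $\alpha\ge(1+\Lambda)/\varphi$. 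The inequality $s'\le\Lambda/2\le b(\alpha)$ should follow from the identity $2b(\varphi)=\varphi$ and monotonicity of $b$: when $\alpha\ge\varphi$ we get $b(\alpha)\ge\varphi/2\ge s'$. When $\alpha<\varphi$ we get $\Lambda\le\varphi\alpha-1$, and we need $(\varphi\alpha-1)/2\le b(\alpha)$. This is precisely the second factorization in the proof of Theorem~\ref{thm:fourfloor}, namely $1+2g(A)\ge\varphi A$ for $A\le\varphi$. The same two identities handle $w^{*}<1/\varphi\le b(\alpha)$ for $\alpha\ge1$, since $b(1)=2/3>1/\varphi$.

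\textbf{Case $j=1$.} The pan holds $\{\alpha,o_1\}$ plus $m$ in $P$, and after the exchange $m$ is gone. We use the double-pocket geometry: containment to the disk $\alpha+o_1$, where the pair is rigid (Lemma~\ref{lem:DG}), yields two disjoint mirror pockets of radius $b_2(\alpha,o_1)$, and additionally $D_m$ is free. Positions inside the pan are existential, so we may repack the top level in this rigid form, with subtrees riding inside their parents. The witness places the row in $D_m$ and $s'$ and $w^{*}$ in the two pockets. We must show $\max(s',w^{*})\le b_2(\alpha,o_1)$ under the tail cascade: $\min(\alpha,o_1)\ge(1+\Lambda)/\varphi$, and the larger of the two is at least $(\min+1+\Lambda)/\varphi$. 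At the cascade floor, $(o_1,\alpha)\to(2/\varphi,2)$ with $\Lambda\to1$, the identity $b_2(2,\sqrt5-1)=1$ (the golden corner) gives pocket $1$ against $2w^{*}\to2/\varphi$. This leaves the slack that shadows lacked. Elsewhere, monotonicity of $b_2$ in each argument pushes to the floors. Where the remaining $D_m$ slot and pocket are insufficient because $\Lambda$ is near $\varphi$, we fall back on the tail of $m$ together with (W) and (Ry)-type walls, or on the $\Psi_B$/leaf bounds (Theorem~\ref{thm:DBpp}, Proposition~\ref{prop:DPsij}) when nested nodes exist.

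The main obstacle is the one-variable-family inequality $\max(s',w^{*})\le b_2(\alpha,o_1)$ in case $j=1$ across the whole cascade region, together with justifying the repack to the rigid configuration when the pan has slack. Slack only enlarges the pockets via antitone containment, but the extra small rings kept in the pan must also be adjoined to the displaced mass as in Corollary~\ref{cor:DSpan}. I would first try to reduce this inequality to a polynomial sign certificate, in the style of the $c_{10},c_{20}$ certificates of Theorem~\ref{thm:DGp}, anchored at the exact golden corner. If that fails near the corner, I would fall back on a box-certified branch-and-bound like \texttt{quintetocert}.
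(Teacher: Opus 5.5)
Your case $j=0$ is correct and is the paper's own argument. It uses the tangent pair $\{\alpha,1\}$ at radius $\alpha+1$, its two mirror pockets of radius $b(\alpha)$, and the factorization of $1+2b(A)-\varphi A$, which is exactly the paper's $2p(u)-\Lambda$ on the cascade floor. Your bound $w^{*}<1/\varphi<b(1)\le b(\alpha)$ is even a little simpler.

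The case $j=1$, however, has a genuine gap. You repack $\{\alpha,o_1\}$ diametrally in the disk of radius $\alpha+o_1$ and then declare that ``additionally $D_m$ is free''. But $D_m$ is the disk vacated by $m$ at its position in $P$. Once the top level is repacked from scratch, that position no longer exists, and in the rigid configuration the only free space is the two pockets. Mixing the positional and the fresh-certificate modes of the repack lemma in one container is precisely the error recorded in the campaign.

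You therefore have three objects to place (the unit disk carrying the row, $s'$, and the $w^{*}$-disk) but only two slots. The unit disk must itself occupy a pocket, which needs $b_2(\alpha,o_1)\ge1$, and $s'$ and $w^{*}$ must then share the other pocket. At the golden corner $b_2(2,\sqrt5-1)=1$ the unit disk fills a pocket exactly, so the slack you read off there is illusory. Your $2w^{*}\to2/\varphi$ is also inconsistent with that corner: it is the floor at $\Lambda\to1$, where $w^{*}<\Lambda-1\to0$. So $\max(s',w^{*})\le b_2$ is neither the binding constraint nor sufficient.

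The paper handles $j=1$ differently. It treats $\{\alpha,o_1,1,s',w^{*}\}$ as a five-circle corona in the proved floor radius $R_0=\max\{\text{pair sums},\min(R_3,M)\}$ and box-certifies it (\texttt{quintetocert}).

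Your line can nevertheless be completed analytically. Take $a=\max(\alpha,o_1)$, $b=\min(\alpha,o_1)$ and $T=1+\Lambda$. The tails of $b$ and $a$ give $T\le\varphi b$ and $b+T\le\varphi a$, so Lemma~\ref{gt:tails} yields $b_2(\alpha,o_1)\ge(1+\Lambda)/2>1$ in the spill case. Put the unit disk with its row in one pocket. The remaining pieces have total $\Lambda-B<\Lambda-\tfrac12\le(1+\Lambda)/2$, since the rowed mass satisfies $B\ge s'$ and $B>1-s'$; put them as a single row in the other pocket.

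This is the two-larger-children case of Theorem~\ref{thm:golden-global}. It would replace the computational quintet certificate by a written proof. The $\Psi_B$/leaf fallbacks you mention are unnecessary, since node subtrees ride inside their parents.
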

\begin{proof}
Steps~1--2 are those of Theorem~\ref{thm:nestedwritten}: $m$ moves
into $u$ by $F$'s certificate, $D_m$ is filled greedily, and either
nothing spills (done) or the spill produces $s'$ and $w^{*}$ with
the exact ligatures
\[
s'\le\min(\Lambda/2,\ \varphi/2),
\qquad
w^{*}<\min(1/\varphi,\ \Lambda-1),
\qquad w^{*}\le\Lambda-2s',
\]
the last two because $B+s'>1$ and because the rowed mass contains
the largest loose piece, which is $\ge s'$. With $j\le1$ the
insertion route of Theorem~\ref{thm:nestedwritten} is not available
(the golden razor), and the witness instead re-packs the whole top
level from scratch as a corona of at most five circles:
$\alpha$, the optional occupant $o_1$, a unit disk carrying the
$D_m$ row, and the two circles $s'$ and $w^{*}$ -- a bounded
family, decided exactly.

\emph{The available radius.} The true pan radius $R$ packs $P$'s
top level, which contains $\{\alpha,o_1,m\}$ (or $\{\alpha,m\}$),
so $R$ is at least every pair sum; moreover any packing of a trio
$\{a,b,c\}$ satisfies the written trio bound
$R\ge\min(R_3,\,M)$, where $R_3$ solves
$\theta(a,b)+\theta(b,c)+\theta(c,a)=2\pi$ and
$M=\min(\max+2\min)$ over the pairs is the stackability threshold:
if $R<M$, every pair is non-stackable, so by the corner lemma of
the duality certificates each angular separation of centres is at
least the mural angle $\theta$, and the three central separations sum to at most $2\pi$ (twice
the extreme separation when the three centres lie in a half-plane,
exactly $2\pi$ in the surrounding case) -- $\theta(a,b)+\theta(b,c)+\theta(c,a)\le2\pi$, i.e.\
$R\ge R_3$ ($\theta$ decreases in $R$). The certificate below therefore works
throughout with the proved floor
$R_0=\max\{\text{pair sums},\ \min(R_3,M)\}$; at the golden point
$(\alpha,o_1)=(2,2/\varphi)$, pair sums, $R_3$ and $M$ all
collapse to $1+\sqrt5$ and the trio sum equals $2\pi$ exactly.

\emph{The corona fits.} For $j=0$ this is an exact algebraic
theorem: at $R=\alpha+1$ the Descartes discriminant of the tangent
pair $\{\alpha,1\}$ vanishes identically, the wall pocket is
$p(u)=u(u+1)/(u^{2}+u+1)$, increasing. Parametrize the cascade floor by
$u=(1+\Lambda)/\varphi$, so $\Lambda=\varphi u-1$ and
$2/\varphi<u\le\varphi$ on the domain $\Lambda\in(1,\varphi]$. With
the coupled mass caps ($s'\le\Lambda/2$, $w^{*}\le\Lambda-1$) the
sub-pocket condition for $s'$ reads $2p(u)\ge\Lambda$, and
\[
2p(u)-\Lambda=\frac{(\varphi-u)\bigl[\varphi u^{2}+2(\varphi-1)u
+(\varphi-1)\bigr]}{u^{2}+u+1}\ge0
\]
since the bracket has positive coefficients: the condition holds
on all of $\Lambda\le\varphi$, with equality exactly at
$\Lambda=\varphi$ and $\alpha$ on its cascade floor -- tangent to
the golden pocket $b_2(\varphi,1)=\varphi/2$ of
Theorem~\ref{thm:DP}'s counterexample. For $w^{*}$ the condition
is $p(u)\ge\varphi u-2$, and $g(u)=p(u)-(\varphi u-2)$ is
decreasing on the domain with
$g(\varphi)=(3-\sqrt5)/4>0$: it holds throughout. When
$\alpha$ exceeds its floor the pocket only widens ($p$
increasing) and both conditions relax; the non-consecutive pair
$(s',w^{*})$ is separated by the unit disk in the cyclic order,
so its own tangency imposes nothing new: the quartet cell is a
theorem. For
$j=1$ the quintet is the single delimited computational step: it is
box-certified over the entire domain by script
\texttt{code/quintetocert.py} (4/4): a branch-and-bound over
$(\Sigma,\alpha,o_1)$ with the exact ligatures of this proof, the
radius handled through the necessity bracket
$R_0=\max\{\text{pair sums},\min(R_3,M)\}$ in \emph{both}
branches (the $R_3\le M$ safeguard is bracketed, not sampled),
five closed per-box certificates -- pocket assignments, a
row-disk merge of the two smalls, the diametral band with a
closed-form $4$-cycle test and a range dichotomy for $w^{*}$, a
\emph{stacked witness} that closes the golden point
$(\varphi,\varphi,\varphi)$ algebraically (exact margin
$1/\varphi^{3}$ in $\mathbb Q[\sqrt5]$), and a rational
\emph{semiarc threshold} $x\le R/(1+f(a)+f(o))$ that collapses to
exactly $1$ at the critical corner (the configuration of
Theorem~\ref{thm:golden} reappearing as a tangent-legal witness)
-- with every acceptance decided by exact rational comparisons
(no tolerances; angle sums by outward directed intervals with
every operation rounded outward, and the arcsines evaluated by
high-precision arithmetic on exact rational arguments, so no
hypothesis on the platform's math library remains), together
with a written trio lemma showing that the trio fits within the
proved floor $R_0$: directly at the pair-sum floor when
$\max(\alpha,o_1)\ge2$, and through $R_3\le M$ when
$\max(\alpha,o_1)\le2$ -- both anchored at the kernel-checked
mirror corner $b_2(2,\sqrt5-1)=1$ -- and
the $\alpha\to\infty$ tail closed by
the exact pocket infimum $ab/(\sqrt a+\sqrt b)^{2}$; the binding
tangent curve of the trio sum $=2\pi$ from $(2,2/\varphi)$
through $(3/2,3/2)$ is covered by the same subdivision with no
special neighbourhoods. It supersedes, for this quintet, the
earlier arc-LP and pocket sweeps (scripts \texttt{code/arcolp.py}
and \texttt{code/bolsillos.py}, 5/5 each, three adversarial
rounds), which in turn superseded the directed maximization of
script \texttt{code/gaplemma.py}. No step uses the width -- the solid
pivot is included -- and the caps are mass bounds, so every
profile and any extra small rings are covered. With the corona
placed, the displaced mass is reinserted: the exchange is not
blocked.
\end{proof}

With Theorems~\ref{thm:nestedwritten} and~\ref{thm:gapwritten}, the
\emph{entire nested template} of case~(b) carries written proofs for
every $j$, every width, every profile and extras -- at the same
epistemic standard as the pan side (the central pan maximization is
box-certified by \texttt{rstarcert}, the D1 budget by
\texttt{insercioncert}). For $j=0$ the domain sweep has since been upgraded to an
\emph{exact algebraic pocket certificate}: at $R=\alpha+1$ the
Descartes discriminant vanishes identically and the wall pocket is
$p(u)=u(u+1)/(u^2+u+1)$, increasing; with the coupled mass caps
($s'\le\Sigma/2$, $w^*\le\Sigma-1$) the sub-pocket condition
factors as $(\varphi-u)\,r(u)\ge0$ with
$r(u)=\varphi u^2+2(\varphi-1)u+(\varphi-1)$
positive-coefficient, so equality holds exactly at $\Sigma=\varphi$
with $\alpha$ on its cascade floor -- where the cap
$s'=\varphi/2$ is \emph{tangent} to the golden pocket
$b_2(\varphi,1)=\varphi/2$ of Theorem~\ref{thm:DP}'s
counterexample; the second cap closes with exact minimum
$(3-\sqrt5)/4$ (the quartet cell is a theorem, its critical corner
the golden pocket itself). The remaining bounded-corona domains --
the $j=1$ quintet and the $k\le2$ hole cells -- are certified by
subdivision as well, on two new exact tools: an \emph{arc-LP
characterization} of small mural coronas (feasibility of the
per-arc requirement system is equivalent to the corona, with the
pairs-fit precondition; its disjoint-family dual serves as pruning
and an exact base-enumeration primal as the criterion -- closed
inequalities, so it certifies \emph{at} tangency), and a
closed-form feasibility test for the critical $4$-cycle
($-\sigma\ge\max(B_1,B_2)$: the two diagonal NS-$2$ deficits
against the slack). The binding frontier turned out to be an
entire \emph{tangent curve}: the trio sum
$\theta(a,o)+\theta(o,1)+\theta(1,a)$ at $R=a+o$ equals $2\pi$
exactly along a curve running from the golden point $(2,2/\varphi)$
through $(3/2,3/2)$ -- where $\theta(3/2,1;3)=\pi/2$ exactly -- to
its mirror image, the boundary of the trio-necessity band; on the
curve the corona is handled \emph{by construction} (the trio is
exactly tangent and the small pieces insert sub-pocket at zero
angular cost), with the golden endpoints -- where pair sums, $R_3$
and the stackability threshold all collapse -- excluded into
certified neighborhoods. Behind it all sits one more exact golden
identity whose algebraic core is kernel-checked in Lean:
$\theta(\varphi,1/\varphi)+\theta(1/\varphi,1)+\theta(1,\varphi)
=\pi$ at $R=2\varphi$, which makes the dangerous $4$-cycle of the
$j=1$ cell exactly tangent and ties its $5$-cycle to the
shared-root corner constant $\pi+4\arcsin(1/\sqrt3)$ (scripts
\texttt{code/arcolp.py} and \texttt{code/bolsillos.py}, 5/5 each,
three adversarial rounds -- one of which refuted and repaired the
arc-LP statement itself). For the $j=1$ quintet this closure is
now superseded by the tolerance-free box certificate of script
\texttt{code/quintetocert.py} (see the proof of
Theorem~\ref{thm:gapwritten}): the tangent curve is covered by
the same subdivision with no special neighbourhoods, and the
golden endpoints are no longer excluded; \texttt{code/arcolp.py}
and \texttt{code/bolsillos.py} remain the certificates of the
other small-corona cells they back.

\subsection*{The exchange assembly and the container port}

\begin{figure}[ht]
\centering
\begin{tikzpicture}[every node/.style={font=\small, align=center},
  level distance=16mm, sibling distance=52mm]
\node[draw, rounded corners, inner sep=5pt]
  {exchange step: destination $u$ of $m$\\ (containers are the pan or
   holes of rings larger than $m$; $u\ne v$)}
  child {node[draw, rounded corners, inner sep=4pt]
    {(a) $u$ the pan\\ \footnotesize
     Thms.~\ref{thm:DP}--\ref{thm:DPr},\\
     \footnotesize Cor.~\ref{cor:DSpan},
     Thm.~\ref{thm:D1written}}}
  child {node[draw, rounded corners, inner sep=4pt]
    {(b) $u$ the hole of a\\ top-level $\alpha$, $v$ the pan\\
     \footnotesize Thms.~\ref{thm:nestedwritten}--\ref{thm:gapwritten}}}
  child {node[draw, rounded corners, inner sep=4pt]
    {(c) the container port\\
     \footnotesize written as a reduction\\
     \footnotesize (this appendix)}};
\end{tikzpicture}
\caption{The case partition of the exchange assembly. Cases (a) and
(b) are covered by the pan and nested theorems; case (c), the
container port, is written as a reduction whose remaining domains are
inventoried at the close of this appendix.}
\label{fig:casetree}
\end{figure}

\paragraph{The exchange assembly.} The exchange step of
Theorem~\ref{thm:oblivious} is assembled into a case partition
(a theorem: containers are the unique pan or holes of rings larger
than $m$, and $u\ne v$): (a)~$u$ is the pan -- the template of
Theorems~\ref{thm:DP}--\ref{thm:DPr} plus the closures above;
(b)~$u$ is the hole of a top-level $\alpha$ and $v$ is the pan -- the
nested templates plus the closures above; (c)~the \emph{container
port}. Assembly facts (theorems): $|S|\le1$ never blocks (a single
$\sigma<r_m$ rows into $D_m$; with $S=\emptyset$, $m$ moves under
$F$'s own certificate, containers of larger rings being shared);
feasibility is per-container and positions are existential
(Section~2) -- and this is now a proved \emph{realization-and-repack
lemma}, not a gloss: any forest whose containers each carry a
placement of their children as balls with disjoint interiors is
realized globally by composing translations along root-to-leaf
paths (inductive invariant: a ring's subtree stays in its ball, its
strict descendants in its hole ball; the annulus material is
disjoint from the hole interior; translations preserve distances,
and any isometry of a container placement is just another
placement), so replacing one container's placement -- subtrees
rigid -- yields another realization of the same forest, and a
witness with a \emph{new} assignment composes the per-container
certificates (vacated ball, row, corona, pocket, shadow insertion)
through the same lemma (script \texttt{repack}, 5/5, adversarial
round: exact tangencies, solid rings and subtree isometries all
verified, and the checker proved equivalent to material
disjointness). Repacking any container is therefore a legal
resource -- the pan repack of
Theorem~\ref{thm:DP} is the precedent; and every geometric wall
descends by antitone containment to the intrinsic disk of its
members, so walls port to any container with the corresponding
capacity. Porting \emph{programs} additionally needs their
cohabitation hypotheses, which yields the closure split: if $\alpha$
is a direct member of $v$, case~(b) is inherited verbatim with
$R\mapsto Y-\omega$ (theorem); an extension lemma transports the
$(N)$-heritage walls with the load travelling inside $\sigma_1$
(monolith for placements not using $\sigma_1$'s hole, exact
absorption of the fattened $B_3'$ in the golden-line chain). The
genuinely new configurations -- $\alpha$ outside $v$ -- were closed
by a dedicated campaign (script \texttt{puertocii}, 7/7, two hostile
rounds): exact pincers reduce the light profile to a window that is
empty for $\omega_{\mathrm{ef}}\le3/(2\varphi)$ (the corner
degenerates at $\sigma_2=\tfrac12$ exactly), and inside the window
the pan repack places $\sigma_2$ in the mirror pocket of the
top-level pair, by $\alpha>2$, $T>\sqrt5-1$ and monotonicity of
$b_2$ against the corner $b_2(2,\sqrt5-1)=1$; the heavy profile is
closed by an exact partition pincer ($b_2(4/\varphi,2/\varphi)
=12/(7\varphi)>1$); towers, the $Y\ge\alpha$ branch and the
shared-root configurations (where the top-level pair degenerates)
are closed by a dedicated mural-trio analysis: the exchange lives in
the hole of $\alpha$ with capacity $c=\alpha-\omega\ge\Sigma_S+Y$
(hole tariff), the trio sum decreases in $c$ (exact), and its
supremum over the legal window is the certified corner
$\pi+4\arcsin(1/\sqrt3)\approx5.60<2\pi$ (margin $0.68$; lightness
with $\sigma_1\ge\sigma_2$ forces $\sigma_2\le\tfrac12$, while the
analytic danger threshold is $\sigma_2=\tfrac23$), with the heavy
branch by an exact partition and the deeper/mirrored configurations
by direct sweeps with derived tariffs.

The port itself is now \emph{written as a reduction}: every case
below is a theorem/corollary, lands in a computationally
certified cell, or belongs to one of the residual domains
inventoried honestly in the closing subsection. When $v$ is the
pan and
$\alpha$ sits nested in a tower whose root $t$ is top-level, the
case is a \emph{corollary} of
Theorems~\ref{thm:nestedwritten}--\ref{thm:gapwritten}: those
theorems use, of the configuration, exactly four things -- a top-level
pan family $\{t,o_1,\dots,o_{j'},m\}$ in $P$, the vacated unit disk
$D_m$ at top level ($v$ the pan makes $m$ top-level in $P$), the
legality of $m$'s destination by $F$'s certificate with rigid
subtrees (agnostic to \emph{which} hole receives $m$; the tower is
shared between $F$ and $P$), and the mass ligatures -- and all four
hold with the tower root $t$ in the role of ``$\alpha$'', the
per-rank floors only rising (the tail of $t$ contains the whole
tower; $t\ge\alpha+\omega\ge1+2\omega$ by two circles level by
level; the hole-mass floor is automatic by an exact two-branch
argument), while both certified domains admit arbitrary values above
their floors (large-slack sweeps to $10^4$ with the worst budget at
the floor, and the $t\to\infty$, $\alpha\to\infty$ limits closing by
formula -- an adversarial round of this very reduction demanded and
obtained the large-slack block). When $v$ is the hole of a top-level
$Y$, the case assembles into a theorem from the exact pincers above
together with the two \emph{hole-corona} theorems, which run the
nested template \emph{inside} the hole: the cascade chain among a
hole's occupants yields the same automatic regime ($x_2\ge1+\Sigma$
for $k\ge3$ occupants, $\varphi^2=1+\varphi$; tails are global), and
for $k\le2$ the family is bounded and the armored trio bound
applies, on the hole's own low floors (occupants $\ge1$ with no wall
relation -- a domain the pan boxes do not cover, swept separately
with $10^4$ slack and the $x_1\to\infty$ limit by formula). The
breathing branch $Y\ge\alpha$ needs a \emph{strong breathing lemma}
since the hole mass includes dust: the full cross-tail pincer plus
the wall $\Sigma_S>1$ force $X_Y+\omega>\varphi(3-\varphi)
=2\varphi-1=\sqrt5$, dust is at most $\varphi-\Sigma_S<\varphi-1$ by
the tail of $m$, and $\sqrt5-(\varphi-1)=\varphi$ exactly: the
above-$m$ mass satisfies $X_{>m}+\omega>\varphi$, so $k\ge1$ -- the
golden threshold reappears after discounting dust. The
$\alpha$-corona branch is lighter: lightness gives
$\Sigma_S-\sigma_2<1$ exactly (the rest of $S$ rows into $D_m$ and
only $\sigma_2$ needs mural insertion), and the capacity window
$[\Sigma_S+X_\alpha,\,1+\sigma_2+X_\alpha)$ is exact with $X_\alpha$
cancelling (script \texttt{coronaagujero}, 5/5, adversarial round
with zero counterexamples under 55k independent adversarial
instances).

A \emph{geometric-tail lemma} then makes every one of these shadow
budgets \emph{uniform in the occupant count}. For any cascade
family $t_1\ge t_2\ge\dots\ge t_n$ (global tails, $\rho\le\varphi$)
with $R\ge t_1+t_2$, regime $2s<t_2$ and $t_2\ge1+\Sigma$, the
budget is dominated term by term by an explicit finite series: the
suffix masses $S_i$ decay geometrically
($S_i\ge\varphi S_{i+1}$, again $1+1/\varphi=\varphi$), each tail
piece obeys three exact caps
($t_{3+r}\le\min(t_2,\ S_3/\varphi^r-(1+\Sigma),\
(S_3-1-\Sigma)/(r+1))$), a piece at rank $3+r$ drags a suffix of
mass at least $1+\Sigma+p_{\min}$ so the series has finitely many
terms, and -- the load-bearing link -- the cascade of $t_1$ itself
forces $t_1\ge(t_2+S_3)/\varphi$: a heavy tail forces a large
leader, so the tight pair and the full tail are \emph{jointly
infeasible} (allowing both would push the majorant to $6.37>2\pi$;
with the link its supremum over the whole box is $5.2115<2\pi$,
attained with \emph{zero} gap at the real family
$\{2\varphi,2,2/\varphi\}$ plus the unit disk -- the very corner
every sweep had found). The hypothesis $t_2\ge1+\Sigma$ is forced
by three-or-more occupants via $\varphi^2=1+\varphi$ and is exactly
the lemma's frontier: at $n=2$ the golden razor
($t_2\to2/\varphi$, shadow ratio identically $1$) breaks the
budget, which is the known boundary where the bounded-family
closures take over. Consequently the sweep caps ($j\le6$, $k\le14$,
$k\le12$) drop out of the written theorems' statements: shadow
coverage holds for \emph{every} occupant count (script
\texttt{colageometrica}, 5/5, adversarial round: domination
unbroken under ${\sim}45$k families including the theorems' own
cascade generators). Finally, a
\emph{pocket-$\varphi$ lemma} removes the profile- and dust-count
directions of the scaling question exactly: at $R=o_1+o_2$ the
Descartes discriminant of the tangent pair vanishes identically, the
pocket radius $1/(1/o_1+1/o_2-1/(o_1+o_2))$ is monotone in
$o_1,o_2,R$ and equals $\varphi$ \emph{exactly} at the cascade
minimum $(2\varphi,2,2\varphi+2)$, while the total small mass is
$\le\varphi$ by the tail of $m$: the entire profile and dust row
into a single pocket ($p$ and $k$ free; script \texttt{escala}, 5/5,
with the nested template verbatim and a geometric non-invasion
check).

\subsection*{The multipiece campaign and the vacuity closures}

The shared-root \emph{multipiece} sweeps are certified by a
pessimistic-corner engine over the
arc-LP's sufficiency direction (valid for every $k$): a
term-by-term majorant matrix (each wall angle bounded at its own
corner, participants at their ceilings \emph{including} their
contribution to the capacity, the rest at their floors -- valid
since $\partial\theta/\partial a$ carries the sign of
$R-a-b=\Sigma_S+\text{rest}>0$), a maximum-slack LP whose returned
gap vector is re-verified in plain floating point, and an
\emph{antipodal} criterion for the $\Sigma_S\to1$ edge ($Y$ and $m$
at distance exactly $\pi$, strict by algebra, with the remaining
pieces as an interval -- hence totally unimodular, hence exact --
path system on one half-circle, where the circular LP's margin
degenerates even at real points). This certifies the explicit
extra occupants ($1$--$3$ corona pieces up to size $Y$, coronas of
$4$--$6$ pieces, entire $\sigma_2<1$) over the sweep box, and the
mirrored branch beyond its $X=0$ cut in the $X_Y=0$ direction
(shifted windows with $X_\alpha,X_z,X_m>0$, dual per-box bounds
coupling the diametral floor $c'\ge1+z$, certified at margins both
$0.3$ and $0.35$). The heavy-profile partition -- multipiece $A$
with \emph{unbounded} cardinality -- is closed by a
\emph{reduction lemma with a golden threshold}, exact in
$\mathbb{Q}(\sqrt5)$: taking $B^{*}$ the best subset of $S$ of mass
$\le1$ (rowed into $D_m$) and $A=S\setminus B^{*}$ with
$\beta=\Sigma B^{*}$, maximality gives $a+\beta>1$ for every
$a\in A$, $\beta\ge\sigma_1\ge\max A$ (hence $a\le\varphi/2$), and
\emph{at most four} pieces of $A$ exceed
$t_0=(\varphi-1)/4=1/(4\varphi)$ -- a dichotomy at
$\beta^{*}=(9-\sqrt5)/8$ whose critical identity
$5t_0=\varphi-\beta^{*}$ is exact, with the twin identity
$4t_0=\varphi-1$ making four large pieces and dust
\emph{incompatible} -- while the remaining dust (pieces $\le t_0$,
present only for $\beta>\beta^{*}$, mass $\mu<5t_0$) is paid by
\emph{mass, not count}: convexity of $\arcsin$ gives
$\theta(a,b)\le\arcsin f(a)+\arcsin f(b)$, so a monotone dust chain
costs at most $\pi\mu/(R-t_0)$ regardless of how many pieces carry
the mass. The heavy mural is then $\{Y\text{ or }z,m\}$ plus at
most four large pieces plus one aggregated dust block, and the
two-sided antipodal engine certifies both heavy branches over
their sweep boxes -- the direct one entirely; the mirrored one on
its $X=0$ cut -- and the slot-reduction closure \texttt{lemaA3}
now re-certifies both heavy murals with $Y$, $z$ and $|A|$
unbounded (the golden-chain lemma settling the tangency corner); and now, closing the mirrored cell, \emph{beyond}
that cut: within the dust convention the global pivot tail caps
\emph{every} extra mass at once,
$\Sigma_S+X_m+X_\alpha+X_z+\Sigma X_Y\le\varphi$, so each dust
mass is below $\varphi-1$ -- the old sweep ceilings
$X_\alpha\le1.5$, $X_z\le1$ cease to be ceilings -- and a
frozen-criterion sweep certifies the heavy mirrored branch with
\emph{all} dusts free: $X_m$ eliminated as a dimension (its
majorant only ever uses its lower endpoint), $X_z$ eliminated by
credit segments, the dust block made \emph{splittable} across the
two half-circles (a greedy halving of any multiset of pieces
$\le$cap leaves $|m_1-m_2|\le$cap, so two sub-blocks of mass
$M/2+\text{cap}/2$ cover every real split -- the atomic block
saturated one side), and the costly $\Sigma_S\to1$ edge collapsed
by a \emph{degeneracy reduction}: there the heavy wall forces
$\sigma_2\le\varepsilon_0$, the best-subset row fills
$\beta\ge1-\varepsilon_0$, and both windows have width
$\le\varepsilon_0$, so fourteen dimensions reduce to four and the
whole strip $\Sigma_S\in(1,1.016]$ certifies in $85$ boxes
(six $\Sigma_S$-bands, $\sim32\,100$ boxes, cover the rest).
What remains sampled there: only the model-conditional occupant
channel -- the width cap $\omega\le1.6$ has since been removed: the
tail $\omega>1.6$ is box-certified at once for every width by
monotone-limit caps (the fast pieces grow linearly in $\omega$, the
receiving hole satisfies $c\ge1+z$, and each corona term is bounded
uniformly in $\omega$; script \texttt{espomegacola}, 5/5,
adversarial round: confirmed with no soundness gap)
(scripts \texttt{esppesada} and \texttt{espfinal}, 5/5 each,
adversarial rounds; the latter's referee found one surgical
soundness gap -- $\sigma_1$ can be excluded from the best subset,
so the strip's dust cap is $2\varepsilon_0$, not $\varepsilon_0$
-- repaired with identical counts, and proved the greedy-halving
invariant). Also settled: the $X_Y>0$ mirrored corona -- whose exact danger
geometry is now settled by a two-round exploration: at the
coexistence floor $c'=1+z$ the pair $(z,m)$ is exactly diametral,
and an extra piece $x$ of $X_Y$ fits beside $m$ precisely when
$x\le p(z,1;1+z)=z(z+1)/(z^2+z+1)$, the \emph{Descartes pocket of
the diametral pair} (degenerate form, exact at $R=a+b$; the
would-be sliver $x\in(p,1)$ has width $1/(z^2+z+1)$ --
$1/(2\varphi^2)$ at $z=\varphi$, where also
$p(\varphi,1;\varphi+1)=\varphi/2$ exactly, the golden pocket of
Theorem~\ref{thm:DP}'s counterexample, already met as the critical
point of the duality certificates above); a first adversarial round
confirmed this algebra by hand, and a second one then proved the
sliver \emph{vacuous}: the global tail of the pivot counts every
smaller piece, so $\Sigma_S+X_m+\Sigma X_Y\le\varphi$, which is
incompatible with the light-profile wall $\Sigma_S>1$ and
$x>p\ge0.91$ ($100\%$ of the previously sampled ``variety''
violates it, minimum $1.96$ against $\varphi$), and independently
the floor itself is \emph{rigid} -- the diametral pair is forced
tangent and the largest hole left in the container is exactly the
pocket, so no legal configuration ever placed such an $x$ murally
at the floor. The positive consequence: every \emph{legal} $X_Y$
piece measures at most $\varphi-\Sigma_S<\varphi-1<p(z)$
throughout the domain -- \emph{always sub-pocket} -- so the
light mirrored $X_Y>0$ cell \emph{closes} by vacuity plus a
certified $k$-piece corona: a nine-dimensional
branch-and-bound over the sweep box with the dust mass
$\mu\le\varphi-\Sigma_S-X_m$ as its own dimension (each piece
$\le\mu$ by pigeonhole) certifies the corona
$\{z,D_m,\sigma_2\}$ plus the dust block -- paid by mass, monotone
chain at most $\pi\mu/(c'-\mu)$, any $k$ -- throughout the domain
($1\,329$ boxes; end-to-end sanity with explicit $1$--$4$-piece
partitions, $250/250$ across seeds), all \emph{within the dust
convention} $X_Y<r_m$. The ``occupant $\ge r_m$ inside $Y$''
channel, long model-conditional with its tariff underived, now
has a \emph{derived} tariff -- two node walls: in the light
profile blocking implies $x<\sigma_2+\omega+X_x$
(Lemma~\ref{lem:DBo} ported to the mirrored cell: the opposite
disk packs $\{\sigma_2\}\cup\mathrm{children}(x)$ into $x$'s
hole and the row $S\setminus\{\sigma_2\}<1$ fills $m$'s vacated
disk; nothing else moves), and in \emph{every} profile
$x<\omega+\Sigma_S-1+\sigma_1+X_x$ (a greedy partition of $S$
sends $B$ into $x$'s hole and $A\le1$ to the row) -- and the
light channel is \emph{closed entirely on $v$ and on the depth-one
tower cut}: the exact tie $x=r_m$
is $\rho$-illegal (the first-copy convention makes the first
copy's tail collect the twin \emph{plus} all of $S$ and the
dust, $\ge1+\Sigma_S>2>\varphi$ -- the referee's find, which
dissolved the residual ``twin sheet'' the campaign had first
delimited: one more boundary vacuity), the band
$x\in(r_m,2/\varphi)$ is $\rho$-illegal by the tail of $x$
itself, the high band certifies by subdivision ($79\,277$ boxes,
using the trio floor of $\{z,m,x\}$ from a new \emph{crescent
lemma} -- exact angular windows around a near-mural giant,
matching the classical three-circle optimum $1+2/\sqrt3$ to
$10^{-13}$ -- and a \emph{dust-pooling} witness: the blocking
walls with the $X$-masses in situ are only one assignment, and
the witness repools $\sigma_2$ and the obstructing dust across
the free disks of $u$, of $z$'s hole and of $m$'s vacated disk),
and above the node wall the cell unblocks outright (script
\texttt{espcanal}, 5/5, adversarial round run on rigor criteria
alone); the heavy profile with $x$ explicit is certified as
well, by a \emph{dimension reduction} that the extra ring itself
makes possible: with $x\ge(1+\Sigma_S)/\varphi$ forced by $x$'s
tail, the capacity floor is far above espfinal's $1+z$, so the
whole partition load $A=S\setminus B^{*}$ is paid \emph{by mass}
as a splittable block with the exact cap
$\min(\beta,\varphi/2,\Sigma A)$ (every piece of $A$ is bounded
by each term: $\beta\ge\sigma_1\ge\max A$ even when $\sigma_1$
is excluded from $B^{*}$, maximality plus $\Sigma_S\le\varphi$
gives $a\le\varphi/2$, and a piece cannot outweigh its
multiset), the six explicit-partition dimensions collapse to
$\beta$ alone, and a new exact prune $\beta>\tfrac12$ (the
best-subset greedy stops only when every remaining piece exceeds
$1-\beta$, so $\sigma_1>1-\beta$ with $\sigma_1\le\beta$)
dissolves a phantom fine-dust band: five $\Sigma_S$-bands
($144\,337$ boxes) plus the tower cut ($15\,571$ boxes) certify
the heavy channel entirely (script \texttt{espcanalp}, 5/5,
adversarial round: the referee re-derived the reduction theorem
independently and probed it with $20\,000$-instance fuzz, and found the uncovered
tower cut and a mislabeled band edge, both repaired -- the tower
cut is now \emph{certified}, not declared); what remains of the
channel is the mid band at tower depth $\ge2$ and the structural
exclusion of $u$'s content; and
the heavy mirrored branch's dust is now \emph{merged} with the
reduction-lemma block as well -- on the $X=0$ cut, the $X_Y$ dust
(mass $\le\varphi-\Sigma_S$) joins the partition dust in a single
mass-paid chain, the hidden $\mu_Y$ range is handled by an
eight-segment ceiling/tail-credit split (a full waiver of the
dust's own tail term creates a \emph{phantom} tangency exactly at
$\Sigma_S\to1$, where that tail is the rescue), and the heavy wall
$\Sigma_S\ge1+\sigma_2$ prunes the light region already certified
-- whose exact boundary $\Sigma_S=1+\sigma_2$ is \emph{vacuous}
($S\setminus\{\sigma_2\}$ has mass exactly $1$, so $\beta=1$ and
the $\alpha$-window is empty): the mirrored cell with $X_Y>0$ is
closed in both profiles on its declared cuts, certified over six
$\Sigma_S$-bands ($\sim34\,800$ boxes) (scripts \texttt{espxy},
\texttt{espvals}, \texttt{espkp} and \texttt{esppesada}, 5/5 each,
four adversarial rounds; the second refuted a proposed
vacated-ball witness -- the
two modes of the repack lemma, positional and fresh-certificate,
must not be mixed in one container -- and found the missing
global-tail wall itself; the third confirmed the certificate with
no soundness gap and forced the convention to be declared); the caps of the sweep
boxes themselves have since been removed by the tail certificates
\texttt{espomegacola} and \texttt{r2bcolas} (scripts \texttt{r2bmulti} and \texttt{areduccion},
5/5 each, adversarial rounds: no soundness gap -- $1\,460$ directed
tests for the engine, hand-redone algebra and adversarial profiles
for the lemma; the rounds corrected domain claims to the sweep
boxes, replaced the solver-trusting margin by a float
re-verification of the returned gap vector, fixed a degenerate
half-circle that silently re-introduced the excluded antipodal
pair, and contributed the $4t_0$ incompatibility). The near-equal-tops
duality gap ($R_{\mathrm{fit}}/R_{\mathrm{lb}}\le1.0116$) was
first re-delimited by a \emph{conditional duality lemma}: for families whose pairs are all
non-stackable at the radius in play, the arc-LP radius of
Appendix~\ref{app:campaign} is the exact minimal mural-corona radius
on both sides, by compactification for necessity and the closed
arc-LP for sufficiency, reproducing the classical equal-circle
optima $1+2/\sqrt3$ and $1+\sqrt2$ at tangency; on the synthetic
non-stackable domain the true load-versus-tops interval has corner
supremum $1.082$, evaluated over a deterministic corner mesh --
now an \emph{abstract} arc-LP statement: a subsequent global-tail
audit showed that every sampled gap instance of the realistic
generator is $\rho$-illegal ($0/30$ under the full tail
legality), by a one-line \emph{forbidden-triple} fact: three
pieces whose second and third are $\ge(\varphi/2)\cdot$largest
already violate $\rho\le\varphi$ (their tail sum is
$\ge2\cdot(\varphi/2)=\varphi$ times the largest), while the
duality gap needs at least three near-equal tops (ratio
$0.9>\varphi/2$) -- and the complementary case, a third piece too
small for the forbidden triple, is closed by a \emph{forced
sub-pocket} lemma: full tail legality gives
$t_3<(\varphi-r_2)t_1-2$, the dominant pair's pocket at its pairs
floor is $q(r_2)t_1$ with $q(r)=r(1+r)/(1+r+r^2)$ (the same
degenerate-pocket function, scaled by the smaller piece), and
$(\varphi-r_2)t_1-2\le q(r_2)t_1$ holds throughout
$t_1\le2/(\varphi-r_2-q(r_2))$ -- at the cell's edge $r_2=0.9$ the
ceiling is $23.0$, growing to infinity at the real root
$r^{*}=0.9637$ of the golden cubic
$r^3+(2-\varphi)r^2+(2-\varphi)r=\varphi$, while the generator's
true domain is $t_1\le6.64$ -- so the third piece always inserts
freely, the trio sits at its pairs floor, and (checked against the
confinement strengthening of the lower bound, $100\%$ of $5\,300$
cell instances) $R$ certificates match: the cell has \emph{no}
legal instance of the phenomenon ($0$ gaps across $\sim11\,000$
adversarial legal samples, including sub-$m$ third pieces and
four-top profiles). The old $1.0116$ residue is therefore
\emph{withdrawn}: $\rho\le\varphi$ itself forbids the
configurations that carried it, and only the abstract arc-LP
statements (the conditional duality lemma and the synthetic
$1.082$ corner) remain; and the converse ``gap $\Rightarrow$
cell'', long declared open with only the empirical boundary
$0.9$, is now closed in an \emph{exact} form: a certified-gap
family, after an exact \emph{peeling} step (any piece below the
straight-wall pocket $p_\infty(a,b)=ab/(\sqrt a+\sqrt b)^2$ of
the core's smallest pair inserts interior to every wall corona
at every radius -- the Descartes pocket \emph{shrinks} as $R$
grows, so the flat-wall limit is the uniform floor -- leaving
the wall radius unchanged), must either retain a
\emph{stackable} pair at the arc-LP radius (some $u\ge t$ with
$R^{*}\ge u+2t$: the cyclic certificate loses that pair's angle)
or, with four or more core pieces, have a non-adjacent pair
dominating both complementary arcs of the minimal-sum cyclic
order at an intermediate radius; the complement is gap-free by
two small theorems (a three-piece non-stackable family reduces
the arc-LP to boxes, $\Sigma\theta\le2\pi$ exactly the cyclic
condition; and without stackable pairs the corner analysis of
the cyclic engine gives $\gamma_{\min}=\theta_w$ exactly, so
non-domination would make the system feasible below $R^{*}$).
Across $4\,000$ sampled families every one of the $1\,266$
certified gaps (gap-certification threshold $2\cdot10^{-6}$; the
band $(10^{-9},2\cdot10^{-6}]$ is declared unswept) lands in the
exact cell -- all of them via the
stackable-pair route: the true anatomy of the duality gap is a
stackable grain that fails to peel, not the near-equal tops
themselves (a peeled four-equal-tops family has \emph{no} gap;
script \texttt{f3converso}, 5/5).
The scripts
\texttt{f3cierre}, \texttt{auditcolas} and \texttt{f3vacio} (5/5
each) passed adversarial rounds: the first refuted the original
``closure'' claim and its sampled $1.030$ bound, the second
refuted a partial-tail audit and supplied the forbidden-triple
mechanism, the third corrected the declared generator domain
($6.64$, not $5.1$) and verified the confinement attack. The
shared-root \emph{trio} itself is certified by subdivision over its entire legal domain in the direct
orientation -- light interior, both frontiers and every depth at
once, via the free-leader reduction -- and over the $X=0$ cut of
the mirrored one, where the same branch-and-bound also proves the
mass wall $\Sigma_S\le\varphi$ \emph{necessary}: without it the
corner $\sigma_2\to1$ is legal and its corona genuinely fails, the
$(z,m)$ pair going exactly diametral (script \texttt{r2bcert}, 5/5,
adversarial round; the pan-repack legality behind the
mirror-pocket pincers is likewise no longer on this list: it is the
realization-and-repack lemma above). The optimization asterisk of
the \emph{bounded-family} domain sweeps is, for the shadow budgets
themselves, closed: since every term of a
shadow budget is monotone in each piece radius and in $s$, and
antitone in $R$, corner evaluation gives a valid upper bound on any
box, and an exhaustive branch-and-bound over the majorant's
hypothesis box (with the exact prunings $t_2\ge1+\Sigma$,
$u\le\varphi t_2$ and the $t_1$-link, the $\pi$-cap for
$t_1\to\infty$, and a normalized form with an analytic geometric
remainder for $t_2>10^3$) \emph{certifies}
$\sup G\le5.25$ on the principal box and $<2\pi-0.05$ globally --
margins $0.98$ and $0.19$ radians, twelve orders of magnitude above
IEEE rounding (script \texttt{optimizacion}, 5/5, adversarial round
with a repaired normalized-link bound and a certified-honesty
control: tightening the target below the true supremum $5.2115$
makes the branch-and-bound stall at exactly that value).

\subsection*{The extra-ring cells of the occupant channel}

The occupant channel ($\ge r_m$ inside $Y$'s container) has its
tariff \emph{derived} (the two node walls) and is closed in both
profiles on $v$ and on the depth-one tower cut; the mid band
$x\in[2/\varphi,\sigma_2+\omega+X_x)$ at tower depth $\ge2$ is closed
for chain towers in both profiles (scripts
\texttt{code/esptorre.py}, \texttt{code/esptorrep.py}: the tower
enters as mass on favourable floors only, and the naive reduction
to depth one fails because $t_1$'s children are not dust). The
inherited width ceiling $\omega\le1.6$ of the channel cells is
removed: the six criteria are $\omega$-generic, the mid range
$\omega\in[1.6,40]$ re-runs them over extended roots ($\sim$5M
boxes), and the tail $\omega\ge40$ closes by self-contained cap
arguments with margins $0.4$--$0.8$~rad
(\texttt{code/espomegacanal.py}).

The $k\ge2$ extra-ring cells are certified on their declared
domains (scripts \texttt{code/lemaA4.py} and
\texttt{code/lemaA5.py}; nine adversarial rounds, four
intermediate refutations repaired): \emph{light leaf extras} for
$\omega\le1.25$ with the full nested-mass range -- $W_z\le34$ by
domain, and the $W_z>34$ tail by a mode in which the nested mass
counts toward $z$'s tail, so $\rho\le\varphi$ both bounds the
uncovered sector by infeasibility and couples $z\le C_0+W_z$; the
piecewise-linear lower bound on the hosting capacity then has an
upper piece of slope $2/\varphi>1$ where the interior critical
points of the pair bound are \emph{maxima} -- the reverse of the
generic-slope case -- with a closed-form critical point, so the
supremum over the unbounded $z$-ray is computed from finitely many
candidates. \emph{Parent extras} (nested inside other extras of
$v$) are certified on $\omega\le1.05$, $W_v\le8$, $W_z\le34$
(extended node walls $T+W_z$, positional caps for $j\ge6$). The
width range $\omega\in[1.6,2]$ is certified for $j_v\le1$
configurations via a movable antipodal exemption in the placement
engine (any single-clamped pair $(z,\cdot)$ is exemptible by
coexistence), with the finding that the saturated-diametral
obstruction family is \emph{width-invariant}: the $j_v\ge2$
residues of all width ranges are one object. The \emph{heavy}
profile ($\Sigma_S\ge1+\sigma_2$) with $k\ge2$ extras is
certified on $\omega\le1.15$ over a 16-band manifest (the A7
heavy node wall applied per-extra, the partition collapsed to its
best-subset mass, exact ladder variants up to $j\le8$). The
light-leaf width cut rose from $1.15$ to $1.25$ via a
\emph{necessity} wall: the floor capacity $c^*$ of the mandatory
corona set, computed by bisection over the repository's audited
order-sum refuter (radial stacking handled). Every mirrored-cell
closure stays within the dust convention (every extra piece is
dust, of size $<r_m$; the extra \emph{masses} are bounded by the
global tail, not by $r_m$).

\subsection*{Historical coverage limits of the computational campaign}

The list below records the coverage limits of this particular campaign,
not remaining cases of the universal disk theorem.
Theorem~\ref{thm:golden-global} now settles the golden guarantee by a
different argument, independently of every computational closure below.
Each item retains its original epistemic label; bounded or sampled
claims are not upgraded to universal certificates by the new theorem.
\begin{enumerate}
\item[(i)] \emph{Swept:} the occupant-count direction $j$ of the
\emph{computational} scaling closures, swept to $j\le9$ (pan) and
$j\le8$ (nested) with uniform tangent duality and the exact
geometric growth $T_{k-1}\ge\varphi\,T_k$ of the cascade; the
\emph{written} theorems' occupant counts are closed by the
geometric-tail lemma.
\item[(ii)] \emph{Box-certified, on declared boxes:} every
subdivision certificate is rigorous over its declared box; the
former width, mass and piece-count ceilings are removed by the
tail certificates and the slot-reduction engine
(\texttt{code/espomegacola.py}, \texttt{code/r2bcolas.py},
\texttt{code/lemaA.py}--\texttt{code/lemaA3.py}). What remains:
the engine's mirrored-ESP corona boxes hold on their swept ranges
only.
\item[(iii)] \emph{Declared residues of the extra-ring cells:}
the band $\omega\in[1.25,1.6]$ and the width-invariant
$j_v\ge2$ family for $\omega>1.6$; parent extras with $W_v>8$ or
$\omega>1.05$ or $W_z>34$; the heavy profile outside
$\omega\le1.25$; branched towers (chain towers are closed in
both profiles); and the structural exclusion of $u$'s content (the
one model-conditional piece). The high-width, large-$W_z$ leaf
branch is certified on $\omega\in[1.6,6]$, $W_z>34$, excluding
the declared $j_v\ge2$ family; its $\omega>6$ continuation remains
open. The low-$W_z$, $j_v\le1$ branch is certified only through
$\omega=2$. Recorded truth probes found no violations; they do
not establish universal coverage.
\item[(iv)] \emph{Sampled:} the supporting sweep of the exact
``gap $\Rightarrow$ cell'' converse (closed in exact form;
the domination branch is an unexercised prediction; the realistic
cell itself carries no legal instance).
\item[(v)] \emph{Exposition program:} analytic proofs replacing the
remaining computational certificates. Their optimization budgets
are already certified, as recorded in Appendix~\ref{app:verifmap}.
A rigorous certificate on its
declared domain is already a proof; replacing it is not itself a
missing mathematical case. This is an exposition direction of Open
Problem~\ref{op:assembly}, not a gap in the global golden proof.
\end{enumerate}
Everything else in this appendix is a written proof, an exact
identity in $\mathbb{Q}(\sqrt5)$, a vacuity, or a subdivision
certificate recorded in the verification map
(Appendix~\ref{app:verifmap}).

\section{Verification details for Theorem~\ref{thm:twins}}\label{app:verif}

All feasibilities in the twin instances are two-circle-exact except three
facts, proved here; every numerical claim is also reproduced by the scripts in
the repository (\texttt{code/gemelas.py}).

\paragraph{(V1) $\{10,\,4.99,\,4.50\}$ does not pack in a disk of radius 15.}
As in Lemma~\ref{lem:cap}: $|c_A|\le5$, $|c_X|\le10.01$, $|c_Y|\le10.5$,
$|c_A-c_X|\ge14.99$, $|c_A-c_Y|\ge14.5$, $|c_X-c_Y|\ge9.49$; the first two
give $a:=|c_A|\ge4.98$. With $u=-c_A/|c_A|$,
$c_X\!\cdot\!u\ge(124.5-a^2)/(2a)\ge9.95$ and
$c_Y\!\cdot\!u\ge(100-a^2)/(2a)\ge7.5$ (both decreasing in $a$, evaluated at
$a=5$), with transverse norms at most
$\sqrt{10.01^2-9.95^2}\le1.095$ and $\sqrt{10.5^2-7.5^2}\le7.349$.
Hence $|c_X-c_Y|^2\le10.01^2+10.5^2-2(9.95\cdot7.5-1.095\cdot7.349)
\le77.30$, so $|c_X-c_Y|\le8.80<9.49$.

\paragraph{(V2) With the pan pair $\{10,5\}$ placed, no third circle of radius
$z\in\{4.74,4.76\}$ fits.}
From $|c_A|\le5$, $|c_B|\le10$ and $|c_A-c_B|\ge15$ the triangle inequality
forces equalities: the pair is diametrically rigid, $c_B=10u$ for a unit
vector $u$ and $c_A=-5u$. For a third circle of radius $z$:
$c_z\!\cdot\!u\ge\bigl((10{+}z)^2-(15{-}z)^2-25\bigr)/10$, which equals $8.8$
for $z=4.76$ and $8.7$ for $z=4.74$; then
$|c_z-c_B|^2=|c_z|^2-20\,c_z\!\cdot\!u+100\le(15-z)^2-20\,c_z\!\cdot\!u+100$,
giving $|c_z-c_B|\le5.38<9.76$ and $\le5.60<9.74$ respectively.

\paragraph{(V3) A feasible triple.}
The circles of radii $10$, $4.76$, and $4.74$ pack in a disk of
radius $15$. An exact boundary configuration is $c_A=(-5,0)$,
\[
c_X=\bigl(8.8,\ \sqrt{27.4176}\,\bigr),\qquad
c_Y=\bigl(8.7,\ -\sqrt{29.5776}\,\bigr).
\]
Then $|c_A|=5$; $|c_X|^2=77.44+27.4176=104.8576=10.24^2$ and
$|c_Y|^2=75.69+29.5776=105.2676=10.26^2$, so $|c_X|=15-4.76$ and
$|c_Y|=15-4.74$ (both on the boundary; the abscissas are the exact
tangency values $10.24\cdot\tfrac{55}{64}=8.8$ and
$10.26\cdot\tfrac{145}{171}=8.7$);
$|c_A-c_X|^2=13.8^2+27.4176=217.8576=14.76^2$ and
$|c_A-c_Y|^2=13.7^2+29.5776=217.2676=14.74^2$ (exact tangencies); and
$|c_X-c_Y|^2=0.01+(\sqrt{27.4176}+\sqrt{29.5776})^2>113.9>9.50^2=90.25$
with ample margin.

\paragraph{(V4) Hole facts.} The hole of the $10$ has radius $9.495$;
$4.99+4.50=9.49\le9.495$ (the pair fits, two-circle exact) while
$4.76+4.74=9.50>9.495$ and $5+4.50=9.50>9.495$ (blocked, two-circle exact).

\end{document}

%% file: golden_global.tex
\subsection{The uniform golden guarantee}\label{sec:goldenglobal}
\begin{theorem}[Global golden guarantee]\label{thm:golden-global}\label{conj:golden}
For every finite inventory of strictly decreasing positive outer radii
in a disk, every descending greedy execution returns the lexicographically
maximum feasible subinventory when $\rho\le\varphi$. The assertion
also holds with independent hole radii $0\le h_i<r_i$.
\end{theorem}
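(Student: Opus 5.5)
The plan is to follow the skeleton of Theorem~\ref{thm:oblivious}. We argue by induction on the prefix processed by the greedy. Suppose $F$ is the greedy placement of $G$ before ring $i$, and $P$ is a witness placing $G\cup\{i\}$; we need a container of $F$ that admits $i$. As in that proof, let $m$ be the largest ring of $G$ on whose parent $F$ and $P$ disagree. Put $u=c_F(m)$ and $v=c_P(m)$. Let $S$ be the set of rings smaller than $m$ that $P$ keeps in $u$, each carrying its subtree rigidly; this is legal by the realization-and-repack lemma. It suffices to build a feasible witness $P'$ that agrees with $F$ on all rings of radius $\ge r_m$. Iterating this exchange then terminates exactly as in Theorem~\ref{thm:oblivious}. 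With independent holes nothing changes in this reduction, since the argument never uses the hole size of $m$. The only resource it needs is the vacated outer ball $D_m$ of radius $r_m$ inside $v$, together with whatever room $u$ offers. The whole difficulty is therefore an exchange lemma: if $\rho\le\varphi$, then $S$ can always be reinserted.

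The key geometric input will be the ``three-largest-disks theorem'' announced in the abstract. It states that if a list of disks $x_1\ge x_2\ge\cdots$ satisfies golden tail bounds $\sum_{l>k}x_l\le\varphi x_k$, then the whole list fits in a disk of radius $C$ if and only if its three largest members fit. I would prove this first. The idea is to place the top three as a corona; the tangency pockets this leaves (Descartes pockets, Proposition~\ref{prop:S5} and its general-$R$ form $b_R$) then receive the remainder recursively. The tail bound guarantees that the mass after $x_3$ is at most $\varphi x_3$. The golden identities ($b_2(\varphi,1)=\varphi/2$, $2b(\varphi)=\varphi$, and the factorization $(A^2-A-1)(2A+1)$ from Theorem~\ref{thm:DP}) suggest that this mass fits into the pockets, via rows (Lemma~\ref{lem:row}) or the opposite disk (Lemma~\ref{lem:DR}), with the extremal case at exact golden tangency. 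The induction here runs on list length, with the pockets acting as smaller containers in which the same tail bounds hold.

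With this lemma available, the exchange step splits by the shape of the reinsertion, following the case (a)/(b)/(c) partition of Figure~\ref{fig:casetree}, but uniformly rather than template by template. \emph{Chains.} If $S$ has a single top-level member, it fits alone in $D_m$ since it is smaller than $m$ (the single-child construction). \emph{Branching.} If $S$ has two or more top-level members, take $\sigma_1$ and row the greatest prefix of mass $\le1$ into $D_m$. For the spill, I would use two auxiliary slots. One slot is $D_m$ itself. The other is a Descartes pocket of the pair formed by $m$ and its largest neighbour in $u$: in $u$, after moving $m$, the family consisting of $m$, its neighbours, and the spill satisfies golden tails, because every member's tail in the inventory dominates its tail within the family. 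By the three-largest-disks theorem, feasibility of this family then reduces to a trio test. That trio test in turn reduces to the pocket inequalities $q\le g(A)$ and $1+2g(A)\ge\varphi A$, the same inequalities that close Theorem~\ref{thm:fourfloor}. The identities used there give exactly $\rho>\varphi$ whenever the test fails, which is a contradiction.

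I expect the main obstacle to be the three-largest-disks theorem itself, and specifically making the pocket recursion respect the golden tail bounds uniformly when the container is a hole whose radius is unrelated to the ring widths. I would first attempt the case of a diametrically rigid top pair, where the pockets are explicit. I would then reduce the general case to it by antitone containment, shrinking the container to the pair's intrinsic disk as in Lemma~\ref{lem:DG}. Sharpness is already supplied by Theorem~\ref{thm:golden}, and the endpoint $\rho=\varphi$ is included because every inequality above closes non-strictly at golden tangency.
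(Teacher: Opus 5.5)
\textbf{The exchange step.} Your reduction to one exchange at the largest disagreement $m$ matches the paper. The gap is in how you reinsert. Your exchange lemma draws on only two resources: the vacated disk $D_m$, and room beside $m$ inside $u$, organised around the pocket of $m$ and its largest larger neighbour there.

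That suffices when $u$ contains a ring larger than $m$. With two or more such rings, the trio of your family was already certified by $F$. With exactly one, $A$, a failed pocket test forces $\sigma_1\ge q>g(A)$, and the identities of Theorem~\ref{thm:fourfloor} give $\rho>\varphi$.

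It fails when $u$ contains no larger ring, and then the needed room can lie outside $u\cup D_m$ altogether. Take independent holes, which the statement covers. The root has radius $3.43$ and holds $a_1=2.12$ and $a_2=1.31$ diametrally, with $h_{a_1}=1$ and $h_{a_2}=1.11$; the rings $1>0.56>0.55$ are solid. Let $F$ nest $m=1$ in $a_2$, and let $P$ nest $m$ in $a_1$ and $\{0.56,0.55\}$ in $a_2$. Then:
\begin{itemize}
\item $\rho\approx1.613<\varphi$;
\item $D_m$ is the whole hole of $a_1$ and takes only one small ring;
\item nothing fits beside $m$ in $u$;
\item the only home for the other small ring is a root pocket of radius $\beta(a_1,a_2)\approx1.06$, in a container that is neither $u$ nor $v$.
\end{itemize}

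This is why the paper splits on the tree of rings larger than $m$, not on the shape of $S$. All smaller rings may be rebuilt as leaves anywhere. After reserving the next radius $p$, the first part of Lemma~\ref{gt:tails} gives $U=T-p\le m$, so two radius-$m$ slots anywhere suffice. The slots come from three cases:
\begin{itemize}
\item a container $w$, anywhere in the tree, with at least three larger children supplies them by Theorem~\ref{gt:three} applied to $a_1,\dots,a_k,m,m$;
\item a container with exactly two larger children supplies them through its diametral pockets, since $m+T\le2\beta(a,b)$;
\item if the larger tree is a chain, one of $u,v$ has exactly one larger child $a$. The pockets of $(a,m)$ then take two balanced rows of total at most $T/2\le\beta(a,m)$. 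When that container is $v$, $m$ is a phantom there and $p$ goes in its envelope.
\end{itemize}

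\textbf{The three-largest-disks theorem.} Your route here also misses the key step. Shrinking to the intrinsic disk $x_1+x_2$ of a rigid top pair works only when $x_3\le\beta(x_1,x_2)$. When $x_3>\beta(x_1,x_2)$, the top trio does not fit in radius $x_1+x_2$ at all, so there is no rigid pair. Pockets are not disks, so they cannot host a recursive use of the theorem either.

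The controlling quantity is not the mass after $x_3$. It is $x_3+U\le2\beta(x_1,x_2)$, from the two tails at $x_1,x_2$ (second part of Lemma~\ref{gt:tails}). The missing idea is a compensation. Place the trio in its minimal, fully tangent container. The gap of the pair $(x_1,x_2)$ opposite $x_3$ then contains a disk $d$ with $x_3+d\ge2\beta(x_1,x_2)$, by inversion at the tangency point of $x_1,x_2$ together with the Descartes reflection identity (Lemma~\ref{gt:opposite}). So the entire remainder fits in that gap as one row.

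In the regime $x_3\le\beta(x_1,x_2)$, no recursion is needed either. Put $x_3$ in one pocket. Put $x_4$ and a row $V$ of the rest together in the other, by the two-disk pocket Lemma~\ref{gt:arbelos}. Its hypotheses hold because the tails give $x_4+V\le x_2$ and $V\le x_3$.
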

The greedy semantics are those of Section~\ref{sec:model}: previously
chosen parents stay fixed, but sibling coordinates may be rearranged.
A rejection means that no container admits the extended forest.
The proof works for an arbitrary finite inventory; it does not enumerate
successive sizes. Write $\beta(a,b)=ab(a+b)/(a^2+ab+b^2)$.

\subsection{Tail bounds and a geometric pocket}

A list of disks whose radii sum to $s$ fits in a disk of radius $s$:
place them consecutively along a diameter. This row construction may
replace a disk envelope without changing its surrounding packing.

\begin{lemma}[Two tail bounds]\label{gt:tails}
If $p+U\le\varphi m$ and $U\le\varphi p$, then $U\le m$.
If $a>b>0$, $T\le\varphi b$, and $b+T\le\varphi a$, then
$T\le2\beta(a,b)$.
\end{lemma}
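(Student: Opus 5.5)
The first bound is a one-line consequence of the golden identity $1+1/\varphi=\varphi$, and the second reduces, after normalizing $b=1$, to the two factorizations already proved in Theorem~\ref{thm:fourfloor}.

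\emph{First bound.} I take $p\ge0$, since it is a radius; this is where $U\le\varphi p$ is used. From $U\le\varphi p$ we get $p\ge U/\varphi$. Substituting into $p+U\le\varphi m$ gives
\[
U\Bigl(1+\tfrac1\varphi\Bigr)\le p+U\le\varphi m .
\]
Since $1+1/\varphi=\varphi$, this reads $\varphi U\le\varphi m$, that is, $U\le m$. Nothing more is needed.

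\emph{Second bound: normalization.} Both hypotheses and the conclusion are homogeneous of degree one, because $\beta(\lambda a,\lambda b)=\lambda\beta(a,b)$. So I normalize $b=1$ and write $x=a/b>1$. The hypotheses become $T\le\min(\varphi,\ \varphi x-1)$. With $D=x^2+x+1$, the target is
\[
T\le 2\beta(x,1)=2g(x),\qquad g(x)=\frac{x(x+1)}{D},
\]
which is the diametral pocket function $g$ of Theorem~\ref{thm:fourfloor}. It therefore suffices to show $\min(\varphi,\varphi x-1)\le 2g(x)$ for every $x>1$.

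\emph{Second bound: the two ranges.} I split at $x=\varphi$, where the two terms of the minimum coincide, and reuse the identities proved in Theorem~\ref{thm:fourfloor} with $A=x$.
\begin{itemize}
\item For $x\ge\varphi$, I use the bound $T\le\varphi$. The identity $D\bigl(2g(x)-\varphi\bigr)=(x-\varphi)\bigl((2-\varphi)x+1\bigr)$ has both factors nonnegative, since $2-\varphi>0$. Hence $2g(x)\ge\varphi\ge T$.
\item For $1<x\le\varphi$, I use $T\le\varphi x-1$. The identity $D\bigl(1+2g(x)-\varphi x\bigr)=(\varphi-x)\bigl(\varphi x^2+(2\varphi-2)x+\varphi-1\bigr)$ has both factors nonnegative, the second having positive coefficients. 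Hence $2g(x)\ge\varphi x-1\ge T$.
\end{itemize}

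Undoing the normalization gives $T\le2\beta(a,b)$. I see no real obstacle: the only thing to get right is the case split at $x=\varphi$ so that the correct tail hypothesis is used on each side. The two polynomial identities are the kernel-checked ones from \texttt{FourRing.lean}, and they can be re-expanded symbolically using $\varphi^2=\varphi+1$ if needed.
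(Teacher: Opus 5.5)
Your proof is correct and is essentially the paper's: the first bound is the paper's chain $(1+\varphi)U\le\varphi(p+U)\le\varphi^2m$ divided through by $\varphi$, and the second uses the same normalization $b=1$, the same split at $x=\varphi$, and the same two factorizations. The appeal to $p\ge0$ is unnecessary, since $p\ge U/\varphi$ follows from $U\le\varphi p$ alone.
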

\begin{proof}
For the first assertion,
$(1+\varphi)U\le\varphi p+\varphi U\le\varphi^2m=(1+\varphi)m$.
For the second, normalize $b=1$, put $x=a/b>1$,
$g=x(x+1)/(x^2+x+1)$, and write $t=T/b$. We have
$t\le\min(\varphi,\varphi x-1)$. The identities
\begin{align*}
 2x(x+1)-\varphi(x^2+x+1)
   &=(x-\varphi)((2-\varphi)x+1),\\
 (x^2+x+1)+2x(x+1)-\varphi x(x^2+x+1)
   &=(\varphi-x)\bigl(\varphi x^2+(2\varphi-2)x+\varphi-1\bigr)
\end{align*}
give $2g\ge\varphi$ when $x\ge\varphi$, and
$1+2g\ge\varphi x$ when $1<x\le\varphi$. Thus $t\le2g$ in
both cases. Rescale by $b$.
\end{proof}

\begin{lemma}[Balanced rows]\label{gt:split}
Let a nonempty finite list of positive radii have total $T$ and largest
member $p$. After reserving $p$, its remaining members can be divided
into two groups, each of total at most $T/2$.
\end{lemma}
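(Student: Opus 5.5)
The plan is to build a balanced partition greedily and then argue that the worst case is controlled by the reserved largest member $p$. Let the remaining members be $x_1\ge x_2\ge\dots\ge x_k$, all at most $p$, with total $T-p$. I will process them in decreasing order, each time placing the current member into the group whose running total is currently smaller (LPT-style balancing). Write $G_1,G_2$ for the final totals and $\Delta=|G_1-G_2|$. Since $G_1+G_2=T-p$, the larger group has total $\bigl(T-p+\Delta\bigr)/2$. So it suffices to prove $\Delta\le p$: then the larger group has total at most $T/2$.

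The key invariant I will maintain is that after each step the difference of running totals is at most the largest member processed so far, hence at most $p$. It holds trivially after the first assignment, where the difference is $x_1\le p$. For the inductive step, suppose the current difference is $d\le p$ and the next member $x$ (with $x\le p$) goes to the smaller group. The new difference is $|d-x|\le\max(d,x)\le p$, because both $d$ and $x$ lie in $[0,p]$. This is the whole mechanism, and it already gives the bound without needing the decreasing order. The order only makes the statement robust and lets me phrase it as the greedy of Step~2 elsewhere if desired.

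The final step concludes: larger group $\le(T-p+p)/2=T/2$, and the smaller one a fortiori. The degenerate cases need a check: if the list is just $\{p\}$, both groups are empty with total $0\le T/2$. Ties where $x_i=p$ are harmless since the invariant uses only $x\le p$.

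I expect no real obstacle. The one point to state carefully is that the reserved member is the largest of the whole list, since the bound $\Delta\le p$ is what absorbs the reservation exactly. If a sharper use later needs each group at most $(T-p)/2+p/2$ with an explicit witness, the same invariant supplies it directly.
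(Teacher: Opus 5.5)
Your proof is correct and is essentially the paper's argument: assign each member to the currently lighter group, keep the load difference at most $p$ via $|d-x|\le\max(d,x)$, and conclude from $u+v=T-p$ and $|u-v|\le p$ that $2u,2v\le T$. As you note yourself, the decreasing order is unnecessary, and the paper's proof does not use it either.
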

\begin{proof}
Assign each remaining member to the currently lighter group. Both load
differences remain at most $p$, since the added member is at most $p$.
At the end the loads $u,v$ satisfy $u+v=T-p$ and $|u-v|\le p$;
hence $2u,2v\le T$. Each group fits its row envelope.
\end{proof}

Place disks $a\ge b>0$ in a disk of radius $a+b$, with centers
$(-b,0)$ and $(a,0)$. The upper and lower pockets have inscribed
disks of radius $\beta(a,b)$, each tangent to the two disks and the wall.
Their centers are
\[
 \left(\frac{(a-b)(a+b)^2}{a^2+ab+b^2},\ \pm2\beta(a,b)\right).
\]
These coordinates verify all three tangencies and separation of the two
pockets directly. We need a stronger fact about a single pocket.

\begin{lemma}[Two unequal disks in one pocket]\label{gt:arbelos}
If $0<r,s\le\beta(a,b)$ and $r+s\le b$, the two disks of radii
$r,s$ fit simultaneously in the upper pocket. The lower pocket remains
available. A zero radius may be omitted.
\end{lemma}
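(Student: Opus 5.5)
We will work with wall-tangent (mural) placements and the half-angle calculus of Lemma~\ref{lem:S1}, in the disk of radius $R=a+b$. With the two given disks at $(-b,0)$ and $(a,0)$, both are internally tangent to the wall, at angular positions $\pi$ and $0$. With $f(x)=x/(R-x)$ we have $f(a)=a/b$ and $f(b)=b/a$, so $f(a)f(b)=1$ and $\theta(a,b)=\pi$ exactly: the pair is diametral, as in Proposition~\ref{prop:S5}.

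The plan is to put both small disks on the wall in the upper half, in the order $b,x,y,a$ by angle, where $\{x,y\}=\{r,s\}$ is an order still to be chosen. By Lemma~\ref{lem:S1}, and using that the lower half-plane keeps the lower pocket untouched, it suffices to show
\[
\theta(b,x)+\theta(x,y)+\theta(y,a)\;\le\;\pi .
\]
If this holds, we place $x$ at angle $\theta(b,x)$ and $y$ at angle $\pi-\theta(y,a)$. Consecutive pairs are then separated by at least their thresholds. The non-consecutive pairs $(b,y)$, $(x,a)$ and $(a,b)$ are handled as in Lemma~\ref{lem:S2}, using monotonicity of $\theta$ together with $x,y\le\beta\le b\le a$. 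All the angles are defined, since $x+y\le b<R$ and $x+a\le\beta+a\le R$. The endpoint case $x=\beta$, $y=0$ gives exactly $\theta(b,\beta)+\theta(\beta,a)=\pi$; this is the tangency of the Descartes pocket and a useful check. A zero radius is simply dropped, and a single disk of radius at most $\beta$ sits concentrically in the pocket.

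\emph{Reduction to an extremal boundary.} Each $\theta$ is increasing in each argument, so the left side $\Lambda(x,y)$ is increasing in $x$ and in $y$. It is therefore enough to treat the maximal points of the region $\{0\le r,s\le\beta,\ r+s\le b\}$. These are the segment $r+s=b$ with $b-\beta\le r\le\beta$, together with the corner $r=s=\beta$ when $2\beta\le b$. That corner is excluded unless $2\beta\le b$, and the latter fails because $\beta\ge b/2$; indeed $2\beta-b=b\,(a^2+ab-b^2)/(a^2+ab+b^2)\cdot$(positive) $\ge0$ for $a\ge b$. So only the segment $r+s=b$ remains. On it we choose the order greedily: put the larger disk next to $b$, the smaller neighbour. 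This mirrors the zigzag intuition of Proposition~\ref{prop:Dzigzag}, where large disks are not made adjacent to the largest neighbour. We then need to prove $\Lambda(x,b-x)\le\pi$ for $x\in[b/2,\beta]$.

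\emph{The algebraic step.} Write the three half-angles $A,B,C$ with $\sin^2 A=f(b)f(x)$, $\sin^2 B=f(x)f(y)$, $\sin^2 C=f(y)f(a)$. The goal is then $A+B+C\le\pi/2$, that is,
\[
\sin B\le\cos(A+C)=\cos A\cos C-\sin A\sin C .
\]
After squaring, this becomes a polynomial inequality in $x$ (with $y=b-x$) and $a,b$, after normalizing $b=1$. The approach copies the factorizations in Lemma~\ref{lem:S3} and Lemma~\ref{lem:CF}: the cross terms factor through $f(x)f(y)$, and one divides by it. I expect this to be the main obstacle. The inequality is tight at $x=\beta$ (where $y=b-\beta$ and $\theta(b-\beta,\cdot)$ enters), so there is no slack, and I would try to exhibit an exact factorization with a factor $(\beta-x)$ and a cofactor of visible sign. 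Should the $b$-side-first order fail near $x=b/2$, the fallback is the symmetric point: at $x=y=b/2$ we would verify $\Lambda\le\pi$ directly (it is slack there), and then argue that $\Lambda(x,b-x)$ is convex in $x$ on the segment, via the convexity of $g$ in Proposition~\ref{prop:Dzigzag}. A convex function is bounded by its endpoint values, so it suffices to check the two endpoints $x=b/2$ and $x=\beta$.
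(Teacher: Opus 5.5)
Your geometric setup matches the paper's. Both small disks are wall-tangent in the upper half, $x$ at angle $\theta(b,x)$ next to $b$ and $y$ at $\pi-\theta(y,a)$ next to $a$, so everything reduces to $\theta(b,x)+\theta(x,y)+\theta(y,a)\le\pi$. But that inequality is the entire content of the lemma, and you do not prove it. The algebraic step is announced as ``the main obstacle'' and left open. The fallback is unsupported: the convexity in Proposition~\ref{prop:Dzigzag} is convexity in $\log f$, not along the segment $x+y=b$. Even if it held, the endpoint $x=\beta$ would still have to be proved.

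The missing observation is that $f(a)f(b)=1$ makes the middle half-angle a product, $\sin B=\sin A\sin C$. Hence $A+B+C\le\pi/2$ is equivalent to $\cos A\cos C\ge2\sin A\sin C$, i.e.\ to $\sin^2A+\sin^2C+3\sin^2A\sin^2C\le1$. Multiplying by $ab(L-x)(L-y)$ with $L=a+b$ gives the margin
\[
L^2\bigl[a(b-x-y)+(a-b)x\bigr]+(a-b)^2xy\;\ge\;0 .
\]
This is visibly nonnegative under $x+y\le b$ and $a\ge b$, for either assignment of $r,s$ to the two sides. So neither your reduction to the segment $x+y=b$ nor the greedy choice of order is needed.

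Your diagnosis of where the difficulty lies is also wrong. On $x+y=b$ the margin equals $(a-b)x\bigl[L^2+(a-b)y\bigr]$, which is strictly positive for $a>b$. It vanishes only when $a=b$, and then along the whole segment, so no factor $(\beta-x)$ exists. Your ``check'' $\theta(b,\beta)+\theta(\beta,a)=\pi$ is the one-disk pocket condition, not a value of your $\Lambda$; indeed $\Lambda(\beta,0)=\theta(b,\beta)<\pi$.

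That one-disk condition is, however, exactly what handles the non-consecutive pairs. The pair $(x,a)$ is \emph{not} covered ``as in Lemma~\ref{lem:S2}'': with $x>y$ the middle disk does not dominate, since $\theta(y,a)<\theta(x,a)$. The correct reason is $\theta(b,x)+\theta(x,a)\le\theta(b,\beta)+\theta(\beta,a)=\pi$ for $x\le\beta$. This is the admissible-interval statement the paper uses.

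Finally, your claim that the lower pocket stays free needs one more remark. The diameter inside the container is covered by the two large disks, so a disk disjoint from them and centred above the diameter cannot cross it.
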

\begin{proof}
Put $L=a+b$. A disk of radius $x\le\beta(a,b)$ tangent to the wall
has its center in the upper half-plane at an admissible angle exactly in
\[
 \left[2\arcsin\sqrt{\frac{bx}{a(L-x)}},\quad
 \pi-2\arcsin\sqrt{\frac{ax}{b(L-x)}}\right].
\]
The law of cosines gives the two endpoints. The interval is nonempty
exactly when $x(a^2+b^2)\le ab(L-x)$, equivalent to $x\le\beta(a,b)$.
Place $r$ at its lower endpoint and $s$ at its upper endpoint, and set
\[
 A^2=\frac{as}{b(L-s)},\qquad B^2=\frac{br}{a(L-r)},\qquad A,B\ge0.
\]
Their angular separation is $\pi-2\arcsin A-2\arcsin B$; the
required separation is $2\arcsin(AB)$. It suffices that
$A^2+B^2+3A^2B^2\le1$: this implies
$\sqrt{1-A^2}\sqrt{1-B^2}\ge2AB$ and therefore
$\arcsin A+\arcsin B+\arcsin(AB)\le\pi/2$.
After multiplication by the positive denominator $ab(L-r)(L-s)$,
the margin is
\[
 L^2\bigl[a(b-r-s)+(a-b)r\bigr]+(a-b)^2rs\ge0.
\]
The disks lie wholly above the diameter: its intersection with the
container is covered by $a,b$, so a disk disjoint from them cannot
cross it. The lower pocket is consequently free.
\end{proof}

\subsection{The minimal triple and its opposite gap}

\begin{lemma}[A fully tangent minimal triple]\label{gt:minimal}
For $a\ge b\ge c>\beta(a,b)$, the three disks admit a smallest
containing disk of radius $R_0>a+b$, in which they are pairwise tangent
and all tangent to the wall. With
\[
 A=1/a,\quad B=1/b,\quad C=1/c,\quad t=\sqrt{AB+AC+BC},
\]
its radius satisfies $1/R_0=2t-A-B-C$.
\end{lemma}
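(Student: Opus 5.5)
The plan is to identify $R_0$ with the outer Soddy circle of the pairwise tangent triple, and to prove minimality through the angular machinery of Lemma~\ref{lem:S1} together with the mural compaction of Lemma~\ref{lem:compact}.

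\emph{Existence and the bound $R_0>a+b$.} The set of radii $R$ at which the triple packs is closed, by the compactness argument of Lemma~\ref{lem:S6a}(2), and it is nonempty (any $R\ge a+b+2c$ works). Hence a smallest radius $R_0$ exists. Below $a+b$ the pair $\{a,b\}$ alone does not fit, by the two-circle exactness. At $R=a+b$ the pair is diametrically rigid, and the rescaled Proposition~\ref{prop:S5} shows that a third disk fits only if its radius is at most $\beta(a,b)$. Since $c>\beta(a,b)$, the triple does not pack there, so $R_0>a+b$.

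\emph{Reduction to a corona.} I would show that every pair is non-stackable at every $R$ slightly above $a+b$ that is relevant, namely $R<\max+2\min$ for each pair, and in particular at $R\le R_0$. For $\{a,b\}$ and $\{a,c\}$ this follows once $R_0<a+2c$. To get an upper bound on $R_0$, I would exhibit a packing explicitly: put the pair $\{a,b\}$ in the rigid configuration and move $c$ outward. For $\{b,c\}$ one needs $R_0<b+2c$. I expect to derive this from $c>\beta(a,b)\ge\tfrac23 b$ together with the explicit Soddy radius below. This inequality check is where I expect the real obstacle, and I would try to reduce it to a polynomial inequality in $(x,y)=(a/b,c/b)$.

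Granting non-stackability, Lemma~\ref{lem:compact} says that packability at $R$ is equivalent to a mural corona. For three circles the corona condition is $\theta(a,b)+\theta(b,c)+\theta(c,a)\le 2\pi$, which follows from Lemmas~\ref{lem:Dgaps} and~\ref{lem:Dsubset} and the exact three-circle construction of Lemma~\ref{lem:S2}. Each $\theta$ is strictly decreasing in $R$, because $f(x)=x/(R-x)$ decreases. So $R_0$ is the unique solution of $\theta(a,b)+\theta(b,c)+\theta(c,a)=2\pi$. At that radius each angular gap equals its threshold, which means the three disks are pairwise tangent and all three are wall-tangent.

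\emph{The radius formula.} Four mutually tangent circles, three of them internally tangent to the fourth, satisfy Descartes' theorem. The outer curvature is $-1/R_0$, so
\[
-1/R_0=A+B+C\pm2t .
\]
The $+$ sign gives a negative radius, so it is impossible. Hence $1/R_0=2t-A-B-C$, and this quantity is positive exactly because a finite enclosing tangent circle exists. As a cross-check on the sign and on the constraint $R_0>a+b$, I would verify that at $c=\beta(a,b)$ the formula returns $R_0=a+b$, the vanishing-discriminant identity already used in the paper. The formula's dependence on $c$ is then monotone, which also confirms $R_0>a+b$ for $c>\beta$.
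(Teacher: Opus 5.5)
There is a genuine gap in your reduction to a corona. The inequality you need for the pair $\{b,c\}$, namely $R_0<b+2c$, is false in general. You have already shown $R_0>a+b$, and $a+b\ge b+2c$ as soon as $a\ge 2c$, which the hypotheses allow. For example, take $a=2$, $b=1$, $c=0.9>\beta(2,1)=6/7$; the formula gives $R_0\approx3.005>2.8=b+2c$. In that regime the pair $\{b,c\}$ is stackable at every radius $\ge R_0$, so Lemma~\ref{lem:compact} does not apply. Its radial, angle-preserving compaction can genuinely fail for such families, as the lemma's own tightness remark warns. The polynomial inequality you planned to check therefore cannot hold. Without this step you only have the constructive direction, $\theta_{ab}+\theta_{ac}+\theta_{bc}\le2\pi\Rightarrow$ packing. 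That bounds $R_0$ above by the Soddy radius but does not exclude a smaller, non-mural optimal packing. So neither the full tangency structure nor $1/R_0=2t-A-B-C$ is yet proved.

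The missing idea is a compaction that works for any three disks in the plane, with no stackability hypothesis. Take a disk with center $x$ and let $x_1,x_2$ be the other two centers. The two closed half-planes $\{v:(x-x_j)\cdot v\ge0\}$ through the origin always share a nonzero direction $v$. Sliding the disk along $v$ never decreases its distance to either other center. Slide it until it touches the wall, fix it, and repeat for the other two disks. This shows that packability at $R$ is equivalent to a mural packing. For $R>a+b$, where all pair sums are below $R$, that in turn is equivalent to $\theta_{ab}+\theta_{ac}+\theta_{bc}\le2\pi$.

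With that in place, the rest of your argument goes through as written. Strict monotonicity in $R$ makes $R_0$ the unique root of $\sum\theta=2\pi$. The three directed gaps are then forced to equal their thresholds, which gives all the tangencies. Descartes' theorem with your sign choice gives the radius. Your check at $c=\beta(a,b)$, together with monotonicity in $c$, is a nice independent confirmation of $R_0>a+b$ once the formula is justified. It presupposes the mural reduction, though, so it cannot replace it.
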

\begin{proof}
Existence of a minimum follows by compactness, using a row as an upper
bound. In any feasible placement of three disks, each disk can be moved
to the wall while the other two are fixed. For its center $x$, choose
a nonzero direction $v$ such that $(x-x_j)\cdot v\ge0$ for the
other two centers. Two closed half-planes through the origin have such
a common direction in the plane. Distances to both other centers are
nondecreasing along the ray. Move to the wall and repeat for the others.

For wall-tangent disks with pair sums at most $R$, let
$\theta_{ij}\in[0,\pi]$ denote their minimum angular separation.
Three such positions exist exactly when
\[
 \theta_{ab}+\theta_{ac}+\theta_{bc}\le2\pi.
\]
Necessity follows by summing the three directed cyclic gaps, each at
least its corresponding minimum. Conversely choose gaps in
$[\theta_{ij},\pi]$ summing to $2\pi$; their endpoints have sums
at most $2\pi$ and $3\pi$, so this is possible. Each gap is then
the smaller angle for its pair.

Two-disk necessity gives $R_0\ge a+b$. At equality the two largest
are forced into the diametral arrangement. Move $c$ to the wall while
fixing them and use the interval in Lemma~\ref{gt:arbelos}; it would
give $c\le\beta(a,b)$, a contradiction. Thus $R_0>a+b$. All three
minimum angles are continuous and strictly between $0$ and $\pi$
near $R_0$. Their sum equals $2\pi$, since a strict inequality would
permit decreasing $R_0$. Taking these gaps realizes every tangency.
Descartes' equation for the oriented curvatures $A,B,C,-1/R_0$ gives
the stated formula; its other root has $1/R_0<0$.
\end{proof}

\begin{lemma}[Compensation by the other gap]\label{gt:opposite}
In the placement of Lemma~\ref{gt:minimal}, there is a further disk
of radius $d>0$, disjoint from all three disks and contained in the
same container, with $c+d\ge2\beta(a,b)$.
\end{lemma}
\begin{proof}
First justify the geometric existence. Invert about the tangency point
of $a,b$. Their boundaries become parallel lines and their interiors
become the exterior half-planes of a strip. The container interior
becomes the exterior of a circle inscribed in the strip. A disk inside
the strip tangent to both lines has radius equal to half its width;
its center lies on the midline. Tangency to the image of the container
leaves exactly two such disks, on opposite sides of that circle. They
have disjoint interiors. The inversion center lies inside the image
of the container circle, hence outside both solution disks; their inverse
images are bounded disks. Inverting back gives two admissible disks in
the original container, one being $c$ and the other denoted $d$.
This argument identifies the two components of the free region; a
height inequality alone would not establish their disjointness.

Set $S=A+B$, $P=AB$, and $Q=S^2-P$. Then $t^2=P+SC$ and
$c>\beta(a,b)=S/Q$ imply $SC<Q$, hence $t<S$ and $C<S$.
The classical Descartes reflection formula for the two solutions
tangent to $a,b$ and the wall \cite[Section 3.3]{GrahamEtAl2005} gives
\[
 D=\frac1d=2(A+B-1/R_0)-C=4S+C-4t>C>0.
\]
The roots are distinct because $t<S$, and inversion has exhibited
exactly two admissible disks, so the reflection identifies the other one.
Using $t^2=P+SC$ and the displayed expression for $D$ yields
\[
 Q(C+D)-2SCD=2(S-t)^2(2S+2t-C)\ge0.
\]
Divide by $QCD>0$ to obtain $c+d\ge2S/Q=2\beta(a,b)$.
\end{proof}

\subsection{Three largest disks determine local feasibility}

\begin{theorem}[Three largest disks]\label{gt:three}
Let $a>b\ge c\ge r_4\ge\cdots\ge r_n>0$, $n\ge3$, and
suppose every tail is at most $\varphi$ times its corresponding radius.
The entire list fits a disk of radius $R$ if and only if $a,b,c$ fit.
Equality between the smaller radii is permitted.
\end{theorem}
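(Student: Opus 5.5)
The "only if" direction is immediate, since removing disks keeps a packing. For the converse, suppose $a,b,c$ fit in radius $R$. Write $U$ for the total of the remaining disks $r_4,\dots,r_n$. The plan is to place these leftovers as at most two row envelopes, inside free regions that a packing of $a,b,c$ always leaves. The tail hypotheses give $U\le\varphi c$ (tail of $c$), $c+U\le\varphi b$ (tail of $b$) and $b+c+U\le\varphi a$ (tail of $a$). Since the row lemma lets a disk of radius $\ge$ the sum replace a row, it suffices to find free disks whose radii dominate suitable partial sums.

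\emph{Case 1: $c\le\beta(a,b)$.} Here $R\ge a+b$, so by antitone containment we work in radius $a+b$ with $a,b$ diametral. Put $c$ concentrically in the upper pocket. The lower pocket is then free, with inscribed radius $\beta(a,b)$. Lemma~\ref{gt:tails}, first part, with $p=c$ and $m=b$ gives $U\le b$. The second part, applied with $T=c+U$, gives $c+U\le2\beta(a,b)$.

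We split the leftovers by Lemma~\ref{gt:split}, reserving the largest leftover $r_4$, into two groups of total at most $U/2$ each. If $U\le\beta(a,b)$, the whole row goes into the lower pocket. Otherwise we use Lemma~\ref{gt:arbelos} to put two row envelopes into one pocket, and $c$ together with another envelope into the other. The admissible radii are $\le\beta$ and the pair sums are $\le b$, the latter because $c+U\le\varphi b$ together with the balanced split keeps each pairing below $b$. The bookkeeping of which pairing to use, distinguishing $r_4$ large versus small relative to $\beta-c$, is the part I expect to need a short subcase analysis. I would first try: pocket one holds $c$ and the group containing $r_4$, pocket two holds the other group, using $c+U\le 2\beta$ to bound the first pocket's pair.

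\emph{Case 2: $c>\beta(a,b)$.} Lemma~\ref{gt:minimal} gives the minimal radius $R_0\le R$, with the triple fully tangent, and we again shrink to $R_0$. Lemma~\ref{gt:opposite} supplies a free disk of radius $d$ with $c+d\ge2\beta(a,b)\ge c+U$ by Lemma~\ref{gt:tails}. Hence $d\ge U$, and the whole leftover row goes into the $d$-disk by the row construction.

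This case is therefore short, and the main obstacle is Case 1. There one must verify that Lemma~\ref{gt:arbelos}'s hypotheses ($r,s\le\beta$, $r+s\le b$) hold for the chosen envelopes in every subcase, including when a single leftover exceeds $\beta-c$ room. If the two-pocket bookkeeping fails, the fallback is to move $c$ itself off-centre inside its pocket, placing $c$ and one envelope as an arbelos pair, and the rest in the other pocket.
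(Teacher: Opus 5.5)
Your ``only if'' direction, your Case~2, and the subcase $U\le\beta(a,b)$ of Case~1 are correct and coincide with the paper's argument. The gap is the remaining subcase $c\le\beta(a,b)<U$, which you defer to an unspecified ``short subcase analysis''. The one concrete assignment you propose there fails, and its justification is false. Lemma~\ref{gt:arbelos} requires $r+s\le b$ for the two disks sharing a pocket, and pairing $c$ with an envelope can violate this however the leftovers are balanced. Take $a=\varphi$, $b=1$, $c=0.7$, $r_4=0.6$, $r_5=0.3$. All tails are at most $\varphi$ times their radii; for instance, the tails of $a$ and $b$ are $2.6\le\varphi^2$ and $1.6\le\varphi$. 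Moreover $c\le\beta(\varphi,1)=\varphi/2$ and $U=0.9>\varphi/2$, so this lies in your unresolved subcase. Yet any envelope containing $r_4$ placed beside $c$ gives a pair sum of at least $1.3>b$, so Lemma~\ref{gt:arbelos} cannot be invoked for your ``first try''. The bound $c+U\le2\beta(a,b)$ that you cite does not help, because $2\beta(a,b)>b$ whenever $a>b$.

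The missing idea is to leave $c$ alone in one pocket and pair $e=r_4$ with the single envelope $V=\sum_{j\ge5}r_j$ in the other pocket. The first assertion of Lemma~\ref{gt:tails} at $(m,p)=(b,c)$ gives $e+V\le b$; this is your own bound $U\le b$. Applied again at $(m,p)=(c,e)$, it gives $V\le c$. Together with $e\le c\le\beta(a,b)$, these are exactly the hypotheses of Lemma~\ref{gt:arbelos}. So $e$ and $V$ fit together in one pocket (omit $V$ if it is zero), and $V$ is then replaced by its row. This argument needs neither Lemma~\ref{gt:split} nor any split on $U$ versus $\beta(a,b)$. In the example above, it places $0.6$ and $0.3$ together in one pocket and $0.7$ alone in the other.
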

\begin{proof}
Only sufficiency needs proof. Let $U=\sum_{j\ge4}r_j$.
Lemma~\ref{gt:tails}, applied to $a,b$, gives
$c+U\le2\beta(a,b)$.

If $c>\beta(a,b)$, place the three largest disks in their minimum
container $R_0\le R$. Lemma~\ref{gt:opposite} provides a disk
of radius $d\ge2\beta(a,b)-c\ge U$ in the remaining gap. Replace
it by a row containing all the remaining disks.

If $c\le\beta(a,b)$, place $a,b$ diametrally in radius $a+b\le R$
and place $c$ in the lower pocket. For a nonempty remaining list,
write $e=r_4$ and $V=\sum_{j\ge5}r_j$. The two-tail assertion
of Lemma~\ref{gt:tails}, first at $b,c$ and then at $c,e$, gives
\[
 e+V\le b,\qquad V\le c,\qquad e\le c\le\beta(a,b).
\]
Thus Lemma~\ref{gt:arbelos} places the two envelope disks $e,V$
in the upper pocket. Replace $V$ by a row of its actual members;
omit it when $V=0$. The empty remaining list requires no extra step.
These envelopes construct positions; they do not replace the inventory
in the definition of $\rho$ or in a greedy execution.
\end{proof}

\subsection{A uniform exchange of parents}

Suppose a descending execution differs from the lexicographic maximum.
At the first differing radius it must omit a ring of that maximum:
the opposite difference would contradict lexicographic maximality of
the latter. Restrict to the already admitted set $G$ and this omitted ring.
This restricted inventory is feasible and still has $\rho\le\varphi$.
Let $F$ be its greedy forest on $G$. Among all complete witness forests
$P$, choose one having the longest prefix of parents in agreement
with $F$; this maximum exists because there are finitely many parent
assignments. If all parents agree, the omitted ring can be accepted
in $F$, contradicting rejection.

Otherwise let $m$ be the largest disagreement, with destination $u$
in $F$ and origin $v\ne u$ in $P$. Both containers are the root or
holes of rings larger than $m$. All larger parents agree. All smaller
rings may be reconstructed as leaves, since we only need to align
the prefix through $m$.

Write $T$ for the total radius below $m$. If $T\le m$, remove the
smaller rings, put $m$ in $u$ using the prefix certificate from $F$,
and place all smaller rings in a row inside the outer disk vacated
by $m$ in $v$, preserving the larger siblings there. This aligns $m$.
Hence assume $T>m$. Let $p<m$ be the next radius and $U=T-p$.
The first assertion of Lemma~\ref{gt:tails} gives $U\le m$.

Consider the rooted tree formed by the larger rings and the virtual
root. The degree of a container is its number of immediate larger
children. The following three cases cover every such tree.

\paragraph{A branching container with at least three larger children.}
Let $w$ have children $a_1>\cdots>a_k>m$, $k\ge3$. The auxiliary
disk list
\[
 a_1,\ldots,a_k,m,m
\]
satisfies all golden tail bounds: for each $a_i$ its auxiliary tail
is at most its original tail, because $2m<m+T$; the last two tail
ratios are $1,0$. Its three largest disks already coexist in $w$.
Theorem~\ref{gt:three} therefore gives two radius-$m$ slots alongside
all the larger children.

If $w\ne u$, put $p$ in one slot, a row of total $U$ in the other,
and $m$ in $u$ using $F$. This includes $w=v$; no third slot in $w$
is needed. If $w=u$, put $m,p$ in its slots and the row of total $U$
in the outer disk vacated by $m$ in $v$, using $P$ there.

\paragraph{A container with exactly two larger children.}
Let its children be $a>b>m$. Their original tails imply
$m+T\le\varphi b$ and $b+m+T\le\varphi a$; thus
$m+T\le2\beta(a,b)$ by Lemma~\ref{gt:tails}. Since $T>m$,
$\beta(a,b)>m$. The container has radius at least $a+b$, so a diametral
placement of $a,b$ provides two radius-$m$ slots in its opposite pockets.
Apply the preceding slot assignment. This also proves the degree-two
criterion without a separate case enumeration.

\paragraph{No branching container.}
The larger tree is a chain with exactly one leaf container. Since
$u\ne v$, at least one of $u,v$ has exactly one larger child $a$.
If it is $u$, $F$ certifies the pair $a,m$ there; if it is $v$, $P$
does so. In either case that container has radius at least $a+m$.
The original tails give $T\le\varphi m$ and $m+T\le\varphi a$, hence
$T\le2\beta(a,m)$. Reserve $p$; Lemma~\ref{gt:split} divides
the others into two rows of total at most $T/2\le\beta(a,m)$.

When using $u$, put $a,m$ there diametrally, the two rows in its
pockets, and $p$ in the outer disk vacated by $m$ in $v$. When using
$v$, keep $a$ there, put $p$ inside the envelope reserved for a phantom
disk $m$, and put the two rows in the pockets; the real $m$ goes to
$u$ using $F$.

In every case the parents of all larger rings are unchanged. Local
interior packings move with their owners. Those owners have radius
larger than $m$, so the assembly creates no cycle even when one chosen
container is an ancestor of another. The complete new witness agrees
with $F$ through $m$, contrary to the maximal choice of $P$. This proves
Theorem~\ref{thm:golden-global}.

\begin{corollary}[Exact global threshold]\label{cor:golden-global}
The disk threshold is $\tau=\varphi$, and its infimum is not attained.
This holds both for common positive width and for independent positive
widths (equivalently, independent holes smaller than the outer radii).
\end{corollary}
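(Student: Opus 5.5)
The corollary should follow by combining the two directions already in hand. I would treat the definition of $\tau$ (the infimum of $\rho$ over instances on which some descending greedy execution misses the lex-max set) separately for each of the two width models. In each model I would bound $\tau$ from below and from above, and then read off non-attainment from the lower-bound argument.

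\emph{Lower bound and non-attainment.} Theorem~\ref{thm:golden-global} states that every descending greedy execution returns the lex-max subinventory whenever $\rho\le\varphi$. This holds for every finite inventory of strictly decreasing radii in a disk, and also with independent holes $0\le h_i<r_i$. So every failing instance has $\rho>\varphi$ strictly. Hence $\tau\ge\varphi$, and no failing instance sits at $\rho=\varphi$. I would spell out that the infimum is unattained precisely because the theorem covers the endpoint $\rho=\varphi$ itself. Common width is the special case $h_i=\max(0,r_i-w)$. Independent positive widths correspond to $h_i=r_i-w_i\in[0,r_i)$. So the single theorem covers both models.

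\emph{Upper bound.} I would use Theorem~\ref{thm:golden}. For $\omega$ in the window $(1-\varphi/2,\varphi-1)$ and small $\varepsilon>0$, the inventory $\{\varphi,1,\varphi/2+2\varepsilon,\varphi/2+\varepsilon\}$ in a pan of radius $\varphi+1$ fails under worst fit, with $\rho=\varphi+3\varepsilon$. Letting $\varepsilon\downarrow0$ gives $\tau\le\varphi$ for common width. To transfer this to independent widths, I would observe that the same instance is an admissible independent-hole instance with $w_i=\omega$ for all $i$. Every radius exceeds $\omega$, since $\varphi/2+\varepsilon>0.809>0.618>\omega$, so all holes are genuine: $h_i=r_i-\omega>0$. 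The greedy semantics (fixed parents, rearrangeable siblings, containers being the pan or holes) are identical in the two models. So the same execution fails there too, and the same bound $\tau\le\varphi$ holds.

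\emph{Main obstacle.} The only delicate point is checking that the failure certified in Theorem~\ref{thm:golden} uses nothing specific to common width beyond the hole capacities it lists. Those capacities are $\varphi-\omega$ for the hole of $\varphi$ and $1-\omega$ for the hole of the ring $1$, together with the non-nesting condition $s_2>s_1-\omega$. In the independent model these become the per-ring holes, which take exactly the same values. Once this is confirmed, the two bounds give $\tau=\varphi$ in both models, with the infimum approached from above and never attained.
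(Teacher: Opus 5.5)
Your proposal is correct and is essentially the paper's proof: Theorem~\ref{thm:golden-global} covers the endpoint $\rho=\varphi$ and independent holes, so every failing instance has $\rho>\varphi$, which gives $\tau\ge\varphi$ and non-attainment; the common-width golden family of Theorem~\ref{thm:golden} then supplies failures with $\rho\downarrow\varphi$ in both classes. The transfer you single out as the main obstacle is automatic, since a common-width instance is literally an independent-width instance with all $w_i=\omega$, so no capacity-by-capacity check is needed.
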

\begin{proof}
Theorem~\ref{thm:golden-global} rules out every failure at
$\rho\le\varphi$. The common-width four-ring family of
Theorem~\ref{thm:golden} has failures tending to $\varphi$ from above
and belongs to both classes.
\end{proof}
The algebra of the angular margin, the opposite-gap identity and the
row-envelope bounds is certified in \texttt{Calamares/ThreeCore.lean}.
Inversion, compactness, Descartes and forest assembly remain written
proofs. The specialized exchange program that follows supplies stronger
information about particular templates; its coverage limits are not
premises of this global result.

%% file: generalizations_v2.tex
\subsection{Container shape and dimension}\label{sec:dimension}

Theorems~\ref{thm:selection} and~\ref{thm:oblivious}, and now
Proposition~\ref{prop:n3}, hold for arbitrary compact containers in
$\R^d$. For spherical containers the negative results also transfer
to higher dimensions, for a different reason: each relevant sibling
query involves at most three balls.

\begin{lemma}[Dimension reduction for sibling queries]\label{lem:dimreduce}
Let $k\ge2$ and $d\ge k-1$. Balls of positive radii $a_1,\ldots,a_k$
pack in a ball of radius $R$ in $\R^d$ if and only if they pack in
one of the same radius in $\R^{k-1}$. A single ball fits if and only
if its radius is at most $R$, in every positive dimension.
\end{lemma}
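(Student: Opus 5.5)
The plan is to exploit two facts: packing of balls in a ball depends only on the pairwise distances of the centers and their norms, and any $k$ points in $\R^d$, together with the container center, fit in an affine subspace of dimension at most $k$. That naive count gives dimension $k$, not $k-1$, so the heart of the argument is saving one dimension.

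The single-ball statement is immediate. A ball of radius $a$ fits iff its center $c$ satisfies $|c|\le R-a$, which is possible iff $a\le R$, and this does not depend on the dimension.

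\emph{The direction $\R^{k-1}\Rightarrow\R^d$.} Embed $\R^{k-1}$ isometrically into $\R^d$ through the container center, which is possible since $d\ge k-1$. The constraints $|c_i|\le R-a_i$ and $|c_i-c_j|\ge a_i+a_j$ involve only norms and distances, so they are preserved by the embedding.

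\emph{The direction $\R^d\Rightarrow\R^{k-1}$.} Take a packing with centers $c_1,\dots,c_k$ and the container center at the origin. The span of $\{0,c_1,\dots,c_k\}$ has dimension at most $k$. If it has dimension at most $k-1$, an isometry carries it into $\R^{k-1}$ and we are done. Otherwise the centers are affinely independent and do not lie in a hyperplane through $0$. Let $H$ be the affine hyperplane through $c_1,\dots,c_k$, and let $o'$ be the orthogonal projection of $0$ onto $H$. For every $i$ we have $|c_i-o'|\le|c_i|\le R-a_i$, since $c_i-o'\perp o'$. So if we take the container to have center $o'$, still of radius $R$, all containment constraints hold, and the pairwise distances are unchanged. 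The configuration therefore lives in the $(k-1)$-dimensional affine space $H$ containing both the new center and all the ball centers, and an isometry of $H$ onto $\R^{k-1}$ finishes the argument.

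Translating the container center is legitimate because feasibility is a property of the abstract constraint system, not of where the container sits. This projection step is the only nontrivial point, and once it is in place it closes the proof cleanly; the rest is routine.
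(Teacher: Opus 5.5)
Your proof is correct and is essentially the paper's argument: you orthogonally project the container center onto the affine hull of the $k$ centers, and Pythagoras ($c_i-o'\perp o'$) shows the norms do not increase while the pairwise distances are unchanged. The only difference is your case split on the dimension of the linear span, which is unnecessary but harmless, since the affine hull of $k$ points always has dimension at most $k-1$ and the paper applies the projection uniformly.
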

\begin{proof}
The forward embedding from lower to higher dimension preserves all
center distances and norms. Conversely, center the container at zero
and let $c_1,\ldots,c_k$ be the centers of a packing. Their affine hull
$A$ has dimension at most $k-1$. Let $q$ be the orthogonal projection
of zero onto $A$. Since $q$ is orthogonal to $A-q$,
\[
 \|(c_i-q)-(c_j-q)\|=\|c_i-c_j\|\ge a_i+a_j,
 \qquad \|c_i-q\|^2=\|c_i\|^2-\|q\|^2\le(R-a_i)^2.
\]
Here $R-a_i\ge0$ by the original containment. A common translation
of the centers therefore preserves separation and does not increase
their norms. Identify $A-q$ isometrically with a subspace of
$\R^{k-1}$. This translates the whole affine hull; it does not project
the individual centers onto a predetermined lower-dimensional space.
\end{proof}

\begin{corollary}[Transfer of the sharpness results]\label{cor:dimsharp}
For spherical pans in every dimension $d\ge2$, the following hold with
the same radii and widths as in the plane:
\begin{enumerate}
\item placement obliviousness holds for at most three pieces and fails
  at four, as in Theorem~\ref{thm:n4};
\item the twin instances of Theorem~\ref{thm:twins} exclude every
  deterministic state-based rule, and every randomized state-based
  rule fails on some instance with probability at least $1/2$;
\item the rigid family of Theorem~\ref{thm:rigidfloor} has infimum
  exactly $T$, not attained at positive width;
\item the dimension-dependent geometric threshold satisfies
  $1\le\tau_d\le\varphi$, and its restriction to inventories of
  at most four rings satisfies $\tau_{4,d}=\varphi$, unattained.
\end{enumerate}
\end{corollary}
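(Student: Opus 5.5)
The plan is to derive each of the four items from its planar counterpart via Lemma~\ref{lem:dimreduce}. The observation doing all the work is that every feasibility statement used in the planar proofs concerns a container that, in the relevant configuration, holds at most three sibling balls. Lemma~\ref{lem:dimreduce} with $k\le3$ gives $d\ge k-1$ for every $d\ge2$, so such a query has the same answer in $\R^d$ as in $\R^2$. The nesting test $r_{\mathrm{child}}\le r_{\mathrm{parent}}-w$ and the single-ball test are dimension-free. Hence every greedy execution, and every exhibited witness forest, on these specific instances has an identical transcript in every dimension $d\ge2$.

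\emph{Item 1.} The positive half, at most three pieces, is already Proposition~\ref{prop:n3}, valid for arbitrary compact $K\subset\R^d$. For the failure at four, I would rerun Theorem~\ref{thm:n4} in $\R^d$. The witness uses the pan pair $\{10,5\}$ and the hole pair $\{4.9,4.8\}$. Best fit consults pairs and the triples $\{10,4.9,4.8\}$ and $\{10,5,z\}$; the first is Lemma~\ref{lem:cap} and the second is Proposition~\ref{prop:S5} rescaled. All of these transfer verbatim.

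\emph{Item 2.} The argument for the twins is the same. The observable state is defined without reference to dimension. Each forced step in the four executions of Theorem~\ref{thm:twins} is a query on at most three balls: the (V1) and (V2) exclusions, the (V3) packing, and the two-circle hole facts. So both executions behave identically in $\R^d$, and the deterministic and randomized conclusions follow unchanged.

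\emph{Item 3.} Conditions (F1)--(F3) involve only pairs and the triple $\{1,p,q\}$. The strict bound $\rho>T$ uses only the constructive direction (Lemmas~\ref{lem:S2}--\ref{lem:S3}) applied to (F3). Non-packability in $\R^d$ is equivalent to non-packability in $\R^2$, so the bound holds verbatim. For the upper bound, the approximating family of Proposition~\ref{prop:S6} is certified by Proposition~\ref{prop:S5} and Lemma~\ref{lem:S6a}, again triple statements. Instances of $\mathcal F$ are therefore the same in every dimension, with the same infimum and non-attainment.

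\emph{Item 4.} The lower bound $\tau_d\ge1$ is Theorem~\ref{thm:oblivious}, which is dimension-agnostic. The upper bound $\tau_d\le\varphi$ comes from the golden family of Theorem~\ref{thm:golden}; its worst-fit run and its witness consult only pairs, the rigid pair plus a third ball, and the corona trio $\{\varphi,s_1,s_2\}$. For $\tau_{4,d}=\varphi$ I would replay the proof of Theorem~\ref{thm:fourfloor} in $\R^d$:
\begin{itemize}
\item The reduction to ``admit three, reject the fourth'' uses Proposition~\ref{prop:n3}, which holds in any dimension.
\item The exchange steps use the row lemma, which is dimension-free.
\item The nested case reduces to item~3.
\item The root case uses the pocket $g(A)$ of Proposition~\ref{prop:S5} for the triple $\{A,1,q\}$, which transfers by the lemma.
\end{itemize}
This yields $\rho>\varphi$ for every four-ring failure, and the golden family shows the bound is approached but not attained.

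\emph{Main obstacle.} The hard part is auditing that no planar argument silently used a query on four or more balls in a common container. The most delicate case is the four-ring threshold proof, where a full witness $P$ might place three balls in the pan alongside nothing else. Witnesses are only ever required to exist, never to be enumerated, so witnesses with more siblings would matter only through existence. I would check that each existence claim is produced constructively by the planar configuration embedded in $\R^d$, and that each non-existence claim is a query on at most three balls, which transfers by the lemma.
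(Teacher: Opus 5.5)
Your proposal is correct and follows the paper's proof essentially step for step: both observe that forest feasibility is a conjunction of per-container sibling queries, each involving at most three balls, and transfer these via Lemma~\ref{lem:dimreduce}. Both then replay Proposition~\ref{prop:n3}, Theorems~\ref{thm:n4}, \ref{thm:twins}, \ref{thm:rigidfloor}, \ref{thm:golden} and the two cases of Theorem~\ref{thm:fourfloor} unchanged. The audit you flag as the main obstacle is exactly what the paper records: in the four-ring proof, every container contains at most three siblings. The only queries are the root pair $\{A,1\}$ with hole pair $\{p,q\}$, the root triple $\{A,p,q\}$, and the pocket triple $\{A,1,q\}$.
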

\begin{proof}
By Lemma~\ref{lem:dimreduce}, every query of at most three siblings
in a ball has the same answer in all dimensions $d\ge2$. A fixed
assignment forest whose containers have at most three children can
be realized by assembling these local packings from the leaves:
each child's entire subtree stays inside its outer ball.

The positive part of (1) is Proposition~\ref{prop:n3}. Its negative
part uses the root pair $\{10,5\}$, the hole pair $\{4.9,4.8\}$,
and the forbidden root triple $\{10,4.9,4.8\}$. All are preserved.
The four executions and the two witnesses of Theorem~\ref{thm:twins}
also use only individual, pair and triple queries. The state at the
arrival of the $5$ is identical in the two inventories, and the
necessary choices are still opposite. If the probability of choosing
the root is $p$, the failure probabilities are $1-p$ and $p$.

In (3), conditions (F1) and (F2) are scalar, and (F3) is precisely
a triple query. Thus the entire admissible parameter family and its
function $\rho$ are unchanged, including the approximating sequence.
Finally, the witness of Theorem~\ref{thm:golden} has the root triple
$\{\varphi,s_1,s_2\}$ and the nested pivot $1$. Its failing execution
only queries pairs and triples as well. It gives the same failures
with $\rho=\varphi+3\varepsilon\to\varphi$. The lower bound in (4)
is Theorem~\ref{thm:oblivious} under weak superincreasingness.
For the four-ring lower bound, the forest reduction in
Theorem~\ref{thm:fourfloor} is dimension-independent. Each of its
two remaining cases uses only pairs and triples in spherical
containers, so the rigid floor and the diametral-pocket argument
apply unchanged. Every four-ring failure therefore still has
$\rho>\varphi$; the golden family gives the matching infimum.
\end{proof}

The corollary does not imply $\tau_d=\tau_2$ or $\tau_d=\varphi$:
other counterexamples may involve four or more siblings. Nor does
monotonicity of packing feasibility in the dimension, by itself,
give monotonicity of the threshold for failure of a greedy execution.

\begin{proposition}[The first dimensional separation]\label{prop:dimfour}
There are four distinct sibling radii that fit in the unit ball
in $\R^3$ but not in the unit disk in $\R^2$, namely
$\{441,440,439,438\}/1000$.
\end{proposition}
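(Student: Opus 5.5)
The statement has two halves, and I will handle them separately.

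\emph{Feasibility in $\R^3$.} I will exhibit an explicit configuration. Following Lemma~\ref{lem:dimreduce}, the four centers span at most a $3$-dimensional affine hull, and a tetrahedral arrangement is natural. For equal radii $x$, four balls fit in a unit ball iff $x\le 1/(1+\sqrt{3/2})=0.44948\ldots$. The centers then sit at distance $1-x$ from the origin on a regular tetrahedron, with pairwise distance $(1-x)\sqrt{8/3}\ge 2x$. Since $0.441<0.4494$, the largest radius already has slack of about $0.008$.

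My plan is to put each ball $a_i$ on the ray through the $i$-th vertex of the regular tetrahedron, at distance $1-a_i$ from the origin. Containment is then exact. Separation follows from
\[
 |c_i-c_j|^2=(1-a_i)^2+(1-a_j)^2+\tfrac23(1-a_i)(1-a_j)\ge(a_i+a_j)^2 .
\]
This only needs checking at the worst pair $(441,440)/1000$, and the margin is comfortable. To make the certificate purely rational, I can verify it in squared form with exact arithmetic, since the tetrahedral inner product is $-1/3$.

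\emph{Infeasibility in $\R^2$.} This is the main obstacle. Four circles of radii at least $0.438$ in a unit disk are close to the optimal planar value for four equal circles, $x_4=\sqrt2-1=0.41421\ldots$; the sum $0.438$ exceeds it by a wide margin. Equal-radius optimality alone does not suffice, because the radii are distinct. I will instead apply monotonicity: shrink every circle to the smallest radius $0.438$. A packing of the original radii would yield, by concentric shrinking, a packing of four equal circles of radius $0.438$. That contradicts the proved optimum for $n=4$ equal circles in a disk (cited in Section~\ref{sec:divergence}), which says that radius $r$ fits iff $r\le(\sqrt2-1)$.

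If I want to avoid citing that result, I will use a self-contained argument. Each center lies within distance $1-0.438=0.562$ of the origin, and the pairwise distances are at least $0.876$. Among four points in the disk of radius $0.562$, some angular gap about the origin is at most $\pi/2$. Two points at angle at most $\pi/2$ with norms at most $0.562$ are at distance at most $0.562\sqrt2=0.7948<0.876$. This gives the contradiction. The angular pigeonhole requires all centers to be nonzero; a center at the origin is excluded immediately, since another center would then lie within $0.562<0.876$ of it.

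Combining the two halves gives the claimed separation. The key step to get right is the planar half. The pigeonhole argument is clean, but it needs the careful handling of the case where a center sits at the origin, as noted above.
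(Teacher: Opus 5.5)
Your proposal is correct and is essentially the paper's proof. The planar half matches the paper exactly: shrink to $r=438/1000$, exclude a center at the origin since $1-r<2r$, then use an angular gap of at most $\pi/2$ to get squared distance at most $2(1-r)^2<(2r)^2$. For $\R^3$ the paper uses one fixed rational tetrahedron $(\pm u,\pm u,\pm u)$ with an even number of minus signs and $u=8/25$, checked once with every radius enlarged to $441/1000$, whereas you place each ball tangent to the wall along its own tetrahedral ray; both give valid rational certificates.
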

\begin{proof}
Set $u=8/25$ and use the tetrahedral centers
\[
(u,u,u),\quad(u,-u,-u),\quad(-u,u,-u),\quad(-u,-u,u).
\]
Each norm squared is
$3u^2$ and each pair distance squared is $8u^2$. Even with all
radii enlarged to $441/1000$, containment and separation follow from
\[
 (1-441/1000)^2-3u^2=5281/10^6>0,\qquad
 8u^2-(882/1000)^2=10319/250000>0.
\]
For planar infeasibility, shrink all radii to $r=438/1000$.
The centers would lie in radius $1-r$. A center at zero is impossible
since $1-r<2r$. Otherwise two consecutive centers in angular order
have angular separation at most $\pi/2$, so their squared distance
is at most $2(1-r)^2<(2r)^2$; the difference is $16961/125000$.
This contradicts separation. This is a difference of sibling
feasibility, not a new failure of placement obliviousness.
\end{proof}

\subsection{Square pans: confinement and twin instances}\label{sec:square}

Only the root changes to $[0,s]^2$; every ring's hole is still a disk.
The capacity used by best fit at the root is its inradius $s/2$.
The following sufficient criterion excludes all placements of a trio,
without assuming an optimal tangency pattern.

\begin{lemma}[Quadrant confinement]\label{lem:squareQ}
Let $a\ge b\ge c>0$, $a\le s/2$, and $p\ge q\ge0$. If
\begin{align}
 p&\ge s/2-b, & p^2+(s-a-b)^2&<(a+b)^2,\label{eq:squareQb}\\
 q^2+(s-a-c)^2&<(a+c)^2, &p+q&\ge s-b-c,\label{eq:squareQc}\\
 L:=s-c-a-q&\ge0, &2L^2&<(b+c)^2,\label{eq:squareQbox}
\end{align}
then disks of radii $a,b,c$ do not pack in the square of side $s$.
\end{lemma}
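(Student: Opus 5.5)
The plan is a proof by contradiction in coordinates. Suppose disks of radii $a,b,c$ pack in $[0,s]^2$ with centers $(x_a,y_a),(x_b,y_b),(x_c,y_c)$. Every center of radius $r$ lies in $[r,s-r]^2$. Using the dihedral symmetry of the square, reflect so that the largest disk's center lies in the lower-left quadrant, $a\le x_a,y_a\le s/2$ (this uses $a\le s/2$). The idea is to show, one coordinate at a time, that $b$ and then $c$ are confined to the upper-right of $a$. There they are trapped in a box of side $L$ in which they cannot separate.

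\emph{Step 1: confining $b$.} For any center of radius $r\ge b$ placed anywhere, $|x_r-x_a|\le\max(s-r-x_a,\;x_a-r)\le s-a-r$, since $x_a\ge a$ and $x_a\le s/2\le s-a$. The same holds for $y$. In particular $|x_b-x_a|,|y_b-y_a|\le s-a-b$. If $|y_b-y_a|\le p$, then the squared distance is at most $(s-a-b)^2+p^2<(a+b)^2$ by \eqref{eq:squareQb}, which is a contradiction. Hence $|y_b-y_a|>p$, and symmetrically $|x_b-x_a|>p$. The option $y_b<y_a-p$ would give $y_b<s/2-p\le b$ by $p\ge s/2-b$, contradicting $y_b\ge b$. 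So $y_b>y_a+p$, and likewise $x_b>x_a+p$.

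\emph{Step 2: confining $c$.} The same argument, using the first inequality of \eqref{eq:squareQc}, gives $|x_c-x_a|>q$ and $|y_c-y_a|>q$. Now take one coordinate, say $x$. If $x_c<x_a-q$, then combining with $x_b>x_a+p$ gives $x_b-x_c>p+q\ge s-b-c$. But two centers of radii $b,c$ differ by at most $s-b-c$ in any coordinate, which is a contradiction. Thus $x_c>x_a+q$, and similarly $y_c>y_a+q$.

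\emph{Step 3: the box.} Now $x_b\in(x_a+p,\,s-b]$ and $x_c\in(x_a+q,\,s-c]$. Using $b\ge c$, $p\ge q$ and $x_a\ge a$, we get $x_b-x_c<s-b-a-q\le L$ and $x_c-x_b<s-c-a-p\le L$. The same bounds hold in $y$. (If $L$ were negative the window for $c$ would already be empty; the hypothesis $L\ge0$ just keeps the statement meaningful.) Therefore $|c_b-c_c|^2<2L^2<(b+c)^2$ by \eqref{eq:squareQbox}, contradicting the disjointness of $b$ and $c$.

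The main obstacle is the careful handling of strict versus non-strict inequalities at the walls. The argument rests on the strict inequalities in \eqref{eq:squareQb}--\eqref{eq:squareQc} producing strict coordinate gaps ($>p$, $>q$). These strict gaps are what exclude the boundary cases $y_b=b$ and $x_b-x_c=s-b-c$ exactly. The other delicate point is checking that the uniform bound $|x_r-x_a|\le s-a-r$ really holds for both $r=b$ and $r=c$ once $a$ is fixed in the lower-left quadrant.
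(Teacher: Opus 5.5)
Your proof is correct and takes essentially the same route as the paper. You reflect so the largest center lies in $[a,s/2]^2$, use the two quadratic walls to force strict coordinate gaps $>p$ and $>q$, exclude the negative alternatives via $p\ge s/2-b$ and $p+q\ge s-b-c$, and trap both small centers in a box of side $L$ (the paper phrases that step as membership in $[a+q,s-c]^2$). One cosmetic point: the bound $|x_r-x_a|\le s-a-r$ in Step 1 holds for every radius $r$, not only $r\ge b$, and that general form is what you need when you apply it to $c$.
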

\begin{proof}
Reflect the whole packing about either midline as needed, so the
largest center $A$ has both coordinates in $[a,s/2]$. The coordinates
of $B,C$ lie in $[b,s-b]$ and $[c,s-c]$, respectively. Each absolute
coordinate difference between $A$ and $B$ is at most $s-a-b$.
If $|B_x-A_x|\le p$, the first quadratic inequality contradicts
separation. The negative alternative $B_x-A_x<-p$ is excluded by
$A_x-B_x\le s/2-b\le p$. Thus $B_x>A_x+p$, and likewise
$B_y>A_y+p$, whence $A_x,A_y<s-b-p$.

The same argument with the other quadratic inequality gives
$|C_x-A_x|>q$. Its negative alternative is impossible because
$A_x-C_x<s-b-p-c\le q$. Therefore $C_x>A_x+q$ and
$C_y>A_y+q$. Since $p\ge q$ and $b\ge c$, both small centers lie
in the box $[a+q,s-c]^2$. Their squared distance is at most $2L^2$,
contradicting~\eqref{eq:squareQbox}. All separations used for
packings are non-strict; tangencies have not been excluded.
\end{proof}

\begin{proposition}[An exact square counterexample]\label{prop:squareD}
With side $s=4.8568$, width $w=0.16$ and radii
$\{a,1,b,c\}=\{1.845,1,0.844,0.841\}$, all four rings fit but
best fit places exactly three. The dominant tail is $\rho=337/200$.
Every decimal here and below denotes the exact terminating rational.
\end{proposition}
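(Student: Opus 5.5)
The plan is to check the four claims in turn: the value of $\rho$, a witness placing all four rings, the forced best-fit run, and the pan exclusion for the last ring. Only the last needs real geometry, and for it I will use Lemma~\ref{lem:squareQ}. Throughout write $a=1.845$, $b=0.844$, $c=0.841$, $s=4.8568$, so the root inradius is $s/2=2.4284$. All arithmetic is with exact terminating rationals.

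\emph{The tail ratio.} The three tails are
\[
(1+b+c)/a=2.685/1.845<1.46,\qquad b+c=1.685=337/200,\qquad c/b<1.
\]
So the dominant tail is the one of the ring $1$, and $\rho=337/200$.

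\emph{The witness.} The hole of $a$ has radius $a-w=1.685=b+c$. Hence the pair $\{b,c\}$ fills it exactly, which is two-circle exact. The root pair $\{a,1\}$ fits in the square with plenty of room, for instance with $a$ centred at $(a,a)$ and $1$ near the opposite corner; this is an explicit rational check. So all four rings are feasible.

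\emph{The best-fit execution.} The run proceeds as follows.
\begin{itemize}
\item The ring $a$ goes to the root.
\item The ring $1$ is admitted by both the root (capacity $2.4284$) and the hole of $a$ (capacity $1.685\ge1$). Best fit takes the hole.
\item The ring $b$ cannot join $1$ in the hole of $a$, since $1+b>1.685$. The hole of $1$ has radius $0.84<b$. Hence $b$ goes to the root, where the pair $\{a,b\}$ fits by an explicit placement.
\item The ring $c$ is rejected by every hole. For the hole of $a$, $1+c=1.841>1.685$. The hole of $1$ has radius $0.84<c$, and the hole of $b$ has radius $0.684<c$.
\end{itemize}
It remains to show that $c$ is also rejected at the root, i.e.\ that $\{a,b,c\}$ does not pack in the square. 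Then best fit places exactly three rings while all four are feasible.

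\emph{The pan exclusion.} I apply Lemma~\ref{lem:squareQ} with the rational choices $p=1.5905$ and $q=1.5813$. The hypotheses to verify are:
\begin{itemize}
\item $a\le s/2$ and $p\ge q$;
\item $p\ge s/2-b=1.5844$;
\item $p^2=2.52969\ldots<(a+b)^2-(s-a-b)^2=2.689^2-2.1678^2$;
\item $q^2=2.50050\ldots<(a+c)^2-(s-a-c)^2=2.686^2-2.1708^2$;
\item $p+q=3.1718=s-b-c$, which holds with equality (allowed);
\item $L=s-c-a-q=0.5895\ge0$, and $2L^2<(b+c)^2$, where the margin is large.
\end{itemize}

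This step is the main obstacle. The window for $(p,q)$ is very narrow. The constraint $p+q\ge s-b-c$ together with $q^2<(a+c)^2-(s-a-c)^2$ forces $q\in[1.5808,1.5818)$ and $p$ just below $\sqrt{2.5313}$. So the choice must be tuned so that both strict quadratic inequalities hold, with margins of order $10^{-3}$. If these particular decimals fail a margin in the exact check, I will nudge $p$ up toward its cap and $q$ down by the same amount, keeping $p+q$ fixed.
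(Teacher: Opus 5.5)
Your proof is correct and follows the paper's route: the same forced best-fit run and the same application of Lemma~\ref{lem:squareQ}, with $(p,q)=(1.5905,1.5813)$ a valid alternative to the paper's $(1.591,1.581)$. Both of your quadratic margins are about $1.7\cdot10^{-3}$, and equality in $p+q=s-b-c$ is allowed because that hypothesis is non-strict (the lemma gets strictness from $A_x<s-b-p$).

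One correction: the root pair $\{a,1\}$ does not fit ``with plenty of room''. The side $s$ exceeds the critical value $(a+1)(1+\sqrt2/2)$ by less than $10^{-4}$, so ``near the opposite corner'' must mean in it, at $(s-1,s-1)$, where the margin is only $2(s-a-1)^2-(a+1)^2=16337/25000000$.
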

\begin{proof}
Apply Lemma~\ref{lem:squareQ} with $p=1.591$, $q=1.581$.
The margins in its five inequalities are, in their displayed order,
\[
 \frac{33}{5000},\quad\frac{2079}{25000000},\quad
 \frac{66559}{25000000},\quad\frac1{5000},\quad
 \frac{53587423}{25000000},\qquad L=\frac{2949}{5000}.
\]
The witness places $a$ at $(a,a)$ and $1$ at $(s-1,s-1)$:
$2(s-a-1)^2-(a+1)^2=16337/25000000>0$. In the hole of $a$,
of radius $H=a-w=b+c=1.685$, place the disk of radius $b$ at
relative center $(-c,0)$ and that of radius $c$ at $(b,0)$.

Best fit instead nests $1$ in $a$, since $1\le H<s/2$.
The disk $b$ then fits only at the root: $1+b>H$ and $b>1-w$.
Root feasibility follows by shrinking the disk $1$ in the witness.
At $c$, the root triple is excluded by the lemma, and all holes are
excluded by $1+c>H$, $c>1-w$ and $c>b-w$. The tails are
$179/123$, $337/200$, and $841/844$, so the middle one dominates.
Together with Proposition~\ref{prop:n3}, this establishes the same
three/four-piece transition for a square.
\end{proof}

\begin{theorem}[Square twins]\label{thm:squaretwins}
Let $s=5.1214$, $w=0.301$, and
\[
 I_1=\{2,1,0.850,0.849\},\qquad
 I_2=\{2,1,0.950,0.750\}.
\]
Both full inventories are feasible. In $I_1$ the pivot $1$ must be
placed at the root for the greedy to succeed; in $I_2$ it must be
nested in $2$. Hence no state-based rule succeeds universally, and
every randomized state-based rule has failure probability at least
$1/2$ on one of these instances.
\end{theorem}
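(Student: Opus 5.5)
The plan follows the template of Theorems~\ref{thm:n4} and~\ref{thm:twins}, with the square confinement criterion Lemma~\ref{lem:squareQ} replacing the disk cap arguments. Throughout, write $H=2-w=1.699$ for the hole radius of the ring $2$. The side $s=5.1214$ gives root inradius $s/2=2.5607$.

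\emph{Feasibility of the two full inventories.}
\begin{itemize}
\item In $I_1$ the witness places $2$ and $1$ at opposite corners, $(2,2)$ and $(s-1,s-1)$. I will check $2(s-3)^2\ge 9$ exactly in rationals. The pair $\{0.850,0.849\}$ goes in the hole of $2$, since $0.850+0.849=1.699=H$ is two-circle exact.
\item In $I_2$ the witness nests $1$ in $2$, using $1\le H$. The root then receives the trio $\{2,0.950,0.750\}$. I will exhibit it explicitly: $2$ in a corner, the other two in the two adjacent corners, or in the opposite corner region. Every pairwise and containment margin will be verified as an exact rational inequality.
\end{itemize}

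\emph{The four executions.} The observable state at the arrival of the $1$ is identical in $I_1$ and $I_2$: the root holds only $2$, and the containers are the root (capacity $s/2$) and the hole $H$. I then trace the forced continuations.

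\begin{itemize}
\item \emph{$I_1$, nesting $1$.} The $0.850$ cannot enter the hole beside $1$, because $1+0.85>H$. It cannot nest in $1$, because $0.85>1-w$. So it goes to the root.
  \begin{itemize}
  \item The $0.849$ is then excluded from the hole of $2$, since $1.849>H$.
  \item It is excluded from the hole of $1$ and from the hole of $0.850$, by width.
  \item It is excluded from the root by Lemma~\ref{lem:squareQ} applied to $(a,b,c)=(2,0.850,0.849)$, with suitable $p\ge q$ chosen near the tangency values, as in Proposition~\ref{prop:squareD}.
  \end{itemize}
  The result is three rings.
\item \emph{$I_1$, rooting $1$.} Both small rings fit the hole of $2$, so all four are placed.
\item \emph{$I_2$, rooting $1$.} The root pair $\{2,1\}$ must block both $0.95$ and $0.75$ at the root. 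The hole then takes $0.95$ alone, since $0.95+0.75=1.70>H$. The $0.75$ is stranded: it exceeds $1-w$, and it also exceeds $0.95-w=0.649$. For the root exclusion I will apply Lemma~\ref{lem:squareQ} to the trio $(2,1,0.75)$; that the $0.95$ is excluded as well follows by monotonicity. The result is three rings.
\item \emph{$I_2$, nesting $1$.} The witness itself is then a greedy execution: $0.95$ goes to the root, and $0.75$ goes to the root by the explicit trio placement. All four are placed. The hole is not an option here, since $1+0.95>H$.
\end{itemize}

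\emph{Conclusion.} Since the states coincide while the required choices are opposite, no deterministic state-based rule succeeds on both instances. A randomized rule choosing the root with probability $p$ fails with probabilities $1-p$ and $p$ on the two instances, so one of these is at least $1/2$.

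\emph{Main obstacle.} The delicate steps are the two root-trio exclusions through Lemma~\ref{lem:squareQ}. For $(2,0.85,0.849)$ the trio is nearly feasible, as in $I_1$, so the five inequalities must be tuned carefully. The first thing I would try is $p$ slightly above $s/2-b$ and $q$ just satisfying $p+q\ge s-b-c$, then checking the strict quadratic inequalities in exact rationals. If the margins fail, I would nudge $p$ or $q$ within the window where both quadratics remain strict.

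The case $(2,1,0.75)$ in $I_2$ needs the same check. It is also the critical contrast between the instances: a rooted $1$ kills the root for $0.75$, while a rooted $0.95$ leaves room for $0.75$. This is the square analogue of the non-monotonicity phenomenon noted for Theorem~\ref{thm:twins}.
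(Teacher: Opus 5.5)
Your architecture is the paper's: the same $I_1$ witness (opposite corners via $2(s-3)^2>9$, and $0.850+0.849=H$ in the hole of $2$), the same four forced executions, and Lemma~\ref{lem:squareQ} applied to exactly the trios $\{2,0.850,0.849\}$ and $\{2,1,0.750\}$. Shrinking the last disk then covers $\{2,1,z\}$ for $z\ge0.750$, and the same $1-p$ versus $p$ count finishes.

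The genuine gap is the witness for $I_2$. The root triple $\{2,0.950,0.750\}$ cannot be packed with the disk $2$ in a corner, so neither configuration you propose exists. Put $2$ at $(2,2)$.
\begin{itemize}
\item The $0.950$ disk has centre $(x,y)$ with $(y-2)^2\le2.1714^2$. Separation $2.95$ then forces $(x-2)^2\ge2.95^2-2.1714^2>3.98$, so $x\ge3.99$, and likewise $y\ge3.99$.
\item The same computation with $2.75$ and $2.3714$ forces $x,y\ge3.39$ for the $0.750$ disk.
\item Both centres therefore lie in $[3.39,4.38]^2$. That box has diagonal below $1.41<0.95+0.75$, so the two disks overlap.
\end{itemize}
Intuitively, a cornered $2$ leaves a strip of width $s-4=1.1214$, narrower than both small diameters.

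The triple is feasible only because the big disk can slide along one wall. The paper uses centres $(2.96,2)$, $(0.95,4.1714)$ and $(4.3714,4.3714)$: $2$ sits on the bottom wall off the corner, and the two small disks occupy the two top corners. The squared-separation margins are $1314449/25000000$, $663599/12500000$ and $221399449/25000000$. This configuration is precisely what separates the two instances, so it must be exhibited, not assumed.

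A smaller point concerns the exclusions: your first guess for $(p,q)$ violates the lemma's requirement $p\ge q$.
\begin{itemize}
\item For $(2,1,0.750)$, the second quadratic forces $q<1.3925$. Then $p+q\ge s-b-c=3.3714$ forces $p>1.978$, far above $s/2-b=1.5607$.
\item For $(2,0.850,0.849)$, the conditions $p\ge q$ and $p+q\ge3.4224$ already force $p\ge1.7112$.
\end{itemize}
The admissible window is nonempty in both cases. The paper uses $(p,q)=(1.72,1.718)$ and $(2.12,1.39)$.
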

\begin{proof}
The hole of $2$ has radius $H=1.699$. Lemma~\ref{lem:squareQ}
excludes $\{2,0.850,0.849\}$ using $(p,q)=(1.72,1.718)$, and
excludes $\{2,1,0.750\}$ using $(p,q)=(2.12,1.39)$; direct rational
substitution verifies all its inequalities. Shrinking the last disk
therefore excludes $\{2,1,z\}$ for every $z\ge0.750$.
On the other hand $\{2,0.950,0.750\}$ has the explicit centers
\[
 (2.96,2),\qquad(0.95,4.1714),\qquad(4.3714,4.3714).
\]
Containment is immediate. The three positive separation margins are
\[
\frac{1314449}{25000000},\qquad\frac{663599}{12500000},\qquad
\frac{221399449}{25000000}.
\]

The root pair $\{2,1\}$ fits in opposite corners since
$2(s-3)^2>9$. Shrinking its unit disk at the same center also
packs $\{2,z\}$ for every $0<z<1$, justifying admission of the
first small ring when the pivot is nested.
This pair, with the two small rings inside $2$,
witnesses $I_1$, as $0.850+0.849=H$.
For $I_2$, use the displayed root triple and nest $1$ in $2$.
Every small ring fits alone in $H$, but none fits there beside $1$,
since even $1+0.750>H$. None fits in the pivot's hole of radius
$0.699$, and the last cannot nest in the preceding small ring:
$0.849>0.850-w$ and $0.750>0.950-w$.

If the pivot is nested, both small rings are forced to the root,
where the first triple is infeasible and the second feasible.
If the pivot is at the root, both are excluded from the root and
must use $H$, where only the first pair fits
($0.950+0.750=1.700>H$). All subsequent decisions are therefore
forced. At the pivot, the state, side, width and inventory size are
identical. If the root is chosen with probability $p$, the failure
probabilities are $1-p$ and $p$. The dominant tails are $1.699$ and
$1.700$, respectively.
\end{proof}

\subsection{An algebraic upper bound for the square threshold}
\label{sec:squarelimit}

Define $\tau_{\square}$ as the infimum of $\rho$ over square-pan
instances admitting a failing greedy execution, with side and positive
width allowed to vary, as in the definition of $\tau$.

\begin{theorem}[An approximating square family]\label{thm:squarelimit}
Let $Y$ be the unique positive root of
\[
 (17+10\sqrt2)Y^2+(72+16\sqrt2)Y-(112+96\sqrt2)=0.
\]
Then
\[
 1\le\tau_{\square}\le Y=1.684487745872346\ldots.
\]
There are four-ring counterexamples to best fit with strictly
decreasing rational radii, positive rational width and rational side,
whose values of $\rho$ tend to $Y$ from above. No optimality of $Y$
is asserted, either globally or for all parameters of the confinement
criterion.
\end{theorem}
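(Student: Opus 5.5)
The lower bound $1\le\tau_{\square}$ comes directly from Theorem~\ref{thm:oblivious}. That theorem holds for arbitrary compact containers under weak superincreasingness. Every instance with $\rho\le1$ is therefore placement-oblivious, and a failing execution forces $\rho>1$.

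The upper bound requires a one-parameter family of four-ring best-fit failures modelled on Proposition~\ref{prop:squareD}. The instance has radii $a>1>b>c$, square side $s$ and width $w$, with the following features:
\begin{itemize}
\item The witness puts $a$ and $1$ in opposite corners, so $2(s-a-1)^2\ge(a+1)^2$. It nests $\{b,c\}$ in the hole of $a$, so $b+c\le a-w$.
\item Best fit nests $1$ in $a$, which forces $b$ to the root.
\item Lemma~\ref{lem:squareQ} then excludes the root triple $\{a,b,c\}$.
\end{itemize}
The dominant tail is $\rho=(b+c)/1$, as in Proposition~\ref{prop:squareD}, so we want $b+c$ as small as the lemma allows.

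\textbf{Step 1: the limiting system.} Pass to the limit $w\to0$ and $b,c\to\beta$ (equal small radii), with the corner tangency $s=a+1+(a+1)/\sqrt2$ exact. In this limit the confinement inequalities of Lemma~\ref{lem:squareQ} become equalities in $p$, $q$ and $L$. The target value is $\rho_\infty=2\beta$.

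I would pick $p$ and $q$ at their quadratic thresholds,
\[
p=\sqrt{(a+b)^2-(s-a-b)^2},\qquad q=\sqrt{(a+c)^2-(s-a-c)^2}.
\]
Then I would impose the two remaining conditions at equality: the linear condition $p+q\ge s-b-c$ and the box condition $2L^2<(b+c)^2$. Together with the constraint $a\ge2\beta$, needed so the nest fits in the hole, this gives a small algebraic system. The $\sqrt2$ coming from the corner diagonal should reduce it to the stated quadratic over $\mathbb{Q}(\sqrt2)$, whose positive root is $Y$. The extremal choice of $a$ presumably sits at a corner where the hole capacity constraint and the box constraint are both tight.

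\textbf{Step 2: perturbation to rational instances.} For $\rho\downarrow Y$, push every strict inequality of the lemma off equality by $O(\varepsilon)$. Take rational $a,b,c,s,w,p,q$ near the limit, with $b>c$ strictly and $w>0$ small but exceeding the nesting gaps $b-c$ and $1-b$. Then verify each of the following:
\begin{itemize}
\item the five lemma margins are positive;
\item the witness margin is positive;
\item $b+c\le a-w$;
\item $1+b>a-w$;
\item $b,c>1-w$;
\item $c>b-w$;
\item the other tails are smaller than $b+c$.
\end{itemize}
By continuity all of these hold in a neighbourhood of the limit point, except where they are equalities there. Those equalities are exactly the ones being relaxed, in the direction that keeps the triple excluded and the witness feasible. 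Best fit's trace is then forced, as in Proposition~\ref{prop:squareD}.

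\textbf{Main obstacle.} The hard part is the compatibility of the relaxation directions. Raising $b+c$ slightly above $2\beta$ is what relaxes the box and quadratic conditions. It must still leave room for $b+c\le a-w$ and for the corner witness, and the linear condition $p+q\ge s-b-c$ moves the other way as $s$ changes.

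I would first try fixing $a$ slightly above its limiting value, so that the hole has slack. I would then choose $s$ slightly larger than the tangency value, which gives the witness a positive margin. The cost is that $L$ grows linearly in $s$. To keep $2L^2<(b+c)^2$, I would increase $b+c$ proportionally, which moves $\rho$ only by $O(\varepsilon)$.

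Exactness would be checked by rational substitution on a few sample instances, in the manner of the square scripts. Continuity along the family is a written argument, and I claim nothing about optimality.
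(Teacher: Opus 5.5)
Your lower bound is correct, and the overall plan (a balanced degenerate limit, a continuity perturbation, then rationalization) matches the paper's. The gap is in Step~1: its limiting configuration cannot carry counterexamples, so your system does not produce $Y$.

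\textbf{The width cannot go to zero.} The limit $w\to0$ contradicts blocking conditions you list yourself in Step~2. To strand $c$, the pivot's hole must reject both small rings, so $b,c>1-w$. This forces $w>1-c\approx1-\beta\approx0.158$, which is not small. Concretely, if $w\to0$, best fit nests $b$ in the empty hole of the ring $1$, since that is the smallest feasible capacity. The root then holds $a$ alone, $c$ is admitted, and all four rings are placed. Combining $w>1-c$ with the witness condition $b+c\le a-w$ gives $a>1+b$, which is stronger than your $a\ge2\beta$. The correct degenerate point is therefore $a=1+\beta$, $b=c=\beta$, $w=1-\beta$ and $s=C(a+1)$, with $C=1+\sqrt2/2$.

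\textbf{This changes the value, and the binding walls.} With $b=c=\beta$ and $p=q$, the first wall and the linear wall both read $p\ge s/2-\beta$. The two quadratic walls are then compatible iff $(s/2-\beta)^2<(a+\beta)^2-(s-a-\beta)^2$. For $a\ge1$, the least admissible $\beta$ increases with $a$, so $a$ should be taken as small as allowed.
\begin{itemize}
\item At $a=1+\beta$ the condition is exactly the paper's margin $G(\beta)=\tfrac18\bigl[(17+10\sqrt2)\beta^2+(36+8\sqrt2)\beta-(28+24\sqrt2)\bigr]>0$, whose positive root is $Y/2$.
\item Under your constraint $a\ge2\beta$, the same minimization ends near $1.674<Y$, a value realized by no counterexample.
\item The box inequality is slack at the true limit: $L=s/2-a$ and $2L^2<50/49<4\beta^2$. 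Imposing box and hole equality together forces $a=1$, $\beta=\tfrac12$, where the quadratic wall fails, so your system as described is inconsistent.
\end{itemize}
For the same reason, the ``main obstacle'' in Step~2 does not arise: enlarging $s$ costs nothing through $L$.

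\textbf{How the paper makes room.} It takes $t>t_0=Y/2$, so that $G(t)>0$. It then sets $a=1+t+2\eta$, $b=t+\eta$, $c=t-\eta$ and $w=1-t+2\eta$, so that $a-w=b+c$ exactly. It takes $s=C(a+1)$ at exact corner tangency and $p=q=s/2-t$. Then $p+q=s-b-c$ holds identically and $p-(s/2-b)=\eta$. Every blocking difference is $\eta$, $3\eta$, $1-t$ or $1-t\pm\eta$. Rational instances come from enlarging $s$ slightly and resetting $p,q$.
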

\begin{proof}
Put $k=\sqrt2$, $C=1+k/2$, and consider first the balanced data
$a_0=1+t$, $b_0=c_0=t$, $s_0=C(2+t)$,
$p_0=q_0=s_0/2-t$. The common quadratic margin is
\[
 G(t)=(a_0+t)^2-p_0^2-(s_0-a_0-t)^2
 =\frac{(17+10k)t^2+(36+8k)t-(28+24k)}8.
\]
It is strictly increasing for $t\ge0$ and has exactly one positive
root $t_0=Y/2$. Indeed,
\[
 G(21/25)=\frac{8897}{5000}-\frac{639k}{500}<0,
 \qquad
 G(17/20)=\frac{5953}{3200}-\frac{399k}{320}>0,
\]
using $7/5<k<10/7$. For $21/25<t<17/20$,
\[
 L_0=s_0/2-a_0=k/2+(k-2)t/4,\qquad
 \tfrac12<\tfrac{229}{400}<L_0<\tfrac57,
 \qquad 2L_0^2<\tfrac{50}{49}<4t^2.
\]
Thus containment of $a_0$, positivity of $p_0$, and the final
confinement inequality have strict margins.

Fix any $t_0<t<17/20$ and perturb by $\eta>0$:
\begin{gather*}
 a=1+t+2\eta,\quad m=1,\quad b=t+\eta,\quad c=t-\eta,\\
 w=1-t+2\eta,\quad H=a-w=2t,\quad
 s=C(a+1),\quad p=q=s/2-t.
\end{gather*}
At $\eta=0$ both quadratic margins equal $G(t)>0$. The strict
inequalities just established, including $a<s/2$ and $p>0$,
are continuous in $\eta$; finitely many such inequalities remain
strict on some interval $0<\eta<\eta_0(t)$. The two remaining
walls hold identically:
\[
 p-(s/2-b)=\eta>0,\qquad p+q-(s-b-c)=0.
\]
Choose also $\eta<t$, $\eta<1-t$, and $3\eta<2t-1$.
Then $a>1>b>c>w>0$, and Lemma~\ref{lem:squareQ} excludes the
root triple $\{a,b,c\}$.

The witness places $\{a,1\}$ in opposite corners and $\{b,c\}$
on a diameter of the large hole. Both are legal because
$2(s-a-1)^2=(a+1)^2$, both root radii are at most $s/2$, and
$b+c=H$. Best fit nests $1$, since $1<H<a<s/2$.
The differences excluding the alternative holes for the two
small rings are
\begin{gather*}
 1+b-H=1-t+\eta>0,\quad 1+c-H=1-t-\eta>0,\\
 b-(1-w)=3\eta>0,\quad c-(1-w)=\eta>0,\quad
 c-(b-w)=1-t>0.
\end{gather*}
The first small ring fits at the root by shrinking the pivot in
the witness; the last is excluded everywhere. Best fit places
exactly three rings. Moreover
\[
 \frac{1+b+c}{a}=\frac{1+2t}{1+t+2\eta}<2t,
 \qquad \frac cb<1<2t,
\]
where the first inequality uses $1<2t^2+4t\eta$ and $t>21/25$.
Thus $\rho=2t\to2t_0=Y$ from above, proving the upper bound.
The lower bound follows from Theorem~\ref{thm:oblivious}.

For entirely rational instances choose rational $t\downarrow t_0$
and then rational $\eta$ in the nonempty interval above. Replace
$s=C(a+1)$ by a slightly larger rational and reset $p=q=s/2-t$.
All strict inequalities persist by continuity in $s$, the two
displayed wall identities persist exactly, and the root pair gains
strict separation. Density of the rationals completes the construction.
The unperturbed balanced limit itself is not a strict-radius
counterexample and is not used as one.
\end{proof}

Eliminating the radical shows that $Y$ also satisfies
\[
 89Y^4+1808Y^3+4704Y^2-9984Y-5888=0.
\]
The positive branch is specified by the quadratic over $\mathbb Q(\sqrt2)$,
not by an arbitrary choice of a positive root of this quartic.
Exact rational examples in the accompanying code attain
$\rho=1.684488$, $1.684487746$, and $1.6844877458724$.
These finite checks certify only those instances; the limiting bound
uses the continuity proof above.

\paragraph{Formal verification scope.}
\texttt{Square.lean} proves Lemma~\ref{lem:squareQ} as a universal
Cartesian exclusion over linearly ordered commutative rings, including
the reflections. \texttt{SquareTwins.lean} proves the required twin
exclusions and the explicit feasible triple with the same Cartesian
predicate. \texttt{SquareLimit.lean} checks the balanced-gap and norm
polynomial identities. These files contain 24 theorems, in addition
to the 57 earlier algebraic certificates and the three in
\texttt{FourRing.lean}. The Euclidean interpretation,
assignment forests, algorithmic arguments, dimension reduction and
continuity remain written proofs; they are not claimed as fully
formalized Lean theorems. All new numerical checks use exact rational
arithmetic, and all displayed decimals specifying instances are exact.

%% file: extras_v2.tex
\subsection{Up to five rings in every spherical dimension}\label{sec:fivedim}

The global proof above is stated for the plane. The following separately
verified extension covers up to five rings in every spherical dimension;
it uses explicit pocket constructions, not a dimension reduction for
four or five arbitrary siblings.

\begin{theorem}[Five-ring golden threshold]\label{thm:five-dim}
In a spherical pan of any dimension $d\ge2$, with strictly decreasing
positive outer radii and independent holes $0\le h_i<r_i$, every
descending greedy execution on at most five rings returns the lex-max
set when $\rho\le\varphi$. Thus $\tau_{\le5,d}=\varphi$, with
unattained infimum. The threshold is also $\varphi$ for exactly five rings.
\end{theorem}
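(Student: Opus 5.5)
The proof runs the same reduction as Theorem~\ref{thm:fourfloor} and Theorem~\ref{thm:golden-global}, taken at the first disagreement, and checks that every sibling query it needs involves few enough balls to be settled in any dimension.

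\textbf{Reduction.} Take a failing execution on at most five rings. Restrict to the admitted prefix $G$ plus the first omitted lex-max ring. This restricted inventory is feasible and still has $\rho\le\varphi$. By Proposition~\ref{prop:n3}, $|G|\ge3$, so $|G|\in\{3,4\}$. Then run the uniform exchange of parents from Section~\ref{sec:goldenglobal}: choose a witness $P$ agreeing with $F$ on the longest prefix of parents, and let $m$ be the largest disagreement. Write $T$ for the total radius below $m$. If $T\le m$, the row lemma (Lemma~\ref{lem:row}) aligns $m$; it is valid in every dimension and with independent holes. So assume $T>m$, and note that $U\le m$ by the first part of Lemma~\ref{gt:tails}. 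With at most five rings, at most three of them are larger than $m$, and at most three rings lie below $m$.

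\textbf{Case split on the larger tree.} Split by the number of larger children in the relevant container.

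\emph{Three larger children.} Here the auxiliary list $a_1,a_2,a_3,m,m$ would need Theorem~\ref{gt:three}, which is planar. In this case $m$ is the fourth ring and only $p$ and $q$ lie below it. The two needed slots hold a radius-$m$ ball and a row. In $\R^d$ I would instead embed the planar configuration: the three larger balls have coplanar centres after Lemma~\ref{lem:dimreduce}. Then the planar construction of Lemma~\ref{gt:opposite} places one extra disk of radius at least $2\beta-c\ge U$. The remaining ball is put in the hole pocket or handled via tails, mirroring the $c\le\beta$ branch.

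\emph{Two larger children.} This uses only the two diametral pockets of $\{a,b\}$ in a ball of radius $\ge a+b$. Both pockets are explicit in a $2$-plane through the diameter. The slot assignment therefore transfers verbatim, because each slot is a ball of radius $\beta(a,b)>m$.

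\emph{Chain.} This uses the pair $\{a,m\}$ diametrally with two pocket rows of total at most $\beta(a,m)$ and the reserved $p$. These are again planar pocket placements inside a great $2$-disk. With at most three small rings, each row has at most two members, so Lemma~\ref{gt:arbelos} is not even required beyond single pockets.

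Independent holes enter only through container capacities. Since every construction is a packing inside a given ball, they change nothing. Sharpness and non-attainment come from the golden family, which transfers to all $d$ by Corollary~\ref{cor:dimsharp}(4). That family has four rings, and adding a tiny fifth superdecreasing ring gives exactly five rings with $\rho\downarrow\varphi$; this shows that the threshold is also $\varphi$ for exactly five rings.

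\textbf{Main obstacle.} The hard part is the three-larger-children branch. There the planar Theorem~\ref{gt:three} is not available for four or five arbitrary balls in $\R^d$. The minimal container of three balls in $\R^d$ coincides with the planar one by Lemma~\ref{lem:dimreduce}. What needs an explicit argument is that the opposite Descartes gap, or a pocket, still exists disjoint from the configuration in the full ball. I would construct it inside the $2$-plane of the three centres through the container centre, and check that the pocket disk is disjoint there. Disjointness in $\R^d$ then follows because all centres lie in that plane.
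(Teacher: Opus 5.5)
Your proposal follows essentially the paper's route. You restrict to the first disagreement, take a witness aligned on a maximal prefix of parents, and settle $T\le m$ by the row lemma. You then note that every remaining construction uses only a diametral pair and its two explicit pockets. These lie in a $2$-plane through the centre and so embed in every $d\ge2$. Independent holes and the exactly-five-ring sharpness (a tiny trailing ring, processed after the inherited failure) are handled the same way.

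The one correction is that the branch you single out as the main obstacle is vacuous. The largest ring can only be a child of the root, so $m\ne r_1$. Also $T>m$ forces at least two rings below $m$. So with at most five rings, $m$ is the second or third ring, and no container has three larger children. Your own description of that branch, with $m$ fourth and $p,q$ below it, would need six rings.

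What remains is exactly the paper's case list:
\begin{itemize}
\item $m=r_2$, a chain through $A$;
\item $m=r_3$ with $A,B$ both at the root, giving two pocket slots since $\beta(A,B)>m$;
\item $m=r_3$ with $B$ inside $A$, handled by the chain constructions with $u$ the root, the hole of $A$, or the hole of $B$ via a phantom pair.
\end{itemize}

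Even had that branch been live, no new gap construction would be needed. The sufficiency half of Theorem~\ref{gt:three} transfers to $\R^d$ directly. Lemma~\ref{lem:dimreduce} with $k=3$ moves the three largest balls into a planar disk of the same radius. The resulting planar packing of the whole list then embeds in a great disk.
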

\begin{proof}
The diametral pair and both pocket centers preceding
Lemma~\ref{gt:arbelos} embed in every dimension $d\ge2$: all norms
and pairwise distances remain unchanged. Two scalar consequences of
Lemma~\ref{gt:tails} suffice. First, for $a>m>p\ge q>0$, the tail
bounds $p+q\le\varphi m$ and $m+p+q\le\varphi a$ give
$p+q\le2\beta(a,m)$, hence $q\le\beta(a,m)$. Second, for
$A>B>m$ and $T>m$, the bounds $m+T\le\varphi B$ and
$B+m+T\le\varphi A$ give $m+T\le2\beta(A,B)$, hence
$\beta(A,B)>m$.

Restrict a first disagreement with the lex-max set to the admitted
prefix and the omitted ring, as in the proof of
Theorem~\ref{thm:golden-global}, and align a complete witness with the
greedy forest. A disagreement at a ring $m$ with at most one smaller
ring is resolved by the row exchange. Only the second and third rings
can therefore remain. Write $u$ for the destination of $m$ in the
greedy forest and $v\ne u$ for its parent in the witness.

At the second ring, write the inventory as $A>m>p>q>t$, omitting
absent terms. The disagreement certifies both $R\ge A+m$ and
$h_A\ge m$. If there are fewer than two smaller rings, use the row
exchange. Otherwise the first scalar consequence above gives
$q,t\le\beta(A,m)$. If $u$ is the root, place the diametral pair
$A,m$ there, $p$ in the hole of $A$, and $q,t$ in its two pockets.
If $u$ is the hole of $A$, use a phantom pair $A,m$ at the root:
put $p$ in the phantom $m$ disk and $q,t$ in the pockets, while the
real $m$ goes inside $A$. All smaller rings may be made leaves.

At the third ring, a blocked row exchange requires exactly five
rings $A>B>m>p>q$, with $p,q$ both immediate children of $u$ in
the witness and $p+q>m$. In particular $m$ has no smaller descendants.
The parents of $A,B$ already agree. If both are at the root, then
$R\ge A+B$ and $\beta(A,B)>m$. When $u$ is the root, place $m,p$
in its two pockets and $q$ in the disk vacated by $m$ in $v$.
Otherwise put $p,q$ in the pockets and $m$ in its certified hole $u$.

It remains that $B$ is inside $A$. If $u$ is the root, its certificate
admits the pair $A,m$; put $q$ in one pocket and $p$ in the disk
vacated in $v$, keeping $B$ inside $A$. If $u$ is the hole of $A$,
do the same with the pair $B,m$ inside $A$. In both cases the first
scalar consequence bounds $q$ by the relevant pocket radius.
If $u$ is the hole of $B$, then $v$ is either the root or the hole
of $A$. Use respectively the phantom pair $A,m$ in the root or
$B,m$ in the hole of $A$, put $p$ in the phantom disk and $q$ in
a pocket, and place the real $m$ in $B$. These cases exhaust $u\ne v$.
All constructions preserve the larger parents and assemble locally,
so each step increases the aligned prefix. This proves the guarantee.

The four-ring family of Theorem~\ref{thm:golden} supplies the upper
bound in every $d\ge2$, by Corollary~\ref{cor:dimsharp}. For exactly
five rings, add a radius $\delta>0$ in the empty hole of the radius-$1$
ring in its witness, with $\delta<\min(w,1-w,s_2)$. The fifth ring
is processed after the existing first failure, which therefore persists.
Its addition raises each old tail ratio by at most $\delta/s_2$.
Let $\delta$ tend to zero together with the golden-family parameter.
The common-width example belongs also to the independent-hole class.
The lower bound rules out attainment in either class.
\end{proof}

\subsection{Independent holes and the exact area guarantee}\label{sec:variablewidth}

Let ring $i$ have outer radius $r_i$ and inner radius $h_i$, with
$0\le h_i<r_i$. Its area divided by $\pi$ is
$a_i=r_i^2-h_i^2>0$. There need not be a single value function of
outer radius representing these areas.

\begin{proposition}[Independent-hole placement]\label{prop:independent}
For arbitrary compact pans in any positive dimension, independent holes
preserve placement obliviousness under $\rho\le1$, including equality.
The unconditional guarantee for at most three rings also remains valid.
\end{proposition}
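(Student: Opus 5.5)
The plan is to rerun the proofs of Theorem~\ref{thm:oblivious} and Proposition~\ref{prop:n3} and check which facts about holes they actually use. In both arguments a hole enters only as a container: ring $x$ can be a child of ring $y$ exactly when $x$ fits in the ball of radius $h_y$. Nothing requires $h_y=r_y-w$, and nothing requires the holes to be ordered like the outer radii. I will therefore restate the model with a per-ring hole radius $h_i$ and an arbitrary compact pan $K\subset\R^d$. Downward closure survives unchanged: a removed ring's children have radius at most $h<r$, so they stay in its vacated outer ball and are re-parented to its container. Hence $L$ is still the unique lex-max set, and the reduction ``greedy set $=L$ by induction'' of Theorem~\ref{thm:selection} goes through verbatim.

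The first step is the exchange argument of Theorem~\ref{thm:oblivious}. Let $m$ be the largest ring of $G$ on which $F$ and the witness $P$ disagree, with $u=c_F(m)$ and $v=c_P(m)$. Moving $m$ to $u$ is legal, because $F$ certified exactly the larger occupants of $u$ together with $m$. The rings smaller than $m$ that $P$ kept in $u$ have total radius at most $\sum_{r_j<r_m}r_j\le r_m$ by weak superincreasingness ($\rho\le1$). By Lemma~\ref{lem:row} they fit in a row inside the outer ball of radius $r_m$ vacated in $v$, each carrying its own subtree rigidly.

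The one point that needs checking under independent holes is the following. Ring $m$ travels to $u$ together with its hole and that hole's contents, which stay legal because they lie inside $m$. The rings moved into the vacated ball are placed as siblings in $v$, so each needs only that its outer ball lies inside the vacated outer ball of $m$. That ball was inside $v$, whether $v$ is the pan or the hole of a larger unmoved ring. Nowhere is $h_m$ compared with the $h$ of any other ring. Iterating strictly shrinks the disagreement prefix, as in the original proof, and the final witness exhibits a container of $F$ admitting $i$. Equality $\rho=1$ is allowed because Lemma~\ref{lem:row} is non-strict.

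The second step is the three-ring guarantee, taken case by case from Proposition~\ref{prop:n3}.
\begin{itemize}
\item If the parents of $r_2$ agree in $F$ and $P$, rearranging siblings gives the result, with no use of hole sizes.
\item If $F$ nests $r_2$ but $P$ keeps it at the root, shrinking that root ball concentrically to radius $r_3$ again uses no hole data.
\item If $F$ keeps $r_2$ at the root but $P$ nests it, the original argument used $r_3<r_2\le r_1-w$.
\end{itemize}
The third case is the main obstacle, since it is the only place where a common width entered. I will replace it as follows: $P$ nests $r_2$ inside $r_1$, so $r_2\le h_1$, hence $r_3<r_2\le h_1$. The hole of $r_1$ is empty in $F$, so it admits $r_3$.

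A secondary care point is the preliminary discarding step. There, a ring skipped before anything was admitted cannot lie in $K$ at all, which is again independent of holes. With that, the three-ring statement holds for all compact containers in every dimension.
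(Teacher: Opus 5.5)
Your proposal is correct and follows the paper's own proof. You observe that the parent-alignment exchange of Theorem~\ref{thm:oblivious} uses only outer radii: the displaced rings form a row of total radius at most $r_m$ in the vacated ball, and $m$'s destination is certified by $F$. You also replace the only width-dependent step of Proposition~\ref{prop:n3} by $r_3<r_2\le h_1$. The paper additionally states that parent radii strictly increase ($r_x\le h_y<r_y$), so the reassembled forest is acyclic; this is implicit in your argument.
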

\begin{proof}
The parent-alignment proof of Theorem~\ref{thm:oblivious} uses only
outer radii in the row exchange. At a pivot $m$, all displaced smaller
children have total outer radius at most $r_m$, so they and their
subtrees fit in the outer disk or ball vacated by $m$. A hole that
previously contained $m$ certifies enough capacity for the replacement;
no comparison of two widths is needed. The chosen destination admits
$m$ with its larger siblings under the greedy certificate. Parent radii
strictly increase, so the assembly is acyclic. The same downward closure
and maximal-prefix argument applies at equality. For at most three
rings, the proof of Proposition~\ref{prop:n3} similarly only uses
$r_3<r_2\le h_1$ when replacing one child by another.
\end{proof}

In a disk, Theorem~\ref{thm:golden-global} strengthens the placement
guarantee to $\rho\le\varphi$. Neither placement statement by itself
implies optimality of the independent-hole contact areas.

\begin{lemma}[Tail capture]\label{lem:tailcapture}
Let $L$ be the lex-max feasible set and $T_i=\sum_{j>i}r_j$.
If $i\in L$ and $h_i\ge T_i$, then every $j>i$ belongs to $L$.
\end{lemma}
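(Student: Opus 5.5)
The plan is to prove the claim by induction on $j=i+1,\dots,n$, showing that each $j$ passes the lex-max scan. The key point is that once $i\in L$, the hole of ring $i$ (radius $h_i$) can absorb all later rings at once. Since $h_i\ge T_i=\sum_{j>i}r_j$, the row lemma (Lemma~\ref{lem:row}) places every ring with index larger than $i$ as a row of balls along a diameter of the hole of $i$. Because the hole is a ball of radius $h_i$, this uses only outer radii and works in any dimension and for any compact container.

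The construction goes as follows. Fix $j>i$ and a feasible placement $P_{j-1}$ of $L\cap\{1,\dots,j-1\}$, which exists by the definition of $L$. First, extract from $P_{j-1}$ a feasible placement of $L\cap\{1,\dots,i\}$. This is possible by downward closure: remove the rings with index in $(i,j)$; their children, which are smaller still, are re-parented and nothing moves. Next, empty the hole of $i$. Here I have to be careful: the hole of $i$ may contain only rings with index larger than $i$, since its children have smaller outer radius, and all of those have just been removed. So the hole of $i$ is now empty, and I insert the row of $(L\cap\{i+1,\dots,j-1\})\cup\{j\}$ as leaves inside it. The total radius of this row is at most $T_i\le h_i$, so Lemma~\ref{lem:row} certifies the sibling packing. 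No nesting constraint is needed beyond ``a child's outer ball lies in the parent's hole ball,'' which the row provides.

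This gives a feasible placement of $(L\cap\{1,\dots,j-1\})\cup\{j\}$, so the greedy definition of $L$ includes $j$. Induction on $j$ then finishes the argument, with the base step identical.

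The only point I expect to need care is the emptying step. I must make sure that removing the rings with index in $(i,j)$ does not leave rings of index at most $i$ stranded inside the hole of $i$. This cannot happen, because any ring nested in $i$'s hole has strictly smaller outer radius (since $r_{\mathrm{child}}\le h_i<r_i$) and hence larger index. I also need to check that the insertion respects the forest semantics, namely that rings larger than $i$ keep their parents and that the sibling packings elsewhere are untouched. Both hold because only the content of $i$'s hole changes.
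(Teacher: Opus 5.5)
Your proof is correct and is essentially the paper's argument. In a witness for $L\cap\{1,\dots,i\}$ the hole of $i$ is empty, all later rings fit there as a row because $T_i\le h_i$, and the lex-max definition then forces every $j>i$ into $L$. The paper does this in one step: it builds a single completion containing all later indices and invokes lexicographic maximality. Your per-$j$ induction and the removal of the rings indexed in $(i,j)$ are therefore harmless but unnecessary, since no step actually uses the inductive hypothesis.
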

\begin{proof}
In a witness of the selected prefix through $i$, its hole is empty.
Place all later rings there as a row of outer disks. This produces a
feasible completion containing every later index, without adding any
previously omitted index. Lexicographic maximality gives the claim.
\end{proof}

\begin{theorem}[Sharp area factor]\label{thm:variable-area}
Fix $0<\kappa<1$. For independent holes in any compact planar pan
and $\rho\le\kappa$, every descending greedy execution satisfies
\[
 A_{\rm greedy}\ge c(\kappa) A_{\rm opt},\qquad
 c(\kappa)=\min(1,\kappa^{-2}-1).
\]
For $\kappa\le1/\sqrt2$, the lex-max set is the unique area-optimal
set. For $1/\sqrt2<\kappa<1$, the constant is best possible even for
two rings in a disk; its infimum over positive-optimum instances is
not attained.
\end{theorem}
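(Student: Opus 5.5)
Every greedy execution returns the lex-max set $L$, and we compare $L$ directly with an arbitrary feasible set $S$. Since $\rho\le\kappa<1$, the radii are superincreasing. By Proposition~\ref{prop:independent}, every descending greedy execution then returns $L$, whatever its placement rule. So it suffices to show $A(L)\ge c(\kappa)A(S)$ for every feasible $S$, where $A(X)=\sum_{i\in X}a_i$ and $a_i=r_i^2-h_i^2$.

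\textbf{The comparison.} Let $i$ be the first index at which $L$ and $S$ differ, and let $P$ be their common prefix. By lexicographic maximality $i\in L\setminus S$, and $S\setminus P\subseteq\{i+1,\dots,n\}$. There are two cases.
\begin{itemize}
\item If $h_i\ge T_i$, Lemma~\ref{lem:tailcapture} puts every later index in $L$. Then $L\supseteq S$ and $A(L)\ge A(S)$.
\item If $h_i<T_i$, we bound the two sides separately:
\[
 A(S\setminus P)\le\sum_{j>i}r_j^2\le T_i^2\le\kappa^2r_i^2,
 \qquad
 a_i>r_i^2-T_i^2\ge(\kappa^{-2}-1)\,T_i^2.
\]
The first chain uses that areas are at most outer squares, and that a sum of squares is at most the square of the sum. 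The second uses $h_i<T_i\le\kappa r_i$.
\end{itemize}
In the second case, whenever $S\setminus P$ is nonempty ($T_i>0$) we get $a_i>(\kappa^{-2}-1)A(S\setminus P)$. With $c=\min(1,\kappa^{-2}-1)\le1$ this gives
\[
 A(L)\ge A(P)+a_i\ge c\,A(P)+c\,A(S\setminus P)=c\,A(S).
\]
This proves the factor.

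\textbf{Uniqueness for $\kappa\le1/\sqrt2$.} Here $\kappa^{-2}-1\ge1$. Take any feasible $S\ne L$ with first difference at $i$.
\begin{itemize}
\item If $S\setminus P\ne\emptyset$, the second case gives $a_i>A(S\setminus P)$ strictly, so $A(L)>A(S)$.
\item If $S\setminus P=\emptyset$, then $S=P\subsetneq L$, and positivity of the areas gives $A(L)>A(S)$.
\item If $h_i\ge T_i$, then $L\supsetneq S$ and again $A(L)>A(S)$.
\end{itemize}
So $L$ is the unique area-optimal set.

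\textbf{Sharpness for $1/\sqrt2<\kappa<1$.} We use two rings in a disk of radius $1$: $r_1=1$ with $h_1=\kappa-\varepsilon$, and $r_2=\kappa$ with $h_2=0$.
\begin{itemize}
\item The two rings cannot coexist at the root, since $1+\kappa>1$.
\item Ring $2$ cannot nest in ring $1$, since $\kappa>h_1$.
\item Hence $L=\{1\}$, and the optimum is $\{2\}$ because $1-(\kappa-\varepsilon)^2<\kappa^2$ for small $\varepsilon$ when $\kappa>1/\sqrt2$.
\item Here $\rho=\kappa$, and the ratio is $(1-(\kappa-\varepsilon)^2)/\kappa^2$, which tends to $\kappa^{-2}-1$ as $\varepsilon\to0$.
\end{itemize}
Non-attainment follows from the strict inequality $a_i>r_i^2-T_i^2$ in the comparison above, which forces strictness whenever $A(S)>0$ and $c<1$.

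\textbf{Main obstacle.} The delicate step is the branch $h_i<T_i$. There the large hole of ring $i$ can make $a_i$ small, and only the bound $h_i<T_i\le\kappa r_i$ controls it. This is exactly why tail capture is needed for the complementary branch. It also explains why the constant degrades to $\kappa^{-2}-1$ precisely when $h_i$ approaches $T_i$.
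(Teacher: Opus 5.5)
Your proposal is correct and follows the paper's proof step for step. It uses the reduction to $L$ via Proposition~\ref{prop:independent}, the first-difference comparison split by Lemma~\ref{lem:tailcapture} when $h_i\ge T_i$ and by the bounds $A(S\setminus P)\le T_i^2$ and $a_i>r_i^2-T_i^2\ge(\kappa^{-2}-1)T_i^2$ otherwise, the same two-ring example $(1,\kappa-\varepsilon)$, $(\kappa,0)$ for sharpness, and non-attainment from the strictness of that comparison (in the capture branch the strictness comes from $L\supsetneq S$ and $a_i>0$, which your remark leaves implicit).
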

\begin{proof}
The greedy set is $L$ by Proposition~\ref{prop:independent}. Compare
it with any distinct feasible set $S$. Their first difference is an
index $i\in L\setminus S$. Let $C$ be the normalized area of their
common prefix and $B$ the normalized area of members of $S$ after $i$.
If $h_i\ge T_i$, Lemma~\ref{lem:tailcapture} gives $S\subsetneq L$,
so positivity gives $A(L)>A(S)$. Otherwise
$0\le h_i<T_i\le\kappa r_i$, and
\[
 B\le\sum_{j\in S,\ j>i}r_j^2\le T_i^2,\qquad
 a_i>r_i^2-T_i^2\ge(\kappa^{-2}-1)T_i^2\ge c(\kappa)B.
\]
Since $C\ge0$ and $0<c(\kappa)\le1$, this yields
$A(L)\ge C+a_i>c(\kappa)(C+B)=c(\kappa)A(S)$, with the common
factor $\pi$ restored if desired. For $c=1$ this proves uniqueness;
otherwise compare with an area optimum. If that optimum is $L$, the
guarantee is immediate and its ratio is $1>c$.

For sharpness when $\kappa>1/\sqrt2$, take a radius-$1$ pan and
\[
 (r_1,h_1)=(1,\kappa-\varepsilon),\qquad
 (r_2,h_2)=(\kappa,0),\qquad
 0<\varepsilon<\kappa-\sqrt{1-\kappa^2}.
\]
The first outer disk fills the root, and its hole is too small for the
second ring; no set containing both is feasible. The greedy takes the
first, while the optimum is the second. Here $\rho=\kappa$ and
\[
 \frac{A_{\rm greedy}}{A_{\rm opt}}
 =\kappa^{-2}-1+\frac{2\varepsilon}{\kappa}
                  -\frac{\varepsilon^2}{\kappa^2}
 \longrightarrow\kappa^{-2}-1
\]
from above. Both widths are positive. The strict comparison already
proved excludes attainment of this infimum.
\end{proof}

Consequently the universal area-optimality threshold for independent
holes is exactly $1/\sqrt2$, including its good endpoint. Under strict
superincreasing radii alone there is no positive uniform area factor: with
$0<e<1/4$, use pan radius $1+e$ and
$(r_1,h_1)=(1+e,1-e)$, $(r_2,h_2)=(1,0)$. The greedy-to-optimum
ratio is $4e$, whereas $\rho=1/(1+e)<1$. The seven theorems of
\texttt{Calamares/VariableWidth.lean} certify the algebra, not the
capture lemma or the geometric assembly. The five-ring proof uses the
pocket certificates of \texttt{FiveRing.lean}; all its placements above
are explicit.

%% file: main.bbl
\begin{thebibliography}{99}

\bibitem{GrahamEtAl2005}
R.~L. Graham, J.~C. Lagarias, C.~L. Mallows, A.~R. Wilks, C.~H. Yan.
Apollonian Circle Packings: Geometry and Group Theory I. The Apollonian Group.
\emph{Discrete \& Computational Geometry} 34:547--585, 2005.
\url{https://arxiv.org/abs/math/0010298}.


\bibitem{PedrosoCunhaTavares2016}
J.~P. Pedroso, S.~Cunha, J.~N. Tavares.
Recursive circle packing problems.
\emph{International Transactions in Operational Research} 23(1--2):355--368,
2016.

\bibitem{ChenEtAl2018}
M.~Chen, X.~Tang, T.~Song, Z.~Zeng, X.~Peng, S.~Liu.
Greedy heuristic algorithm for packing equal circles into a circular
container.
\emph{Computers \& Industrial Engineering} 119:114--120, 2018.

\bibitem{Gleixner2020}
A.~Gleixner, S.~J. Maher, B.~M\"uller, J.~P. Pedroso.
Price-and-verify: a new algorithm for recursive circle packing using
Dantzig--Wolfe decomposition.
\emph{Annals of Operations Research} 284(2):527--555, 2020.

\bibitem{CoffmanGareyJohnson1987}
E.~G. Coffman~Jr., M.~R. Garey, D.~S. Johnson.
Bin packing with divisible item sizes.
\emph{Journal of Complexity} 3(4):406--428, 1987.

\bibitem{DemaineFeketeLang2010}
E.~D. Demaine, S.~P. Fekete, R.~J. Lang.
Circle packing for origami design is hard.
In \emph{Origami$^5$: Proc.\ 5th International Conference on Origami in
Science, Mathematics and Education}, A K Peters, 2011, pp.\ 609--626.

\bibitem{FeketeKeldenichScheffer2019}
S.~P. Fekete, P.~Keldenich, C.~Scheffer.
Packing disks into disks with optimal worst-case density.
In \emph{Proc.\ 35th International Symposium on Computational Geometry (SoCG)},
LIPIcs 129, 35:1--35:19, 2019. Journal version:
\emph{Discrete \& Computational Geometry} 69:51--90, 2023.

\bibitem{AbrahamsenMiltzowSeiferth2020}
M.~Abrahamsen, T.~Miltzow, N.~Seiferth.
Framework for $\exists\mathbb R$-completeness of two-dimensional packing
problems.
In \emph{Proc.\ 61st IEEE Annual Symposium on Foundations of Computer
Science (FOCS)}, pp.\ 1014--1021, 2020.

\bibitem{FeketeMorrScheffer2019}
S.~P. Fekete, S.~Morr, C.~Scheffer.
Split packing: algorithms for packing circles with optimal worst-case density.
\emph{Discrete \& Computational Geometry} 61(3):562--594, 2019.

\bibitem{GLNO1998}
R.~L. Graham, B.~D. Lubachevsky, K.~J. Nurmela, P.~R.~J. \"Osterg\aa rd.
Dense packings of congruent circles in a circle.
\emph{Discrete Mathematics} 181:139--154, 1998.

\bibitem{Pirl1969}
U.~Pirl.
Der Mindestabstand von $n$ in der Einheitskreisscheibe gelegenen Punkten.
\emph{Mathematische Nachrichten} 40:111--124, 1969.

\bibitem{Melissen1994}
H.~Melissen.
Densest packings of eleven congruent circles in a circle.
\emph{Geometriae Dedicata} 50:15--25, 1994.

\bibitem{Fodor1999}
F.~Fodor.
The densest packing of 19 congruent circles in a circle.
\emph{Geometriae Dedicata} 74:139--145, 1999.

\bibitem{EkanayakeLaFountain2024}
D.~B. Ekanayake, D.~J. LaFountain.
Tight partitions for packing circles in a circle.
\emph{Italian Journal of Pure and Applied Mathematics} 51:115--136, 2024.

\bibitem{Edmonds1971}
J.~Edmonds.
Matroids and the greedy algorithm.
\emph{Mathematical Programming} 1:127--136, 1971.

\bibitem{KorteHausmann1978}
B.~Korte, D.~Hausmann.
An analysis of the greedy heuristic for independence systems.
\emph{Annals of Discrete Mathematics} 2:65--74, 1978.

\bibitem{Gupte2016}
A.~Gupte.
Convex hulls of superincreasing knapsacks and lexicographic orderings.
\emph{Discrete Applied Mathematics} 201:150--163, 2016.

\bibitem{Litvinchev2014}
I.~Litvinchev, L.~Infante, E.~L. Ozuna Espinosa.
Approximate circle packing in a rectangular container: integer programming
formulations and valid inequalities.
In \emph{Computational Logistics (ICCL 2014)}, LNCS 8760, Springer, 2014,
pp.\ 47--60.

\end{thebibliography}
